\documentclass[12pt, reqno]{amsart}%
\usepackage[english]{babel}
\usepackage{amsthm}     % theorem environments
\usepackage{amsmath}    % math features
\usepackage{amssymb}
\usepackage{dsfont}
\usepackage{graphicx}
\usepackage{esint}
\usepackage[shortlabels]{enumitem}
\usepackage{amscd,mdwlist,relsize}
\usepackage[margin=1.2in]{geometry}
\usepackage{color}
\usepackage{cite}
\usepackage{bbm, mathrsfs,pgf,tikz}
\usepackage{tcolorbox}

\usepackage[backgroundcolor=white, bordercolor=blue,
linecolor=blue]{todonotes}
\usepackage{url}
\usepackage{dsfont}
\usepackage{cases}
\usetikzlibrary{arrows}

\usepackage[pagebackref=false, pdfpagelabels, plainpages=false]{hyperref} % hyperlinks in PDF
\hypersetup{
colorlinks=true,
linkcolor=red,%---------------------%couleur des liens dans le document (references
filecolor=magenta,%-----------------%couleur des files à ouvrir
urlcolor=cyan,%---------------------%couleur des liens internets à ouvrir
citecolor=blue,%--------------------%couleur des liens pour les bibliographies
}%-------------------------------------%pour les courleur des liens ; ne jamias oublier de mettre le package hyperref avant

\theoremstyle{plain}  % italic text
\newtheorem{theorem}{Theorem}[section]
\newtheorem{lemma}[theorem]{Lemma}

\newtheorem{corollary}[theorem]{Corollary}

\theoremstyle{definition}  % upright text
\newtheorem{definition}[theorem]{Definition}
\newtheorem{example}[theorem]{Example}

\newtheorem{remark}[theorem]{Remark}   % numbered remark
\numberwithin{equation}{section}

\DeclareMathOperator{\dist}{dist}
\DeclareMathOperator{\supp}{supp}

\DeclareMathOperator{\diag}{diag}

\DeclareMathOperator{\loc}{loc}

\DeclareMathOperator{\pv}{\operatorname{p.\!v.}}

\DeclareMathOperator{\cE}{\mathcal{E}}

\DeclareMathOperator{\cN}{\mathcal{N}}

\DeclareMathOperator{\cJ}{\mathcal{J}}

\DeclareMathOperator{\R}{\mathbb{R}}

\newcommand{\iil}{\iint}
\renewcommand{\d}{\mathrm{d}}

\newcommand{\eps}{\varepsilon}

\newcommand{\WnuOmR}{W^{p}_{\nu}(\Omega|\R^d )}

\newcommand{\WnuOmO}{W^{p}_{\nu,\Omega}(\Omega|\R^d )}% Zero outside omega
\newcommand{\WnuOmRO}{W_{\nu,0}^{p}(\Omega|\R^d )}% the closure of smooth fct of compact supp

\newcommand{\TnuOm}{T^{p}_{\nu} (\Omega^c)}

\newcommand{\WnuOmRa}{W^p_{\nu_\eps}(\Omega|\R^d )}
\newcommand{\WnuOmRao}{W^p_{\nu_\eps,0}(\Omega|\R^d)}
\newcommand{\TnuOma}{T^p_{\nu_\eps} (\Omega^c)}
\newcommand{\WnuOma}{W^{p}_{\nu_\eps}(\Omega)}

\newcommand{\vertiii}[1]{{\left\vert\kern-0.25ex\left\vert\kern-0.25ex\left\vert #1 \right\vert\kern-0.25ex\right\vert\kern-0.25ex\right\vert}}
\addto\extrasenglish{%

}
\makeatletter
\@namedef{subjclassname@2020}{\textup{2020} Mathematics Subject Classification}
\makeatother

\begin{document}

\title{Optimal stability of complement value problems for $p$-L{\'e}vy operators.}

\author{Guy Foghem}
\address{{\tiny Brandenburgische Technische Universit\"at Cottbus--Senftenberg, Fakult\"{a}t 1: MINT Fachgebiet Mathematik, Platz der Deutschen Einheit 1, 03046 Cottbus, Germany.} \href{https://orcid.org/0000-0002-8917-7309}{ORCID}}
\email{guy.foghem[at]b-tu.de}
\thanks{Financial support from the Deutsche Forschungsgemeinschaft (DFG) through the Walter Benjamin Programme (project FO~1699/1-1) is gratefully acknowledged.}
\begin{abstract}
 We establish the optimal convergence of solutions to integro-differential equations (IDEs) governed by symmetric  integrodifferential $p$-L\'evy operators, $1 < p < \infty$, in the presence of nonlocal Dirichlet or Neumann boundary conditions. For illustrative purposes, consider the particular case of the (fractional) $p$-Laplacian $(-\Delta)^s_p$ with $0 < s \le 1$. If $(-\Delta)^s_p u_s = f_s $ in $\Omega \subset \R^d,$
augmented with a Dirichlet or Neumann data $g_s$ then under suitable assumptions on $\Omega$, $f_s$ and $g_s$,  we show that  $(u_s)_s$ strongly converges  as $s \to 1^-$ in the the optimal, that is, $\|u_s - u_1\|_{W^{s,p}(\Omega)} \to 0$.
\smallskip
Another subsequent goal underpinning our approach is the robustness of the nonlocal trace spaces; specifically, we  also show that the nonlocal trace spaces converge, in an appropriate sense, to the local trace space.
\end{abstract}

\keywords{From nonlocal to local, Optimal convergence of weak solutions, Integro-Differential Equations(IDEs),  Nonlocal $p$-L\'{e}vy operators,  Nonlocal Sobolev spaces}
\subjclass[2020]{
35D30, %Weak solutions to PDEs
 35J20, %Variational methods for second-order elliptic equations
%35J92, %Quasilinear elliptic equations with p-Laplacian
 34B15, %Nonlinear boundary value problems for ordinary differential equations
%34G20, %Nonlinear differential equations in abstract spaces
 35J60, %Nonlinear elliptic equations
 35J66, %Nonlinear boundary value problems for nonlinear elliptic equations
 35B35, %Stability in context of PDEs
%46B10, %Duality and reflexivity in normed linear and Banach spaces
46E35%Sobolev spaces and other spaces of “smooth” function
}

\maketitle

%\vspace{-6mm}
\tableofcontents
\vspace{-1cm}
\section{Introduction}\label{sec:intro}
\subsection{Motivation} In this article we are interested in studying optimal convergence of weak solutions $u_\eps: \R^d\to \R$, $\eps > 0$  satisfying the nonlocal to integro-differential equations (IDEs) driven by $p$-L\'evy operators, of the form
\begin{align}\label{eq:main-problem-nonlocal}
\tag{$P_{\nu_\eps,\tau}$}
\mu L_\eps u_\eps = f_\eps \quad \text{ in } \Omega
\quad \text{and} \quad
\tau \mu \mathcal{N}_\eps u_\eps + (1-\tau) u_\eps = g_\eps
\quad \text{ on } \Omega^c ,
\end{align}
with $\tau \in \{0,1\}$ yielding the nonlocal Dirichlet complement  condition for $\tau = 0$ and the nonlocal Neumann complement condition for $\tau = 1$. Here, $\Omega \subset \R^d$, $d \ge 1$, is an open set with complement $\Omega^c := \R^d \setminus \Omega$ and boundary $\partial\Omega$. Unless otherwise stated $1 < p < \infty$ with conjugate exponent $p' = \frac{p}{p-1}$, $\eps > 0$ is a small parameter aimed to be sent to zero and  the universal constant $\mu^{-1}=K_{d,p}$ plays a crucial role in our asymptotic analysis, see \cite{guy-thesis, Fog23} and is given by
\begin{align*}
K_{d,p}= \frac{1}{|\mathbb{S}^{d-1}|}\int_{\mathbb{S}^{d-1}} |w_d|^p\d \sigma_{d-1}(w)= \frac{ \Gamma\big(\frac{d}{2}\big)  \Gamma\big(\frac{p+1}{2}\big)}{\Gamma\big(\frac{d+p}{2}\big) \Gamma\big(\frac{1}{2}\big)}.
\end{align*}
Throughout this paper, the operators $L_\eps$ and $\mathcal{N}_\eps$ are respectively the nonlocal $p$-L\'evy operator and nonlocal $p$-normal derivative across $\Omega$, defined by
\begin{align*}
L_\eps u(x)&=2\pv \int_{\R^d} |u(x)-u(y)|^{p-2}(u(x)-u(y)) \nu_\eps(x-y)\d y
&&(x\in \R^d),\\
\mathcal{N}_\eps u(x)&=2 \int_{\Omega}
|u(y)-u(x)|^{p-2}(u(x)-u(y)) \nu_\eps(x-y)\d y
&&(x\in \Omega^c),
\end{align*}
where the sequence $(\nu_\eps)_\eps$ is family of radial $p$-L\'evy kernels approximating the identity. We say that a family of   $p$-L\'evy integrable kernels $(\nu_\eps)_{\eps>0}$, is $p$-L\'evy approximation of the unity if  each $\nu_\eps : \R^d\setminus\{0\}\to [0,\infty)$, $\eps>0$ is radial and we have
\begin{align}\label{eq:plevy-approx}
\int_{\R^d}(1\land|h|^p) \nu_\eps (h)\d h=1\quad\text{and for all $\delta>0,$}\quad \lim_{\eps\to0} \int_{|h|>\delta} \hspace{-2ex}\nu_\eps (h)\d h=0.
\end{align}
Note that condition \eqref{eq:plevy-approx} may be replaced alternatively by the analogous condition
 \begin{align}\label{eq:plevy-approx-bis}
\lim_{\eps\to0}\int_{\R^d}(1\land|h|^p) \nu_\eps (h)\d h=1\quad\text{and for all $\delta>0,$}\quad \lim_{\eps\to0} \int_{|h|>\delta}\hspace{-2ex} \nu_\eps (h)\d h=0.
\end{align}
In order to connect the nonlocal formulation with its local counterpart, it is shown in \cite{Fog25} that, letting $\eps \to 0$ in \eqref{eq:main-problem-nonlocal}, the sequence $(u_\eps)_\eps$ converges in $L^p(\Omega)$ to a function $u\in W^{1,p}(\Omega)$ solving the corresponding local boundary problem
\begin{align}\tag{$P_\tau$}
\label{eq:main-problem-local}
-\Delta_p u = f \quad \text{ in } \Omega, \quad\text{ and }\quad
\tau  \partial_{n,p} u + (1-\tau) u = g \quad \text{ on } \partial\Omega,
\end{align}
 where one immediately recognizes the local Dirichlet boundary condition for $\tau = 0$ and the local Neumann boundary condition for $\tau = 1$, $\Delta_p u =\operatorname{div}(|\nabla u|^{p-2}\nabla u)$ denotes the usual $p$-Laplace operator, while for sufficiently smooth $\partial \Omega$ with outer normal derivative $n$, $\partial_{n,p} u= |\nabla u|^{p-2}\nabla u\cdot n$ is the corresponding $p$-normal derivative of $u$ on $\partial \Omega$.

\subsection{Relevance}
\par Note that our choice of the sequence $(\nu_\eps)_\eps$ satisfying \eqref{eq:plevy-approx} is not fortuitous and is essentially optimal. Indeed, according to \cite[Theorem 9.6]{Fog25}  the class $(\nu_\eps)_\eps$ is intrinsic to the fact that, the condition \eqref{eq:plevy-approx} is valid if and only if the following characterization  holds
\begin{align}\label{eq:limit-goal}
\lim_{\eps\to0} \int_{\R^d} \int_{\R^d} |u(x+h)- u(x)|^p\nu_\eps(h)\d h\d x
=K_{d,p}\|\nabla u\|^p_{L^p(\R^d)}
&& \text{$\forall \, u\in W^{1,p}(\R^d)$}.
\end{align}
This characterization \eqref{eq:limit-goal} therefore extends the seminal work initiated by Brezis, Bourgain and Mirunescu in \cite{BBM01}.
A similar approach to this type of characterization is explored in \cite{DDP23,DDG24,GeSt26}. It turns out that the sole condition \eqref{eq:plevy-approx}, we typically obtain that $L_{\varepsilon} u$ converges to $-K_{d,p}\Delta_p u$ pointwise and weakly, while as highlighted above $\cN_{\varepsilon} u $ somewhat converges to $K_{d,p}\partial_{n,p} u= K_{d,p}|\nabla u|^{p-2}\nabla u\cdot n$ in some weak sense.  Namely,  \eqref{eq:plevy-approx} also guarantees the pointwise convergence
\begin{align*}
    \lim_{\varepsilon \to 0} L_{\varepsilon} u(x)
    = - K_{d,p}\, \Delta_p u(x),
\end{align*}
for $u \in L^\infty(\R^d) \cap C^2(B_1(x)),$ $ x \in \R^d$, provided that in the case $1<p<2$ we have $\nabla u(x) \neq 0$.
This type of pointwise estimate is not entirely straightforward and is partly an impetus for studying the convergence of the sequence of solutions $(u_\eps)_\eps$.
Some particular instances of pointwise convergence involving $p$-L\'evy operators of the type $L_\eps u$
has been established  for instance in
\cite[Theorem~2.8]{BS22} for the fractional $p$-Laplacian, \cite[Corollary~6.3]{dTL21} for the case where $\nu_\eps(h)= \tfrac{d+p}{|\mathbb{S}^{d-1}|} \eps^{-d-p}\mathds{1}_{B_\eps(0)}(h)$, and the variant in \cite[Section~7]{IN10} for the case of  the constrained fractional $p$-Laplacian. Let us mention two prototypical examples of sequences $(\nu_\eps)_\eps$ satisfying \eqref{eq:plevy-approx} that are relevant. Further examples can be found in \cite{Fog23, guy-thesis, FGKV20}. Let $\nu : \mathbb{R}^d \setminus \{0\} \to [0,\infty)$ be a radial and $p$-L\'evy normalized, i.e., $\int_{\R^d} (1 \wedge |h|^p)\, \nu(h)\d h = 1.$ The first example is obtained by considering $(\nu_{\eps})_{\eps}$ to the rescaling of  $\nu$  with
\begin{align*}
\begin{split}
\nu_\eps(h) =
\begin{cases}
\eps^{-d-p}\nu\big(h/\varepsilon\big)& \text{if}~~|h|\leq \varepsilon,\\
\eps^{-d}|h|^{-p}\nu\big(h/\varepsilon\big)& \text{if}~~\varepsilon<|h|\leq 1,\\
\eps^{-d}\nu\big(h/\varepsilon\big)& \text{if}~~|h|>1.
\end{cases}
\end{split}
\end{align*}

\noindent The second example of interest is the sequence $(\nu_\eps)_\eps$ of fractional kernels with
\begin{align*}
\nu_\eps(h) = a_{\varepsilon, d,p} |h|^{-d-(1-\eps)p}\quad\text{with }\quad a_{\varepsilon, d,p} = \frac{p\eps(1-\eps)}{|\mathbb{S}^{d-1}|}.
\end{align*}
The resulting operator $L_\eps$ is a multiple of fractional $p$-Laplace operator $(-\Delta)^s_p$, $s\in (0,1)$. More precisely, $L_\eps u= \frac{a_{\eps,d,p}}{\widetilde{C}_{d,1-\eps,p}} (-\Delta)^s_pu$, with $s=1-\eps$. In other words,
 \begin{align*}
(-\Delta)_p^s u(x)	=\widetilde{C}_{d,p,s}\pv \int_{\R^d}\frac{|u(x)-u(y)|^{p-2}(u(x)-u(y))}{|x-y|^{d+sp}}\d y = \tfrac{\widetilde{C}_{d,1-\eps,p}}{ a_{\eps,d,p}}L_\eps u(x),
\end{align*}
where our chosen normalizing constant is
{\tiny
$\widetilde{C}_{d,p,s} =
\begin{cases}
C_{d,p,s}, & \text{if } sp \ge 1, \\
C_{d,2,\frac{sp}{2}}, & \text{if } sp < 1
\end{cases}
$} with
\begin{align*}
C_{d,p,s}=\frac{s(1-2s)\Gamma(\frac{d+sp}{2})}{\pi^{\frac{d-1}{2}}\Gamma(\frac{sp+1}{2})\Gamma(p(1-s)) \cos(s\pi)}=
\frac{
2^{2s}\,\Gamma\!\left(\frac{d+sp}{2}\right)
\,\Gamma\!\left(\frac{2s+1}{2}\right)
\,\Gamma\!\bigl(2(1-s)\bigr)
}{
\pi^{\frac{d}{2}}
\,\bigl|\Gamma(-s)\bigr|
\,\Gamma\!\left(\frac{sp+1}{2}\right)
\,\Gamma\!\bigl(p(1-s)\bigr)
}.
\end{align*}
As noted in \cite{Fog25,Fog26} our normalization constant $\widetilde{C}_{d,p,s}$,  is chosen so as to ensure that if we put $|u|^p_{W^{s,p}(\R^d)}= \cE_{\R^d}^{s,p}(u,u)$; see Section \ref{sec:conv-weak-solution-frac}, the following properties hold.
\begin{itemize}
\item $\widehat{(-\Delta)^s u}(\xi)= |\xi|^{2s}\widehat{u}(\xi)$  for all $u\in C_c^\infty(\R^d)$, where $ \widehat{u}(\xi) =\int_{\R^d} e^{-ix\cdot \xi} u(x)\d x$.
\item  $(-\Delta)^s_p u(x)\to -\Delta_p u(x)\quad$  and  $\quad|u|_{W^{s,p}(\R^d)}\xrightarrow{s\to1^-}\|\nabla u\|_{L^{p}(\R^d)}$ as  $s\to1^-$.
\item $(-\Delta)^s_p u(x)\to |u(x)|^{p-2} u(x)\quad$  and  $\quad|u|_{W^{s,p}(\R^d)}\to \|u\|_{L^{p}(\R^d)}$ as  $s\to0^+$.
\item The following asymptotic behaviors hold
\begin{align*}
\lim_{s\to1^-} \frac{K_{d,p}C_{d,p,s}}{s(1-s)}=\lim_{s\to 0^+}\frac{\Gamma(p+1)C_{d,p,s}}{s(1-s)}=\lim_{s\to 0^+} \frac{2C_{d,2, \frac{sp}{2}}}{s(1-s)}=\frac{2p}{|\mathbb{S}^{d-1}|}.
\end{align*}
\end{itemize}
In particular, $ \frac{a_{\eps,d,p} }{\widetilde{C}_{d,1-\eps,p}}\to K_{d,p}$  as $\eps\to 0$. Others normalizing constants for the fractional $p$-Laplacian are proposed in \cite{War16,dTGCV21}. See  also in \cite{DJS25} where the need for asymptotic  analysis near $s \to 0^+$, in connection with the logarithmic $p$-Laplacian, is considered.

\subsection{Main results}
Our main goal in the present work is to establish the optimal convergence, as $\eps\to 0$, of the family $(u_\eps)_\eps$ to $u$, the solution of the local problem \eqref{eq:main-problem-local}. The price to pay for establishing  the optimal convergence is the need to require following suitable weak or strong asymptotic convergence assumptions on the data $f_\eps$,  $g_\eps,  f $ and  $g$; see Section~\ref{sec:asymp-conv-weak} for the precise definitions. (a) We assume that the sequence $(f_\eps)_\eps$, with $f_\eps \in \bigl(W^{p}_{\nu_\eps}(\Omega \mid \R^d)\bigr)'$, converges asymptotically weakly to $f \in \bigl(W^{1,p}(\Omega)\bigr)'$. This includes the particular situation where $f_\eps, f\in L^{p'}(\Omega)$ and $f_\eps\rightharpoonup f$ in $L^{p'}(\Omega)$.
(b) In the case $\tau=1$ we assume that the sequence $(g_\eps)_\eps$ with $g_\eps \in \bigl(T^{p}_{\nu_\eps}(\Omega^c)\bigr)'$, converges asymptotically weakly to $g \in \bigl(W^{1-1/p ,p}(\partial\Omega)\bigr)'$.
This includes the particular situation where $g_\eps= \mathcal{N}_\eps \varphi $ and $ g= K_{d,p}\partial_{n,p}\varphi $ for any $\varphi\in C^2_b(\R^d)$ where in the singular case $1<p<2$ we assume $\nabla \varphi(x)\neq 0$ for all $x\in \overline{\Omega}$.
(c) In the case $\tau=0$ we assume that the sequence $(g_\eps)_\eps$ with $g_\eps \in T^{p}_{\nu_\eps}(\Omega^c)$ converges asymptotically strongly to $g \in W^{1-1/p ,p}(\partial\Omega)$. This includes the particular situation where $g_\eps, g\in W^{1,p}(\R^d\setminus \overline{\Omega})$ and $g_\eps\to  g$ strongly in $W^{1,p}(\R^d\setminus \overline{\Omega})$.   We emphasize that  $W^{1-1/p ,p}(\partial\Omega)$ and $T^{p}_{\nu_\eps}(\Omega^c)$ are, respectively, the trace spaces of the (non)local Sobolev spaces $W^{1,p}(\Omega)$ and $\WnuOmRa$; see Section  \ref{sec:robust-trace} for more details. Further development of these spaces can be found in \cite{guy-thesis, Fog21b, FoKa24, Fog25}. We recall that
\begin{align*}
\WnuOmRa
&= \Big\lbrace u: \R^d \to \R \text{ meas.} \,: \, u|_\Omega\in L^p(\Omega)\,\!\text{ and }\,\!  \cE^\eps(u,u) <\infty \Big\rbrace,\\
W^p_{\nu_\eps,0}(\Omega|\R^d)
&= \{ u\in \WnuOmRa~: ~u=0~~\text{a.e. on } \R^d\setminus \Omega\},\\
\WnuOma
&= \Big\lbrace u\in L^p(\Omega)\,\!:\,\!  \cE^\eps_\Omega(u,u) <\infty \Big\rbrace,
\end{align*}
where we consider the forms
\begin{align*}
\mathcal{E}^{\eps}(u,v) &= \iil_{(\Omega^c\times \Omega^c)^c}\hspace*{-2ex}|u(x)-u(y)|^{p-2}(u(x)-u(y))(v(x)-v(y) )\, \nu_\eps(x-y) \d y \, \d x,
\\ \mathcal{E}^{\eps}_{\Omega}(u,v) &=  \iil_{\Omega \Omega} |u(x)-u(y)|^{p-2}(u(x)-u(y))(v(x)-v(y)) \nu_\eps(x-y)\, \d y \, \d x.
\end{align*}
Note that $\mathcal{E}^{\eps}(u,v) =\mathcal{E}^{\eps}_\Omega(u,v) +\mathcal{E}^{\eps}_{cr}(u,v)+ \mathcal{E}^{\eps}_{cr}(v,u) $ where we put
\begin{align*}
\cE^\eps_{cr}(u,v)&= \iil_{\Omega \Omega^c} |u(x)-u(y)|^{p-2}(u(x)-u(y))(v(x)-v(y)) \, \nu_\eps(x-y)\d y \, \d x.
\end{align*}
The spaces $\WnuOmRa$ and $\WnuOma$ are equipped respectively with is the (semi)norms
\begin{align*}
\|u\|_{\WnuOmRa} &=
\big( \|u\|^p_{L^p(\Omega)}+ \cE^\eps(u,u)\big) ^{1/p} \\
\|u\|_{\WnuOma} &=
\big( \|u\|^p_{L^p(\Omega)}+ \cE^\eps_\Omega(u,u)\big) ^{1/p}.
\end{align*}

Another purpose of this paper is to prove the robustness of the nonlocal trace spaces $\TnuOma$ as $\eps \to 0$; which plays a crucial role in our approach to proving the optimal convergence result. More precisely, we prove that the family of nonlocal trace spaces $\TnuOma$, $\eps>0$, converges, in an appropriate sense, to the local trace space $W^{1-1/p,p}(\partial\Omega)$. The optimal convergence requires a careful analysis of a form of demi-convergence of the nonlocal forms $\cE^\eps(\cdot,\cdot)$ to the local form $K_{d,p}\cE^0(\cdot,\cdot)$ where
\begin{align*}
\cE^0(u,v)=\int_\Omega |\nabla u(x)|^{p-2}\nabla u(x)\cdot \nabla v(x)\d x.
\end{align*}
The latter is obtained by establishing several uniform estimates involving $\cE^\eps(\cdot,\cdot)$ that are independent of $\eps$.
This paper contains several new contributions. However, to ensure good flow in the introduction, we list them under the following main categories, under various mild assumptions on $\Omega$.
\begin{enumerate}[$(A)$]
\item \textbf{Optimal convergence.} In Section \ref{sec:conv-weak-solution} we show that, the sequence $(u_\eps)_\eps$ strongly converges to $u\in W^{1,p}(\Omega)$ satisfying \eqref{eq:main-problem-local}, in optimal  sense, that is,
\begin{align}\label{eq:intro-op-con}
\lim_{\eps\to0}\|u_\eps-u\|_{\WnuOma}= \lim_{\eps\to0}\|u_\eps-\overline{u}\|_{\WnuOmRa}= \lim_{\eps\to0} \cE^\eps_{cr}(u_\eps,u_\eps)=0,
\end{align}
where $\overline{u}\in W^{1,p}(\R^d)$ is a suitable $W^{1,p}$-extension of $u\in W^{1,p}(\Omega)$. For the case $\tau=1$ we require in addition that $\int_\Omega u_\eps \d x= \int_\Omega u \d x= 0$. Our approach also applies to the  Neumann problem associated with the regional $p$-L{\'e}vy operator

\begin{align*}
L_{\Omega, \eps} u(x):= 2\pv \int_\Omega |u(x)-u(y)|^{p-2}(u(x)-u(y))\, \nu_\eps(x-y)\d y.
\end{align*}
It is worth emphasizing that  no stronger convergence can be expected in general. Indeed, for sufficiently smooth $\Omega$, it is shown in \cite{Fog25} that $u_\eps \in \WnuOmRa$, i.e., $u_\eps|_\Omega\in \WnuOma$,
while $u \in W^{1,p}(\Omega)\subset \WnuOma$.

\item \textbf{Robust trace spaces.}
In Section \ref{sec:robust-trace} we show that, the nonlocal trace spaces $\TnuOma$ resp. the corresponding trace operator $\operatorname{Tr} : \WnuOmRa \to \TnuOma,$
with $ \operatorname{Tr}(g) = g|_{\Omega^c},$
converge, to the  local trace space $W^{1-1/p,p}(\partial\Omega)$ resp. the usual local  trace operator $\gamma_0 : W^{1,p}(\Omega) \to W^{1-1/p,p}(\partial\Omega)$ in the following sense:
if $\|\operatorname{Tr}(g_\eps-g)\|_{\TnuOma}\to 0$ with $g\in W^{1,p}(\R^d)$ and $g_\eps\in \WnuOmRa$ then
\begin{align*}
\lim_{\eps\to0}	\|\operatorname{Tr}(g_\eps)\|_{\TnuOma}=
\|\gamma_0(g)\|_{W^{1-1/p ,p}(\partial\Omega)}
\end{align*}
Furthermore, one also finds the existence of robust lifting. Namely, there exist operators $R_\eps: \TnuOma\to \WnuOmRa$ with $\operatorname{Tr}\circ R_\eps= \operatorname{Id}$  and $R_0:W^{1-1/p , p}(\partial \Omega)\to W^{1,p}(\Omega)$ with $\gamma_0\circ R_0= \operatorname{Id}$ such that for all $g\in W^{1,p}(\R^d\setminus\overline{\Omega})$
\begin{align*}
\lim_{\eps\to0}\|R_\eps(g)-R_0(g)\|_{\WnuOma}= \lim_{\eps\to0}\|R_\eps(g)-R_{0,g}(g)\|_{\WnuOmRa}=0,
\end{align*}
where $R_{0,g}(g)\in W^{1,p}(\R^d)$ is the extension by $g$ of $R_{0}(g)$.
\item  \textbf{Demi-convergence of forms.}
In Section~\ref{sec:conv-nonloc-to-loc-forms},  we able to obtain a sort of asymptotic demi-convergence of the nonlocal forms $\cE^\eps(\cdot,\cdot)$ to the local form $\cE^0(\cdot,\cdot)$. %(w_\eps)_\eps
More precisely, given $v$ and $w$, if $w_\eps\in \WnuOmRa$ satisfies  $w_\eps \to w$ in $L^p_{\operatorname{loc}}(\Omega)$  and $\sup_{\eps>0} \| w_\eps\|_{\WnuOmRa}<\infty $, then
under the condition either that $v,w$ and $\Omega$ are sufficiently smooth or that ($w\in W^{1,p}_0(\Omega)$ or $v\in W^{1,p}_0(\Omega)$), we have
%for all $v\in W^{1,p}(\R^d)$
\begin{align*}
&\lim_{\eps\to0} \cE^\eps(v,w_\eps)= \lim_{\eps\to0}\cE^\eps_\Omega (v,w_\eps)= K_{d,p}\cE^0(v,w),\\
&\lim_{\eps\to0} \cE^\eps_{cr}(w_\eps,v)
=\lim_{\eps\to0}\cE^\eps_{cr} (v,w_\eps)
=0.
\end{align*}

%\item \textbf{Convergence of dual spaces.} We also show the dual space of $\WnuOmRa$ (resp. $W^p_{\nu_\eps,0}(\Omega|\R^d)$)converges to the dual space of $W^{1,p}(\Omega)$ (resp. $W^{1,p}_0(\Omega)$. Namely, for all $f\in (L^p(\Omega))'$) we have
%\begin{align*}
%\| f\|_{(W^{1,p}(\Omega))'}=\lim_{\eps\to0} \| f\|_{(\WnuOma)'} \,\, \text{ and }\,\,
%\| f\|_{(W^{1,p}_0(\Omega))'}
%&=\lim_{\eps\to0} \| f\|_{(W^p_{\nu_\eps,0}(\Omega|\R^d))'}.
%\end{align*}
\item \textbf{Case of the fractional $p$-Laplace.}
Our optimal convergence result is new even in the case of the fractional $p$-Laplacian $(-\Delta)^s_p$, with $0 < s < 1$, which is treated in Section~\ref{sec:conv-weak-solution-frac}.  Replacing, \emph{mutatis mutandis}, $f_\eps$ and $g_\eps$ by $f_s$ and $g_s$, if we assume that $w_1 = u$ solves \eqref{eq:main-problem-local} and that $w_s$ satisfies
\begin{align}\label{eq:main-fract}
(-\Delta)^s_p w_s = f_s \quad \text{ in } \Omega, \quad\text{and}\quad
\tau \,\cN_p^s w_s + (1-\tau) w_s = g_s \quad \text{on } \Omega^c,
\end{align}
then $(w_s)_s$ strongly converges to $w_1=u$ in the following sense
\begin{align*}
\lim_{s\to1^-}\|w_s -w_1\|_{W^{s,p}(\Omega)} = \lim_{s\to1^-}\|w_s- \overline{w}_1\|_{W^{s,p}(\Omega|\R^d)}=0,
\end{align*}
where $\overline{w}_1\in W^{1,p}(\R^d)$ is a suitable  $W^{1,p}$-extension of $w_1$.
\end{enumerate}

%\vspace{1mm}

\noindent Let us briefly highlight some key points underlying our strategy for obtaining the above results. We obtain the convergence of $u_\eps {\,'}s$ in $L^p(\Omega)$ by implicitly relying on a form of $\Gamma$-convergence from nonlocal functionals $(\mathcal{J}^\eps_\tau )_\eps$ to a local functional $\mathcal{J}_\tau$ w.r.t the topology of $L^p(\Omega)$; where  $\mathcal{J}^\eps_\tau$  and $\mathcal{J}_\tau$, $\tau=0,1$, are roughly defined as follows
\begin{align*}
 \cJ^\eps_1(v)&=\frac{\mu}{p}  \mathcal{E}^\eps(v,v) -\langle f_\eps, v \rangle_\eps - \langle g_\eps, v\rangle_\eps,
&& \qquad\cJ_1(v)=\frac{1}{p}\mathcal{E}^0(v,v)-  \langle f, v\rangle- \langle g, v\rangle,\\
\mathcal{J}^\eps_0(v)
&= \frac{\mu}{p}  \cE^\eps(v,v)-\langle f_\eps , v-g \rangle_\eps, &&\qquad \cJ_0(v)=\frac{1}{p}\mathcal{E}^0(v,v) -\langle f ,v-g\rangle.
\end{align*}
\noindent Here are two key observations.  First, a function minimizes $\mathcal{J}^\varepsilon_\tau$ (resp. $\mathcal{J}_\tau$) if and only if it is a weak solution of \eqref{eq:main-problem-nonlocal} (resp.  \eqref{eq:main-problem-local}.
Secondly, as consequence of the nonlocality effect functions entering the nonlocal form $\cE^\eps(\cdot,\cdot)$ are defined on $\R^d$ whereas functions entering local form $\cE^0(\cdot,\cdot)$ are defined on $\Omega$,  such $\Gamma$-convergence cannot hold in the classical sense, due to this mismatch between the domains of definition of the functionals $\mathcal{J}^\eps$ and $\mathcal{J}$; see Section \ref{sec:conv-weak-solution} for more details. Furthermore, this nonlocality effect is genuinely reflected in the collapse of the cross-forms $\cE^\eps_{cr}(\cdot,\cdot)$ across $\partial\Omega$ as $\eps \to 0$. As highlighted in \eqref{eq:intro-op-con}, this collapse is also manifested in the convergence of weak solutions as one has $\cE^\eps_{cr}(u_\eps,u_\eps) \to 0$ as $\eps \to 0$. Afterwards, our approach to the optimal convergence strongly relies on the aforementioned asymptotic demi-convergence of the energy forms $\cE^\eps(\cdot,\cdot)$ to  $\cE^0(\cdot,\cdot)$, together with uniform linearization of the forms $\cE^\eps(\cdot,\cdot)$  see Theorem  \ref{thm:equiv-conv-in-form}.  This analysis also enables us to show  in Theorem \ref{thm:conv-dual-spaces} that dual spaces $(W^p_{\nu_\eps,0} (\Omega|\R^d))'$, $(\WnuOmRa)'$  and $(\WnuOma)'$ converge to local counterpart $ (W^{1,p}_0(\Omega))'$ and  $ (W^{1,p}(\Omega))'$ respectively. We also benefit from various robust Poincar{\'e} inequalities established in \cite{Fog25}.
Furthermore, in the case of Dirichlet problem, i.e. when $\tau=0$,  the robustness of the trace spaces is naturally needed in addition.  Meanwhile  the convergence of forms also implies the robustness of the trace spaces, i.e., $(\TnuOma)_\eps$ converges to $W^{1-1/p,p}(\partial\Omega)$, whereas the robustness of the lifting operator  $(R_\eps)_\eps$ partially relies on optimal convergence of solutions $u_\eps$ in the case of Dirichlet problem, i.e. when $\tau=0$. Finally, the case of fractional $p$-Laplacian follows directly by using the sequence
$\nu_\eps(h)=\tfrac{p\eps(1-\eps)}{|\mathbb{S}^{d-1}|}|h|^{-d-p(1-\eps)}$.

\vspace{-1mm}
\subsection{Literature}
We now provide a brief account of the literature. This work continues and generalizes the investigation initiated in our previous article \cite{Fog25} where we only prove strong convergence in $L^p(\Omega)$ under relaxed assumptions on $f_\eps$ and $ g_\eps$.  See also \cite{guy-thesis,FoKa24} for $p=2$ wherein the asymptotic for general nonlocal elliptic operators is treated. To the best of our knowledge, our optimal convergence result is new event for the fractional $p$-Laplacian. There are some recent progresses about convergence of solutions related to the case of the fractional $p$-Laplacian. For instance, the convergence rate for the Dirichlet problem for $p=2$ is discussed in \cite{BuFe25}. The differentiability of the mapping $s\mapsto w_s$, with $w_s$ given as in \eqref{eq:main-fract} for $\tau=1$ is provided in \cite{JSW25} for $p=2$.
Convergence  of weak solutions to nonlocal  problems with complement Neumann condition to the local ones appear in \cite{guy-thesis,FoKa24,GrHe24}.
Note however that the convergence of nonlocal Neumann problems associated with regional type operators can be found in \cite{AMRT08, AMRT10,DLS15}. There is a substantial amount of works treating the convergence from nonlocal to local of weak solutions to homogeneous Dirichlet problems associated with the fractional $p$-Laplacian $(-\Delta)^s_p$; we refer interested reader for instance to  \cite{BPS16,FeSa20,BS22,BO20,BO21,SaVe22}.  For convergence of solutions to elliptic problems, see \cite{guy-thesis,Voi17,Gru25}. The work \cite{ScDu25} addresses also the strong nonlocal-to-local convergence for nonlocal problems with a shrinking horizon, arising in the peridynamic framework and driven by boundary-localized nonlocal convolution operators.  The robustness of the fractional trace spaces in connection with the Dirichlet problem is established in \cite{GrKa25}, thereby extending the earlier work from \cite{GrHe24} dealing with the case $p=2$. Further details can found in \cite{Gru25}.  The cornerstone  of \cite{GrHe24,GrKa25} is that, the scaling property of the fractional kernel $h\mapsto(1-s)|h|^{-d-sp}$ and the the Whitney decomposition, enable the authors to refine the earlier work \cite{DyKa19} by  giving an explicit characterization of the nonlocal trace space. Finally, the asymptotic demi-convergence above generalizes and extends the convergence of forms obtained in \cite{Fog23,guy-thesis}. This type of convergence was originally studied in \cite{BBM01}. It is, however, important to note that earlier work \cite{AAS67}, though nowadays largely overlooked, had already investigated the nonlocal-to -local convergence of fractional energy forms in the fractional Sobolev spaces with $p=2$, thereby, as later emphasized in \cite{BBM01}, anticipating the need of the factor $(1-s)^{1/p}$ in front of the fractional seminorm to obtain the correct asymptotic behavior. Further, the work \cite{AAS67} also addresses the problem of convergence from nonlocal to local energy forms, as subsequently studied in \cite{BBM01} and the asymptotic compactness result as studied in \cite{Pon04}. Several generalizations concerning the nonlocal-to-local convergence of energy forms have recently emerged. For instance, related developments on Mosco and/or $\Gamma$-convergence can be found in \cite{Wei22,guy-thesis,FGKV20,Pon04-gamma,Voi17}. Various extensions in other directions are discussed in \cite{BMR20,DB22,DD22,Fog23,PS17,Mil05}, as well as in the variant settings considered in \cite{NPSV18,LMP19} and the references therein.

\subsection*{Acknowledgement:}
Part of this work was carried out during the author’s postdoctoral stay at Technische Universit{\"a}t Dresden. The author gratefully acknowledges the excellent working conditions and hospitality provided by the host institution.

%Part of this work was completed during the author’s postdoctoral appointment at TU Dresden. The author would like to thank Technische Universit{\"a}t Dresden for its hospitality and support.

\section{Convergence of nonlocal  forms} \label{sec:conv-nonloc-to-loc-forms}
In this section, we discuss in various perspectives the convergence of nonlocal energy forms.  More importantly,are interested in the asymptotic of the energy forms:  max form (or special form) $\cE^\eps(\cdot,\cdot)$,  plus form $\cE^\eps_+(\cdot,\cdot)$, Gagliardo form  $\cE^\eps_\Omega(\cdot,\cdot)$ and boundary crossing form  $\cE^\eps_{cr}(\cdot,\cdot)$ and local gradient form $\cE^0(\cdot,\cdot)$ defined as follows
\begin{align*}
\mathcal{E}^{\eps}(u,v) &= \iil_{(\Omega^c\times \Omega^c)^c} |u(y)-u(x)|^{p-2}(u(y)-u(x)) (v(y)-v(x) )\nu_\eps(x-y) \d y \, \d x,
\\ \mathcal{E}^{\eps}_+(u,v) &= \iil_{\Omega \R^d} |u(y)-u(x)|^{p-2}(u(y)-u(x))(v(x)-v(y))\nu_\eps(x-y)\d y \, \d x,
\\ \mathcal{E}^{\eps}_{\Omega}(u,v) &=  \iil_{\Omega \Omega} |u(y)-u(x)|^{p-2}(u(y)-u(x)) (v(y)-v(x)) \nu_\eps(x-y)\d y \, \d x, \\
\mathcal{E}^{\eps}_{cr}(u,v) &= \iil_{\Omega \Omega^c} |u(y)-u(x)|^{p-2}(u(y)-u(x))(v(x)-v(y))\nu_\eps(x-y)\d y \, \d x,
\\\mathcal{E}^{0}(u,v)&= \int_\Omega |\nabla u(x)|^{p-2}\nabla u(x)\cdot \nabla v(x)\d x.
\end{align*}

\noindent Note  in passing that if when $\Omega=\R^d$ then  $\mathcal{E}^{\eps}(u,v)= \mathcal{E}^{\eps}_+(u,v) =\mathcal{E}^{\eps}_{\Omega}(u,v)$. For a general  measurable kernel  $k:\R^d\times \R^d\setminus\diag \to [0\,, \infty)$,  we introduce the nonlinear nonlocal form
\begin{align*}
\cE_k (u, v):= \iil_{\R^d \R^d} |u(x)-u(y)|^{p-2}(u(x)-u(y)) (v(x)-v(y))k(x,y)\d y\d x.
\end{align*}
It is important to keep in mind that H\"older's inequality implies that
\begin{align}\label{eq:holder-form-estimate}
|\cE_k(u,v)|\leq \cE_k(u,u)^{1/p'}\cE_k(v,v)^{1/p}.
\end{align}

\begin{remark} The nonlocal forms $\cE^\eps(\cdot,\cdot)$,  $\cE^\eps_+(\cdot,\cdot)$, $\cE^\eps_\Omega(\cdot,\cdot)$  and  $\cE^\eps_{cr}(\cdot,\cdot)$ merely appear as special instances of the nonlocal form $\cE_k(\cdot,\cdot)$. Indeed one easily verifies that
\begin{align*}
\cE_k (u, v)=\cE^\eps(u, v)\quad \text{when}\quad k(x,y)
&= \max( \mathds{1}_{\Omega} (x),\mathds{1}_{\Omega} (y))\nu_\eps(x-y),\\
\cE_k (u, v)=\cE^\eps_+(u, v)\quad \text{when}\quad k(x,y)&= \tfrac{1}{2}(\mathds{1}_{\Omega} (x)+\mathds{1}_{\Omega} (y))\nu_\eps(x-y),\\
\cE_k (u, v)=\cE^\eps_\Omega(u, v)\quad \text{when}\quad k(x,y)&= \min( \mathds{1}_{\Omega} (x),\mathds{1}_{\Omega} (y))\nu_\eps(x-y),\\
\cE_k (u, v)=\cE^\eps_{cr}(u, v)\quad \text{when}\quad k(x,y)&= \min( \mathds{1}_{\Omega} (x),\mathds{1}_{\Omega^c} (y))\nu(x-y).
\end{align*}
Furthermore, in each of these four examples, one obtains the following uniform estimate
\begin{align}\label{eq:taylor-exapnsion}
\cE_k(u,u)\leq \cE^\eps_{\R^d}(u,u) \leq 2^p \|u\|^p_{W^{1,p}(\R^d)}, \qquad\text{$u\in W^{1,p}(\R^d)$},
\end{align}
independently on $\eps$. This is  readily implied by the  fact that $\int_{\R^d}(1\land|h|^p)\nu_\eps(h)\d h=1$ and the following Taylor expansion
\begin{align*}
\int_{\R^d}|u(x)-u(x+h)|^p\d x\leq 2^p(1\land |h|^p) \|u\|_{W^{1,p}(\R^d)}.
\end{align*}
\end{remark}

\subsection{Uniform linearization of the  forms} In this subsection, we establish a uniform linearization of the aforementioned forms, which further eases their asymptotic analysis.  The proof of the following Lemma can be found in \cite[Lemma A.3]{Fog25}.
\begin{lemma}\label{lem:elm-est-simons}
For all $x,y\in \R^d$, the following inequalities hold true.
\begin{numcases}{\big||x|^{p-2}x-|y|^{p-2}y\big|\leq}
\kappa_p |x-y|(|x|+|y|)^{p-2} & $p\in [2,\infty),$ \label{eq:upper-elem-degen}%\kappa_p =(p-1)
\\
\kappa_p |x-y|^{p-1}&  $p\in [1,2]$. \label{eq:upper-elem-sing}
\end{numcases}

\begin{numcases}{\big(|x|^{p-2}x-|y|^{p-2}y\big)\cdot (x-y)\geq}
\kappa'_p |x-y|^p&  $p\in [2,\infty)$, \label{eq:under-elem-degen}
\\
\kappa'_p |x-y|^2(|x|+|y|)^{p-2} &  $p\in [1,2]$.\label{eq:under-elem-sing}
\end{numcases}
where the constants $\kappa_p$ and $\kappa'_p$ are defined as follows
\begin{align*}
\kappa_p	=  p-1 \quad \text{and}\quad \kappa'_p=\min(2^{-1}, 2^{2-p})\qquad &&p\in [2,\infty)\\
\kappa_p=2^{2-p}(3-p)\leq 2^{3-p}\quad \text{and}\quad  \kappa'_p=p-1&&p\in [1,2].
\end{align*}
\end{lemma}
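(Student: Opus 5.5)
The plan is to treat the vector field $F(z)=|z|^{p-2}z$ on $\R^d$ and work along the segment $z_t=y+t(x-y)$, $t\in[0,1]$. Its differential is $DF(z)=|z|^{p-2}\big(I+(p-2)\hat z\otimes\hat z\big)$ with $\hat z=z/|z|$, valid for $z\neq0$. The eigenvalues of $DF(z)$ are $|z|^{p-2}$ and $(p-1)|z|^{p-2}$. Hence $|DF(z)|\le\max(1,p-1)|z|^{p-2}$ and $DF(z)\xi\cdot\xi\ge\min(1,p-1)|z|^{p-2}|\xi|^2$.

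If the segment avoids $0$, we write
\[
F(x)-F(y)=\int_0^1 DF(z_t)(x-y)\,\d t .
\]
The case where the segment passes through the origin, which is where $DF$ may be singular when $p<2$, is handled by continuity or by perturbing $x$ slightly. This reduces all four inequalities to one-dimensional integrals of the form $I:=\int_0^1|z_t|^{p-2}\d t$.

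For \eqref{eq:upper-elem-degen} with $p\ge2$, we use $|z_t|\le |x|+|y|$, which gives $I\le(|x|+|y|)^{p-2}$. The constant $p-1$ follows directly.

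For \eqref{eq:under-elem-sing} with $p\le2$, we again use $|z_t|\le|x|+|y|$. Since now $p-2\le0$, this gives the lower bound $I\ge(|x|+|y|)^{p-2}$. Combined with $\min(1,p-1)=p-1$, this yields the constant $\kappa'_p=p-1$.

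For \eqref{eq:under-elem-degen} with $p\ge2$, we need the lower bound $I\ge c\,|x-y|^{p-2}$. The plan is to use that $|z_t|\ge \big|\,|t-t_0|\,|x-y|\,\big|$ projected suitably. Concretely, $|z_t|\ge |t-t_*||x-y|$, where $t_*$ is the parameter at which $|z_t|$ is minimal, by Pythagoras. Then
\[
I\ge |x-y|^{p-2}\int_0^1|t-t_*|^{p-2}\d t\ge |x-y|^{p-2}\,\frac{2^{2-p}}{p-1}.
\]
This gives a constant of the claimed type. If necessary, we adjust the constant to $\min(2^{-1},2^{2-p})$ by the alternative elementary route of expanding
\[
(F(x)-F(y))\cdot(x-y)=|x|^p+|y|^p-(|x|^{p-2}+|y|^{p-2})\,x\cdot y
\]
and using the symmetric identity
\[
\tfrac12(|x|^{p-2}+|y|^{p-2})|x-y|^2+\tfrac12(|x|^{p-2}-|y|^{p-2})(|x|^2-|y|^2).
\]
The second term is nonnegative, and the first term is bounded below by $2^{2-p}|x-y|^{p-2}\cdot\frac12|x-y|^2$, since $\max(|x|,|y|)\ge|x-y|/2$.

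For \eqref{eq:upper-elem-sing} with $p\le2$, we need the upper bound $I\le C|x-y|^{p-2}$. Using the same Pythagoras bound $|z_t|\ge|t-t_*||x-y|$, we get $I\le|x-y|^{p-2}\int_0^1|t-t_*|^{p-2}\d t\le |x-y|^{p-2}\,\frac{2^{2-p}}{p-1}$. This bound is integrable for $p>1$ but degenerates as $p\to1$, whereas the claimed constant $2^{2-p}(3-p)$ stays bounded. So this last inequality is the main obstacle.

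To handle it, I would avoid the integral representation and split into cases instead. If $|x-y|\ge\frac12\max(|x|,|y|)$, we use the trivial bound $|F(x)|+|F(y)|\le 2\max(|x|,|y|)^{p-1}\le 2\cdot2^{p-1}|x-y|^{p-1}$. Otherwise the segment stays at distance at least $\frac12\max(|x|,|y|)$ from $0$, so $|z_t|\ge\frac12\max$ on it. The mean value bound then gives $(p-1)\,2^{2-p}\max^{p-2}|x-y|$, which is at most $(p-1)\,2^{2-p}|x-y|^{p-1}$ in this regime. Combining the two cases and optimizing the splitting threshold should produce a constant of the form $2^{2-p}(3-p)$. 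Since the stated constant is taken from \cite[Lemma A.3]{Fog25}, I would tune the threshold to match it.
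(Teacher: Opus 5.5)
Your segment argument $F(x)-F(y)=\int_0^1 DF(z_t)(x-y)\,\d t$ correctly gives \eqref{eq:upper-elem-degen} and \eqref{eq:under-elem-sing} with exactly $\kappa_p=p-1$ and $\kappa'_p=p-1$. Crossing the origin is harmless for $1<p<2$ because $|z_t|^{p-2}$ is integrable there, and $p=1$ is trivial. The gap is that the lemma fixes explicit constants, which are reused in $c_p,c'_p$ of Theorem~\ref{thm:under-upper-form-est}, and for the other two inequalities neither of your routes reaches them.

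For \eqref{eq:under-elem-degen}, the integral route gives $2^{2-p}/(p-1)$. This drops below $\min(2^{-1},2^{2-p})$ once $p$ exceeds about $2.46$; at $p=3$ you get $1/4$ instead of $1/2$. Your fallback gives only $2^{1-p}$, which is strictly below $\min(2^{-1},2^{2-p})$ for every $p>2$: the bound $\tfrac12(|x|^{p-2}+|y|^{p-2})\ge\tfrac12\max(|x|,|y|)^{p-2}$ wastes a factor. Keep your identity but bound its first term differently.
\begin{itemize}
\item For $2\le p\le3$, subadditivity of $s\mapsto s^{p-2}$ gives $|x|^{p-2}+|y|^{p-2}\ge(|x|+|y|)^{p-2}\ge|x-y|^{p-2}$, hence the constant $\tfrac12$.
\item For $p\ge3$, convexity gives $\tfrac12(|x|^{p-2}+|y|^{p-2})\ge\big(\tfrac{|x|+|y|}{2}\big)^{p-2}\ge2^{2-p}|x-y|^{p-2}$, hence $2^{2-p}$.
\end{itemize}
Together these give exactly $\min(2^{-1},2^{2-p})$.

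For \eqref{eq:upper-elem-sing}, tuning the threshold cannot succeed.
\begin{itemize}
\item In your first case you bound by $|x|^{p-1}+|y|^{p-1}$. Take $|x|=|y|=1$ and $|x-y|=\lambda$: the ratio is $2\lambda^{1-p}\ge2$ for every $\lambda\le1$. Your second case needs $\lambda<1$ to keep the segment a definite distance from $0$. Yet $2^{2-p}(3-p)<2$ for $p\ge1.6$, and it equals $1$ at $p=2$.
\item Your integral bound $2^{2-p}/(p-1)$ exceeds $2^{2-p}(3-p)$ for every $p<2$, since $(p-1)(3-p)<1$.
\item Also, for $p\le2$ the operator norm of $DF(z)$ is $|z|^{p-2}$, not $(p-1)|z|^{p-2}$.
\end{itemize}
What is missing is an algebraic splitting instead of integration along the segment. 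By symmetry assume $|x|\ge|y|>0$; the case $y=0$ is trivial. Write $F(x)-F(y)=|x|^{p-2}(x-y)+(|x|^{p-2}-|y|^{p-2})y$.
\begin{itemize}
\item Since $|x-y|\le2|x|$, the first term is at most $|x|^{p-2}|x-y|\le2^{2-p}|x-y|^{p-1}$.
\item For the second term, set $s=|y|/|x|\in(0,1]$. Use $s^{p-1}-s\le(2-p)(1-s)$, which holds because $(2-p)(1-s)-s^{p-1}+s$ vanishes at $s=1$ and has derivative $(p-1)(1-s^{p-2})\le0$ on $(0,1]$. This gives $|y|\,(|y|^{p-2}-|x|^{p-2})=|x|^{p-1}(s^{p-1}-s)\le(2-p)|x|^{p-2}(|x|-|y|)\le(2-p)2^{2-p}|x-y|^{p-1}$.
\end{itemize}
Summing the two terms yields exactly $2^{2-p}(3-p)$.
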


\noindent An important consequence of  Lemma \ref{lem:elm-est-simons} is the following result
\begin{corollary}\label{cor:elm-est-taylor} For all $x,y\in \R^d,$ the following inequalities hold true
\begin{align*}
|x+y|^p- |x|^p- p|x|^{p-2} x\cdot y&\leq
p\kappa_p |y|^{\min(p,2)} (|y|+|x|)^{\max(p-2,0)}, \\
|x+y|^p- |x|^p- p|x|^{p-2} x\cdot y &\geq 	\frac{p}{2}\kappa'_p |y|^{\max(p,2)} (|y|+|x|)^{\min(p-2,0)}.
\end{align*}
\end{corollary}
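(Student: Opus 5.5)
Write $F(z)=|z|^p$ and $\Phi(z)=|z|^{p-2}z$, so that $\nabla F=p\Phi$. The plan is to prove Corollary \ref{cor:elm-est-taylor} through an integral (Taylor) representation along the segment from $x$ to $x+y$ and then apply Lemma \ref{lem:elm-est-simons} pointwise inside the integral. Since $F\in C^1(\R^d)$ for $p>1$,
\begin{align*}
|x+y|^p-|x|^p-p|x|^{p-2}x\cdot y = p\int_0^1 \big(\Phi(x+ty)-\Phi(x)\big)\cdot y \,\d t .
\end{align*}
The two bounds are then obtained by inserting into the integrand the upper or lower estimate of Lemma \ref{lem:elm-est-simons} with the pair $(x+ty,\,x)$, whose difference is $ty$.

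\textbf{Upper bound.} First estimate the integrand by $|\Phi(x+ty)-\Phi(x)|\,|y|$.
\begin{itemize}
\item For $p\ge2$, \eqref{eq:upper-elem-degen} gives the bound $\kappa_p\,t|y|^2\,(|x+ty|+|x|)^{p-2}$. Using $|x+ty|+|x|\le 2|x|+|y|\le 2(|x|+|y|)$ produces a harmless constant factor; integrating $t$ over $[0,1]$ gives the factor $1/2$. Up to how $2^{p-2}$ is absorbed (which must be checked against the stated constant, possibly by a finer bound), this yields $p\kappa_p|y|^2(|x|+|y|)^{p-2}$.
\item For $1<p\le 2$, \eqref{eq:upper-elem-sing} gives $\kappa_p t^{p-1}|y|^{p}$. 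Integrating gives $\kappa_p|y|^p/p$, which is at most $p\kappa_p|y|^p$.
\end{itemize}

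\textbf{Lower bound.} Write $(\Phi(x+ty)-\Phi(x))\cdot y=\tfrac1t(\Phi(x+ty)-\Phi(x))\cdot(ty)$ and apply \eqref{eq:under-elem-degen} and \eqref{eq:under-elem-sing}.
\begin{itemize}
\item For $p\ge2$, the integrand is at least $\kappa'_p t^{p-1}|y|^p$, so integrating gives $p\kappa'_p|y|^p/p\ge \tfrac p2\kappa'_p|y|^p$ once $p\ge2$.
\item For $p\le2$, the integrand is at least $\kappa'_p t|y|^2(|x+ty|+|x|)^{p-2}$. Since $p-2\le0$ and $|x+ty|+|x|\le 2(|x|+|y|)$, this is bounded below by $\kappa'_p t|y|^2 2^{p-2}(|x|+|y|)^{p-2}$. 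Integrating gives the factor $1/2$, matching $\tfrac p2\kappa'_p|y|^2(|x|+|y|)^{p-2}$ up to the $2^{p-2}$ factor.
\end{itemize}

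\textbf{Main obstacle.} The only delicate point is the constant bookkeeping, namely the factor $2^{|p-2|}$ coming from $|x+ty|+|x|$ versus $|x|+|y|$. To remove it, I would first try using $|x+ty|+|x|\le |x|+|y|+|x|$ together with the convention of the paper's cited source. If that fails, the fallback is to accept a modified constant, since only the order of the bound matters for later use.

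Everything else is routine. The $t$-integrals are elementary, and the $C^1$ regularity of $F$ (including at $0$ for $p>1$) justifies the fundamental theorem of calculus along the segment.
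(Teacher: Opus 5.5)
Your route is the same as the paper's: write the left side as $p\int_0^1(\Phi(x+ty)-\Phi(x))\cdot y\,dt$ and insert Lemma \ref{lem:elm-est-simons} pointwise. The paper's one-line proof does exactly this without checking constants. Your upper bound for $1<p\le 2$ is fine.

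\textbf{Lower bound for $p\ge 2$.} This case contains a reversed inequality. Integrating gives $\kappa'_p|y|^p$, but $\kappa'_p|y|^p\ge\frac p2\kappa'_p|y|^p$ holds only for $p\le 2$, not ``once $p\ge2$''. The step cannot be repaired, because the second inequality of the corollary is false for $p>2$ in general. Take $d=1$, $p=3$ (so $\kappa'_3=\frac12$), $x=1$, $y=-3$. Then the left side is $|{-2}|^3-1+9=16$, while the right side is $\frac32\cdot\frac12\cdot 27=\frac{81}{4}$. What your computation (and the paper's) actually proves for $p\ge 2$ is $|x+y|^p-|x|^p-p|x|^{p-2}x\cdot y\ge \kappa'_p|y|^p$, without the factor $\frac p2$. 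You should prove and state that corrected bound.

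\textbf{The $2^{|p-2|}$ loss.} This is a genuine gap, not bookkeeping. Insert the Lemma as stated and let $y\to 0$. In the upper bound for $p\ge2$ you get at best $p\kappa_p2^{p-3}|y|^2|x|^{p-2}$, which is too weak once $p>3$. In the lower bound for $1<p<2$ you get at best $\frac p2\kappa'_p2^{p-2}|y|^2|x|^{p-2}$, which is too weak for every such $p$. There the claimed constant is sharp as $y\to0$ parallel to $x$, so no loss is affordable.

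\textbf{The fix.} Use a version of the Lemma with $\max(|a|,|b|)$ in place of $|a|+|b|$. Write $\Phi(a)-\Phi(b)=\int_0^1 D\Phi(b+s(a-b))(a-b)\,ds$, where $D\Phi(z)=|z|^{p-2}\big(I+(p-2)\frac{z\otimes z}{|z|^2}\big)$ has eigenvalues $|z|^{p-2}$ and $(p-1)|z|^{p-2}$. Along a segment through $0$, the singularity is integrable for $p>1$. Using $|b+s(a-b)|\le \max(|a|,|b|)$, you get:
\begin{itemize}
\item $|\Phi(a)-\Phi(b)|\le (p-1)|a-b|\max(|a|,|b|)^{p-2}$ for $p\ge2$;
\item $(\Phi(a)-\Phi(b))\cdot(a-b)\ge (p-1)|a-b|^2\max(|a|,|b|)^{p-2}$ for $1<p\le 2$.
\end{itemize}
Now set $a=x+ty$, $b=x$, and use $\max(|x+ty|,|x|)\le |x|+|y|$. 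Integrating in $t$ gives $\frac{p(p-1)}{2}|y|^2(|x|+|y|)^{p-2}$ in both cases:
\begin{itemize}
\item as an upper bound for $p\ge 2$, which is at most the claimed $p\kappa_p|y|^2(|x|+|y|)^{p-2}$;
\item as a lower bound for $1<p\le2$, which is exactly the claimed $\frac p2\kappa'_p|y|^2(|x|+|y|)^{p-2}$, since $\kappa'_p=p-1$.
\end{itemize}
Falling back on ``a modified constant'' does not prove the statement as written.
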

\begin{proof}
Let $t\mapsto z_t= y + t(x-y)$ and observe that
\begin{align*}
|x|^p-|y|^p- p|y|^{p-2}y\cdot(x-y)
&=p\int_0^1\big(|z_t|^{p-2}z_t -|y|^{p-2} y\big) \cdot (z_t-y)\, \frac{\d t}{t}.
\end{align*}
It suffices to apply  Lemma \ref{lem:elm-est-simons} together with $|y|+|z_t|\leq (2|y|+|x-y|)$.
\end{proof}

\begin{theorem}
\label{thm:under-upper-form-est}
There holds the following estimates

\medskip

\textbf{Case $p\geq 2$.} For $c_p= 2^{\frac{p(p-2)}{2}}\kappa_p^{\frac{p}{2}}$, we have
\begin{align}\label{eq:under-form-degen}
(\cE_k(u,u-v)-\cE_k(v,u-v)) &\geq \kappa'_p\cE_k(u-v,u-v),%\qquad \text{$p\geq2$}.
\end{align}
\begin{align}\label{eq:upper-form-degen}
(\cE_k(u,u-v)-\cE_k(v ,u-v) )^{\frac{p}{2}}&\leq c_p \cE_k(u-v,u-v)
\big(\cE_k(u,u)
+\cE_k(v,v)\big)^{\frac{p-2}{2}}.
\end{align}

\textbf{Case $1<p<2$.} For  $c'_p=2^{\frac{p(2-p)}{2}} {\kappa'_p}^{\frac{p}{2}} $, we have
\begin{align}\label{eq:under-form-sing}
(\cE_k(u,u-v)-\cE_k(v,u-v))^{\frac{p}{2}}
\geq c'_p \cE_k(u-v,u-v) \big( \cE_k(u,u)+\cE_k(v,v)\big)^{\frac{p-2}{2}} ,
\end{align}
\begin{align}\label{eq:upper-form-sing}
(\cE_k(u,u-v)-\cE_k(v,u-v))
\leq \kappa_p\cE_k(u-v,u-v).
\end{align}

\end{theorem}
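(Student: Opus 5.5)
Set $a=u(x)-u(y)$, $b=v(x)-v(y)$, so that $a-b=(u-v)(x)-(u-v)(y)$. By bilinearity of the integral in the second slot,
\begin{align*}
\cE_k(u,u-v)-\cE_k(v,u-v)=\iil_{\R^d\R^d}\big(|a|^{p-2}a-|b|^{p-2}b\big)(a-b)\,k(x,y)\,\d y\,\d x .
\end{align*}
The plan is to apply Lemma~\ref{lem:elm-est-simons} pointwise (in dimension one, $a,b\in\R$) and then integrate, using H\"older's inequality with exponents $\frac{p}{2}$ and $\frac{p}{p-2}$ (or their reverse versions) to pass from pointwise weighted bounds to the form bounds.

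\textbf{Case $p\ge 2$.} The lower bound \eqref{eq:under-form-degen} follows by integrating \eqref{eq:under-elem-degen} against $k$: the integrand dominates $\kappa'_p|a-b|^p$, which integrates to $\kappa'_p\cE_k(u-v,u-v)$. For \eqref{eq:upper-form-degen}, \eqref{eq:upper-elem-degen} gives the pointwise bound $\kappa_p|a-b|^2(|a|+|b|)^{p-2}$. H\"older with exponents $\frac p2$ and $\frac{p}{p-2}$ and the measure $k\,\d y\,\d x$ bounds the integral by
\begin{align*}
\kappa_p\Big(\iil|a-b|^pk\Big)^{2/p}\Big(\iil(|a|+|b|)^pk\Big)^{(p-2)/p}.
\end{align*}
Since $(|a|+|b|)^p\le 2^{p-1}(|a|^p+|b|^p)$, the last factor is at most $\big(2^{p-1}(\cE_k(u,u)+\cE_k(v,v))\big)^{(p-2)/p}$. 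Raising the result to the power $\frac p2$ gives a constant of the form $\kappa_p^{p/2}2^{(p-1)(p-2)/2}$, which is at most $c_p=2^{p(p-2)/2}\kappa_p^{p/2}$.

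\textbf{Case $1<p<2$.} The upper bound \eqref{eq:upper-form-sing} follows from \eqref{eq:upper-elem-sing}: the integrand is at most $|a-b|\cdot\kappa_p|a-b|^{p-1}=\kappa_p|a-b|^p$. The lower bound \eqref{eq:under-form-sing} is the main step and needs a reverse H\"older argument. Write
\begin{align*}
|a-b|^p=\big(|a-b|^2(|a|+|b|)^{p-2}\big)^{p/2}(|a|+|b|)^{p(2-p)/2}
\end{align*}
on the set where $|a|+|b|>0$; where $|a|+|b|=0$ we have $a=b=0$, so both sides vanish. Applying H\"older with exponents $\frac2p$ and $\frac{2}{2-p}$ gives
\begin{align*}
\cE_k(u-v,u-v)\le\Big(\iil|a-b|^2(|a|+|b|)^{p-2}k\Big)^{p/2}\Big(\iil(|a|+|b|)^pk\Big)^{(2-p)/2}.
\end{align*}
By \eqref{eq:under-elem-sing}, the first factor is at most $\big({\kappa'_p}^{-1}(\cE_k(u,u-v)-\cE_k(v,u-v))\big)^{p/2}$. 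By $(|a|+|b|)^p\le 2^{p-1}(|a|^p+|b|^p)\le 2(|a|^p+|b|^p)$, the second factor is at most $\big(2(\cE_k(u,u)+\cE_k(v,v))\big)^{(2-p)/2}$.

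Rearranging, moving the energy factor to the other side with exponent $\frac{p-2}{2}<0$, yields \eqref{eq:under-form-sing} with some explicit constant. That constant is roughly ${\kappa'_p}^{p/2}2^{-(2-p)/2}$, or $2^{-(p-1)(2-p)/2}{\kappa'_p}^{p/2}$ if the sharper factor $2^{p-1}$ is kept.

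The main obstacle is matching the constant exactly as stated, $c'_p=2^{p(2-p)/2}{\kappa'_p}^{p/2}$, since its power of $2$ is positive while this argument naturally produces a negative one. I would first recheck the convexity step, where $(|a|+|b|)^p\le 2^{p-1}(|a|^p+|b|^p)$ is the only place a power of $2$ enters, and whether the stated normalisation of $c'_p$ was intended. If it cannot be recovered, I would state the inequality with the constant this argument actually yields; the structure of the estimate is unaffected. The degenerate cases $\cE_k(u,u)+\cE_k(v,v)=0$ or $\infty$ are handled trivially.
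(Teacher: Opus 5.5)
Your argument is the paper's own: Lemma~\ref{lem:elm-est-simons} applied pointwise to $a=u(x)-u(y)$, $b=v(x)-v(y)$, followed by H\"older with exponents $\frac p2,\frac{p}{p-2}$ (resp.\ $\frac 2p,\frac{2}{2-p}$). The only difference is where convexity enters. You apply $(|a|+|b|)^p\le 2^{p-1}(|a|^p+|b|^p)$ after H\"older, whereas the paper first converts $(|a|+|b|)^{p-2}$ pointwise into $2^{p-2}(|a|^p+|b|^p)^{(p-2)/p}$. Your route gives slightly better constants, and \eqref{eq:under-form-degen}, \eqref{eq:upper-form-degen} and \eqref{eq:upper-form-sing} are fully proved.

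On \eqref{eq:under-form-sing}, your suspicion is correct, and you should state it as a fact rather than an open obstacle. With $c'_p=2^{p(2-p)/2}{\kappa'_p}^{p/2}$ the inequality is false, so no rechecking will recover it. Take any $u$ with $0<\cE_k(u,u)<\infty$ and $v=tu$, $0<t<1$. Then
\begin{align*}
\cE_k(u,u-v)-\cE_k(v,u-v)=(1-t)(1-t^{p-1})\cE_k(u,u),\qquad \cE_k(u-v,u-v)=(1-t)^p\cE_k(u,u),
\end{align*}
and $\cE_k(u,u)+\cE_k(v,v)=(1+t^p)\cE_k(u,u)$. Hence \eqref{eq:under-form-sing} with a constant $c$ is equivalent to
\begin{align*}
\Big(\frac{1-t^{p-1}}{1-t}\Big)^{p/2}(1+t^p)^{\frac{2-p}{2}}\ge c .
\end{align*}
Letting $t\to1^-$ forces $c\le (p-1)^{p/2}2^{\frac{2-p}{2}}$. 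This is strictly smaller than $c'_p=(p-1)^{p/2}2^{\frac{p(2-p)}{2}}$ for every $p\in(1,2)$.

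The paper's proof has a sign slip at exactly this step. Its pointwise bound is $(|a|^{p-2}a-|b|^{p-2}b)(a-b)\ge 2^{p-2}\kappa'_p|a-b|^2(|a|^p+|b|^p)^{\frac{p-2}{p}}$. Raising this to the power $\frac p2$ yields the constant $2^{\frac{p(p-2)}{2}}{\kappa'_p}^{p/2}$, not $2^{\frac{p(2-p)}{2}}{\kappa'_p}^{p/2}$. The companion Theorem~\ref{thm:under-upper-form-est-bis} does state the lower bound with the correct factor $2^{p\beta_p}=2^{\frac{p(p-2)}{2}}$.

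So your version, with constant $2^{-\frac{(p-1)(2-p)}{2}}{\kappa'_p}^{p/2}$, is the correct statement. Its downstream use in Theorem~\ref{thm:equiv-conv-in-form} only needs some positive constant, so nothing there is affected.
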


\begin{proof}
Throughout we consider $a= u(x)-u(y)$ and $b= v(x)-v(y). $ Assume  $p\geq 2$. The estimate  $(|b|^{p-2}b-|a|^{p-2}a|)(b-a)\geq \kappa'_p|b-a|^p$ (see \eqref{eq:under-elem-degen})  implies
\begin{align*}
(\cE_k(u,u-v)-\cE_k(v,u-v)) &\geq \kappa'_p\cE_k(u-v,u-v).
\end{align*}
Next the inequality  $||b|^{p-2}b-|a|^{p-2}a|\leq 2^{p-2}\kappa_p|b-a|(|a|^p+|b|^p)^{\frac{p-2}{p}}$ (see \eqref{eq:upper-elem-degen})  implies
\begin{align*}
(|b|^{p-2}b-|a|^{p-2}a)(b-a)\leq 2^{p-2}\kappa_p(|b-a|^p)^{\frac{1}{q}}(|a|^p+|b|^p)^{\frac{1}{q'}}
\end{align*}
where we let  $\frac{1}{q}= \frac{2}{p}$ so that $\frac{1}{q'}= \frac{p-2}{p}$. Thus H\"older inequality  yields
\begin{align*}
(\cE_k(u,v-u)-\cE_k(v, v-u)) &\leq 	2^{p-2}\kappa_p \cE_k(u-v,u-v)^{\frac{2}{p}}
\big( \cE_k(u,u)+\cE_k(v,v)\big)^{\frac{p-2}{p}}.
\end{align*}
That is we have
\begin{align*}
(\cE_k(u,u-v)-\cE_k(v,u-v))^{\frac{p}{2}}&\leq 2^{\frac{p(p-2)}{2}}\kappa_p^{\frac{p}{2}} \cE_k(u-v,u-v)\big(\cE_k(u,u)+\cE_k(v,v)\big)^{\frac{p-2}{2}}.
\end{align*}
Now assume $1<p<2$. Using $||b|^{p-2}b-|a|^{p-2}a|\leq \kappa_p|b-a|^{p-1}$ (see \eqref{eq:upper-elem-sing})  implies
\begin{align*}
\big((|b|^{p-2}b-|a|^{p-2}a)(b-a)\big)\leq \kappa_p|b-a|^p.
\end{align*}
The latter yields
\begin{align*}
(\cE_k(u,u-v)-\cE_k(v,u-v)) \leq \kappa_p\cE_k(u-v,u-v).
\end{align*}
The inequality
$(|b|^{p-2}b-|a|^{p-2}a)(b-a)\geq 2^{p-2} \kappa'_p |b-a|^2(|a|^p+|b|^p)^{\frac{p-2}{p}}$ (see \eqref{eq:under-elem-sing}) can be rewritten as
\begin{align*}
c_p |b-a|^p\leq
\big((|b|^{p-2}b-|a|^{p-2}a)(b-a)\big)^{1/q} (|a|^p+|b|^p)^{1/q'},
\end{align*}
where $\frac{1}{q}= \frac{p}{2}$ so that $\frac{1}{q'}= \frac{2-p}{2}$ and $c_p=2^{\frac{p(2-p)}{2}}{\kappa'_p}^{\frac{p}{2}} $.
Thus H\"older inequality  yields
\begin{align*}
\big(\cE_k(u,u-v)-\cE_k(v,u-v)\big)^{\frac{p}{2}}
\big( \cE_k(u,u)+\cE_k(v,v)\big)^{\frac{2-p}{2}}
\geq c_p \cE_k(u-v,u-v).
\end{align*}
\end{proof}

\noindent The next result can be retrieved as a  variant of Theorem  \ref{thm:under-upper-form-est}.
\begin{theorem}
\label{thm:under-upper-form-est-bis}
Let $\alpha_p=\max(1,\frac{p}{2})-1$  and $\beta_p=\min(1,\frac{p}{2})-1$, then we have

\begin{align}\label{eq:upper-form-mixed}
\frac{	(\cE_k(u,u-v)-\cE_k(v,u-v))^{\alpha_p+1}}{	\big( \cE_k(u,u)+\cE_k(v,v)\big)^{\alpha_p} }
\leq 	2^{ p\, \alpha_p}\kappa_p^{\alpha_p+1} \cE_k(u-v,u-v).
\end{align}
\begin{align}\label{eq:under-form-mixed}
\frac{	(\cE_k(u,u-v)-\cE_k(v,u-v))^{\beta_p+1}}{	\big( \cE_k(u,u)+\cE_k(v,v)\big)^{\beta_p} }
\geq 	2^{ p\, \beta_p} {\kappa'_p}^{\beta_p+1}\cE_k(u-v,u-v).
\end{align}
Analogously  \eqref{eq:upper-form-mixed}--\eqref{eq:under-form-mixed} are true for the local form $\cE^0(\cdot,\cdot)$ in place of $\cE_k(\cdot,\cdot)$.
\end{theorem}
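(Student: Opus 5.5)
Theorem \ref{thm:under-upper-form-est-bis} repackages Theorem \ref{thm:under-upper-form-est} by splitting into the cases $p\ge 2$ and $1<p<2$, using the exponents $\alpha_p,\beta_p$ to select in each case the trivial or the H\"older-type estimate. Throughout we write $D:=\cE_k(u,u-v)-\cE_k(v,u-v)\ge 0$ (nonnegative by monotonicity of $a\mapsto |a|^{p-2}a$), $S:=\cE_k(u,u)+\cE_k(v,v)$ and $E:=\cE_k(u-v,u-v)$.

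\textbf{Upper bound \eqref{eq:upper-form-mixed}.} If $1<p<2$, then $\alpha_p=0$ and the claim reduces to $D\le \kappa_p E$, which is exactly \eqref{eq:upper-form-sing}. If $p\ge2$, then $\alpha_p+1=p/2$ and $\alpha_p=(p-2)/2$. Dividing \eqref{eq:upper-form-degen} by $S^{(p-2)/2}$ gives $D^{p/2}/S^{\alpha_p}\le c_p E$ with $c_p=2^{p(p-2)/2}\kappa_p^{p/2}=2^{p\alpha_p}\kappa_p^{\alpha_p+1}$, which is the claim. If $S=0$ then $u,v$ are $k$-a.e.\ constant on the relevant pairs, so $D=E=0$; we adopt the convention $0/0=0$, or equivalently read the inequality in multiplied-out form.

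\textbf{Lower bound \eqref{eq:under-form-mixed}.} If $p\ge2$, then $\beta_p=0$ and the claim is $D\ge \kappa'_p E$, which is \eqref{eq:under-form-degen}. If $1<p<2$, then $\beta_p+1=p/2$ and $-\beta_p=(2-p)/2$, so the left side equals $D^{p/2}S^{(2-p)/2}$. This is precisely the quantity bounded below in \eqref{eq:under-form-sing}, with constant $c'_p=2^{p(2-p)/2}{\kappa'_p}^{p/2}=2^{p\beta_p}\cdot$\dots\ The exponent of $2$ needs care here, since $p\beta_p=p(p-2)/2$ is negative while the constant in Theorem \ref{thm:under-upper-form-est} carries $2^{p(2-p)/2}$.

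I would therefore redo that step directly. From \eqref{eq:under-elem-sing} and $(|a|+|b|)^{p-2}\ge 2^{(p-2)(p-1)/p}\cdots$, one obtains a constant of the form $2^{p\beta_p}{\kappa'_p}^{p/2}$, since the elementary bound $(|a|+|b|)^p\le 2^{p-1}(|a|^p+|b|^p)$ raised to the power $(p-2)/p<0$ produces the factor $2^{(p-1)(p-2)/p}$. After applying H\"older with exponents $2/p$ and $2/(2-p)$, this contributes at least $2^{p\beta_p}$ up to harmless slack. The main obstacle is exactly this bookkeeping of powers of $2$; the structure of the argument is immediate.

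\textbf{Local form.} For $\cE^0$ the same pointwise argument applies with $a=\nabla u(x)$, $b=\nabla v(x)\in\R^d$, using the vector versions of Lemma \ref{lem:elm-est-simons}, which are stated for $x,y\in\R^d$. The H\"older step is then performed on $\Omega$ with the measure $\d x$ in place of $k\,\d y\,\d x$, and everything else is unchanged.
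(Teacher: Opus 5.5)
Your proof is correct and follows the paper's intended route: Theorem~\ref{thm:under-upper-form-est-bis} is read off Theorem~\ref{thm:under-upper-form-est} case by case.

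Redoing the case $1<p<2$ of \eqref{eq:under-form-mixed} is the right call, though not because of a mismatch as such. Taken at face value, the larger constant in \eqref{eq:under-form-sing} would only strengthen a lower bound and would give the claim trivially. The real reason is that the constant $2^{p(2-p)/2}{\kappa'_p}^{p/2}$ printed there is a sign slip: $(2^{p-2}\kappa'_p)^{p/2}=2^{p\beta_p}{\kappa'_p}^{\beta_p+1}$, and the printed version genuinely fails, e.g.\ for $v=(1+\delta)u$ as $\delta\to0$. Your factor $2^{(p-1)(p-2)/p}$, raised to the power $p/2$, gives $2^{(p-1)(p-2)/2}\ge 2^{p\beta_p}$, and after H\"older this yields exactly the stated bound.
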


\noindent The next result is crucial as it characterizes the asymptotic convergence involving the $\cE^\eps$-forms by uniformly linearizing the nonlinearity.
\begin{theorem}
\label{thm:equiv-conv-in-form}
Assume that $(\cE^\eps(u_\eps, u_\eps))_\eps$  and $(\cE^\eps(u, u))_\eps$  are bounded, that is,
\begin{align*}
0<M:=\sup_{\eps>0} \big(\cE^\eps(u_\eps, u_\eps)+ \cE^\eps(u,u)\big)<\infty,
\end{align*}
with  $(u_\eps)_\eps$ and $u$ given. Then there is a constant $C= C(M,p)>0$ such that
\begin{align*}
C^{-1}\mathcal{E}^\eps(u_\eps-u, u_\eps-u)\leq \big(\mathcal{E}^\eps(u_\eps, u_\eps-u) - \mathcal{E}^\eps(u, u_\eps-u)\big) \leq C\mathcal{E}^\eps(u_\eps-u,u_\eps-u).
\end{align*}
Furthermore,  there holds the equivalence
\begin{align*}
\mathcal{E}^\eps(u_\eps-u, u_\eps-u) \xrightarrow{\eps\to0}0 \quad \text{if and only if} \quad 	\big(\mathcal{E}^\eps(u_\eps, u_\eps-u) - \mathcal{E}^\eps(u,u_\eps-u)\big) \xrightarrow{\eps\to0}0.
\end{align*}
The statement continues to hold for $\cE^\eps_+(\cdot,\cdot)$, $\cE^\eps_\Omega(\cdot,\cdot)$ and $\cE^\eps_{cr}(\cdot,\cdot)$ in place of $\cE^\eps(\cdot,\cdot)$.
\end{theorem}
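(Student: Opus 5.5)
Write $D_\eps:=\mathcal{E}^\eps(u_\eps,u_\eps-u)-\mathcal{E}^\eps(u,u_\eps-u)$ and $R_\eps:=\mathcal{E}^\eps(u_\eps-u,u_\eps-u)$. The plan is to read off both inequalities from Theorem~\ref{thm:under-upper-form-est} (equivalently Theorem~\ref{thm:under-upper-form-est-bis}). We apply it with $k(x,y)=\max(\mathds{1}_\Omega(x),\mathds{1}_\Omega(y))\nu_\eps(x-y)$, $u$ replaced by $u_\eps$ and $v$ replaced by $u$. Every constant will then depend only on $p$ and $M$, and not on $\eps$.

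The first step is to control the nonlinear factor $\mathcal{E}^\eps(u_\eps,u_\eps)+\mathcal{E}^\eps(u,u)$, which is at most $M$ by assumption. By convexity, $|a-b|^p\le 2^{p-1}(|a|^p+|b|^p)$, so we also have $R_\eps\le 2^{p-1}M$. Hence $R_\eps$, $D_\eps$ and the nonlinear factor all lie in a fixed bounded interval. We also need $D_\eps\ge 0$, which follows from the monotonicity of $t\mapsto|t|^{p-2}t$.

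The second step splits into the two regimes of $p$, each of which contains one linear bound.

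\emph{Case $p\ge2$.} Inequality \eqref{eq:under-form-degen} gives $D_\eps\ge\kappa'_pR_\eps$ directly. Inequality \eqref{eq:upper-form-degen} gives $D_\eps^{p/2}\le c_pM^{(p-2)/2}R_\eps$, that is, $D_\eps\le C R_\eps^{2/p}$.

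\emph{Case $1<p<2$.} Inequality \eqref{eq:upper-form-sing} gives $D_\eps\le\kappa_pR_\eps$ directly. Inequality \eqref{eq:under-form-sing} gives $R_\eps\le (c'_p)^{-1}M^{(2-p)/2}D_\eps^{p/2}$.

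The main obstacle is converting the remaining power-type side into a linear one. The exponents $2/p$ (for $p\ge2$) and $p/2$ (for $1<p<2$) are both at most $1$. So on the bounded range from the first step, a power $R_\eps^{2/p}$ is bounded by a multiple of $R_\eps$ only when $R_\eps$ stays away from $0$. Concretely, on any set where $R_\eps\ge r_0>0$ we have $R_\eps^{2/p}\le r_0^{2/p-1}R_\eps$, and similarly $D_\eps^{p/2}\le d_0^{p/2-1}D_\eps$ wherever $D_\eps\ge d_0>0$.

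I would first attempt a direct pointwise argument. For $p\ge2$, bound $(|a|+|b|)^{p-2}$ by the integrand of the nonlinear factor, and try to absorb the loss by Hölder's inequality with exponents $(p/2,\,p/(p-2))$. This is the computation already carried out in the proof of Theorem~\ref{thm:under-upper-form-est}, and it returns exactly the exponent $2/p$. So if no additional structure is available, the constant $C$ must be allowed to depend on a lower bound for $R_\eps$. On the regime $R_\eps\to0$, the honest statement of the upper bound (for $p\ge 2$) is the power-type bound $D_\eps\le CR_\eps^{2/p}$.

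This last issue does not affect the equivalence. Both power-type bounds are continuous at $0$ with value $0$. So $R_\eps\to0$ forces $D_\eps\to0$, and $D_\eps\to0$ forces $R_\eps\to0$, in both regimes of $p$.

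Finally, $\mathcal{E}^\eps_+$, $\mathcal{E}^\eps_\Omega$ and $\mathcal{E}^\eps_{cr}$ are also of the form $\mathcal{E}_k$ with nonnegative kernels $k$, as recorded in the Remark above. The same argument therefore applies to them verbatim, with $M$ still controlling the corresponding energies because each of these kernels is dominated by the kernel of $\mathcal{E}^\eps$.
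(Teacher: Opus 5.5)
Your route is the paper's route. The paper's proof of Theorem~\ref{thm:equiv-conv-in-form} is the single sentence that everything follows from Theorem~\ref{thm:under-upper-form-est}, and you have extracted exactly what that theorem yields:
\begin{itemize}
\item one linear bound: $D_\eps\ge\kappa'_pR_\eps$ for $p\ge2$, and $D_\eps\le\kappa_pR_\eps$ for $1<p<2$;
\item one power-type bound with $C=C(M,p)$: $D_\eps\le CR_\eps^{2/p}$ for $p\ge2$, and $R_\eps\le CD_\eps^{p/2}$ for $1<p<2$;
\item hence the equivalence $R_\eps\to0\iff D_\eps\to0$, carried over to $\mathcal{E}^\eps_+$, $\mathcal{E}^\eps_\Omega$, $\mathcal{E}^\eps_{cr}$ because their kernels are dominated by that of $\mathcal{E}^\eps$.
\end{itemize}
All of this is correct.

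Your suspicion about the missing linear bound is also correct, and you should state it as a fact rather than a hedge. For $p\neq2$ the two-sided linear estimate with $C=C(M,p)$ is false. Take a nonzero $u\in C_c^\infty(\Omega)$. Then $\mathcal{E}^\eps(u,u)$ is bounded by \eqref{eq:taylor-exapnsion}, and it is positive for small $\eps$ because it tends to $K_{d,p}\|\nabla u\|^p_{L^p(\Omega)}$ by Theorem~\ref{thm:BBM-dual-limit}. Set $u_\eps=(1+t_\eps)u$ with $t_\eps\in(0,1]$ and $t_\eps\to0$. Since $\mathcal{E}^\eps(\lambda u,\mu u)=|\lambda|^{p-2}\lambda\mu\,\mathcal{E}^\eps(u,u)$, you get $M\le(2^p+1)\sup_\eps\mathcal{E}^\eps(u,u)<\infty$ and
\[
\frac{D_\eps}{R_\eps}=\frac{t_\eps\big((1+t_\eps)^{p-1}-1\big)}{t_\eps^{\,p}}\sim(p-1)\,t_\eps^{\,2-p}.
\]
This ratio is unbounded for $p>2$ and tends to $0$ for $1<p<2$. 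The example also shows that your exponents $2/p$ and $p/2$ are sharp.

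Consequently, the first display of the statement is false as written. Theorem~\ref{thm:under-upper-form-est} does not deliver it, and the paper's one-line proof overclaims. The true content is your version, with one linear and one power-type bound. This does no harm downstream, since only the equivalence is invoked in Section~\ref{sec:conv-weak-solution}.
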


\begin{proof}
The claims emanate straightforwardly from  Theorem \ref{thm:under-upper-form-est}.
\end{proof}

\subsection{Extension domain and $d$-set}
Let us recall that an open set (not necessarily bounded) $\Omega \subset \R^d$ is called an $W^{1,p}$-extension domain if there exist a linear operator $E:W^{1,p}(\Omega)\to W^{1,p}(\R^d)$ and a constant $C: = C(d,p,\Omega)>0$ such that
\begin{align}\label{eq:w1p-extension}
Eu\mid_{\Omega} &= u \quad\hbox{and} \quad \|Eu\|_{W^{1,p}(\R^d)}\leq C \|u\|_{W^{1,p}(\Omega)} \quad\text{for all}\quad u \in W^{1,p}(\Omega).
\end{align}
\noindent Examples of extension domains include, the half space $\R^d_+= \{(x',x_d)\in \R^d\,: x_d>0 \}$ and  bounded Lipschitz domains. In particular euclidean balls, rectangles in $\R^d$ are extension domains. The geometrical characterization of extension domains has been extensively studied in the last decades.  It turns out that, the $W^{1,p}$-extension property of an open set $\Omega$ infers certain regularity of the boundary $\partial\Omega$. Indeed, according to \cite[Theorem 2]{HKT08}, an important geometrical feature of a  $W^{1,p}$-extension domain $\Omega\subset \R^d$, is that it satisfies  the volume density condition. In other words an  $W^{1,p}$-extension domain $\Omega\subset \R^d$ is  a $d$-set,  i.e.,  there exists a constant $c>0$ such that
\begin{align*}
|\Omega \cap B(x, r)| \geq c r^d  \qquad\text{for all $x \in \partial \Omega$ and $0 < r < 1$.}
\end{align*}
Several references on extension domains for Sobolev spaces can be found in \cite{Zho15}.
It becomes clear in the sequel, thanks to the Lebesgue Differentiation Theorem, that if $\Omega$ is a $d$-set, then it is a Jordan set\footnote{A bounded set $S \subset \R^d$ is Jordan measurable \cite{Zor16} iff $\partial S$ has Lebesgue measure zero.}, meaning that, $|\partial \Omega| = 0$. We follow the approach in \cite{Zho15} or \cite[Lemma 9]{HKT08bis}.

\begin{lemma}\label{lem:d-set-null-set}
If  $\Omega\subset \R^d$ is a $d$-set then $\partial \Omega$ has Lebesgue measure zero,  i.e., 	$|\partial \Omega| = 0.$
\end{lemma}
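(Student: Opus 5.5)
The plan is to argue by contradiction using the Lebesgue Differentiation Theorem, as the paper suggests. Assume $|\partial\Omega|>0$. Since $\partial\Omega$ is a measurable (closed) set, almost every point of $\partial\Omega$ is a Lebesgue density point of $\partial\Omega$. That is, for a.e. $x\in\partial\Omega$,
\begin{align*}
\lim_{r\to0}\frac{|\partial\Omega\cap B(x,r)|}{|B(x,r)|}=1.
\end{align*}
Fix such a density point $x_0\in\partial\Omega$.

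Next I use the $d$-set property at $x_0$. For all $0<r<1$ we have $|\Omega\cap B(x_0,r)|\ge c r^d = c' |B(x_0,r)|$ with $c'=c/|B(0,1)|>0$. Because $\Omega$ is open, $\Omega\cap\partial\Omega=\emptyset$, so $\Omega\cap B(x_0,r)$ and $\partial\Omega\cap B(x_0,r)$ are disjoint subsets of $B(x_0,r)$. This gives
\begin{align*}
\frac{|\partial\Omega\cap B(x_0,r)|}{|B(x_0,r)|}\le 1-\frac{|\Omega\cap B(x_0,r)|}{|B(x_0,r)|}\le 1-c'<1
\end{align*}
for every $0<r<1$. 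Letting $r\to0$ contradicts the density-one property at $x_0$. Hence $|\partial\Omega|=0$.

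The only point needing care is the measure-theoretic input: $\partial\Omega$ is closed, hence Borel, so Lebesgue's density theorem applies to its indicator function. Positivity of $|\partial\Omega|$ is what guarantees that a density point exists in $\partial\Omega$. No boundedness of $\Omega$ is needed, since the argument is purely local, and the restriction $r<1$ in the definition of a $d$-set is harmless because only $r\to0$ matters. I expect no real obstacle beyond stating the density theorem correctly.
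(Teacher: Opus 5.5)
Your proof is correct and follows the same route as the paper. The $d$-set lower bound at each boundary point, together with $\Omega\cap\partial\Omega=\emptyset$ for open $\Omega$, caps the density of $\partial\Omega$ at $1-c/|B_1(0)|<1$, which is incompatible with the Lebesgue density theorem unless $|\partial\Omega|=0$. Your contradiction phrasing applies the $d$-set condition directly at a density point of $\partial\Omega$, which is slightly cleaner than the paper's detour through approximating points in $\overline{\Omega}$.
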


\begin{proof}
Let $x\in \overline{\Omega}\setminus \Omega =\partial \Omega$ and  consider $(x_j)_j\subset \Omega$ such that  $x_j\to x$ as $j\to \infty$ so that
\begin{align*}
|\Omega \cap B_r(x)|=\lim_{j\to \infty}|\Omega \cap B_r(x_j)| \geq cr^d.
\end{align*}
Thereby, the $d$-set condition yields that for all $x\in \overline{\Omega}$ we have
\begin{align*}
1> \frac{|\Omega \cap B_r(x)|}{|B_r(x)|}\geq   \frac{cr^d}{|B_r(x)|}=\frac{c}{|B_1(0)|}=: c_1>0.
\end{align*}
Accordingly,  we have
\begin{align*}
\frac{|(\overline{\Omega} \setminus\Omega)\cap B(x, r)|}{|B(x, r)|}
= \frac{|\overline{\Omega} \cap B(x, r)|}{|B(x, r)|}  -\frac{|\Omega\cap B(x, r)|}{|B(x, r)|}
&\leq 1 -c_1 < 1.
\end{align*}
In virtue of the Lebesgue differentiation theorem, for almost all  $x\in \partial\Omega= \overline{\Omega}\setminus \Omega $,
\begin{align*}
\mathds{1}_{\partial\Omega}(x)&= \limsup_{r \to 0}\frac{1}{|B_r(x)|}\int_{B_r(x)} \mathds{1}_{\partial\Omega}(y)\d y\\
&= \limsup_{r \to 0} \frac{|(\overline{\Omega} \setminus\Omega)\cap B(x, r)|}{|B(x, r)|}   \leq 1 - c_1 < 1.
\end{align*}
It turns out that each $x\in \overline{\Omega}\setminus \Omega $ is not a Lebesgue point of
$\mathds{1}_{\overline{\Omega}\setminus \Omega}$ and we have
\begin{align*}
\limsup_{r \to 0}\frac{1}{|B_r(x)|}\int_{B_r(x)} \mathds{1}_{\partial\Omega}(y)\d y=	\mathds{1}_{\partial\Omega}(x)=0 .
\end{align*}
However it is well-known that, the set of non-Lebesgue points of $\mathds{1}_{\overline{\Omega}\setminus \Omega}$, which is $ \overline{\Omega} \setminus\Omega=\partial\Omega$, has measure 0, that is $|\partial \Omega|= |\overline{\Omega}\setminus\Omega|=0$.
\end{proof}

\subsection{Collapse across the boundary}
\noindent In this section we investigate the deterioration of the nonlocal form  $\mathcal{E}^{\eps}_{cr}(\cdot,\cdot)$ encoding the collapse phenomenon across the boundary $\partial\Omega$.  Let us first note that \cite[Theorem 5.23]{guy-thesis}, see also the variant in \cite{Brezis-const-function, BBM01,Fog23},
\begin{align}
&\lim_{\eps\to0}\mathcal{E}^{\eps}_{\R^d}(u,u)
% \iil_{\R^d \R^d} |u(x)-u(y)|^p\nu_\eps(x-y)\d y\d x
=K_{d,p}\,\int_{\R^d}|\nabla u(x)|^p\d x.
\label{eq:form-conv-full}
\end{align}
We need the following result involving the collapse across the boundary.
\begin{lemma}\label{lem:collap-bdary}
Assume $\Omega\subset \R^d$ is open and $\partial\Omega= \partial \overline{\Omega}$. Then for  $u,v\in W^{1,p}(\R^d)$,
\begin{align*}
&\lim_{\eps\to0}2\mathcal{E}^{\eps}_{cr}(u,v) = K_{d,p}\,\int_{\partial \Omega}|\nabla u(x)|^{p-2}\nabla u(x)\cdot \nabla v(x)\d x.
\end{align*}
In particular, if  $v\in W^{1,p}_0(\Omega))$ or  $|\partial\Omega|=0$ then we have

\begin{align*}
\lim_{\eps\to0}\iil_{\Omega \Omega^c} |u(y)-u(x)|^{p-2}(u(y)-u(x)) (v(y)-v(x))\nu_\eps(x-y)\d y\,\d x= 0,\\
\lim_{\eps\to0}\iil_{\Omega \Omega^c} |v(y)-v(x)|^{p-2}(v(y)-v(x)) (u(y)-u(x))\nu_\eps(x-y)\d y\,\d x= 0.
\end{align*}
\end{lemma}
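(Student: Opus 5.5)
The plan is to treat the quadratic-type case $u=v$ first, by a localization of the Bourgain--Brezis--Mironescu limit \eqref{eq:form-conv-full}, and then to pass to the mixed form $\mathcal{E}^\eps_{cr}(u,v)$ by differentiating a convex function.

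\emph{Localization for $u=v$.} For $w\in W^{1,p}(\R^d)$ and an open set $A$, write $\mathcal{E}^\eps_{A}(w,w)=\iint_{AA}|w(x)-w(y)|^p\nu_\eps(x-y)\,\d y\,\d x$. I would first show
\begin{align*}
\liminf_{\eps\to0}\mathcal{E}^\eps_A(w,w)\geq K_{d,p}\int_A|\nabla w|^p\,\d x .
\end{align*}
For $w\in C^2_c(\R^d)$ this follows from a Taylor expansion on compacts $K\Subset A$. Indeed, by the second condition in \eqref{eq:plevy-approx}, only $|h|<\dist(K,A^c)$ matters, and averaging $|\nabla w(x)\cdot h|^p$ over spheres produces the constant $K_{d,p}$ through the radial symmetry of $\nu_\eps$. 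One then exhausts $A$ by such compacts. The general case follows by density, since the map $w\mapsto\mathcal{E}^\eps_A(w,w)^{1/p}$ is a seminorm that is uniformly controlled by $\|w\|_{W^{1,p}(\R^d)}$ thanks to \eqref{eq:taylor-exapnsion}.

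I apply this bound to $A_1=\Omega$ and $A_2=\R^d\setminus\overline{\Omega}$. The hypothesis $\partial\Omega=\partial\overline{\Omega}$ guarantees that $A_2$ has boundary $\partial\Omega$ and that $\Omega^c=A_2\cup\partial\Omega$. I then split $\R^d\times\R^d$ into the four blocks $\Omega\Omega$, $\Omega\Omega^c$, $\Omega^c\Omega$ and $\Omega^c\Omega^c$. The last block is further split according to whether $x$ or $y$ lies in $\partial\Omega$, and the part involving $\partial\Omega$ is compared with $\int_{\partial\Omega}$ via Lebesgue points of $\mathds 1_{\partial\Omega}$. Combining the two $\liminf$ bounds with the exact limit \eqref{eq:form-conv-full} of the full form pins down the limits of the separate blocks. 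The $\Omega\Omega$ and $A_2A_2$ blocks converge to $K_{d,p}\int_{\Omega}|\nabla w|^p$ and $K_{d,p}\int_{A_2}|\nabla w|^p$ respectively. The remaining mass identifies $\lim 2\mathcal{E}^\eps_{cr}(w,w)$ with $K_{d,p}\int_{\partial\Omega}|\nabla w|^p\,\d x$, where the integral is with respect to Lebesgue measure. Bookkeeping of this boundary contribution is the \textbf{main obstacle}. When $|\partial\Omega|>0$, I will use that a.e.\ point of $\partial\Omega$ is a density point of $\partial\Omega$, so that locally the kernel sees only the boundary.

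\emph{Passage to $u\neq v$.} Set $\phi_\eps(t)=\frac1p\,2\mathcal{E}^\eps_{cr}(u+tv,u+tv)$ and $\phi(t)=\frac{K_{d,p}}{p}\int_{\partial\Omega}|\nabla u+t\nabla v|^p\,\d x$. Each $\phi_\eps$ is convex and differentiable. The derivative $\phi_\eps'(0)$ equals $2\mathcal{E}^\eps_{cr}(u,v)$, up to the sign convention in the definition of $\mathcal{E}^\eps_{cr}$, which I will track carefully. By the first step, $\phi_\eps\to\phi$ pointwise, and $\phi$ is differentiable with $\phi'(0)=K_{d,p}\int_{\partial\Omega}|\nabla u|^{p-2}\nabla u\cdot\nabla v\,\d x$. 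For convex functions, pointwise convergence to a function differentiable at $0$ implies $\phi_\eps'(0)\to\phi'(0)$. This comes from the sandwich $\frac{\phi_\eps(0)-\phi_\eps(-t)}{t}\le\phi_\eps'(0)\le\frac{\phi_\eps(t)-\phi_\eps(0)}{t}$ followed by $t\to0$. This yields the main formula.

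\emph{Particular cases.} If $|\partial\Omega|=0$, the right-hand side vanishes. If $v\in W^{1,p}_0(\Omega)$, then $v=0$ and $\nabla v=0$ a.e.\ on $\Omega^c$, in particular a.e.\ on $\partial\Omega$, so the integrand vanishes again. The two displayed identities are $\mathcal{E}^\eps_{cr}(u,v)$ and the analogous expression with the roles of $u,v$ exchanged, up to sign. For the second one I rerun the convexity argument with $\phi_\eps(t)=\frac1p\,2\mathcal{E}^\eps_{cr}(v+tu,v+tu)$. Its limit is $\frac{K_{d,p}}{p}\int_{\partial\Omega}|\nabla v+t\nabla u|^p\,\d x$, which is identically $0$ if $|\partial\Omega|=0$. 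In the case $v\in W^{1,p}_0(\Omega)$, where only $\nabla v=0$ a.e.\ on $\partial\Omega$ is available, the limit $\frac{K_{d,p}}{p}|t|^p\int_{\partial\Omega}|\nabla u|^p$ still has derivative $0$ at $t=0$ since $p>1$. Hence both limits vanish.
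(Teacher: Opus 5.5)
\textbf{The gap.} Your convexity step (difference quotients of $t\mapsto 2\mathcal{E}^\eps_{cr}(u+tv,u+tv)$ at $t=0$) is the same as the paper's. The paper does not prove the case $u=v$; it quotes it from \cite[Theorem 3.5]{Fog23}. The genuine gap is in your first step, at the point you flag as the main obstacle.

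Your two $\liminf$ bounds on $\Omega$ and $\R^d\setminus\overline{\Omega}$, combined with \eqref{eq:form-conv-full}, only control a leftover $R_\eps$. Here $R_\eps$ is the sum of the two cross blocks and the blocks $(\R^d\setminus\overline{\Omega})\times\partial\Omega$, $\partial\Omega\times(\R^d\setminus\overline{\Omega})$, $\partial\Omega\times\partial\Omega$. What you get is $\limsup_{\eps\to0}R_\eps\le K_{d,p}\int_{\partial\Omega}|\nabla w|^p\,\d x$, which says nothing about how this mass is split among the blocks. The density-point heuristic you plan to use (near a.e.\ point of $\partial\Omega$ the kernel only sees $\partial\Omega$) puts the mass in $\partial\Omega\times\partial\Omega\subset\Omega^c\times\Omega^c$, i.e.\ outside $\Omega\times\Omega^c$.

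\textbf{The target identity is false when $|\partial\Omega|>0$.} In fact, for every open $\Omega$,
$\mathcal{E}^\eps_{cr}(w,w)=\iint_{\Omega\,\Omega^c}|w(x)-w(y)|^p\nu_\eps(x-y)\,\d y\,\d x\to0$. To see this, take $w\in C^2_c(\R^d)$ and set $F_\eps(x)=\int_{\R^d}|w(x+h)-w(x)|^p\nu_\eps(h)\,\d h$ and $G_\eps(x)=\mathds{1}_\Omega(x)\int_{|h|\geq\dist(x,\Omega^c)}|w(x+h)-w(x)|^p\nu_\eps(h)\,\d h$. Then:
\begin{itemize}
\item $\mathcal{E}^\eps_{cr}(w,w)\le\int G_\eps$ and $0\le G_\eps\le F_\eps$;
\item $G_\eps\to0$ pointwise, by the tail condition in \eqref{eq:plevy-approx};
\item $F_\eps\to K_{d,p}|\nabla w|^p$ pointwise, and $\int F_\eps\to K_{d,p}\int|\nabla w|^p$ by \eqref{eq:form-conv-full}.
\end{itemize}
Fatou's lemma applied to $F_\eps-G_\eps$ gives $\int G_\eps\to0$, and \eqref{eq:taylor-exapnsion} extends this to $W^{1,p}(\R^d)$ by density.

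A concrete counterexample: let $d=1$, let $C\subset[0,1]$ be a fat Cantor set, and let $\Omega$ be the union of the intervals removed at odd stages of the construction. Then $\partial\Omega=\partial\overline{\Omega}=C$ with $|C|>0$. For $w\in C^2_c(\R)$ with $w'=1$ on $[0,1]$, the formula would give $\lim_{\eps\to0}2\mathcal{E}^\eps_{cr}(w,w)=|C|$, whereas the limit is $0$.

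\textbf{Consequences for the particular cases.} Your derivation of the case $v\in W^{1,p}_0(\Omega)$ inserts $\nabla v=0$ a.e.\ on $\partial\Omega$ into the boundary formula, so it is unsupported whenever $|\partial\Omega|>0$. The paper handles this case independently of the formula: it reduces to $v\in C^\infty_c(\Omega)$ by density, then uses $\dist(\supp v,\partial\Omega)>0$, \eqref{eq:holder-form-estimate} and the tail condition.

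Your block argument does correctly prove the case $|\partial\Omega|=0$. There $R_\eps\ge0$ and $\limsup R_\eps\le0$, so $\mathcal{E}^\eps_{cr}(w,w)\to0$, and the hypothesis $\partial\Omega=\partial\overline{\Omega}$ is not even needed.

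\textbf{Repair.} Replace your first step by the vanishing statement above. Both displayed limits then follow at once from \eqref{eq:holder-form-estimate}, for all $u,v\in W^{1,p}(\R^d)$ and every open $\Omega$, with no convexity argument. These vanishing statements are all that the paper uses later.
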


\begin{proof}
Since $\partial\Omega= \partial \overline{\Omega}$,  \cite[Theorem 3.5]{Fog23} infers that
\begin{align*}
&\lim_{\eps\to0}2\mathcal{E}^{\eps}_{cr}(w,w)
=K_{d,p}\,\int_{\partial \Omega}|\nabla w(x)|^p\d x\qquad w\in W^{1,p}(\R^d).
\end{align*}
Now we look at the  case $u\neq v$ which  from this. The elementary inequality   $|b|^p- |a|^p- p|a|^{p-2}a(b-a)\geq 0$ (see Corollary \ref{cor:elm-est-taylor}), yields for $t>0$ and $\sigma\in\R$
\begin{align*}
&2\cE_{cr}^\eps(u+t\sigma v, u+t\sigma v)- 2\cE_{cr}^\eps(u, u)- 2p t\cE_{cr}^\eps(u,\sigma v)
\geq0.
\end{align*}
Accordingly, passing to the $\lim\hspace{-0.5ex}\sup$ as  $\eps\to 0$ yields
\begin{align*}
\frac{K_{d,p}}{t} \int_{\partial \Omega} \Big(|\nabla(u+t\sigma v)(x)|^p-|\nabla u(x)|^p\Big)\d x\geq 2p  \limsup_{\eps\to0} \cE_{\Omega}^\eps(u,\sigma v).
\end{align*}
Letting $t\to 0$ and taking $\sigma=\pm 1$ imply
$$\sigma pK_{d,p}\, \int_{\partial \Omega}|\nabla u(x)|^{p-2}\nabla u(x)\cdot \nabla v(x)\d x\geq 2p\limsup_{\eps\to0} \sigma\cE_{\Omega}^\eps(u, v),$$
yielding that
\begin{align*}
&\lim_{\eps\to0} 2\cE_{cr}^\eps(u, v)
= K_{d,p}\,\int_{\partial \Omega}|\nabla u(x)|^{p-2}\nabla u(x)\cdot \nabla v(x)\d x.
\end{align*}
In particular the right hand side vanishes when $|\partial\Omega|=0$.
Now in the case $v\in W^{1,p}_0(\Omega)$, by density argument, it is sufficient to assume that $v\in C_c^\infty(\Omega) $. To this end, consider $\delta= \dist(\partial\Omega, \supp v)>0$ so that by using \eqref{eq:taylor-exapnsion} we have
\begin{align*}
\lim_{\eps\to0}\mathcal{E}^{\eps}_{cr}(u,v) \leq
&\lim_{\eps\to0}\mathcal{E}^{\eps}_{cr}(u,u)^{1/p'}
\mathcal{E}^{\eps}_{cr}(v,v)^{1/p} \\&
\leq 2^p\|u\|^{p/p'}_{W^{1,p}(\R^d)}\|v\|_{L^p(\Omega)}
\lim_{\eps\to0}\Big(\int_{|h|>\delta} \nu_\eps(h)\d h\Big)^{1/p}=0.
\end{align*}
\end{proof}
\begin{remark}
The condition $\partial \Omega=\partial\overline{\Omega}$ prevents $\Omega$ from being on both sides of the domain; and hence rules out domains with  slits on  both $\Omega$ and $\R^d\setminus\overline{\Omega}$. A simple example of domain with a slit is
$\Omega= \{ (x',x_d)\in B_1(0):x_d\neq0\}$ whose slit is the set $S=\{ (x',x_d)\in B_1(0)\,:\, x_d=0\}$ and for which $\partial(\R^d\setminus\Omega)=\partial\Omega= \partial B_1(0)\cup S$, whereas $\partial\overline{\Omega}= \partial\overline{B_1(0)}= \partial B_1(0)$.
\end{remark}

\noindent As a consequence of Lemma \ref{lem:collap-bdary}, we obtain the next result.
\begin{theorem}\label{thm:unif-collapse-bdry}
Let $v\in W^{1,p}(\R^d)$. Assume that $v\in W^{1,p}_0(\Omega)$ or that  $|\partial\Omega|=0$.
Let $(u_\eps)_\eps$, $u_\eps\in W^p_{\nu_\eps}(\Omega|\R^d)$ be an asymptotically bounded sequence,  that is,
\begin{align*}
M:=\sup_{\eps>0}  \cE^\eps_{cr}(u_\eps, u_\eps)<\infty.
\end{align*}
Then $(\cE^\eps_{cr}(u_\eps, v))_\eps$ and $(\cE^\eps_{cr}(v, u_\eps))_\eps$  collapse across  the boundary, meaning that
\begin{align*}
\lim_{\eps\to0}\mathcal{E}^{\eps}_{cr}(u_\eps,v)= \lim_{\eps\to0}\mathcal{E}^{\eps}_{cr}(v,u_\eps) =0.
\end{align*}
\begin{comment}
This remains true in particular if  either (i), (ii) or (iii) holds:
\begin{enumerate}[(i)]
\item $\Omega$ is $W^{1,p}$-extension domain.
\item $\partial \Omega=\partial\overline{\Omega}$ and  $\R^d\setminus \overline{\Omega}$ is an $W^{1,p}$-extension domain.
\item $\Omega$ is any open set and $v\in W^{1,p}_0(\Omega)$.
\end{enumerate}
\end{comment}
\end{theorem}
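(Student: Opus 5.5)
The plan is to use the Hölder-type estimate \eqref{eq:holder-form-estimate} with the kernel $k(x,y)=\mathds{1}_\Omega(x)\mathds{1}_{\Omega^c}(y)\nu_\eps(x-y)$, for which $\cE_k=\cE^\eps_{cr}$. This bounds both mixed cross forms by the diagonal ones:
\begin{align*}
|\cE^\eps_{cr}(u_\eps,v)|\leq \cE^\eps_{cr}(u_\eps,u_\eps)^{1/p'}\cE^\eps_{cr}(v,v)^{1/p}\leq M^{1/p'}\cE^\eps_{cr}(v,v)^{1/p},
\end{align*}
\begin{align*}
|\cE^\eps_{cr}(v,u_\eps)|\leq \cE^\eps_{cr}(v,v)^{1/p'}\cE^\eps_{cr}(u_\eps,u_\eps)^{1/p}\leq M^{1/p}\cE^\eps_{cr}(v,v)^{1/p'}.
\end{align*}
Both exponents $1/p$ and $1/p'$ are positive since $1<p<\infty$. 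Hence the theorem reduces to showing that $\cE^\eps_{cr}(v,v)\to0$ as $\eps\to0$ for the fixed function $v\in W^{1,p}(\R^d)$.

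\textbf{Case $|\partial\Omega|=0$.} Here I would apply Lemma \ref{lem:collap-bdary} with $u=v$, which gives $2\cE^\eps_{cr}(v,v)\to K_{d,p}\int_{\partial\Omega}|\nabla v|^p\d x=0$. That lemma also assumes $\partial\Omega=\partial\overline{\Omega}$, which is not part of the present hypotheses. To avoid it, I would argue directly, and I expect this to be the main step needing care. For $x\in\Omega$, $y\in\Omega^c$ and $|x-y|<\delta$, both points lie within distance $\delta$ of $\partial\Omega$. So I split the $h=y-x$ integral at $|h|=\delta$.
\begin{itemize}
\item The far part is at most $2^p\|v\|^p_{L^p(\R^d)}\int_{|h|>\delta}\nu_\eps(h)\d h$, which tends to $0$ by \eqref{eq:plevy-approx}.
\item The near part is bounded by $\int_{|h|<\delta}\int_{\Omega_\delta}|v(x+h)-v(x)|^p\d x\,\nu_\eps(h)\d h$, where $\Omega_\delta=\{\dist(\cdot,\partial\Omega)<\delta\}$. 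Writing $v(x+h)-v(x)=\int_0^1\nabla v(x+th)\cdot h\,\d t$ gives the bound $|h|^p\int_{\Omega_{2\delta}}|\nabla v|^p\d z$.
\end{itemize}
Using $\int(1\wedge|h|^p)\nu_\eps=1$, we get $\limsup_{\eps\to0}\cE^\eps_{cr}(v,v)\leq\|\nabla v\|^p_{L^p(\Omega_{2\delta})}$. As $\delta\to0$, the sets $\Omega_{2\delta}$ decrease to $\partial\Omega$, which is closed and of measure zero. Dominated convergence then sends the right-hand side to $0$.

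\textbf{Case $v\in W^{1,p}_0(\Omega)$.} I would argue by density. Take $v_n\in C_c^\infty(\Omega)$ with $v_n\to v$ in $W^{1,p}(\R^d)$, the functions being extended by zero. For $v_n$, set $\delta_n=\dist(\supp v_n,\partial\Omega)>0$. Then $\cE^\eps_{cr}(v_n,v_n)\leq\|v_n\|^p_{L^p}\int_{|h|>\delta_n}\nu_\eps\to0$, exactly as at the end of the proof of Lemma \ref{lem:collap-bdary}. For the error term I would use the elementary inequality $\cE^\eps_{cr}(w,w)^{1/p}\leq\cE^\eps_{cr}(w_1,w_1)^{1/p}+\cE^\eps_{cr}(w-w_1,w-w_1)^{1/p}$, the Minkowski inequality in the weighted $L^p$ space. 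By \eqref{eq:taylor-exapnsion}, $\cE^\eps_{cr}(v-v_n,v-v_n)\leq2^p\|v-v_n\|^p_{W^{1,p}(\R^d)}$ uniformly in $\eps$. Letting first $\eps\to0$ and then $n\to\infty$ gives $\cE^\eps_{cr}(v,v)\to0$.

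Combining either case with the two Hölder bounds above yields both limits claimed in the theorem.
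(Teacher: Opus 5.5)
Your proof is correct, and its overall structure matches the paper's. The paper also uses \eqref{eq:holder-form-estimate} to reduce both limits to $\cE^\eps_{cr}(v,v)\to0$, and then simply invokes Lemma \ref{lem:collap-bdary}.

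The difference lies in how this diagonal collapse is obtained when $|\partial\Omega|=0$. The paper relies on the sharp limit $2\cE^\eps_{cr}(v,v)\to K_{d,p}\int_{\partial\Omega}|\nabla v|^p\,\d x$ imported from \cite[Theorem 3.5]{Fog23}. This identifies the limit even when $|\partial\Omega|>0$, but it requires $\partial\Omega=\partial\overline{\Omega}$. As you rightly noticed, that condition is not among the theorem's hypotheses, so the paper's argument tacitly adds an assumption.

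Your near/far splitting gives only the cruder bound $\limsup_{\eps\to0}\cE^\eps_{cr}(v,v)\le\|\nabla v\|^p_{L^p(\{\dist(\cdot,\partial\Omega)<2\delta\})}$. That is exactly what is needed, however. The argument is elementary, self-contained and valid for an arbitrary open set, so it proves the statement as formulated. In effect, it is the upper-bound half of Lemma \ref{lem:collap-bdary} without the extra hypothesis.

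For $v\in W^{1,p}_0(\Omega)$, your argument is the paper's support-distance argument from the end of the proof of Lemma \ref{lem:collap-bdary}. The paper merely asserts the density step; you make it rigorous through Minkowski's inequality and the uniform bound \eqref{eq:taylor-exapnsion}.

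Two small details should be made explicit. First, choose $\delta<1$, so that $|h|^p=1\wedge|h|^p$ on $\{|h|<\delta\}$. Second, for $v\in W^{1,p}(\R^d)$, justify the estimate $\int_A|v(x+h)-v(x)|^p\,\d x\le|h|^p\int_0^1\!\int_A|\nabla v(x+th)|^p\,\d x\,\d t$ by approximating with smooth functions.
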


\begin{proof}
Put $\psi(t)=|t|^{p-2}t$. Using the inequality \eqref{eq:holder-form-estimate} and Lemma \ref{lem:collap-bdary} we get
\begin{align*}
\Big(\iint_{\Omega\Omega^c}
\big| \psi(u_{\eps}(x)-&u_{\eps}(y))(v(x)-v(y))\big|\nu_{\eps}(x-y)\d y\,\d x)^{p}\\
+ \Big(\iint_{\Omega\Omega^c}
\big| \psi(&u_{\eps}(x)-u_{\eps}(y))(v(x)-v(y))\big|\nu_{\eps}(x-y)\d y\,\d x\Big)^{p'}\\
\leq &(M^{p'/p} +M^{p/p'} ) \Big(\iil_{\Omega\Omega^c} |v(x)-v(y)|^p\nu_{\eps}(x-y)\d y\,\d x\Big)
\xrightarrow{\eps\to0}0\,.
\end{align*}
\end{proof}

\subsection{Nonlocal to local convergence of forms}
\label{sec:conv-forms}
Now we aim to investigate the convergence of  the nonlocal forms
$\mathcal{E}^{\eps}(\cdot, \cdot ), $ $\mathcal{E}^{\eps}_+(\cdot, \cdot ) $ and $\mathcal{E}^{\eps}(\cdot, \cdot )$ to  the local form $\cE^0(\cdot, \cdot )$. Let $BV(\Omega)$ is the usual space of function bounded variations on $\Omega$ and $|u|_{BV(\Omega)}$ denotes to the total variation of $u.$ The next result is  reminiscence  of \cite[Theorem 3.37]{guy-thesis} see also \cite{Pon04}.

\begin{theorem}\label{thm:BBM-liminf}
Let  $\Omega\subset \R^d$ be  open. Let   $(u_\eps)_\eps\subset L^p(\Omega)$, $1\leq p<\infty$, be such that
\begin{align*}
\sup_{\eps>0} \big(\|u_\eps\|^p_{L^p(\Omega)}+ \cE^\eps_\Omega(u_\eps,u_\eps)\big)<\infty.
\end{align*}
\noindent If $u_\eps\to u $ in $L^p_{\loc}(\Omega)$  as $\eps\to0$, then  $u\in W^{1,p}(\Omega)$ for $1<p<\infty$ and $u\in BV(\Omega)$  for $p=1$. Moreover, $u$  satisfies
\begin{align*}
K_{d,p}\|\nabla u\|^p_{L^p(\Omega)}\leq \liminf_{\eps\to0}\cE^\eps_\Omega(u_\eps,u_\eps)
&\quad \text{ for} \quad 1<p<\infty\\
K_{d,1}|u|_{BV(\Omega)}\leq \liminf_{\eps\to0}\cE^\eps_\Omega(u_\eps,u_\eps)
&\quad \text{ for} \quad p=1.
\end{align*}
\end{theorem}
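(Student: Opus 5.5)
The plan is the standard Bourgain--Brezis--Mironescu/Ponce mollification argument, localized to compact subsets of $\Omega$. Fix an open set $\omega\Subset\Omega$ and a standard mollifier $\varphi_\delta$ with $\delta<\dist(\omega,\partial\Omega)$. Put $u_{\eps,\delta}=u_\eps*\varphi_\delta$ and $u_\delta=u*\varphi_\delta$ on $\omega$. For $|h|<\delta'$ small, Jensen's inequality (convexity of $|\cdot|^p$) gives
\begin{align*}
\int_{\omega}|u_{\eps,\delta}(x+h)-u_{\eps,\delta}(x)|^p\d x\leq \int_{\omega_\delta}|u_\eps(x+h)-u_\eps(x)|^p\d x,
\end{align*}
with $\omega_\delta=\omega+B_\delta\Subset\Omega$. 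Integrating against $\nu_\eps(h)$ over $|h|<\delta'$ then yields
\begin{align*}
\iint_{\omega\times\omega,\,|x-y|<\delta'}|u_{\eps,\delta}(x)-u_{\eps,\delta}(y)|^p\nu_\eps(x-y)\d y\d x\leq \cE^\eps_\Omega(u_\eps,u_\eps),
\end{align*}
provided $\omega_\delta+B_{\delta'}\subset\Omega$. So the mollification step loses nothing.

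Next I would pass to the limit $\eps\to0$ with $\delta$ fixed. Since $u_\eps\to u$ in $L^p(\omega_\delta)$, the functions $u_{\eps,\delta}$ converge to $u_\delta$ in $C^2(\overline{\omega})$, because all derivatives of the convolution are controlled by $\|u_\eps-u\|_{L^1(\omega_\delta)}$. For smooth functions I would show directly that
\begin{align*}
\lim_{\eps\to0}\iint_{\omega\times\omega,\,|x-y|<\delta'}|w(x)-w(y)|^p\nu_\eps(x-y)\d y\d x= K_{d,p}\int_\omega|\nabla w|^p\d x,
\end{align*}
and that this holds uniformly along a sequence $w_\eps\to w$ in $C^2(\overline{\omega})$. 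The argument goes as follows:
\begin{itemize}
\item A Taylor expansion gives $w(x+h)-w(x)=\nabla w(x)\cdot h+O(|h|^2)$ uniformly.
\item By \eqref{eq:plevy-approx}, the mass of $|h|^p\nu_\eps$ concentrates near $h=0$ with total mass tending to $1$.
\item Radial symmetry produces $\int_{\mathbb{S}^{d-1}}|\nabla w(x)\cdot\sigma|^p\d\sigma/|\mathbb{S}^{d-1}|=K_{d,p}|\nabla w(x)|^p$.
\item Points $x$ within distance $\eta$ of $\partial\omega$, where the $y$-integration is truncated, contribute at most $C|\{\dist(x,\partial\omega)<\eta\}|$. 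Taking $\eta\to0$ removes this error.
\end{itemize}
Hence $K_{d,p}\|\nabla u_\delta\|^p_{L^p(\omega)}\leq\liminf_{\eps\to0}\cE^\eps_\Omega(u_\eps,u_\eps)=:A<\infty$.

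Then $\nabla u_\delta=(\nabla u)*\varphi_\delta$ distributionally, and these are bounded in $L^p(\omega)$ uniformly in $\delta$. For $1<p<\infty$, weak compactness and lower semicontinuity give $\nabla u\in L^p(\omega)$ with $K_{d,p}\|\nabla u\|^p_{L^p(\omega)}\leq A$. For $p=1$, the same bound on $\|\nabla u_\delta\|_{L^1}$ together with lower semicontinuity of total variation gives $K_{d,1}|Du|(\omega)\leq A$. Exhausting $\Omega$ by $\omega\Subset\Omega$ (monotone convergence, resp. inner regularity of $|Du|$) gives the gradient estimate on $\Omega$. Finally, $u\in L^p(\Omega)$ with norm bounded by $\liminf\|u_\eps\|_{L^p}$ by Fatou along an a.e. subsequence, so $u\in W^{1,p}(\Omega)$, resp. $BV(\Omega)$.

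The main obstacle is the $\eps\to0$ limit for the mollified smooth functions. One must make the Taylor remainder estimate uniform in the moving family $u_{\eps,\delta}$ and control the truncation to $\omega\times\omega$ near $\partial\omega$. I would handle both using the uniform $C^2$ bounds on $u_{\eps,\delta}$ (for fixed $\delta$) and the tail condition in \eqref{eq:plevy-approx}. The latter makes the contribution of $|h|\geq r$ to $\int|h|^p\nu_\eps$ vanish for each fixed $r$, while the $|h|<r$ part carries mass close to $1$.
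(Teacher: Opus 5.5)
Your proposal is correct and follows essentially the same route as the paper. The paper takes an exhaustion of $\Omega$ by bounded open sets $\Omega_j\Subset\Omega$, invokes the known Bourgain--Brezis--Mironescu/Ponce liminf inequality on each $\Omega_j$ (citing Ponce and the author's thesis), lets $j\to\infty$, and uses Fatou to get $u\in L^p(\Omega)$. The only difference is that you reprove that subdomain liminf via the standard mollification argument on which the cited result rests. That is fine: for the lower bound you only need to restrict to $x$ with $\dist(x,\partial\omega)>\eta$ and $|h|<\eta$, so your boundary-layer estimate is not actually needed.
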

\begin{proof}
By Fatou's Lemma, $\|u\|_{L^p(\Omega)}\leq \liminf_{\eps\to0}\|u_\eps\|_{L^p(\Omega)}$, i.e. $u\in L^p(\Omega)$. Let $(\Omega_j)_j$ be an exhaustion  of open bounded subsets of $ \Omega$. Since $\|u_\eps-u\|_{L^p(\Omega_j)}\xrightarrow{\eps\to0}0$, $j\geq1$, then by  \cite[Theorem 3.37]{guy-thesis} see also \cite{Pon04} we have $u\in W^{1,p}(\Omega_j)$, for $1<p<\infty$,
\begin{align*}
K_{d,p}\|\nabla u\|^p_{L^p(\Omega_j)}
&\leq \liminf_{\eps\to0}\iil_{ \Omega_j\Omega_j} |u_\eps(x)-u_\eps(y)|^p\nu_\eps(x-y)\d y\,\d x \\
&\leq \liminf_{\eps\to0}\iil_{ \Omega\Omega} |u_\eps(x)-u_\eps(y)|^p\nu_\eps(x-y)\d y\,\d x.
\end{align*}
Letting $j\to\infty$ yields  the sought estimate and  thus that $u\in W^{1,p}(\Omega)$. The case $p=1$ follows analogously.
\end{proof}

\noindent The next result combines the ideas of \cite[Theorem 1.5]{Fog23} and \cite[Lemma 2.8]{FeSa20}.
\begin{theorem}\label{thm:BBM-dual-limit}  Let $\Omega\subset \R^d$ be open and $u,v\in W^{1,p}(\R^d)$.
Assume either that one of the following conditions $(i)$,  $(ii)$ or $(iii)$ holds.
\begin{enumerate}[$(i)$]
\item $\Omega$ is an $W^{1,p}$-extension domain.
\item $\partial \Omega=\partial\overline{\Omega}$ and  $\R^d\setminus \overline{\Omega}$ is an $W^{1,p}$-extension domain.
\item $u\in W^{1,p}_0(\Omega)$ or $v\in W^{1,p}_0(\Omega)$.
\end{enumerate}
Then the following limits hold
\begin{align}
&\lim_{\eps\to0}\mathcal{E}^{\eps}_{\Omega}(u,v)
=\lim_{\eps\to0}\mathcal{E}^{\eps}(u,v) = \lim_{\eps\to0}\mathcal{E}^{\eps}_+(u,v) = K_{d,p}\,\mathcal{E}^{0}(u,v). \label{eq:form-conv-plus}
\end{align}
\end{theorem}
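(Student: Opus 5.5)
The plan is to reduce the bilinear-type statement to the quadratic (diagonal) one by the same convexity/first-variation trick used in the proof of Lemma \ref{lem:collap-bdary}. The first step is to establish the diagonal limits for $w\in W^{1,p}(\R^d)$ under $(i)$ or $(ii)$:
\begin{align*}
\lim_{\eps\to0}\cE^\eps_\Omega(w,w)=K_{d,p}\int_\Omega|\nabla w|^p\d x.
\end{align*}
This is \cite[Theorem 1.5]{Fog23} (BBM on a domain). If I had to argue it directly, the lower bound comes from Theorem \ref{thm:BBM-liminf} with $w_\eps=w$. For the upper bound under $(i)$, I would use $\cE^\eps_\Omega(w,w)\le \cE^\eps_{\R^d}(\overline w,\overline w)$ for an extension $\overline w=E(w|_\Omega)$, combined with \eqref{eq:form-conv-full} and a localization by smooth cut-offs. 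Under $(ii)$, I would apply the same argument to $\R^d\setminus\overline\Omega$ and subtract from \eqref{eq:form-conv-full}, using $|\partial\Omega|=0$.

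The second step is to note that $|\partial\Omega|=0$ holds in cases $(i)$ and $(ii)$. This follows from Lemma \ref{lem:d-set-null-set}, since extension domains are $d$-sets; in case $(ii)$, apply it to $\R^d\setminus\overline\Omega$ and use $\partial\Omega=\partial\overline\Omega$. Lemma \ref{lem:collap-bdary} then gives $\cE^\eps_{cr}(w,w)\to0$. Since $\cE^\eps=\cE^\eps_\Omega+\cE^\eps_{cr}(\cdot,\cdot)+\cE^\eps_{cr}(\cdot,\cdot)$ with the roles swapped, and $\cE^\eps_+=\cE^\eps_\Omega+\cE^\eps_{cr}$ up to the symmetric cross terms, Theorem \ref{thm:unif-collapse-bdry} shows that all cross terms $\cE^\eps_{cr}(u,v)$ and $\cE^\eps_{cr}(v,u)$ vanish in the limit. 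Hence it suffices to treat $\cE^\eps_\Omega(u,v)$.

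The third step passes to $u\ne v$ by the first-variation argument. By Corollary \ref{cor:elm-est-taylor}, $|b|^p-|a|^p-p|a|^{p-2}a(b-a)\ge0$ pointwise, so for $t>0$ and $\sigma=\pm1$,
\begin{align*}
\cE^\eps_\Omega(u+t\sigma v,u+t\sigma v)-\cE^\eps_\Omega(u,u)\ge pt\,\sigma\,\cE^\eps_\Omega(u,v).
\end{align*}
Taking $\limsup_{\eps\to0}$ and using the first step for both $u$ and $u+t\sigma v$, then dividing by $t$ and letting $t\to0^+$, gives
\begin{align*}
\limsup_{\eps\to0}\sigma\cE^\eps_\Omega(u,v)\le \sigma\int_\Omega|\nabla u|^{p-2}\nabla u\cdot\nabla v\,\d x\cdot K_{d,p}.
\end{align*}
Using $\sigma=\pm1$ yields the limit $K_{d,p}\cE^0(u,v)$.

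For case $(iii)$, the diagonal step is handled differently, since no regularity of $\Omega$ is assumed. By density it suffices to take $u$ or $v$ in $C_c^\infty(\Omega)$, with the error controlled uniformly by \eqref{eq:holder-form-estimate} and \eqref{eq:taylor-exapnsion}. For the cross terms, the proof of Lemma \ref{lem:collap-bdary} gives $\cE^\eps_{cr}\to0$ whenever one entry has compact support in $\Omega$. Suppose first $v\in C_c^\infty(\Omega)$ with support in $K$, and let $K_\delta$ be its $\delta$-neighbourhood inside $\Omega$. Then $\cE^\eps_\Omega(u,v)$ differs from $\cE^\eps_{\R^d}(u,v)$ only by pairs with some point outside $K_\delta$ and the other in $K$. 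These contribute at most a constant times $\big(\int_{|h|>\delta}\nu_\eps\big)^{1/p}\to0$. Moreover $\cE^\eps_{\R^d}(u,v)\to K_{d,p}\int_{\R^d}|\nabla u|^{p-2}\nabla u\cdot\nabla v=K_{d,p}\cE^0(u,v)$ by the variation argument applied on $\R^d$ with \eqref{eq:form-conv-full}. The case $u\in C_c^\infty(\Omega)$ is symmetric, again using \eqref{eq:form-conv-full} for $u$ and $u+tv$.

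I expect the main obstacle to be the first step under $(ii)$ (and under $(i)$ if it is not simply quoted). The difficulty is obtaining the upper bound $\limsup\cE^\eps_\Omega(w,w)\le K_{d,p}\|\nabla w\|^p_{L^p(\Omega)}$ without smoothness of $\partial\Omega$. My first attempt would be to cite \cite[Theorem 1.5]{Fog23} and \cite[Theorem 3.5]{Fog23} and combine them with the complement subtraction described above.
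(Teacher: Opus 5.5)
Your proposal is correct. For cases $(i)$ and $(ii)$ it is the paper's argument: collapse of the cross terms, reduction to $\cE^\eps_\Omega$ via the decompositions of $\cE^\eps$ and $\cE^\eps_+$, and the first-variation trick based on $|b|^p-|a|^p-p|a|^{p-2}a(b-a)\ge 0$ together with the diagonal limit $\cE^\eps_\Omega(w,w)\to K_{d,p}\|\nabla w\|^p_{L^p(\Omega)}$. Routing the cross terms through Theorem \ref{thm:unif-collapse-bdry} rather than Lemma \ref{lem:collap-bdary} is the same H\"older estimate.

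The difference is that the paper justifies the diagonal limit only for extension domains, by citation, and says nothing specific about $(ii)$ or $(iii)$. Your complement subtraction covers $(ii)$. For $(iii)$, you reduce to $u$ or $v$ in $C_c^\infty(\Omega)$, compare $\cE^\eps_\Omega(u,v)$ with $\cE^\eps_{\R^d}(u,v)$ (the discrepancy lives on pairs at distance $>\delta$), and run the first variation on all of $\R^d$ via \eqref{eq:form-conv-full}. This genuinely supplies the case, and needs no information on $\partial\Omega$.

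\textbf{Three small points.}

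\emph{First,} the obstacle you anticipate is not there, because $w$ already lies in $W^{1,p}(\R^d)$. Write $\cE^\eps_A$ for the form over $A\times A$. Then $\cE^\eps_{\R^d}(w,w)=\cE^\eps_\Omega(w,w)+2\cE^\eps_{cr}(w,w)+\cE^\eps_{\Omega^c}(w,w)$. Drop the nonnegative cross term, use $\cE^\eps_{\Omega^c}\ge\cE^\eps_{\R^d\setminus\overline{\Omega}}$, \eqref{eq:form-conv-full}, and Theorem \ref{thm:BBM-liminf} on the open set $\R^d\setminus\overline{\Omega}$. This gives $\limsup_{\eps\to0}\cE^\eps_\Omega(w,w)\le K_{d,p}\|\nabla w\|^p_{L^p(\overline{\Omega})}$. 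So $|\partial\Omega|=0$ alone settles the diagonal step in both $(i)$ and $(ii)$, with no extension operator. By contrast, your ``extension plus cut-off'' bound only yields $K_{d,p}\|\nabla\overline{w}\|^p_{L^p(\R^d)}$, which is not sharp.

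\emph{Second,} the same identity, now also using the liminf on $\Omega$, gives $\limsup_{\eps\to0}2\cE^\eps_{cr}(w,w)\le K_{d,p}\int_{\partial\Omega}|\nabla w|^p\,\d x=0$. Use this rather than Lemma \ref{lem:collap-bdary}, whose hypothesis $\partial\Omega=\partial\overline{\Omega}$ is not part of $(i)$. The paper's proof invokes the lemma in the same unchecked way.

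\emph{Third,} in $(iii)$, when the approximation is in the first (nonlinear) slot, \eqref{eq:holder-form-estimate} does not control $\cE^\eps(u,v)-\cE^\eps(u_j,v)$. You need the continuity of $t\mapsto|t|^{p-2}t$ from Lemma \ref{lem:elm-est-simons}. Together with H\"older's inequality and \eqref{eq:taylor-exapnsion}, it gives a bound uniform in $\eps$ that tends to $0$ as $u_j\to u$ in $W^{1,p}(\R^d)$.
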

\begin{remark}
It is worth mentioning that, the convergence above is achieved when either $\partial\Omega$ is sufficiently regular, or when  $u$ or $v$ are sufficiently smooth near the boundary. No regularity is required on $\partial\Omega$ when $u$ or $v$ vanishes on  $\partial\Omega$.
As illustrated in $(iii)$,  $\partial\Omega$  need not be regular when $u$ or $v$ vanishes on  $\partial\Omega$.
\end{remark}
%\medskip

\begin{proof}
\noindent
Given that, the boundary of an extension domains is a null set, in the cases $(i)$ and $(ii)$ we have $|\partial\Omega|=|\partial\overline{\Omega}|=0$.   Thus in either cases $(i), (ii)$ and $(iii)$ Lemma \ref{lem:collap-bdary} implies that $\cE^{\eps}_{cr}(v,u)\to 0$ and $\cE^{\eps}_{cr}(u,v) \to0$ as $\eps\to0$. Now observe that
\begin{align*}
\mathcal{E}^{\eps}(u,v)&= \mathcal{E}^{\eps}_\Omega(u,v)+\mathcal{E}^{\eps}_{cr}(u,v)+\cE^{\eps}_{cr}(v,u),\\
\mathcal{E}^{\eps}_+(u,v)&= \mathcal{E}^{\eps}_\Omega(u,v)+\mathcal{E}^{\eps}_{cr}(u,v).
\end{align*}
Thence, it is sufficient to establish the result solely for
$\mathcal{E}^{\eps}_\Omega(u,v)$. The elementary inequality $|b|^p- |a|^p- p|a|^{p-2}a(b-a)\geq 0$ (see Corollary \ref{cor:elm-est-taylor}) yields,
\begin{align*}
&\cE_{\Omega}^\eps(u+t\sigma v, u+t\sigma v)- \cE_{\Omega}^\eps(u, u)- p t\cE_{\Omega}^\eps(u,\sigma v)
\geq0\qquad\text{$t>0$,\, $\sigma\in\R$}.
\end{align*}
If  $\Omega$ is a $W^{1,p}$-extension domain then by  \cite[Theorem 5.23]{guy-thesis}, \cite[Theorem 1.2]{Fog23} or  the variant in \cite{BBM01} we find that $\cE^\eps_\Omega(w, w) \xrightarrow{\eps\to0} K_{d,p}\cE^0(w, w)$ for $w\in W^{1,p}(\Omega)$. Accordingly, by Theorem \ref{thm:BBM-liminf}, passing to the $\lim\hspace{-0.5ex}\inf$  yields
\begin{align*}
K_{d,p}\frac{\|\nabla(u+t\sigma v)\|^p_{L^p(\Omega)}-\|\nabla u\|^p_{L^p(\Omega)}}{t}\geq p  \limsup_{\eps\to0} \cE_{\Omega}^\eps(u,\sigma v).
\end{align*}
Letting $t\to 0$ and taking $\sigma=\pm 1$ yield $\lim_{\eps\to0} \cE_{\Omega}^\eps(u, v)= K_{d,p}\cE^0(u, v)$ since
\begin{align*}
\sigma pK_{d,p}\cE^0(u, v)\geq p\limsup_{\eps\to0} \sigma\cE_{\Omega}^\eps(u, v).
\end{align*}
\end{proof}

\noindent We obtain the following nonlocal characterization of Sobolev spaces.

\begin{theorem}[{Nonlocal characterization of Sobolev spaces}]\label{thm:BBM-nonloc-conseq}
Let $\Omega\subset \R^d$ be open and  $1\leq p<\infty$. For some  $\eps_0>0$, consider the spaces\footnote{Note  that $W^{1,p}_{\mathrm{n,sup}}(\Omega)$ is always a Banach space which may not be the case for $W^{1,p}_{\mathrm{n,inf}}(\Omega)$. }
\begin{align*}
W^{1,p}_{\mathrm{n,sup}}(\Omega)&=\{u\in L^p(\Omega)\,:\,
\|u\|_{W^{1,p}_{\mathrm{n, sup}}(\Omega)}<\infty \}\, \, \Big(=\bigcap_{\eps\in (0, \eps_0)} W^{p}_{\nu_\eps}(\Omega)\Big),\\
W^{1,p}_{\mathrm{n,inf}}(\Omega)&=\{u\in L^p(\Omega)\,:\,
\|u\|_{W^{1,p}_{\mathrm{n,inf}}(\Omega)}<\infty \}.
\end{align*} where the norms
$\|\cdot\|_{W^{1,p}_{\mathrm{n,sup}}(\Omega)}$  and $\|\cdot\|_{W^{1,p}_{\mathrm{n,inf}}(\Omega)}$ are defined by
\begin{align*}
\|u\|_{W^{1,p}_{\mathrm{n,sup}}(\Omega)}
&=\Big( \|u\|^p_{L^p(\Omega)}+ \sup_{\eps\in (0, \eps_0)} \iil_{ \Omega \Omega}|u(x)-u(y)|^p\nu_\eps(x-y)\d y \d x \Big)^{1/p}\\
\|u\|_{W^{1,p}_{\mathrm{n,inf}}(\Omega)}
&=\Big( \|u\|^p_{L^p(\Omega)}+ \liminf_{\eps\to 0} \iil_{ \Omega \Omega}|u(x)-u(y)|^p\nu_\eps(x-y)\d y \d x \Big)^{1/p},
\end{align*}
 Especially in the  fractional case,  we have $ W^{1,p}_{\mathrm{n,sup}}(\Omega)= \bigcap_{s\in (0,1)} W^{s,p}(\Omega)$.  Then the following embeddings are  continuous
\begin{align*}
W^{1,p}_{\mathrm{n,sup}}(\Omega)
\hookrightarrow W^{1,p}_{\mathrm{n,inf}}(\Omega)
\hookrightarrow
\begin{cases}
W^{1,p}(\Omega) & p\neq1,\\
BV(\Omega) &p=1.
\end{cases}
\end{align*}
If in addition  $\Omega$ is a $W^{1,p}$-extension domain for $p\neq1$ (resp. a $BV$-extension domain for $p=1$) then the spaces coincide with equivalence of norms, that is,
\begin{align*}
W^{1,p}_{\mathrm{n,sup}} (\Omega) =W^{1,p}_{\mathrm{n,inf}}(\Omega) =
\begin{cases}
W^{1,p}(\Omega) & p\neq1,\\
BV(\Omega) &p=1.
\end{cases}
\end{align*}
\end{theorem}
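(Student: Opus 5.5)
The first embedding is immediate from the definitions: for every $\eps\in(0,\eps_0)$ the double integral is bounded by its supremum over $(0,\eps_0)$, so its $\liminf$ as $\eps\to0$ is bounded by that supremum. Hence $\|u\|_{W^{1,p}_{\mathrm{n,inf}}(\Omega)}\leq \|u\|_{W^{1,p}_{\mathrm{n,sup}}(\Omega)}$. The identification $W^{1,p}_{\mathrm{n,sup}}(\Omega)=\bigcap_{\eps\in(0,\eps_0)}W^p_{\nu_\eps}(\Omega)$ as sets needs care: the inclusion $\subset$ is trivial. For the converse I would only use it as stated in the fractional case. There, for $\nu_\eps=a_{\eps,d,p}|h|^{-d-(1-\eps)p}$, the monotonicity $W^{s,p}(\Omega)\subset W^{s',p}(\Omega)$ for $s'<s$ makes the intersection over $s$ equal to the intersection over $s$ close to $1$. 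The seminorms $(1-s)|u|^p_{W^{s,p}}$ are then controlled uniformly by interpolating the contributions $|h|<1$ and $|h|\geq1$, the latter being bounded by $\|u\|^p_{L^p}$.

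For the second embedding, take $u\in W^{1,p}_{\mathrm{n,inf}}(\Omega)$ and choose a sequence $\eps_j\to0$ realizing the $\liminf$. Apply Theorem \ref{thm:BBM-liminf} to the constant sequence $u_{\eps_j}=u$. The hypothesis $\sup_j(\|u\|^p_{L^p}+\cE^{\eps_j}_\Omega(u,u))<\infty$ holds after discarding finitely many $j$, and trivially $u_{\eps_j}\to u$ in $L^p_{\loc}(\Omega)$. This gives $u\in W^{1,p}(\Omega)$ (resp. $BV(\Omega)$ for $p=1$) and
\[
K_{d,p}\|\nabla u\|^p_{L^p(\Omega)}\leq \liminf_{\eps\to0}\cE^\eps_\Omega(u,u).
\]
Adding $\|u\|^p_{L^p(\Omega)}$ yields $\|u\|_{W^{1,p}(\Omega)}\leq \max(1,K_{d,p}^{-1})^{1/p}\|u\|_{W^{1,p}_{\mathrm{n,inf}}(\Omega)}$, which is continuity. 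The argument is identical for $BV$ with $|u|_{BV(\Omega)}$.

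For the extension-domain case it remains to show $W^{1,p}(\Omega)\hookrightarrow W^{1,p}_{\mathrm{n,sup}}(\Omega)$ continuously. Let $E$ be the extension operator from \eqref{eq:w1p-extension}. Then for all $\eps$
\[
\cE^\eps_\Omega(u,u)\leq \cE^\eps_{\R^d}(Eu,Eu)\leq 2^p\|Eu\|^p_{W^{1,p}(\R^d)}\leq 2^pC^p\|u\|^p_{W^{1,p}(\Omega)}
\]
by \eqref{eq:taylor-exapnsion}. Taking the supremum over $\eps$ gives the bound with a constant independent of $\eps$, and all three spaces coincide with equivalent norms. For $p=1$ one uses a $BV$-extension operator together with the analogous estimate $\int|w(x+h)-w(x)|\,\d x\leq (1\wedge|h|)\,2\|w\|_{BV(\R^d)}$. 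This follows by approximating $w$ with smooth functions whose $BV$ norms converge (strict convergence) and using lower semicontinuity of the left-hand side in $L^1$.

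The main obstacle I expect is this $p=1$ step, namely justifying the Taylor-type bound \eqref{eq:taylor-exapnsion} for $BV$ functions via strict approximation. A secondary subtlety is the set identity in the footnote for general kernels. In the non-fractional case I would simply treat $W^{1,p}_{\mathrm{n,sup}}$ as defined by its norm and not claim more.
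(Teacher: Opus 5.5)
Your proof of the two embeddings and of the norm equivalence on extension domains is correct and follows the paper's argument. The paper also gets $u\in W^{1,p}(\Omega)$ with $K_{d,p}\|\nabla u\|^p_{L^p(\Omega)}\le\liminf_{\eps\to0}\cE^\eps_\Omega(u,u)$ from a BBM-type liminf theorem. Your use of Theorem~\ref{thm:BBM-liminf} on the constant sequence, along a sequence $\eps_j\to0$ realizing the $\liminf$, is the same step. The paper gets the reverse bound from a uniform estimate on extension domains, which is what your extension-plus-\eqref{eq:taylor-exapnsion} computation proves. Its extra appeal to Theorem~\ref{thm:BBM-dual-limit} to identify the $\liminf$ is not needed for equivalence of norms.

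The $p=1$ step you flag is routine. For $w=Eu\in BV(\R^d)$, the mollifications $w*\rho_\delta$ satisfy $\|\nabla(w*\rho_\delta)\|_{L^1(\R^d)}\le|Dw|(\R^d)$ and converge to $w$ in $L^1(\R^d)$. This gives $\int_{\R^d}|w(x+h)-w(x)|\,\d x\le|h|\,|Dw|(\R^d)$ directly.

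The genuine error is your claim that, for the fractional kernels, the identity $W^{1,p}_{\mathrm{n,sup}}(\Omega)=\bigcap_{s\in(0,1)}W^{s,p}(\Omega)$ follows by interpolating the $|h|<1$ and $|h|\ge1$ contributions. Finiteness of the $W^{s,p}(\Omega)$-seminorm for each fixed $s<1$ gives no uniform control of $(1-s)$ times that seminorm as $s\to1^-$. In fact the identity is false. Take $\Omega=(0,1)$, $1<p<\infty$, $\alpha=1-1/p$ and $u(x)=x^\alpha$. The substitution $y=tx$ gives
\begin{align*}
\iint_{(0,1)^2}\frac{|u(x)-u(y)|^p}{|x-y|^{1+sp}}\,\d y\,\d x
=2\int_0^1x^{(1-s)p-1}\,\d x\int_0^1\frac{(1-t^{\alpha})^p}{(1-t)^{1+sp}}\,\d t .
\end{align*}
Since $\alpha(1-t)\le1-t^\alpha\le1-t$ on $(0,1)$, both factors are comparable to $\frac{1}{(1-s)p}$. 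Hence $u\in W^{s,p}(0,1)$ for every $s\in(0,1)$. However, $(1-s)$ times this double integral tends to $\infty$ as $s\to1^-$, and up to a bounded factor this is the quantity entering $\|u\|_{W^{1,p}_{\mathrm{n,sup}}(0,1)}$.

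Consistently, $|u'|^p=\alpha^px^{-1}\notin L^1(0,1)$, so $u\notin W^{1,p}(0,1)$. Your own second embedding then already rules out $u\in W^{1,p}_{\mathrm{n,sup}}(0,1)$.

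Only the inclusion $W^{1,p}_{\mathrm{n,sup}}(\Omega)\subset\bigcap_{\eps\in(0,\eps_0)}W^p_{\nu_\eps}(\Omega)$ holds, for the fractional kernels as for general ones. The paper's proof never addresses these parenthetical identities. You should assert only the inclusion, as you already intended for general kernels, rather than try to prove the converse.
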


\begin{proof}
We only prove for $1<p<\infty$ as the case $p=1$ is analogous. By \cite[Theorem 3.3]{Fog23}  if $u\in W^{1,p}_{\mathrm{n,inf}} (\Omega)$, then $u\in W^{1,p}  (\Omega)$ and we have
\begin{align*}
K_{d,p}\int_\Omega|\nabla u(x)|^p\d x
&\leq  \liminf_{\eps\to 0} \iil_{ \Omega \Omega}|u(x)-u(y)|^p\nu_\eps(x-y)\d y \d x\\
&\leq  \sup_{\eps\in (0,\eps_0)} \iil_{ \Omega \Omega}|u(x)-u(y)|^p\nu_\eps(x-y)\d y \d x.
\end{align*}
This proves the embeddings. Moreover, when $\Omega$ is an $W^{1,p}$-extension domain, then by Theorem \ref{thm:BBM-dual-limit} (see also \cite[Theorem 1.3]{Fog23}) we find that
\begin{align*}
K_{d,p}\int_\Omega|\nabla u(x)|^p\d x
&=\liminf_{\eps\to 0} \iil_{ \Omega \Omega}|u(x)-u(y)|^p\nu_\eps(x-y)\d y \d x\\
&\leq  \sup_{\eps\in (0,\eps_0)} \iil_{ \Omega \Omega}|u(x)-u(y)|^p\nu_\eps(x-y)\d y \d x \leq C \int_\Omega|\nabla u(x)|^p\d x,
\end{align*}
where the last estimate follows from \cite[Lemma 3.47]{guy-thesis} or \cite[Lemma  2.12]{Fog23}.
\end{proof}

\subsection{Asymptotic demi-convergence}
Now we investigate the asymptotics of the nonlocal forms $\cE^\eps(u,v)$ and $\cE^\eps_\Omega(u,v)$ when $u$ or $v$ also depends on $\eps$.  Let us begin with the following elementary result.
\begin{lemma}\label{lem:elem-taylor-expan}
Let   $a,b\in \R$,  $t\neq 0$ and $R>0$.The following assertions are true.
\begin{enumerate}
\item For all  $a,b\in \R$, there holds that
\begin{align*}
|a+b|^p= |a|^p+ p|a|^{p-2}ab+O(|b|^p)+ \sum_{k=2}^{\lfloor p\rfloor}\binom{p}{k}|a|^{p-2k} a^{k}b^{k},
\end{align*}
where $\lfloor p\rfloor=\max\{m\in\mathbb{N}\,:\, m\leq p\}$ and $\binom{p}{k}=\frac{\Gamma(p+1)}{k!\Gamma(p-k+1)}$.
\item  If $1<p< 2$ then for  all $|b|\leq R$  and $a, t\in \R$  we have
\begin{align*}
|a+tb|^p=|a|^p+ pt|a|^{p-2}ab+O(|tR|^p).
\end{align*}
If $p\geq 2$ then for all $|a|\leq R$, $|b|\leq R$ and all $|t|\leq 1$ we have
\begin{align*}
|a+tb|^p=|a|^p+ pt|a|^{p-2}ab+O(|t|^2R^p).
\end{align*}
\item
For all $|a|\leq R$,  $|b|\leq R$ and $|t|\leq 1$ we have
\begin{align*}
|a+tb|^p- |a|^p- pt|a|^{p-2}ab= O(t^{\min(p,2)}R^p).
\end{align*}
\end{enumerate}
Note that the constant behind $O(\cdot)$ only depends on $p$.
\end{lemma}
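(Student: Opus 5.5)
The natural route is to reduce everything to the one-variable function $\phi(s)=|a+sb|^p$ and apply Taylor's formula with integral remainder, splitting into cases according to the size of $|b|$ relative to $|a|$. Since $\phi'(s)=p|a+sb|^{p-2}(a+sb)b$, we have
\begin{align*}
|a+b|^p-|a|^p-p|a|^{p-2}ab=\int_0^1 p\big(|a+sb|^{p-2}(a+sb)-|a|^{p-2}a\big)b\,\d s ,
\end{align*}
and Lemma \ref{lem:elm-est-simons} controls the integrand. For $1<p\le 2$, estimate \eqref{eq:upper-elem-sing} gives $|\cdot|\le p\kappa_p|sb|^{p-1}|b|$, so the remainder is bounded by $\kappa_p|b|^p$. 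For $p\ge2$, estimate \eqref{eq:upper-elem-degen} gives the bound $p\kappa_p|b|^2(2|a|+|b|)^{p-2}$, which is exactly Corollary \ref{cor:elm-est-taylor}. These two bounds are the engine of the whole lemma.

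For item (1), the displayed identity has to be read as the Newton binomial expansion of $|a|^p(1+b/a)^p$ when $|b|<|a|$, with the terms $k\ge2$ kept explicitly. In that regime, Taylor expansion of $(1+z)^p$ for $|z|<1$ up to order $\lfloor p\rfloor$ leaves a remainder of size $|a|^p|z|^{\lfloor p\rfloor+1}$ if $|z|\le 1/2$. This is bounded by $|a|^{p}|z|^{p}=|b|^p$ because $\lfloor p\rfloor+1>p$ and $|z|\le1$. For $1/2\le|z|$ and in the regime $|b|\ge|a|$, every term, including each binomial term $|a|^{p-k}|b|^k$, is $\lesssim |b|^p$ directly, since $|a|\le 2|b|$. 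So the formula holds with a constant depending only on $p$. The main obstacle is precisely this bookkeeping near $|z|\approx1$, where $(1+z)^p$ loses smoothness at $z=-1$. The case split at $|z|=1/2$ is what handles it, because every quantity is then comparable to $|b|^p$.

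Item (2) follows from the integral formula with $b$ replaced by $tb$. For $1<p<2$ the remainder is $\le\kappa_p|tb|^p\le\kappa_p|tR|^p$. For $p\ge2$ the remainder is $\le p\kappa_p t^2|b|^2(2|a|+|tb|)^{p-2}\le p\kappa_p3^{p-2}t^2R^p$, using $|a|,|b|\le R$ and $|t|\le1$. Item (3) is then immediate. When $p\ge2$ we have $\min(p,2)=2$, which is the second part of (2). When $p<2$, $|tR|^p=|t|^pR^p$ with $\min(p,2)=p$. In every case the constants ($\kappa_p$, $3^{p-2}$, the binomial coefficients) depend only on $p$.
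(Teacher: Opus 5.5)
Your proposal is correct and follows essentially the paper's route. For (1), the paper shows that the continuous function $\zeta(t)=\bigl(|1+t|^p-\sum_{k=0}^{\lfloor p\rfloor}\binom{p}{k}t^k\bigr)/|t|^p$ is bounded, using its behaviour as $t\to0$ and $|t|\to\infty$, and then rescales with $t=b/a$; this is your case split at $|b/a|=1/2$ in compact form. For (2)--(3), the paper points to (1) or to Corollary \ref{cor:elm-est-taylor}, and your integral-remainder argument via Lemma \ref{lem:elm-est-simons} is exactly how that corollary is proved, now with the explicit constants $\kappa_p$ and $p\kappa_p 3^{p-2}$.
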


\begin{proof}
Note that  $(2)$ and $(3)$  are implied by $(1)$ or can also  be easily retrieved from Corollary \ref{cor:elm-est-taylor}.  Now, consider the  continuous function $\zeta:\R\to \R$ with
\begin{align*}
\zeta(t)=\frac{|1+t|^p-  \sum_{k=0}^{\lfloor p\rfloor}\binom{p}{k}t^{k} }{|t|^p}.
\end{align*}
Note that, if $\lfloor p\rfloor<p$ then $\zeta(t)\xrightarrow{|t|\to\infty}1$  and if $p=\lfloor p\rfloor$  then $\zeta(t)\xrightarrow{|t|\to\infty}0$,
whereas,  using Taylor's expansion for $|t|<1$ we deduce
\begin{align*}
\zeta(t)=\frac{(1+t)^p-  \sum_{k=0}^{\lfloor p\rfloor}\binom{p}{k}t^{k} }{|t|^p}
= \frac{\sum_{k=\lfloor p\rfloor+1}^\infty\binom{p}{k}t^{k} }{|t|^p}\xrightarrow{|t|\to0}0.
\end{align*}
Therefore, the map $t\mapsto\zeta(t)$ is bounded say $|\zeta(t)|\leq C$ for some $C>1$, yielding
\begin{align*}
|1+t|^p\leq C|t|^p+ \sum_{k=0}^{\lfloor p\rfloor}\binom{p}{k}t^{k}.
\end{align*}
In other words we have
\begin{align*}
|1+t|^p= 1+pt+ \sum_{k=2}^{\lfloor p\rfloor}\binom{p}{k}t^{k}+  O(|t|^p).
\end{align*}
From this, it is a routine to deduce $(1)$, that is,  for all $a,b\in\R$  we have
\begin{align*}
|a+b|^p= |a|^p+ pb|a|^{p-2}a+O(|b|^p)+ \sum_{k=2}^{\lfloor p\rfloor}\binom{p}{k}|a|^{p-2k} a^{k}b^{k}.
\end{align*}
\end{proof}

\noindent The following result generalizes that of
\cite[Lemma 2.8]{FeSa20}, yielding a sort of nonlocal-to-local asymptotic demi-convergence of the forms $\cE^\eps(\cdot,\cdot)$  and $\cE^\eps_+(\cdot,\cdot)$ to $\cE^0(\cdot,\cdot)$.
\begin{theorem} \label{thm:asymp-conv-form}
Let $u, v\in W^{1,p}(\R^d)$ and the sequence $ (v_\eps)_\eps$ with  $v_\eps \in W_{\nu_\eps}^p(\Omega|\R^d)$ be such that  $v_\eps|_\Omega \to v|_\Omega$ in $L^p_{\loc}(\Omega)$ as $\eps\to0$ and
\begin{align*}
\sup_{\eps>0} \big( \|v_\eps\|^p_{L^p(\Omega)}+ \cE^\eps(v_\eps, v_\eps)\big)<\infty.
\end{align*}
Assume either that one of the following conditions $(i)$,  $(ii)$ or $(iii)$ holds.
\begin{enumerate}[$(i)$]
\item $\Omega$ is an $W^{1,p}$-extension domain.
\item $\partial \Omega=\partial\overline{\Omega}$ and  $\R^d\setminus \overline{\Omega}$ is an $W^{1,p}$-extension domain.
\item We have $u\in W^{1,p}_0(\Omega)$.
\end{enumerate}
Then  the following convergences hold true
\begin{align*}
\lim_{\eps\to0}\cE^\eps(u, v_\eps) =\lim_{\eps\to0}\cE^\eps_+(u, v_\eps)
= K_{d,p} \cE^0(u, v)\quad\text{for all $u\in W^{1,p}(\R^d)$}.
\end{align*}
\end{theorem}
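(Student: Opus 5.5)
First I reduce to the Gagliardo part. Write
\[
\cE^\eps(u,v_\eps)=\cE^\eps_\Omega(u,v_\eps)+\cE^\eps_{cr}(u,v_\eps)+\cE^\eps_{cr}(v_\eps,u)
\quad\text{and}\quad
\cE^\eps_+(u,v_\eps)=\cE^\eps_\Omega(u,v_\eps)+\cE^\eps_{cr}(u,v_\eps).
\]
In cases $(i)$ and $(ii)$ we have $|\partial\Omega|=0$, as noted in the proof of Theorem \ref{thm:BBM-dual-limit}; in case $(iii)$ we have $u\in W^{1,p}_0(\Omega)$. Moreover $\cE^\eps_{cr}(v_\eps,v_\eps)\le \cE^\eps(v_\eps,v_\eps)$ is uniformly bounded. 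Theorem \ref{thm:unif-collapse-bdry}, applied with $u$ in the role of the fixed function, then gives $\cE^\eps_{cr}(u,v_\eps)\to0$ and $\cE^\eps_{cr}(v_\eps,u)\to0$. So it suffices to show $\cE^\eps_\Omega(u,v_\eps)\to K_{d,p}\cE^0(u,v)$.

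The main idea is that the form is linear in its second argument, so a convexity/monotonicity argument in $u$ should not be needed. I would proceed by approximation. First I treat the case where $u$ is smooth. For $u\in C_c^\infty(\R^d)$, I would use the symmetry of $\nu_\eps$ to write
\[
\cE^\eps_\Omega(u,v_\eps)=\int_\Omega v_\eps(x)\,L^\Omega_\eps u(x)\,\d x,
\qquad
L^\Omega_\eps u(x)=2\,\pv\!\int_\Omega \psi(u(x)-u(y))\,\nu_\eps(x-y)\,\d y,\quad \psi(t)=|t|^{p-2}t .
\]
On compact subsets of $\Omega$, $L^\Omega_\eps u$ converges to $-K_{d,p}\Delta_p u$, using the pointwise convergence recalled in the introduction and the cutoff of $\Omega^c$ contributing only $\int_{|h|>\delta}\nu_\eps\to0$. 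Since $v_\eps\to v$ in $L^p_{\loc}(\Omega)$, the interior part of the integral converges to $\int v\,(-K_{d,p}\Delta_p u)$. I expect this to be awkward, because the pointwise limit needs $\nabla u\ne0$ when $p<2$, and the boundary layer is not controlled.

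A cleaner route is a localization plus splitting argument. Fix a cutoff $\chi\in C_c^\infty(\Omega)$ with $0\le\chi\le1$, and split $v_\eps=\chi v_\eps+(1-\chi)v_\eps$. The far-from-boundary piece can be handled by mollification/Fourier-type arguments. The remaining piece satisfies
\[
\bigl|\cE^\eps_\Omega(u,(1-\chi)v_\eps)\bigr|\le \cE^\eps_\Omega(u,u)^{1/p'}\,\cE^\eps_\Omega\bigl((1-\chi)v_\eps\bigr)^{1/p},
\]
which is not small in general. So this direction seems to fail, and I would instead exploit the weak compactness of $\nabla$.

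Concretely, I would prove that along any subsequence the vector fields
\[
F_\eps(x)=\int \psi(u(x)-u(y))\,\frac{x-y}{|x-y|}\ \text{(suitably weighted)}
\]
converge strongly, while the difference quotients of $v_\eps$ converge weakly. I would implement this as follows. Set $D_\eps u(x,h)=\bigl(u(x+h)-u(x)\bigr)\,\nu_\eps(h)^{1/p}$ on the set $\{x\in\Omega,\ x+h\in\Omega\}$, so that
\[
\cE^\eps_\Omega(u,v_\eps)=\int \psi(D_\eps u)\,D_\eps v_\eps .
\]
Here $D_\eps v_\eps$ is bounded in $L^p$ and $\psi(D_\eps u)$ is bounded in $L^{p'}$. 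I would then replace $D_\eps u$ by its linearization $\nabla u(x)\cdot h\,\nu_\eps^{1/p}$. The linearization error tends to zero in $L^p$ for $u\in W^{1,p}(\R^d)$; this follows from the BBM-type convergence \eqref{eq:form-conv-full} together with Theorem \ref{thm:BBM-dual-limit} and the uniform linearization in Theorem \ref{thm:equiv-conv-in-form}. After that, I would test $D_\eps v_\eps$ against the weight $|\nabla u\cdot h|^{p-2}(\nabla u\cdot h)\,\nu_\eps^{1/p'}$. Using radial symmetry and $v_\eps\to v$, this should give $K_{d,p}\int|\nabla u|^{p-2}\nabla u\cdot\nabla v$. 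The justification is to first take $\nabla u$ smooth and integrate by parts in $x$, moving the difference onto the smooth field, and then pass to general $u$ by density using the uniform bound \eqref{eq:taylor-exapnsion}.

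The main obstacle is this final weak identification near $\partial\Omega$. Integrating by parts in the difference quotient creates boundary terms coming from $x\in\Omega$, $x+h\notin\Omega$. These are exactly the crossing terms, and I would control them by $\int_{|h|>\delta}\nu_\eps\to 0$ together with the fact that $|\partial\Omega|=0$ (or that $u$ vanishes near $\partial\Omega$ in case $(iii)$).
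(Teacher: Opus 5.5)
Your reduction to $\cE^\eps_\Omega(u,v_\eps)$ via Theorem \ref{thm:unif-collapse-bdry} is fine. The gap is in the decisive step: identifying the limit of
\[
\iint_{\{x\in\Omega,\ x+h\in\Omega,\ |h|<\delta\}}\psi(\nabla u(x)\cdot h)\,\big(v_\eps(x+h)-v_\eps(x)\big)\,\nu_\eps(h)\,\d h\,\d x
\]
by integrating by parts in $x$ does not work as you describe. In cases $(i)$--$(ii)$, integrating by parts over $\{x\in\Omega:\ x+h\in\Omega\}$ and then changing $h\mapsto-h$ leaves the boundary-layer terms
\[
\int\nu_\eps(h)\int_{\{x\in\Omega:\ x-h\in\Omega,\ x+h\notin\Omega\}}v_\eps(x)\big[\psi(\nabla u(x-h)\cdot h)+\psi(\nabla u(x)\cdot h)\big]\,\d x\,\d h .
\]
These contain no differences of $v_\eps$, so they are not the crossing terms, which you had already discarded. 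They are also not small: they are the nonlocal version of the boundary integral $K_{d,p}\int_{\partial\Omega}\partial_{n,p}u\,v\,\d\sigma$ in the Gauss--Green formula.

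Concretely, take $d=1$, $p=2$, $\Omega=(0,1)$ and $u=v_\eps=v=x$ near $[0,1]$ (cut off far away). The weight $\psi(u'(x)h)=h$ does not depend on $x$, so the interior term vanishes identically. The entire limit $K_{1,2}\cE^0(u,v)=1$ is then carried by these boundary terms. Neither $|\partial\Omega|=0$ nor $\int_{|h|>\delta}\nu_\eps\to0$ can make them small: the layer factor decays without a rate, while $\int_{|h|<\delta}|h|^{p-1}\nu_\eps\to\infty$. Moreover, since $v_\eps\to v$ only in $L^p_{\loc}(\Omega)$, you have no control of $v_\eps$ in the layer with which to identify their limit.

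Independently, for $1<p<2$, moving the difference onto $\psi(\nabla u(x)\cdot h)$ means differentiating $\psi$ at $0$. The interior term then becomes a regional $p$-L\'evy operator of $u$. This is unbounded near critical points: at a nondegenerate extremum it behaves like $\int|h|^{2p-2}\nu_\eps\to\infty$. Its limit $-K_{d,p}\Delta_p u$ need not lie in $L^{p'}_{\loc}$. This is exactly the obstruction you noticed in your first attempt, and approximating $u$ by smooth functions does not remove critical points.

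A minor point: the linearization must be truncated to $|h|<\delta$, because $\int_{|h|\ge1}|h|^p\nu_\eps$ may be infinite (e.g.\ for fractional kernels). The truncated error is controlled by $\|u(\cdot+h)-u-\nabla u\cdot h\|_{L^p}\le|h|\sup_{|z|\le|h|}\|\nabla u(\cdot+z)-\nabla u\|_{L^p}$. It is not controlled by Theorems \ref{thm:BBM-dual-limit} and \ref{thm:equiv-conv-in-form}, which concern forms in two functions.

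The missing idea is to regularize the \emph{weight}, not $u$ or $v_\eps$. The map $G\mapsto\psi(G(x)\cdot h)$ is continuous from $L^p(\Omega;\R^d)$ into $L^{p'}(\Omega\times B_\delta,\nu_\eps(h)\,\d h\,\d x)$, uniformly in $\eps$, by Lemma \ref{lem:elm-est-simons} and $\int_{B_\delta}|h|^p\nu_\eps\le1$. Also $(v_\eps(x+h)-v_\eps(x))\nu_\eps^{1/p}$ is bounded in $L^p$. So you may proceed as follows:
\begin{enumerate}
\item Replace $\nabla u|_\Omega$ by a field $F\in C_c^\infty(\Omega;\R^d)$.
\item Replace $\psi(F(x)\cdot\xi)$, with $\xi=h/|h|$, by a uniform approximation $\sum_i\phi_i(x)m_i(\xi)$ with $\phi_i\in C_c^\infty(\Omega)$.
\item Integrate by parts only on the $\phi_i$, taking $\delta<\dist(\supp F,\partial\Omega)$.
\end{enumerate}
Then no boundary terms arise and $\psi$ is never differentiated. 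The identity $\fint_{\mathbb{S}^{d-1}}\psi(a\cdot\xi)\,\xi\,\d\sigma(\xi)=K_{d,p}|a|^{p-2}a$ gives the limit $K_{d,p}\int_\Omega|F|^{p-2}F\cdot\nabla v$. Repaired this way, your route gives $\cE^\eps_\Omega(u,v_\eps)\to K_{d,p}\cE^0(u,v)$ with no assumption on $\partial\Omega$; the hypotheses $(i)$--$(iii)$ enter only through the crossing terms.

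The paper avoids identifying any limit of nonlocal difference quotients of $v_\eps$, using precisely the convexity argument you set aside. Lemma \ref{lem:elem-taylor-expan} gives, uniformly in $\eps$,
\[
0\le\cE^\eps(u+t\sigma v_\eps,u+t\sigma v_\eps)-\cE^\eps(u,u)-pt\,\cE^\eps(u,\sigma v_\eps)\le C(|t|^p+t^2).
\]
One then bounds the perturbed energy below by $\cE^\eps_\Omega$ and applies Theorem \ref{thm:BBM-liminf} to it. Theorem \ref{thm:BBM-dual-limit} gives $\cE^\eps(u,u)\to K_{d,p}\cE^0(u,u)$; this is where $(i)$--$(iii)$ are used. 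Dividing by $t$, letting $t\to0$ and taking $\sigma=\pm1$ yields the claim in a few lines.
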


\begin{proof}
We only prove for $\cE^\eps(\cdot,\cdot)$ since  the one for $\cE^\eps_+(\cdot,\cdot)$  is analogous.  We apply Lemma \ref{lem:elem-taylor-expan} with $a= U(x,y)= u(x)-u(y)$ and $b=t\sigma V_\eps(x,y)= t\sigma (v_\eps(x)-v_\eps(y))$ with $|t|\leq 1$ and $ \sigma =\pm1 $ yielding
\begin{align*}
0\leq \cE^\eps(u+t\sigma v_\eps, u+t\sigma v_\eps)-\cE^\eps(u, u)-p\cE^\eps(u, t\sigma v_\eps)= O( t^{p}\cE^\eps(u , u))+ F^\eps_p (u,tv_\eps).
\end{align*}
The remainder $F^\eps_p (u,tv_\eps)$ is given by: $F^\eps_p (u,tv_\eps)=0$ for $1<p<2$ or else by
\begin{align*}
F^\eps (u,tv_\eps) &=  \sum_{k=2}^{\lfloor p\rfloor}\binom{p}{k} t^k
\iil_{(\Omega^c\times \Omega^c)^c}\hspace{-2ex}
|U(x,y)|^{p-2k} U(x,y)^{k}V_\eps(x,y)^{k}\nu_\eps(x-y)\d y \d x,
\end{align*}
 Using the H\"older inequality with $q= \frac{p}{p-k}$ and $q'= \frac{p}{k}$ we obtain
\begin{align*}
|F^\eps (u,tv_\eps)|
&\leq \sum_{k=2}^{\lfloor p\rfloor}\binom{p}{k}  |t|^k \cE^\eps(u,u)^{p-k} \cE^\eps(v_\eps,v_\eps)^k.
\end{align*}
Next, by \eqref{eq:taylor-exapnsion} we have $\cE^\eps(u,u)\leq 2^p \|u\|_{W^{1,p}(\R^d)}$, while $(\cE^\eps(v_\eps,v_\eps))_\eps$ is uniformly bounded. Thus, there exists $ M > 0$ such that for $|t|\leq 1$,
\begin{align*}
\sup_{\eps>0} \cE^\eps(u+ t\sigma v_\eps,u+ t\sigma v_\eps)\leq 2^p\sup_{\eps>0} \big(\cE^\eps(u,u) + \cE^\eps(v_\eps,v_\eps)\big)\leq M.
\end{align*}
From this and the penultimate estimate we deduce
\begin{align*}
|F^\eps (u,tv_\eps)|
&\leq \sum_{k=2}^{\lfloor p\rfloor}\binom{p}{k}  |t|^k \cE^\eps(u,u)^{p-k} \cE^\eps(v_\eps,v_\eps)^k\leq C|t|^2,
\end{align*}
for some generic constant $C>0$ only depending on $M$ and $p.$ It follows that
\begin{align*}
0\leq \cE^\eps(u+t\sigma v_\eps, u+t\sigma v_\eps)&-\cE^\eps(u, u)-pt\cE^\eps(u, \sigma v_\eps)\leq  C |t|^{p}+ C|t|^2.
\end{align*}
Therefore, for  all $0<t<1$, we get that
\begin{align*}
\cE^\eps(u+t\sigma v_\eps, u+t\sigma v_\eps)-\cE^\eps(u, u)  =   pt\cE^\eps(u, \sigma v_\eps)+ O( t^{\min(p,2)}).
\end{align*}
Observing that $u+t\sigma v_\eps\to u+t\sigma v$ as $\eps\to0$ in $L^p_{\loc}(\Omega)$ and  $\sup_{\eps>0} \big(\cE^\eps(u+ t\sigma v_\eps,u+ t\sigma v_\eps)\big)\leq 2^pM$,  we apply
Theorem \ref{thm:BBM-liminf} and Theorem \ref{thm:BBM-dual-limit} so  that
\begin{align*}
p \liminf_{\eps\to0}\cE^\eps(u, \sigma v_\eps)
&= \liminf_{\eps\to0}	\frac{\cE^\eps(u+t\sigma v_\eps, u+t\sigma v_\eps)-\cE^\eps(u, u)}{t}- O( t^{\min (p-1,1)}) \\
&\geq \liminf_{\eps\to0}	\frac{\cE^\eps_\Omega(u+t\sigma v_\eps, u+t\sigma v_\eps)-\cE^\eps(u, u)}{t}- O( t^{\min (p-1,1)}) \\
& \geq K_{d,p}	\frac{\cE^0(u+t\sigma v, u+t\sigma v)-\cE^0(u, u)}{t} - O( t^{\min (p-1,1)}).
\end{align*}
Letting $t\to 0$ yields $ \lim_{\eps\to0}\cE^\eps(u,v_\eps)=  K_{d,p}	\cE^0(u, v)$ since $ \sigma =\pm1$  and
\begin{align*}
p  \sigma \liminf_{\eps\to0}\cE^\eps(u,v_\eps) \geq p \sigma  K_{d,p}	\cE^0(u, v).
\end{align*}
\end{proof}

\noindent The following  is the  analog of Theorem \ref{thm:asymp-conv-form} for the form $\cE^\eps_\Omega(\cdot,\cdot)$, the nonlocal-to-local asymptotic demi-convergence of the forms $\cE^\eps_\Omega (\cdot,\cdot)$ to $\cE^0(\cdot,\cdot)$.
\begin{theorem}\label{thm:asymp-conv-reg-form}
Let $\Omega\subset \R^d$ be open,  $u,v\in W^{1,p}(\Omega)$ . Let the sequence $ (v_\eps)_\eps$ with  $v_\eps \in W_{\nu_\eps}^p(\Omega)$ such that  $v_\eps \to v$ in $L^p_{\loc}(\Omega)$ as $\eps\to0$ and
\begin{align*}
\sup_{\eps>0} \big(\|v_\eps\|^p_{L^p(\Omega)}+ \cE^\eps_\Omega(v_\eps,v_\eps)\big)<\infty.
\end{align*}
If $u\in W^{1,p}_0(\Omega)$ or $\Omega$ is an $W^{1,p}$-extension domain then we have
\begin{align*}
\lim_{\eps\to0}\cE^\eps_\Omega(u,  v_\eps)
= K_{d,p}\cE^0(u, v).
\end{align*}
\end{theorem}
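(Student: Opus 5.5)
The plan is to repeat the argument of Theorem~\ref{thm:asymp-conv-form}, now with the regional form $\cE^\eps_\Omega(\cdot,\cdot)$. Every quantity lives on $\Omega\times\Omega$, so no boundary-crossing terms appear and no collapse lemma is needed. Set $U(x,y)=u(x)-u(y)$ and $V_\eps(x,y)=v_\eps(x)-v_\eps(y)$. Take $\sigma=\pm1$ and $0<t<1$, and apply Lemma~\ref{lem:elem-taylor-expan} (or Corollary~\ref{cor:elm-est-taylor}) pointwise with $a=U$ and $b=t\sigma V_\eps$. Integrating against $\nu_\eps(x-y)$ over $\Omega\times\Omega$ gives
\begin{align*}
0\le \cE^\eps_\Omega(u+t\sigma v_\eps,u+t\sigma v_\eps)-\cE^\eps_\Omega(u,u)-pt\,\cE^\eps_\Omega(u,\sigma v_\eps)\le C\,t^{\min(p,2)} .
\end{align*}
The mixed terms with $2\le k\le\lfloor p\rfloor$ are controlled by H\"older's inequality, exactly as before. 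The constant $C$ depends only on $p$ and on $M:=\sup_\eps\big(\cE^\eps_\Omega(u,u)+\cE^\eps_\Omega(v_\eps,v_\eps)\big)$, and we must check that $M<\infty$. The $v_\eps$ part is bounded by hypothesis. For $u$ there are two cases. If $\Omega$ is a $W^{1,p}$-extension domain, then $\cE^\eps_\Omega(u,u)\le \cE^\eps_{\R^d}(Eu,Eu)\le 2^p\|Eu\|^p_{W^{1,p}(\R^d)}$ by \eqref{eq:taylor-exapnsion} and \eqref{eq:w1p-extension}. If $u\in W^{1,p}_0(\Omega)$, we bound it the same way using the zero extension $\bar u\in W^{1,p}(\R^d)$.

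Next, divide by $t$ and pass to the $\liminf$ as $\eps\to0$:
\begin{align*}
p\liminf_{\eps\to0}\cE^\eps_\Omega(u,\sigma v_\eps)\ge \liminf_{\eps\to0}\frac{\cE^\eps_\Omega(u+t\sigma v_\eps,u+t\sigma v_\eps)-\cE^\eps_\Omega(u,u)}{t}-Ct^{\min(p,2)-1}.
\end{align*}
Two ingredients handle the right-hand side. First, $u+t\sigma v_\eps\to u+t\sigma v$ in $L^p_{\loc}(\Omega)$, and its $L^p(\Omega)$ norms and $\cE^\eps_\Omega$-energies are uniformly bounded. Theorem~\ref{thm:BBM-liminf} then gives $\liminf_\eps \cE^\eps_\Omega(u+t\sigma v_\eps,u+t\sigma v_\eps)\ge K_{d,p}\cE^0(u+t\sigma v,u+t\sigma v)$. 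Second, we need the exact limit $\cE^\eps_\Omega(u,u)\to K_{d,p}\cE^0(u,u)$. In the extension-domain case this is Theorem~\ref{thm:BBM-dual-limit}$(i)$ applied to $Eu$, using that $\cE^\eps_\Omega$ only sees $Eu|_\Omega=u$. In the case $u\in W^{1,p}_0(\Omega)$, it is Theorem~\ref{thm:BBM-dual-limit}$(iii)$ applied to $\bar u$.

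Combining these, letting $t\to0^+$ and using the differentiability of $t\mapsto\cE^0(u+t\sigma v,u+t\sigma v)$ at $0$, we get $\sigma\liminf_\eps\cE^\eps_\Omega(u,v_\eps)\ge \sigma K_{d,p}\cE^0(u,v)$ for $\sigma=\pm1$. The case $\sigma=-1$ should be read as a bound on $\limsup_\eps\cE^\eps_\Omega(u,v_\eps)$, since $\liminf_\eps\cE^\eps_\Omega(u,-v_\eps)=-\limsup_\eps\cE^\eps_\Omega(u,v_\eps)$. Together the two bounds force the limit to exist and equal $K_{d,p}\cE^0(u,v)$.

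The main obstacle is obtaining the exact convergence $\cE^\eps_\Omega(u,u)\to K_{d,p}\cE^0(u,u)$ and the uniform bound on $\cE^\eps_\Omega(u,u)$ when $u$ is only given on $\Omega$. Both hinge on passing to a $W^{1,p}(\R^d)$ function through extension or zero-extension. In the $W^{1,p}_0$ case one should double-check that Theorem~\ref{thm:BBM-dual-limit}$(iii)$ indeed gives the regional limit for $\bar u$ without any regularity of $\partial\Omega$. This follows from $\cE^\eps_{\R^d}(\bar u,\bar u)\to K_{d,p}\|\nabla\bar u\|^p_{L^p}$ by \eqref{eq:form-conv-full}, together with the collapse $\cE^\eps_{cr}(\bar u,\bar u)\to0$ from Lemma~\ref{lem:collap-bdary}. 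Note that $\bar u$ vanishes on $\Omega^c$, so $\cE^\eps_{\R^d}(\bar u,\bar u)=\cE^\eps_\Omega(\bar u,\bar u)+2\cE^\eps_{cr}(\bar u,\bar u)$, and the regional limit follows. Everything else is a routine transcription of the proof of Theorem~\ref{thm:asymp-conv-form}.
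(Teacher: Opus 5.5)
Your proposal is correct and is essentially the paper's argument: the paper presents this result as the regional analog of Theorem~\ref{thm:asymp-conv-form}, i.e.\ the same convexity/Taylor sandwich now on $\Omega\times\Omega$, with Theorem~\ref{thm:BBM-liminf} handling the perturbed energies and the exact limit $\cE^\eps_\Omega(u,u)\to K_{d,p}\cE^0(u,u)$ (via extension or zero extension) handling the subtracted term. Your extra check in the $W^{1,p}_0(\Omega)$ case is sound, because the collapse $\cE^\eps_{cr}(\bar u,\bar u)\to0$ only uses the density part of the proof of Lemma~\ref{lem:collap-bdary}, which needs no regularity of $\partial\Omega$; alternatively, you can sandwich $\cE^\eps_\Omega(u,u)$ between Theorem~\ref{thm:BBM-liminf} and $\cE^\eps_{\R^d}(\bar u,\bar u)\to K_{d,p}\|\nabla u\|^p_{L^p(\Omega)}$.
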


\vspace{1mm}
\par  In view of Theorem \ref{thm:BBM-liminf} and Theorem \ref{thm:asymp-conv-reg-form} we obtain the following compactness result, which refines \cite[Theorem 5.35 \& 5.40]{guy-thesis} as well as \cite[Theorem 1.2]{Pon04}.
\begin{theorem}\label{thm:asymp-compactness}
Let $\Omega\subset \R^d$ is any open set. In the case $d=1$ we assume in addition that $(\nu_\eps)_\eps$ satisfies the following uniform lower bound condition:
\begin{align*}
\text{There exist $c_0, \theta_0 \in (0,1)$ such that}
\quad
\inf_{0 < \varepsilon < 1} \inf_{\theta_0 \leq \theta \leq 1} \nu_\varepsilon(\theta x) \geq c_0
\quad \text{for all } x \in \R.
\end{align*}
 Assume that  $(u_\eps)_\eps\subset L^p(\Omega)$, $1\leq p<\infty$ satisfies
$\sup_{\eps>0} \big(\|u_\eps\|^p_{L^p(\Omega)}+ \cE^\eps_\Omega(u_\eps,u_\eps) \big)<\infty.$
\noindent Then there exist $u\in L^p(\Omega)$ with,  $u\in W^{1,p}(\Omega)$ if $1<p<\infty$ and  $u\in BV(\Omega)$ if $p=1$, and a subsequence $(\eps_n)_n$ with $\eps_n\to0$ such that $u_{\eps_n}\to u $ in $ L^p_{\loc}(\Omega)$ and
\begin{align*}
& \liminf_{\eps\to0}\cE^{\eps}_\Omega( u_\eps, u_\eps) \geq K_{d,p} \cE^{0}( u, u),\quad 1<p<\infty,\\
%\quad \text{and}\quad
&\liminf_{\eps\to0}\cE^{\eps}_\Omega( u_\eps, u_\eps) \geq K_{d,1} |u|_{BV(\Omega)},\quad p=1.
\end{align*}

\noindent 	In addition, the following  assertions hold.
\begin{enumerate}[$(i)$]
\item  If $\Omega$ is a $W^{1,p}$-extension set then $\cE^{\eps_n}_\Omega( v, u_{\eps_n}) \to K_{d,p} \cE^{0}( v, u)$, $v\in W^{1,p}(\Omega)$.
\item If $\Omega=\R^d$ then  $\|u_{\eps_n}-u\|_{L^p(\Omega')}\xrightarrow{n\to\infty}0$ for $\Omega'\subset \R^d $ with   $|\Omega'|<\infty$.
\item If $\Omega$ is bounded Lipschitz  then  $\|u_{\eps_n}-u\|_{L^p(\Omega)}\xrightarrow{n\to\infty}0$.
\item Further if $\Omega$ is bounded Lipschitz  and,  $(u_\eps)_\eps\subset L^p(\R^d)$ satisfies $u_\eps=0$ a.e. on $\Omega^c$ for each $\eps$, and $\sup_{\eps>0}\|u_\eps\|_{ W^p_{\nu_\eps}(\R^d)}<\infty$ then $ u\in W^{1,p}_0(\Omega)\cap W^{1,p}(\R^d)$.
\end{enumerate}
\end{theorem}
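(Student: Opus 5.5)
The plan is to obtain the subsequence from an asymptotic compactness argument on interior pieces, then read off each assertion $(i)$--$(iv)$ from results already available.

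\textbf{Compactness on interior pieces.} Fix an exhaustion $(\Omega_j)_j$ of $\Omega$ by open bounded sets with $\overline{\Omega_j}\subset\Omega_{j+1}$. On each $\Omega_j$, I will apply the Bourgain--Brezis--Mironescu/Ponce type compactness criterion (\cite[Theorem 1.2]{Pon04}, \cite[Theorem 5.35]{guy-thesis}) to the restrictions $u_\eps|_{\Omega_j}$. Its hypotheses are the uniform bound $\sup_\eps\iint_{\Omega_j\Omega_j}|u_\eps(x)-u_\eps(y)|^p\nu_\eps(x-y)\d y\d x\le \sup_\eps\cE^\eps_\Omega(u_\eps,u_\eps)$, together with the radial approximation-of-unity property \eqref{eq:plevy-approx}. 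For $d\ge2$, radiality and \eqref{eq:plevy-approx} suffice for Ponce's argument. For $d=1$, the uniform lower bound on $\nu_\eps$ is exactly what Ponce's proof needs in order to compare with the constant kernel on small balls; I would use it to control the translation moduli $\|u_\eps(\cdot+h)-u_\eps\|_{L^p}$ on compacts uniformly in $\eps$.

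This yields relative compactness of $(u_\eps)$ in $L^p(\Omega_j)$ for each fixed $j$. A diagonal extraction then produces $\eps_n\to0$ and $u\in L^p_{\loc}(\Omega)$ with $u_{\eps_n}\to u$ in $L^p_{\loc}(\Omega)$. Fatou's lemma gives $u\in L^p(\Omega)$. Theorem~\ref{thm:BBM-liminf} then gives $u\in W^{1,p}(\Omega)$ (resp.\ $BV(\Omega)$ for $p=1$) together with the liminf inequality along $\eps_n$. The inequality for the full $\liminf_{\eps\to0}$ is obtained by first extracting a subsequence realising the liminf and only then running the compactness argument.

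\textbf{Assertions $(i)$--$(iv)$.}
\begin{enumerate}[$(i)$]
\item This is a direct application of Theorem~\ref{thm:asymp-conv-reg-form}, with $v$ playing the role of the fixed function and $u_{\eps_n}$ that of the varying one.
\item For $\Omega=\R^d$ we have $L^p_{\loc}$ convergence, and I need to upgrade it to sets $\Omega'$ of finite measure. I would split $\Omega'=(\Omega'\cap B_R)\cup(\Omega'\setminus B_R)$ and show that the tail contribution is small uniformly in $n$. The device is a mollification argument: $\|u_\eps-u_\eps*\phi_\delta\|_{L^p(\R^d)}$ is small uniformly in $\eps$, since it is bounded by the translation moduli, which are controlled by the energy. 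Meanwhile $\|u_\eps*\phi_\delta\|_{L^\infty}\le C_\delta\|u_\eps\|_{L^p}$, so that $\|u_\eps*\phi_\delta\|_{L^p(\Omega'\setminus B_R)}\le C_\delta\|u_\eps\|_{L^p}|\Omega'\setminus B_R|^{1/p}$. This last quantity is small for $R$ large because $|\Omega'|<\infty$.
\item I will use a uniform boundary estimate for bounded Lipschitz domains: $\Omega$ is then an extension domain, and the extension operator can be chosen so that, for $u_\eps$, the nonlocal energy on $\R^d$ of a localized reflection is bounded by $C\cE^\eps_\Omega$, uniformly in $\eps$ (as in \cite[Theorem 5.40]{guy-thesis}). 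Then $(ii)$ applied to the extended sequence gives convergence in $L^p(\Omega)$. If that route is too heavy, an alternative is to show directly that $\int_{\Omega\setminus\Omega_j}|u_\eps|^p$ is uniformly small, using the Lipschitz cone property together with the uniform energy bound.
\item Since $u_\eps=0$ on $\Omega^c$, we can apply $(ii)$ on $\R^d$: $u_{\eps_n}\to\bar u$ in $L^p_{\loc}(\R^d)$ with $\bar u\in W^{1,p}(\R^d)$ by Theorem~\ref{thm:BBM-liminf}, and $\bar u=0$ a.e.\ on $\Omega^c$. Since $\bar u=u$ on $\Omega$ and $\Omega$ is Lipschitz, a $W^{1,p}(\R^d)$ function vanishing outside $\Omega$ lies in $W^{1,p}_0(\Omega)$.
\end{enumerate}

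\textbf{Main obstacle.} The hardest step is the uniform-in-$\eps$ compactness on interior pieces and up to the Lipschitz boundary in $(iii)$, in particular the case $d=1$. There I would first carefully adapt Ponce's estimate $\|u*\rho-u\|\lesssim$ energy under the stated lower bound on $\nu_\eps$.
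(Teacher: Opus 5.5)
Your architecture matches the paper's. The paper gives no self-contained proof here. It takes the compactness from \cite[Theorem 1.2]{Pon04} and \cite[Theorem 5.35 \& 5.40]{guy-thesis}, the liminf and $u\in W^{1,p}(\Omega)$ from Theorem~\ref{thm:BBM-liminf}, and $(i)$ from Theorem~\ref{thm:asymp-conv-reg-form}, with the same assignment of roles as yours. It gets $(iv)$ from the trace argument of Theorem~\ref{thm:conv-trace}: $\gamma_0(u)=\gamma_0(0)=0$ and $\ker\gamma_0=W^{1,p}_0(\Omega)$.

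Several of your steps are fine:
\begin{itemize}
\item Extracting a liminf-realizing subsequence before compactifying is the right way to get the inequality for the full $\liminf_{\eps\to0}$.
\item Your $(iv)$ uses the fact that a $W^{1,p}(\R^d)$ function vanishing a.e.\ off a Lipschitz domain lies in $W^{1,p}_0(\Omega)$. This is more direct than the trace route, and the paper itself uses that fact in the proof of Theorem~\ref{thm:converg-dirichlet}.
\item Your tail argument for $(ii)$ is sound once you have the bound on $\|u_\eps-u_\eps*\phi_\delta\|_{L^p(\R^d)}$ uniformly for small $\eps$. That is the same estimate that drives the compactness.
\item Apply Ponce's criterion on balls or cubes covering compact subsets of $\Omega$, not on arbitrary open $\Omega_j$.
\end{itemize}

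\textbf{The case $d=1$.} This step does not work as you describe it. Read literally (take $\theta=1$), the hypothesis says $\nu_\eps\ge c_0$ on all of $\R$. That contradicts \eqref{eq:plevy-approx}, so the hypothesis must mean the comparability condition $\nu_\eps(\theta x)\ge c_0\,\nu_\eps(x)$ for $\theta\in[\theta_0,1]$, a substitute for the monotonicity in \cite{BBM01}.

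In any case a lower bound cannot help. Dominating a constant kernel on a ball only controls $\iint_{|x-y|<r}|u_\eps(x)-u_\eps(y)|^p\,\d y\,\d x$. That quantity is already at most $2^p|B_r|\,\|u_\eps\|^p_{L^p}$, so it carries no information about small-scale oscillation.

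The real one-dimensional obstruction is a kernel concentrated near $|h|=\eps$. Take $\nu_\eps=c_\eps\mathds{1}_{\{\eps\le|h|\le\eps+\eps^2\}}$, normalized as in \eqref{eq:plevy-approx}. Then $u_\eps(x)=\sin(2\pi x/\eps)$ on a bounded interval has uniformly bounded $L^p$-norm and energy, but no $L^p$-convergent subsequence. The comparability condition excludes this by forcing $\nu_\eps$ to charge the whole range of scales $[\theta_0|h|,|h|]$. It is this spreading, combined with the triangle inequality, that yields uniform control of small translations.

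\textbf{Part $(iii)$.} Your route also rests on an unproved claim. Nothing in the paper gives a reflection extension whose $\R^d$-energy is at most $C\,\cE^\eps_\Omega(u_\eps,u_\eps)$ uniformly in $\eps$, and under \eqref{eq:plevy-approx} alone it is not automatic. Already for the flat reflection $z\mapsto z^*$ across $\{x_d=0\}$, the cross term is $\iint_{\Omega\Omega}|u(x)-u(z)|^p\,\nu_\eps(x-z^*)\,\d z\,\d x$ with $|x-z^*|\ge|x-z|$. A pointwise comparison with $\cE^\eps_\Omega$ therefore needs $\nu_\eps$ to be essentially radially nonincreasing. A bi-Lipschitz reflection would need comparability of $\nu_\eps$ over a fixed range of scales.

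Your fallback, uniform smallness of $\int_{\Omega\setminus\Omega_j}|u_\eps|^p$, is the right target. But it is exactly the up-to-the-boundary part of the compactness and does not follow from the cone property alone. The paper takes compactness in $L^p(\Omega)$ for bounded Lipschitz $\Omega$ from the cited references. You should either do the same or actually prove that boundary estimate.
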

Note that $(iv)$ follows the upcoming Theorem \ref{thm:conv-trace}.

\vspace{-1ex}
%%%%%%%%%%%%%%%%%%%%%%%%%%%%%%%%
\section{Robustness of  nonlocal trace spaces} \label{sec:robust-trace}
%%%%%%%%%%%%%%%%%%%%%%%%%%%%%%%%
In this section we establish the robustness of  our nonlocal trace space of $\WnuOmRa$, as $\eps\to0$ toward the trace space of the trace space $W^{1 ,p}(\partial\Omega)$ with $1\leq p<\infty$. Note that in the case $d=1$ we require the additional condition:
\begin{align*}
\text{There exist $c_0, \theta_0 \in (0,1)$ such that}
\quad
\inf_{0 < \varepsilon < 1} \inf_{\theta_0 \leq \theta \leq 1} \nu_\varepsilon(\theta x) \geq c_0
\quad \text{for all } x \in \R.
\end{align*}
\subsection{Local trace theorem}
Let us quickly recall the following version of classical trace theorem, which will be useful  later.
 \begin{theorem}[{Classical trace theorem, see \cite[Chap III]{BF13}}]\label{thm:trace-loc-thm}
Assume $\Omega\subset \R^d$ is bounded Lipschitz. There exists a linear and continuous trace operator $\gamma_0: W^{1,p}(\Omega)\to L^p(\partial\Omega)$ such that  $\gamma_0 u= u|_{\partial\Omega}$ for all $u\in C^1(\overline{\Omega})\cap W^{1,p}(\Omega)$  and
\begin{align*}
\|\gamma_0(u)\|_{ L^p(\partial \Omega)}\leq C\|u\|^{1-1/p}_{L^p(\Omega)}
\|u\|^{1/p}_{W^{1,p}(\Omega)}.
\end{align*}
Moreover, the following assertions are true
\begin{enumerate}[$(a)$]
\item  $ \ker(\gamma_0 ) = W^{1,p}_0(\Omega)$, and  $\gamma_0 (W^{1,p}(\Omega))\overset{def}{=} W^{1,1-1/p }(\partial \Omega)$.
\item The mappings $W^{1,p}(\Omega) \xrightarrow{\gamma_0}W^{1-1/p ,p}(\partial\Omega)
\xrightarrow{\operatorname{Id}} L^p(\partial \Omega)$
are continuous.
\item The operator $\gamma_0$ has a right  lifting, i.e., there is $R_0:W^{1-1/p ,p}(\partial\Omega)\to W^{1,p}(\Omega)$ bounded and continuous mapping such that $ \gamma_0\circ R_0= Id$.
\end{enumerate}
\end{theorem}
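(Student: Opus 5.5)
The plan is the standard route of \cite[Chap III]{BF13}: localize, flatten the boundary, prove everything on the half-space for smooth functions, and then extend by density. Since $\Omega$ is bounded Lipschitz, we cover $\partial\Omega$ by finitely many cylinders $U_1,\dots,U_N$. In each $U_j$, after a rigid motion, $\Omega\cap U_j$ is the epigraph of a Lipschitz function $\phi_j$. The bi-Lipschitz map $\Phi_j(x',x_d)=(x',x_d-\phi_j(x'))$ sends it to a piece of $\R^d_+$. We add an interior open set $U_0\Subset\Omega$ and take a smooth partition of unity $(\theta_j)_{j=0}^N$ subordinate to this cover. Composition with $\Phi_j$ preserves $W^{1,p}$, and the surface measure on $\partial\Omega\cap U_j$ is comparable to $\d x'$. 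It therefore suffices to treat $w=(\theta_j u)\circ\Phi_j^{-1}$, a compactly supported function on $\overline{\R^d_+}$. We also use the density of $C^1(\overline\Omega)$ (indeed $C_c^\infty(\R^d)|_\Omega$) in $W^{1,p}(\Omega)$, which holds for Lipschitz, hence extension, domains via \eqref{eq:w1p-extension} and mollification.

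For $w\in C_c^1(\overline{\R^d_+})$ we write $|w(x',0)|^p=-\int_0^\infty\partial_d(|w|^p)(x',t)\,\d t\le p\int_0^\infty|w|^{p-1}|\partial_d w|\,\d t$. Integrating in $x'$ and applying H\"older gives
\begin{align*}
\|w(\cdot,0)\|^p_{L^p(\R^{d-1})}\le p\,\|w\|^{p-1}_{L^p(\R^d_+)}\|\nabla w\|_{L^p(\R^d_+)}.
\end{align*}
This is exactly the multiplicative bound stated. Summing over $j$ (the cutoffs only add lower-order terms) and using density defines $\gamma_0$ as the unique continuous extension of $u\mapsto u|_{\partial\Omega}$. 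For the $W^{1-1/p,p}$ bound in $(b)$, we estimate the Gagliardo seminorm of $w(\cdot,0)$ on $\R^{d-1}$. We write $w(x',0)-w(y',0)$ as a telescoping sum through the point $((x'+y')/2,|x'-y'|)$. Each piece is bounded by an integral of $|\nabla w|$ along a segment of length $\sim|x'-y'|$, and Hardy's inequality then yields $|w(\cdot,0)|_{W^{1-1/p,p}}\le C\|\nabla w\|_{L^p(\R^d_+)}$. Transporting back through the charts and using that Lipschitz changes of variables preserve $W^{1-1/p,p}(\partial\Omega)$ gives $(b)$. The embedding $W^{1-1/p,p}(\partial\Omega)\hookrightarrow L^p(\partial\Omega)$ holds by definition of the norm.

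For $(c)$, the lifting, we again work on the half-space and define $R g(x',x_d)=\eta(x_d)\,(\rho_{x_d}*g)(x')$, where $\rho$ is a standard mollifier on $\R^{d-1}$ and $\eta$ is a cutoff. One checks that $Rg\to g$ as $x_d\to0$, and that $\|\nabla Rg\|_{L^p}$ is controlled by $\iint|g(x')-g(y')|^p|x'-y'|^{-(d-1)-(p-1)}\,\d x'\d y'$. The key point is that $\nabla_{x'}(\rho_t*g)=(\nabla\rho_t)*(g-g(x'))$ and $\partial_t(\rho_t*g)$ have the same structure, after which Jensen and Fubini apply. Gluing with the partition of unity gives $R_0$ with $\gamma_0\circ R_0=\operatorname{Id}$. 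This shows $\gamma_0$ is onto $W^{1-1/p,p}(\partial\Omega)$, so the definitional identity in $(a)$ is consistent. For $\ker\gamma_0=W^{1,p}_0(\Omega)$, the inclusion $\supseteq$ follows by density of $C_c^\infty(\Omega)$ and continuity of $\gamma_0$. For $\subseteq$, we localize again. On the half-space, $w$ with zero trace is approximated by $w_n=w\,(1-\chi(n x_d))$, with $\chi$ a cutoff equal to $1$ near $0$. The error term is $n\,\chi'(nx_d)w$, and it is controlled using
\begin{align*}
|w(x',t)|\le\int_0^t|\partial_d w(x',s)|\,\d s,
\end{align*}
valid because the trace vanishes, together with Hardy's inequality. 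This shows $w_n\to w$ in $W^{1,p}$.

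The main obstacle I expect is this kernel characterization, together with the bookkeeping that the Lipschitz charts preserve both the fractional boundary norm and the zero-trace property. The pointwise representation $w(x',t)=\int_0^t\partial_d w$ is not available for non-smooth $w$. So I would first establish the identity $w(x',t)=\gamma_0 w(x')+\int_0^t\partial_d w(x',s)\,\d s$ for a.e.\ $x'$ by density in the smooth case, and only then run the cutoff argument.
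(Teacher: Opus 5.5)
The paper gives no proof of this theorem; it only cites Boyer--Fabrie. Your outline is the standard proof of that cited result: bi-Lipschitz charts with a partition of unity, half-space estimates plus density, Gagliardo's mollifier extension for the lifting, and a cutoff plus Hardy argument for $\ker\gamma_0$. For $1<p<\infty$ it is correct.

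One point should be made explicit. The function $w=(\theta_ju)\circ\Phi_j^{-1}$ is only Lipschitz, not $C^1$. This is harmless:
\begin{itemize}
\item $\Phi_j$ maps vertical lines to vertical lines, so your computation of $|w(x',0)|^p$ along $x_d$ is unchanged.
\item The slanted-segment estimate in $(b)$ can be proved for $w\in C^1_c(\overline{\R^d_+})$ and then passed to the limit by density and Fatou's lemma.
\end{itemize}

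The gap is the endpoint $p=1$, which the statement is meant to cover. Section~\ref{sec:robust-trace} is set up for $1\le p<\infty$, and Theorem~\ref{thm:conv-trace} invokes this result in that range. Moreover, $(c)$ asks only for a bounded continuous (not linear) $R_0$, precisely because, by Peetre's theorem recalled right after the statement, no linear lifting exists when $p=1$.

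Every $p$-sensitive step of yours needs $p>1$:
\begin{itemize}
\item The Minkowski/Hardy step in $(b)$ produces the factor $\int_0^1 t^{-1/p}\,\d t$, which diverges at $p=1$.
\item Your lifting $g\mapsto \eta(x_d)\,\rho_{x_d}*g$ is linear, so it cannot exist at $p=1$.
\item The kernel argument uses Hardy's inequality.
\item At $p=1$ the intrinsic double-integral norm no longer describes the trace space, which is $L^1(\partial\Omega)$ by Gagliardo.
\end{itemize}
The $L^p$ bound itself is unaffected.

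Here is how to close each piece at $p=1$.
\begin{itemize}
\item \emph{Kernel.} Replace Hardy by the H\"older estimate $|w(x',t)|^p\le t^{p-1}\int_0^t|\partial_d w(x',s)|^p\,\d s$. For every $p\ge1$ this yields
\begin{align*}
n^p\int_{\R^{d-1}}\int_0^{2/n}|w(x',t)|^p\,\d t\,\d x'\le \frac{2^p}{p}\int_{\R^{d-1}}\int_0^{2/n}|\partial_d w(x',s)|^p\,\d s\,\d x'\xrightarrow{n\to\infty}0 .
\end{align*}
\item \emph{Part $(b)$.} Use the quotient norm the paper puts on $W^{1-1/p,p}(\partial\Omega)$. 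The first arrow is then bounded by definition, and the second is your $L^p$ estimate.
\item \emph{Part $(c)$.} A nonlinear device is unavoidable. The Bartle--Graves theorem says a bounded linear surjection between Banach spaces has a continuous right inverse $\sigma$ with $\|\sigma(y)\|\le C\|y\|$. Applied to $\gamma_0$ onto its range with the quotient norm, it gives $R_0$. Gagliardo's construction is what identifies that range with $L^1(\partial\Omega)$; it interpolates in $x_d$ between smooth approximations of $g$ on layers whose thickness depends on $g$.
\end{itemize}
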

\noindent It is important to emphasize that a lifting operator $ R_0 $ can be chosen to be linear (though not necessarily) only when
$1 < p < \infty $, whereas $R_0$ can never be linear in the case $ p = 1$. This is known as the Peetre's theorem \cite{Pee79}; a more  recent and more accessible proof can be found \cite{PeWo02}. For obvious and  practical reasons, we equip the local trace space $W^{1-1/p ,p}(\Omega)$  the following norm
\begin{align*}
\|g\|_{W^{1-1/p ,p}(\partial \Omega)}= \inf\big\{ \|u\|^*_{W^{1,p}(\Omega)} \,\,:\,\, \gamma_0(u)= g\quad \text{a.e on $\partial \Omega$}\quad u\in W^{1,p}(\Omega)\big\}
\end{align*}
where we consider the Sobolev norm
\begin{align*}
\|u\|^*_{W^{1,p}(\Omega)} =\Big(\|u\|^p_{L^p(\Omega)}+ K_{d,p}\|\nabla u\|^p_{L^p(\Omega)}\Big)^{1/p}.
\end{align*}
Let us recall that when $\Omega$ is bounded Lipschitz and  $1<p<\infty$, it is possible to obtain the explicit equivalence of the following trace norm, see for instance \cite{MiRu15,Mir18},
\begin{align*}
\|u\|_{W^{1-1/p ,p}(\partial\Omega)}\asymp\Big( \int_{\partial \Omega} |u(x)|^p\d \sigma(x)+  \iint_{\partial \Omega\, \partial \Omega} \frac{|u(x)-u(y)|^p}{|x-y|^{d+ p-2}}\d \sigma(y)\, \d \sigma(x)\Big)^{1/p}.
\end{align*}
Interestingly, according to \cite{Gag57}, it turns out that in the case $p=1$,   the trace space $W^{0,1}(\partial\Omega)$ of $W^{1,1}(\Omega)$ coincides with $L^1(\partial\Omega)$, i.e. $\gamma_0 (W^{1,1}(\Omega))= L^1(\partial\Omega)$; we strongly recommend see \cite{Mir15} for elegant proof of the latter statement.

\subsection{Nonlocal trace theorem} In the nonlocal situation, given $\nu$ a symmetric $p$-L{\'e}vy kernel, i.e., $\nu\geq 0$, $\nu(h)=\nu(-h)$ and  $\nu\in L^1(\R^d,1\land |h|^p)$,  it is much natural to consider the nonlocal Sobolev space
\begin{align*}
\WnuOmR=\big\lbrace u: \R^d \to \R \text{ meas.} \,: \, u|_\Omega\in L^p(\Omega)\,\!\text{ and }\,\!  \cE(u,u) <\infty \big\rbrace,
\end{align*}
where $\cE(\cdot,\cdot )$ is the nonlinear nonlocal form
\begin{align*}
\cE(u,u)&= \iil_{(\Omega^c\times\Omega^c)^c} |u(x)-u(y)|^p \nu(x-y)\d y \d x.
\end{align*}
In addition we also consider $\WnuOmRO$ the space of elements $u\in \WnuOmR$ such that $u=0$ a.e. on $\Omega^c$ that is,
\begin{align*}
\WnuOmRO
&= \{ u\in \WnuOmR~: ~u=0~~\text{a.e. on } \R^d\setminus \Omega\}.
\end{align*}
\noindent To be rigorous, it is natural  to identify $\WnuOmR\equiv W^p_\nu(\Omega|\Omega_\nu)$, where $\Omega_\nu=\Omega+\supp\nu $ and  $W^p_\nu(\Omega|\Omega_\nu)$ is the space of measurable functions $u:\Omega_\nu^*= \Omega_\nu\cup\Omega\to\R$ such that $u|_\Omega\in L^p(\Omega)$ and $\cE_+(u,u)<\infty$ with
\begin{align*}
\cE_+(u,u)	= \iil_{\Omega\Omega_\nu} |u(x)-u(y)|^p \nu(x-y)\d y \d x=\iil_{\Omega\R^d} |u(x)-u(y)|^p
\nu(x-y)\d y \d x.
\end{align*}
In this way th space $\WnuOmR$ can be endowed with one the equivalent seminorms
\begin{align*}
\|u\|_{\WnuOmR}&= \Big( \|u\|^p_{L^p(\Omega)}+ \cE(u,u)\Big)^{1/p} \hspace{-1ex}\asymp   \Big( \|u\|^p_{L^p(\Omega)}+  \cE_+(u,u)	\Big)^{1/p}
\hspace{-1ex}=: \|u\|_{W^p_\nu(\Omega|\Omega_\nu)}.
\end{align*}
\noindent Interested reader is referred  to \cite{Fog25,guy-thesis,FoKa24,Fog21b} for further details on these spaces.
Due to its nonlocal character, the nonlocal  trace space of $\WnuOmR$ assumes functions defined on $\R^d\setminus\Omega$, or strictly speaking on the nonlocal boundary $\Omega_{\nu}^e=\Omega_\nu\setminus\Omega$ of $\Omega$ with respect to $\nu$.  The main reason is that elements of $\WnuOmR$ are essentially defined on $\R^d$, or strictly speaking  on the nonlocal hull
$\Omega_\nu^{*} =\Omega_\nu\cup\Omega$ when taking into account the support of $\nu$; see \cite{Fog25}.
\begin{definition}[Trace space of $\WnuOmR$]
The trace space of $\WnuOmR$ denoted  $\TnuOm$ is the space of restrictions to $\R^d\setminus \Omega$ of functions of $\WnuOmR$. More precisely,
\begin{align*}
\TnuOm = \{ v = u|_{\Omega^c} ~~\hbox{with }~~ u \in \WnuOmR\}.
\end{align*}
By  identifying  $\WnuOmR\equiv W^p_\nu(\Omega|\Omega_\nu)$ leads to the identification $\TnuOm\equiv T^p_\nu(\Omega_{\nu}^e)$ where $\Omega_{\nu}^e=\Omega_\nu\setminus\Omega$  is the nonlocal boundary of $\Omega$ with respect to $\nu$ and
\begin{align*}
T^p_\nu(\Omega_{\nu}^e)
= \{ v = u|_{\Omega_{\nu}^e} ~~\hbox{with }~~ u \in W^p_\nu(\Omega|\Omega_\nu)\}.
\end{align*}
We endow $\TnuOm $ with its natural seminorm,
\begin{align*}
\|v\|_{\TnuOm } &= \inf\{ \|u\|_{\WnuOmR }: ~~ u \in \WnuOmR ~~ \hbox{ with }~~ v = u|_{\Omega^c} \}\\
&= \inf\{ \|u\|_{W^p_\nu(\Omega|\Omega_\nu) }: ~~ u \in W^p_\nu(\Omega|\Omega_\nu)~~ \hbox{ with }~~ v = u|_{\Omega^e_{\nu}}\}.
\end{align*}
\end{definition}
\noindent Of course, if the space $(\WnuOmR, \|\cdot\|_{\WnuOmR})$	is a Banach space then so is the space $(\TnuOm,  \|\cdot\|_{\TnuOm})$.  Let us state the nonlocal trace theorem in connection with the trace space $\TnuOm$. Further detailed explanation can be found in \cite{Fog25,FoKa24}.
\begin{theorem}[Nonlocal trace theorem]\label{thm:trace-nonloc-thm}
Let $\Omega\subset \R^d$ be open and  $\omega_\nu \in \{\widetilde{\nu}, \overline{\nu}\}$ where
\begin{align*}
\widetilde{\nu}(x)&=\int_{\Omega}(1\land\nu(x-y)) \d y
\qquad\text{and}\qquad \overline{\nu}(x)= \operatorname{ess}\inf_{y\in \Omega}\nu(x-y).
\end{align*}
The embedding $\WnuOmR\hookrightarrow L^p(\R^d,\omega_\nu)$ is continuous. Furthermore, the nonlocal trace operator $u \mapsto \operatorname{Tr}(u) = u\mid_{\Omega^c}$ fulfills the following properties.
\begin{enumerate}[$(a)$]
\item $ \ker(\operatorname{Tr} ) = \WnuOmRO$ and $\operatorname{Tr}(\WnuOmR)= \TnuOm$.
\item The mappings $\WnuOmR \xrightarrow{\operatorname{Tr}}\TnuOm\xrightarrow{\operatorname{Id}} L^p(\Omega^c,\omega_\nu)$
are continuous.
\item If $\Omega$ is bounded in one direction, then there is a bounded and continuous mapping $R: \TnuOm\to \WnuOmR$ such that $ \operatorname{Tr}\circ R= Id$.
\end{enumerate}
\end{theorem}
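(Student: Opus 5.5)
Most of the statement follows directly from the definitions, so I will handle those parts first and spend the effort on the weighted embedding and the lifting.

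\textbf{Weighted embedding.} This is the core. The aim is to show that $\|u\|_{L^p(\R^d,\omega_\nu)}\le C\|u\|_{\WnuOmR}$.

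For $\omega_\nu=\widetilde{\nu}$, fix $x\in\Omega^c$ and $y\in\Omega$. Use
\begin{align*}
|u(x)|^p\le 2^{p-1}|u(x)-u(y)|^p+2^{p-1}|u(y)|^p .
\end{align*}
Multiply by $1\land\nu(x-y)$ and integrate in $y\in\Omega$, then in $x\in\Omega^c$. Bound $1\land\nu\le\nu$ in the difference term; it is then controlled by $\cE_{cr}(u,u)\le\cE(u,u)$. The second term is
\begin{align*}
\int_\Omega |u(y)|^p\int_{\Omega^c}(1\land\nu(x-y))\,\d x\,\d y .
\end{align*}
Here I need $\int_{\R^d}(1\land\nu)<\infty$. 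Since $1\land\nu(h)\le\nu(h)\mathds 1_{|h|\ge1}+\mathds 1_{|h|<1}$ and $\nu\in L^1(\R^d,1\land|h|^p)$, this integral is finite. Hence the second term is at most $C\|u\|^p_{L^p(\Omega)}$. On $\Omega$ itself, $\widetilde{\nu}\le\|1\land\nu\|_{L^1}$, so $\int_\Omega|u|^p\widetilde{\nu}\le C\|u\|^p_{L^p(\Omega)}$ as well.

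For $\omega_\nu=\overline{\nu}$, I use the same splitting. For $x\in\Omega^c$, average over a fixed set $A\subset\Omega$ of finite positive measure; if $|\Omega|<\infty$, take $A=\Omega$. Then
\begin{align*}
|u(x)|^p\overline{\nu}(x)\,|A|\le 2^{p-1}\int_A|u(x)-u(y)|^p\nu(x-y)\,\d y+2^{p-1}\overline{\nu}(x)\int_A|u|^p .
\end{align*}
This bound still leaves a term $\int_{\Omega^c}\overline{\nu}$, which need not be finite. So the infimum weight must be treated more carefully. Instead I will estimate, using $\overline{\nu}(x)\le\nu(x-y)$ for a.e. $y\in\Omega$,
\begin{align*}
\overline{\nu}(x)|u(x)|^p\le \frac{1}{|A|}\int_A|u(x)|^p\nu(x-y)\,\d y ,
\end{align*}
and split $|u(x)|$ as before. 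The difference part is again bounded by $\cE(u,u)$. The remaining part is $\int_A|u(y)|^p\int_{\Omega^c}\overline{\nu}(x)\,\d x\,\d y$. The inner integral is finite because $\overline{\nu}(x)\le\nu(x-y_0)\land\nu(x-y_1)$ for two separated points of $\Omega$; this kills the singularity at a single point. I expect this to be the delicate point, and I would fall back on whatever integrability of $\omega_\nu$ is shown in \cite{Fog25} if needed.

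\textbf{Parts (a) and (b).} Once the embedding holds, (a) and (b) are essentially definitional. The kernel of $\operatorname{Tr}$ is, by definition, the set of $u$ vanishing on $\Omega^c$, which is $\WnuOmRO$. Surjectivity $\operatorname{Tr}(\WnuOmR)=\TnuOm$ is exactly the definition of $\TnuOm$. By the infimum definition of the trace seminorm, $\|\operatorname{Tr}u\|_{\TnuOm}\le\|u\|_{\WnuOmR}$. For any extension $u$ of $v$, the embedding gives $\|v\|_{L^p(\Omega^c,\omega_\nu)}\le C\|u\|_{\WnuOmR}$; taking the infimum over all such $u$ proves continuity of $\operatorname{Id}:\TnuOm\to L^p(\Omega^c,\omega_\nu)$.

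\textbf{Lifting (c).} When $\Omega$ is bounded in one direction, the Poincaré inequality on $\WnuOmRO$ from \cite{Fog25} makes the energy $\cE(\cdot,\cdot)^{1/p}$ a norm on $\WnuOmRO$ that is equivalent to $\|\cdot\|_{\WnuOmR}$. Given $v\in\TnuOm$, I define $R(v)$ as the unique minimizer of $\cE(w,w)$ over the closed convex affine set $\{w:\ w=v \text{ on }\Omega^c\}$. Existence follows from uniform convexity and reflexivity, via the direct method; uniqueness follows from strict convexity.

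To bound $R(v)$, take any competitor $w$ with $w|_{\Omega^c}=v$. Minimality gives $\cE(Rv,Rv)\le\cE(w,w)$. For the $L^p(\Omega)$ part, write $Rv=w+(Rv-w)$ with $Rv-w\in\WnuOmRO$ and apply Poincaré to $Rv-w$. This yields $\|Rv\|_{\WnuOmR}\le C\|w\|_{\WnuOmR}$, and taking the infimum over $w$ gives $\|Rv\|\le C\|v\|_{\TnuOm}$.

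Continuity of the (nonlinear) map $R$ follows from the monotonicity estimates of Theorem \ref{thm:under-upper-form-est} applied to the Euler–Lagrange equations of two minimizers. Clearly $\operatorname{Tr}\circ R=\operatorname{Id}$ by construction.
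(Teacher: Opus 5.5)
Your proposal is correct in substance and follows the standard route. The paper does not prove this statement; it quotes it from \cite{Fog25,FoKa24}. Your splitting argument for the weighted embedding and the definitional proofs of (a) and (b) are fine. Your lifting in (c) is the nonlocal $p$-harmonic extension, i.e.\ the weak solution of $Lu=0$ in $\Omega$, $u=v$ on $\Omega^c$, made coercive by the Poincar\'e inequality on $\WnuOmRO$. This is exactly how the paper later builds $R_\eps$ in the Robust right lifting theorem of Section~\ref{sec:robust-trace}. For continuity of $R$, the lower bounds of Theorem~\ref{thm:under-upper-form-est} are not enough on their own. You also need an upper bound for $|\cE(u_1,w)-\cE(u_2,w)|$, with $w$ an extension of $v_1-v_2$, and this comes from the upper inequalities of Lemma~\ref{lem:elm-est-simons} together with H\"older.

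The $\overline{\nu}$ case of the embedding needs repair, because the write-up contradicts itself.
\begin{itemize}
\item \textbf{Your first bound already works.} The splitting $|A|\,\overline{\nu}(x)|u(x)|^p\le 2^{p-1}\int_A|u(x)-u(y)|^p\nu(x-y)\,\d y+2^{p-1}\overline{\nu}(x)\|u\|^p_{L^p(A)}$ is enough, because $\overline{\nu}\in L^1(\R^d)$ always holds for nonempty open $\Omega$. Your worry that $\int_{\Omega^c}\overline{\nu}$ may be infinite is unfounded.
\item \textbf{The ``instead'' variant does not give the remainder you state.} If you first replace $\overline{\nu}(x)$ by $\nu(x-y)$ and then split, the remainder is $2^{p-1}|A|^{-1}\int_A|u(y)|^p\int_{\Omega^c}\nu(x-y)\,\d x\,\d y$. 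This involves $\nu$, not $\overline{\nu}$. It is not controlled by $\|u\|_{L^p(\Omega)}$ when $A$ reaches $\partial\Omega$, for instance $A=\Omega$ and $\nu(h)=|h|^{-d-sp}$. It is controlled only if $\dist(A,\Omega^c)=\delta>0$, since then the inner integral is at most $\int_{|h|\ge\delta}\nu$.
\item \textbf{The two-point argument needs justification.} The inequality $\overline{\nu}(x)\le\nu(x-y)$ holds only for a.e.\ $y\in\Omega$, so you cannot evaluate it at two prescribed points. Either pick $y_0,y_1$ outside a null set via Fubini, or average over two balls $B_r(y_0),B_r(y_1)\subset\Omega$ with $|y_0-y_1|\ge 4r$. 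Every $x$ lies at distance at least $r$ from one of these balls. This yields $\overline{\nu}(x)\le |B_r|^{-1}\int_{|h|\ge r}\nu(h)\,\d h$ for all $x$, and $\int_{\R^d}\overline{\nu}\le 2\int_{|h|\ge r}\nu(h)\,\d h<\infty$.
\item \textbf{The part over $\Omega$ is missing.} You never treat the contribution of $\Omega$ to $\|u\|_{L^p(\R^d,\overline{\nu})}$. Either the $L^\infty$ bound just stated settles it, or you can run your first splitting for every $x\in\R^d$. In the latter case the difference term is bounded by $\cE(u,u)$, since $\R^d\times\Omega\subset(\Omega^c\times\Omega^c)^c$.
\end{itemize}
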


\begin{remark}
Note that the kernel $\nu$ is not necessarily fully supported in $\R^d$. Consequently, the sets $\Omega^c$ and
$\Omega^e_\nu := \Omega_\nu \setminus \Omega$ do not necessarily coincide.
Using the rigorous identifications
$\WnuOmR \equiv W^p_{\nu}(\Omega | \Omega_\nu) $ and $\TnuOm \equiv T^p_{\nu}(\Omega^e_\nu),$
we also have $L^p(\Omega^c,\omega_\nu) \equiv L^p(\Omega^e_\nu,\omega_\nu),$
since one readily verifies that $\omega_\nu = 0$ on $\Omega^c \setminus \Omega_\nu$. In this setting, the nonlocal trace operator $\operatorname{Tr}$ reduces to the restriction $\operatorname{Tr}(u) = u\big|_{\Omega^e_\nu},$ and therefore $\operatorname{Tr}\big(W^p_\nu(\Omega | \Omega_\nu)\big) = T^p_{\nu}(\Omega^e_\nu).$
Moreover, $\WnuOmRO \equiv W^p_{\nu,0}(\Omega | \Omega_\nu)$
is precisely the subspace of $W^p_\nu(\Omega | \Omega_\nu)$ consisting of functions vanishing on
$\Omega^e_\nu = \Omega_\nu \setminus \Omega$.
For simplicity of notation, we shall continue to write $L^p(\Omega^c,\omega_\nu)$, $\TnuOm$, $\WnuOmR$, $\WnuOmO$, as well as $\operatorname{Tr}$ and $R$, as if $\nu$ were fully supported. This slight abuse of notation does not affect any of the arguments or results presented here, all statements remain true in both settings (see \cite{FoKa24,Fog25} for further details).

\end{remark}

\subsection{Robustness of the nonlocal trace space}
In this section, we prove that  our formulation of the nonlocal trace space is understood in the robust sense. In other words,  prove the convergence of the nonlocal trace space $\TnuOma\equiv T^p_{\nu_\eps}(\Omega^e_{\nu_\eps})$, with $1\leq p<\infty$, to the local  trace space  $W^{1-1/p ,p}(\partial \Omega) $ including the case $p=1$.
Furthermore, we also establish the existence of a robust right lifting  $R_\eps =\operatorname{Ext}_\eps$ for the nonlocal trace operator $\operatorname{Tr}$, i.e.,   $R_\eps= \operatorname{Ext}_\eps$ strongly converges to  $ R_0=\operatorname{Ext}$, which is also a right lifting of the local trace operator $\gamma_0$.   Let us recall that the (semi)norms we consider on the  (non)local trace spaces are given by
\begin{align*}
\|g\|_{T^p_{\nu_\eps}(\Omega^c)} &= \inf\big\{ \|u\|_{W^p_{\nu_\eps}(\Omega|\R^d)} \,\,:\,\, u= g\quad \text{a.e on $\Omega_{\nu_\eps}\setminus \Omega$}\quad u\in W^p_{\nu_\eps}(\Omega|\R^d)\big\},
\\
\|g\|_{W^{1-1/p ,p}(\partial \Omega)}
&= \inf\big\{ \|u\|^*_{W^{1,p}(\Omega)} \,\,:\,\, \gamma_0(u)= \gamma_0(g)\quad \text{a.e on $\partial \Omega$}\quad u\in W^{1,p}(\Omega)\big\},
\end{align*}
where we consider the Sobolev norm
\begin{align*}
\|u\|^*_{W^{1,p}(\Omega)} =\Big(\|u\|^p_{L^p(\Omega)}+ K_{d,p}\|\nabla u\|^p_{L^p(\Omega)}\Big)^{1/p}.
\end{align*}
\begin{theorem}[Robust trace spaces]
\label{thm:conv-trace}
Assume $\Omega\subset\R^d$ is open with a compact Lipschitz  boundary. The nonlocal trace spaces $(T^p_{\nu_\eps}(\Omega^c))_\eps$, $1\leq p<\infty$,  converge to the local trace $W^{1-1/p ,p}(\partial\Omega)$ in the  following sense:

\noindent If $\|\operatorname{Tr}(g_\eps-g)\|_{\TnuOma}\to 0$ as $\eps\to0$, for $g\in W^{1,p}(\R^d)$ and $g_\eps\in \WnuOmRa$ then
\begin{align*}
\lim_{\eps\to0}	\|\operatorname{Tr}(g_\eps)\|_{\TnuOma}=
\|\gamma_0(g)\|_{W^{1-1/p ,p}(\partial\Omega)}.
\end{align*}
This is true in particular if $g_\eps\to g$ in $W^{1,p}(\R^d\setminus \overline{\Omega})$ or merely  $g_\eps=g$.
\end{theorem}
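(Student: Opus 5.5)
First reduce to $g_\eps=g$. The trace seminorm is an infimum of norms over an affine class, so $\big|\|\operatorname{Tr}(g_\eps)\|_{\TnuOma}-\|\operatorname{Tr}(g)\|_{\TnuOma}\big|\le\|\operatorname{Tr}(g_\eps-g)\|_{\TnuOma}\to0$. It therefore suffices to show
\begin{align*}
\lim_{\eps\to0}\|\operatorname{Tr}(g)\|_{\TnuOma}=\|\gamma_0(g)\|_{W^{1-1/p,p}(\partial\Omega)}\qquad\text{for } g\in W^{1,p}(\R^d).
\end{align*}
For the particular case $g_\eps\to g$ in $W^{1,p}(\R^d\setminus\overline\Omega)$, I would first check that $\|\operatorname{Tr}(g_\eps-g)\|_{\TnuOma}\to0$. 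To do this, extend $g_\eps-g$ from the exterior Lipschitz domain $\R^d\setminus\overline\Omega$ to some $w_\eps\in W^{1,p}(\R^d)$ with $\|w_\eps\|_{W^{1,p}(\R^d)}\lesssim\|g_\eps-g\|_{W^{1,p}(\R^d\setminus\overline\Omega)}$. Then apply \eqref{eq:taylor-exapnsion}, which gives $\|w_\eps\|_{\WnuOmRa}\le C\|w_\eps\|_{W^{1,p}(\R^d)}\to0$.

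\emph{Upper bound.} Let $\delta>0$ and pick $u\in W^{1,p}(\Omega)$ with $\gamma_0u=\gamma_0g$ and $\|u\|^*_{W^{1,p}(\Omega)}\le\|\gamma_0 g\|+\delta$. Define $\bar u=u$ on $\Omega$ and $\bar u=g$ on $\Omega^c$. Since the traces match across the Lipschitz boundary, $\bar u\in W^{1,p}(\R^d)$ and $\operatorname{Tr}\bar u=\operatorname{Tr}g$. Hence $\|\operatorname{Tr}g\|^p_{\TnuOma}\le\|u\|^p_{L^p(\Omega)}+\cE^\eps(\bar u,\bar u)$. Write $\cE^\eps=\cE^\eps_\Omega+2\cE^\eps_{cr}$. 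By Theorem \ref{thm:BBM-dual-limit}(i) (a Lipschitz domain with compact boundary is an extension domain), $\cE^\eps_\Omega(\bar u,\bar u)\to K_{d,p}\|\nabla u\|^p_{L^p(\Omega)}$. By Lemma \ref{lem:collap-bdary} with $|\partial\Omega|=0$, $\cE^\eps_{cr}(\bar u,\bar u)\to0$. Thus $\limsup_\eps\|\operatorname{Tr}g\|_{\TnuOma}\le\|\gamma_0g\|+\delta$.

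\emph{Lower bound; this is the main obstacle.} Pick near-minimizers $u_\eps\in\WnuOmRa$ with $u_\eps=g$ on $\Omega^c$ and $\|u_\eps\|^p_{\WnuOmRa}\le\|\operatorname{Tr}g\|^p_{\TnuOma}+\eps$. By the upper bound these are uniformly bounded. Set $w_\eps=u_\eps-\bar g$, where $\bar g=g$. Then $w_\eps\in W^p_{\nu_\eps,0}(\Omega|\R^d)$, and $\cE^\eps(w_\eps,w_\eps)$ is bounded via \eqref{eq:taylor-exapnsion}. Extended by zero, $w_\eps$ has a bounded full $\R^d$-energy, namely $\cE^\eps$ plus the $\Omega^c\times\Omega^c$ part, which vanishes. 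By compactness (Theorem \ref{thm:asymp-compactness}, applied on $\R^d$ and on a large ball), a subsequence satisfies $w_\eps\to w$ in $L^p_{\loc}$ with $w\in W^{1,p}(\R^d)$ and $w=0$ a.e. on $\Omega^c$. Since $\partial\Omega$ is Lipschitz, this gives $w\in W^{1,p}_0(\Omega)$.

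Next set $u=g+w$ on $\Omega$, so that $\gamma_0u=\gamma_0g$. Lower semicontinuity comes from Fatou on $L^p(\Omega)$ (using $L^p_{\loc}$ convergence and Fatou) together with Theorem \ref{thm:BBM-liminf}, which gives $K_{d,p}\|\nabla u\|^p_{L^p(\Omega)}\le\liminf\cE^\eps_\Omega(u_\eps,u_\eps)\le\liminf\cE^\eps(u_\eps,u_\eps)$. Combining these yields $\|\gamma_0 g\|^p\le\|u\|^{*p}\le\liminf\|\operatorname{Tr}g\|^p_{\TnuOma}$.

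The delicate point is the limit identification $w\in W^{1,p}_0(\Omega)$, which uses the fact that the zero exterior values are preserved in the limit and that Lipschitz regularity characterizes $W^{1,p}_0$ as functions vanishing outside. For $p=1$ the same scheme runs with $BV$ and the Gagliardo result $\gamma_0(W^{1,1})=L^1(\partial\Omega)$, interpreting norms via relaxation. Finally, the upper and lower bounds hold for every subsequence, so the full limit exists.
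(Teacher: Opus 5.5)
For $1<p<\infty$ your argument is correct and is essentially the paper's proof. It uses the same reduction to $g_\eps=g$ via the triangle inequality for the quotient seminorm, and the same upper bound by gluing a competitor $u$ to $g$ outside $\Omega$; your split into $\cE^\eps_\Omega$ plus a collapsing cross term is just the proof of Theorem~\ref{thm:BBM-dual-limit} unpacked. The lower bound is also the same: near-minimizers, Theorem~\ref{thm:asymp-compactness} on $\R^d$, and Theorem~\ref{thm:BBM-liminf}. The differences are cosmetic. You apply compactness to $u_\eps-g$ and get $w\in W^{1,p}_0(\Omega)$ from the characterization of $W^{1,p}_0$ on Lipschitz sets, whereas the paper applies it to $u_\eps$ and matches traces via the trace theorem on $B\setminus\overline{\Omega}$. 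You also spell out the exterior-extension argument for the special case $g_\eps\to g$ in $W^{1,p}(\R^d\setminus\overline{\Omega})$, which the paper leaves to Remark~\ref{rem:ext-plus-trace}.

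The genuine gap is your closing claim that for $p=1$ ``the same scheme runs with $BV$''. It does not, and it breaks exactly at the ``delicate point'' you single out. For $p=1$, Theorem~\ref{thm:asymp-compactness} only gives $w\in BV(\R^d)$, and a $BV(\R^d)$ function vanishing a.e.\ on $\Omega^c$ may have nonzero interior trace. For bounded $\Omega$, take the constant sequence $w_\eps=\mathds{1}_\Omega$: its energy stays bounded (it converges to $K_{d,1}\operatorname{Per}(\Omega)$), and its limit has interior trace $1$. So $u=g+w$ need not lie in $W^{1,1}(\Omega)$ or satisfy $\gamma_0u=\gamma_0g$, and it is not an admissible competitor for $\|\gamma_0 g\|_{W^{0,1}(\partial\Omega)}$.

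The missing idea is that for $p=1$ the cross terms do not collapse but record the boundary jump. Write $\cE^\eps(u_\eps,u_\eps)$ as the full-space energy minus the $\Omega^c\times\Omega^c$ part, which converges to $K_{d,1}\|\nabla g\|_{L^1(\R^d\setminus\overline{\Omega})}$. This yields $\liminf_{\eps\to0}\cE^\eps(u_\eps,u_\eps)\ge K_{d,1}\big(|Du|(\Omega)+\int_{\partial\Omega}|\gamma_0 u-\gamma_0 g|\,\d\sigma\big)$, with $\gamma_0u$ the $BV$ trace. You then need the relaxation identity: the infimum over $BV(\Omega)$ of $\|u\|_{L^1(\Omega)}+K_{d,1}\big(|Du|(\Omega)+\int_{\partial\Omega}|\gamma_0u-\gamma_0g|\,\d\sigma\big)$ equals the $W^{1,1}$-infimum with the trace constraint. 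This follows from strict approximation preserving traces, plus a thin boundary layer that corrects the trace at gradient cost $(1+\delta)\|\gamma_0u-\gamma_0g\|_{L^1(\partial\Omega)}$ and arbitrarily small $L^1$ cost. ``Interpreting norms via relaxation'' does not supply this. The paper's own proof has the same defect at $p=1$, since it asserts that the compactness limit lies in $W^{1,p}(\R^d)$.
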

\begin{proof}
Since $\big|\|\operatorname{Tr}(g_\eps)\|_{\TnuOma} -\|\operatorname{Tr}(g)\|_{\TnuOma}\big|\to 0$, it is sufficient to prove for the case $g_\eps=g$. Let $u\in W^{1,p}(\Omega)$ with  $\gamma_0(u)= \gamma_0(g)$. Define $ E_gu\in W^{1,p}(\R^d)\subset W^p_{\nu_\eps}(\Omega|\R^d)$ by
\begin{align*}
E_g u(x)=\begin{cases}
u(x)& x\in \Omega,\\
g(x)& x\in \R^d\setminus \Omega.
\end{cases}
\end{align*}
By convergence of forms $\cE^\eps(\cdot,\cdot)$ from  Theorem \ref{thm:BBM-dual-limit} we find that
\begin{align*}
\lim_{\eps\to0}\cE^\eps(E_g u, E_g u) =K_{d,p}\cE^0(E_g u, E_g u)=   K_{d,p}\|\nabla u\|^p_{L^{p}(\Omega)}.
\end{align*}
Inasmuch as $E_gu |_{\Omega^c}=  \operatorname{Tr}(g)$ and $\|g\|^p_{T^p_{\nu_\eps}(\Omega^c)}  \leq  \|E_g u\|^p_{W^p_{\nu_\eps}(\Omega|\R^d)}$ we get

\begin{align*}
\|g\|^p_{T^p_{\nu_\eps}(\Omega^c)}
%&\leq  \|E_g u\|^p_{W^p_{\nu_\eps}(\Omega|\R^d)}\\
&\leq \cE^\eps(E_g u, E_g u) + \|u\|^p_{L^p(\Omega)}
\xrightarrow{\eps\to0} K_{d,p}\|\nabla u\|^p_{L^{p}(\Omega)}+\|u\|^p_{L^p(\Omega)}.
\end{align*}
In other words, it follows that
\begin{align*}
\limsup_{\eps\to0}	\|g\|_{T^p_{\nu_\eps}(\Omega^c)}\leq  \limsup_{\eps\to0} \|E_g u\|_{W^p_{\nu_\eps}(\Omega|\R^d)}= \| u\|^{*}_{W^{1,p}(\Omega)},
\end{align*}
wherefrom  we deduce the limsup inequality
\begin{align}\label{eq:limsup-trace}
\limsup_{\eps\to0}	\|g\|_{T^p_{\nu_\eps}(\Omega^c)}\leq
\|\gamma_0(g)\|_{W^{1-1/p ,p}(\partial\Omega)}.
\end{align}
Next for  each $0<\eps<1$ consider  $u_\eps \in W^p_{\nu_\eps}(\Omega|\R^d)$ such that $u_\eps= g$ a.e. on $\Omega^c$ and
\begin{align*}
\|u_\eps\|_{W^p_{\nu_\eps}(\Omega)}\leq  \|u_\eps\|_{W^p_{\nu_\eps}(\Omega|\R^d)}
\leq \|g\|_{T^p_{\nu_\eps}(\Omega^c )} + \eps.
\end{align*}
Note that, the uniform estimate  \eqref{eq:taylor-exapnsion}  implies that
\begin{align*}
\|g\|_{T^p_{\nu_\eps}(\Omega^c )} \leq  \|g\|_{W^p_{\nu_\eps}(\Omega|\R^d)}
\leq  \|g\|_{W^p_{\nu_\eps}(\R^d )} \leq 2^{1+1/p}\|g\|_{W^{1,p}(\R^d)}.
\end{align*}
By definition of $u_\eps$ and the fact that $u_\eps = g$ on $\Omega^c$ we find that
\begin{align*}
\|u_\eps\|_{W^p_{\nu_\eps}(\R^d)}
&=\big(\|u_\eps\|^p_{W^p_{\nu_\eps}(\Omega|\R^d)}+ \|g\|^p_{W^p_{\nu_\eps}(\Omega^c)}\big)^{1/p}\\
&\leq
\|g\|_{T^p_{\nu_\eps}(\Omega^c)}+\eps+ \|g\|_{W^p_{\nu_\eps}(\R^d)}.
\end{align*}
From this we deduce that,  there is $M>0$ such that
\begin{align*}
\sup_{0<\eps<1}\|u_\eps\|_{W^p_{\nu_\eps}(\R^d)}\leq  M.
\end{align*}
According to Theorem \ref{thm:asymp-compactness}, there exist  $\eps_j\to 0$ and $u\in W^{1,p}(\R^d)$ such that
\begin{align*}
&\|u_{\eps_j}- u\|_{L^p(\Omega')}\xrightarrow{\eps_j\to 0}0,\qquad\text{for every bounded set  $\Omega'\subset \R^d$,}
\\
&K_{d,p}\|\nabla u\|^p_{L^p(\Omega)}\leq\liminf_{\eps\to0} \cE_{\Omega}^{\eps}(u_\eps,u_\eps),\\
&K_{d,p}\|\nabla u\|^p_{L^p(\R^d)}\leq\liminf_{\eps\to0} \cE_{\R^d}^{\eps}(u_\eps,u_\eps).
\end{align*}
In particular, the above liminf implies
\begin{align*}
\|u\|^*_{W^{1,p}(\Omega)}\leq \liminf_{\eps\to0} \|u_{\eps}\|_{W^{p}_{\nu_\eps}(\Omega)}\leq  \liminf_{\eps\to0} \|g\|_{T^{p}_{\nu_\eps}(\Omega^c)}.
\end{align*}
Next, take a large ball $B\subset \R^d$, neighborhood of $\partial\Omega$, i.e., $\partial \Omega\subset B$. Then $\Omega'=B\setminus \Omega$ (with $\partial\Omega'=\partial \Omega\cup \partial B$)  is also bounded Lipschitz. The  trace Theorem \ref{thm:trace-loc-thm} implies
\begin{align*}
\|\gamma_0(g)-\gamma_0(u)\|_{L^p(\partial\Omega)}
&\leq \|\gamma_0(g)-\gamma_0(u)\|_{L^p(\partial\Omega')}\\
& \leq C\|g-u\|_{W^{1,p}(\Omega')}\leq C
\liminf_{j \to\infty} \|g-u_{\eps_j}\|_{W^{1,p}(\Omega')}=0.
\end{align*}
Inasmuch as  $u_{\eps_j}=g$ a.e. on $\Omega^c$.
Hence   $\gamma_0(g)=\gamma_0(u)$ with $u\in W^{1,p}(\Omega)$ and
\begin{align*}
\|\gamma_0(g)\|_{W^{1-1/p ,p}(\partial\Omega)}\leq \|u\|_{W^{1,p}(\Omega)}\leq \liminf_{\eps\to0} \|u_{\eps}\|_{W^{p}_{\nu_\eps}(\Omega|\R^d)}
\leq  \liminf_{\eps\to0} \|g\|_{T^{p}_{\nu_\eps}(\Omega^c)}.
\end{align*}
This together with \eqref{eq:limsup-trace} yields  the sought result
\begin{align*}
\lim_{\eps\to0} \|g\|_{T^{p}_{\nu_\eps}(\Omega^c)}= \|\gamma_0(g)\|_{W^{1-1/p ,p}(\partial\Omega)}.
\end{align*}
\end{proof}

\begin{remark}\label{rem:ext-plus-trace}
It is worth recalling that any open set $\Omega\subset \R^d$ with a compact Lipschitz boundary is a $W^{1,p}$-extension domain in the sense of \eqref{eq:w1p-extension}, i.e., there  is a continuous extension operator $E: W^{1,p}(\Omega)\to W^{1,p}(\R^d)$. Consider the lifting of the trace operator
$R_0: W^{1-1/p ,p}(\partial\Omega)\to W^{1,p}(\Omega)$ as given in Theorem \ref{thm:trace-loc-thm}. It clearly appears that the extension operator $\widehat{E}=E\circ R_0: W^{1-1/p ,p}(\partial\Omega)\to W^{1,p}(\R^d)$ is also continuous and bounded. In addition we find that
\begin{align*}
\gamma_0(\widehat{E} (g)) &= \gamma_0\circ R_0 (g)= g\quad\text{for all $g\in W^{1-1/p ,p}(\partial\Omega)$}.
\end{align*}
Moreover, since the continuous embeddings $W^{1,p}(\R^d) \hookrightarrow W^{p}_\nu(\R^d) \hookrightarrow \WnuOmR$ clearly hold, it follows  that $\operatorname{Tr}\circ E\circ R_0: W^{1-1/p ,p}(\partial\Omega)\to \TnuOm $ with $g\mapsto  \widehat{E}(g)|_{\Omega^c}$ is also continuous and bounded.
On the other hand, if $\R^d\setminus \overline{\Omega}$ is $W^{1,p}$-extension domain, i.e., there  is a continuous extension operator $E^*:W^{1,p}(\R^d\setminus \overline{\Omega})\to W^{1,p}(\R^d)$ continuous then one readily obtains the continuous embedding $\operatorname{Tr}\circ E^* : W^{1,p}(\R^d\setminus \overline{\Omega})\to \TnuOm$ with $\operatorname{Tr}\circ E^* (g) = g$
for all $g\in W^{1,p}(\R^d\setminus \overline{\Omega})$.
\end{remark}

\noindent Using the notations of Remark \ref{rem:ext-plus-trace}, we obtain the following corollary.
\begin{corollary}\label{cor:conv-trace}
Assume $\Omega\subset\R^d$ be open with compact Lipschitz boundary.
Then the nonlocal trace spaces $(T^p_{\nu_\eps}(\Omega^c))_\eps$, $1\leq p<\infty$, converge to the classical  local trace  space
$W^{1-1/p ,p}(\partial\Omega)$ in the following sense: for all $g\in W^{1-1/p ,p}(\partial\Omega)$ we have
\begin{align*}
\lim_{\eps\to0}	\|\widehat{E}(g)\|_{T^p_{\nu_\eps}(\Omega^c)}=
\|g\|_{W^{1-1/p ,p}(\partial\Omega)}.
\end{align*}
Moreover, this does not depend on the choice of the extension of $g$. Namely, if $\widehat{g}\in W^{1,p}(\R^d)$ is another  extension of $g$ such that $\gamma_0(\widehat{g})= g$ then we have
\begin{align*}
\lim_{\eps\to0}	\|\widehat{E}(g) -\widehat{g}\|_{T^p_{\nu_\eps}(\Omega^c)}=0.
\end{align*}
\end{corollary}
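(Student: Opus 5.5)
The plan is to obtain both claims as direct consequences of Theorem \ref{thm:conv-trace}, applied to suitable functions $g \in W^{1,p}(\R^d)$.

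\textbf{First claim.} Fix $g\in W^{1-1/p,p}(\partial\Omega)$. By Remark \ref{rem:ext-plus-trace} we have $\widehat{E}(g)=E(R_0 g)\in W^{1,p}(\R^d)$ and $\gamma_0(\widehat{E}(g))=g$. Applying Theorem \ref{thm:conv-trace} with the constant sequence $g_\eps=\widehat{E}(g)$ gives
\begin{align*}
\lim_{\eps\to0}\|\widehat{E}(g)\|_{T^p_{\nu_\eps}(\Omega^c)}=\|\gamma_0(\widehat{E}(g))\|_{W^{1-1/p,p}(\partial\Omega)}=\|g\|_{W^{1-1/p,p}(\partial\Omega)}.
\end{align*}
Since the right-hand side depends only on the trace, the same limit holds for any extension $\widehat g\in W^{1,p}(\R^d)$ with $\gamma_0(\widehat g)=g$.

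\textbf{Second claim (independence of the extension).} Put $w=\widehat{E}(g)-\widehat{g}\in W^{1,p}(\R^d)$. By linearity of $\gamma_0$ we have $\gamma_0(w)=0$, so $w|_\Omega\in W^{1,p}_0(\Omega)$ by Theorem \ref{thm:trace-loc-thm}(a). Applying Theorem \ref{thm:conv-trace} to $w$ then yields
\begin{align*}
\lim_{\eps\to0}\|w\|_{T^p_{\nu_\eps}(\Omega^c)}=\|\gamma_0(w)\|_{W^{1-1/p,p}(\partial\Omega)}=\|0\|_{W^{1-1/p,p}(\partial\Omega)}=0.
\end{align*}
The last equality holds because the infimum defining the local trace norm is attained by $u=0$.

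\textbf{Possible gap and alternative.} The only delicate point is whether $\|\cdot\|_{T^p_{\nu_\eps}}$ respects differences as a genuine seminorm. This holds because $\operatorname{Tr}$ is linear and the trace seminorm is a quotient seminorm. If one prefers to avoid the full liminf/limsup machinery, a direct argument is available. Pick $\phi_n\in C_c^\infty(\Omega)$ with $\phi_n\to w|_\Omega$ in $W^{1,p}(\Omega)$, and let $w_n=w\mathds{1}_{\Omega^c}+\phi_n\mathds{1}_\Omega$. Then $w-w_n$ is the zero extension of $w|_\Omega-\phi_n$, hence lies in $W^{1,p}(\R^d)$ with norm controlled by $\|w|_\Omega-\phi_n\|_{W^{1,p}(\Omega)}$. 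Since $w_n|_{\Omega^c}=w|_{\Omega^c}$, we can use $w_n$ as a competitor in the infimum, and \eqref{eq:taylor-exapnsion} gives
\begin{align*}
\|w\|_{T^p_{\nu_\eps}(\Omega^c)}\le \|w_n\|_{W^p_{\nu_\eps}(\Omega|\R^d)}.
\end{align*}
For fixed $n$, we control the right-hand side as follows. The $L^p(\Omega)$ part equals $\|\phi_n\|_{L^p(\Omega)}$. For the energy, the interaction of $\phi_n$ inside $\Omega$ tends to $K_{d,p}\|\nabla\phi_n\|_p^p$. The cross terms $\cE^\eps_{cr}$ are handled by Lemma \ref{lem:collap-bdary} applied to $w_n$, which is in $W^{1,p}(\R^d)$; they vanish because $|\partial\Omega|=0$.

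This yields $\limsup_{\eps\to0}\|w\|_{T^p_{\nu_\eps}(\Omega^c)}^p\le \|\phi_n\|^p_{L^p(\Omega)}+K_{d,p}\|\nabla\phi_n\|^p_{L^p(\Omega)}$, which tends to $\|w|_\Omega\|^{*p}_{W^{1,p}(\Omega)}$ as $n\to\infty$, not to $0$. The simple estimate is therefore insufficient on its own. The correct reduction is to note that $\|w\|_{T^p_{\nu_\eps}}$ is an infimum over all $u$ equal to $w$ off $\Omega$. This infimum is exactly what the limsup step \eqref{eq:limsup-trace} of Theorem \ref{thm:conv-trace} handles: taking $u=0$ in $\Omega$ as the local competitor, the gluing $E_w 0$ lies in $W^{1,p}(\R^d)$ since $\gamma_0(w)=0$. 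I therefore rely on Theorem \ref{thm:conv-trace}, and the main step to verify is that the gluing $E_w0\in W^{1,p}(\R^d)$ is legitimate, which follows from $\gamma_0 w=0$ and the Lipschitz boundary.
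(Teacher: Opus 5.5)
Your proof is correct and coincides with the paper's: both claims follow by applying Theorem~\ref{thm:conv-trace} with constant sequences, first to $\widehat{E}(g)$ using $\gamma_0(\widehat{E}(g))=g$, then to $w=\widehat{E}(g)-\widehat{g}$ using $\gamma_0(w)=0$, so the local trace norm is $0$. The detour in your last paragraph is unnecessary; your competitor there should simply have been $\phi_n=0$, i.e.\ $w\mathds{1}_{\Omega^c}\in W^{1,p}(\R^d)$, which together with Lemma~\ref{lem:collap-bdary} and $|\partial\Omega|=0$ already gives the second claim from the limsup side alone.
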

\begin{proof}
Note in passing that, for $u\in W^{1,p}(\Omega)$ we have $\gamma_0(Eu)= \gamma_0(u)$ since $Eu= u$ a.e. on $\Omega$ and $Eu\in W^{1,p}(\R^d)$. Since $R_0$ is the right lifting of $\gamma_0$ we find that
\begin{align*}
\gamma_0\circ \widehat{E} (g)  = \gamma_0( E (R_0(g))) = \gamma_0\circ R_0 (g)= g.
\end{align*}
Therefore, Theorem \ref{thm:conv-trace} implies
\begin{align*}
\lim_{\eps\to0}	\|\widehat{E}(g)\|_{T^p_{\nu_\eps}(\Omega^c)}
=\|\gamma_0\circ\widehat{E}(g)\|_{W^{1-1/p ,p}(\partial\Omega)}=
\|g\|_{W^{1-1/p ,p}(\partial\Omega)}.
\end{align*}
Now observing that $\gamma_0(\widehat{E}(g)- \widehat{g})=0$,  Theorem \ref{thm:conv-trace} also implies
\begin{align*}
\lim_{\eps\to0}	\|\widehat{E}(g) -\widehat{g}\|_{T^p_{\nu_\eps}(\Omega^c)}
=0.
\end{align*}
\vspace{-3mm}
\end{proof}

\noindent Next, we establish the existence and the  robustness of a right lifting to the nonlocal trace operator $ \operatorname{Tr}$. It is worth mentioning that, in general, the right lifting may not be uniquely determined.
\begin{theorem}[Robust right lifting]
Assume $\Omega\subset\R^d$ be open  bounded with Lipschitz boundary. There exist $R_\eps: \TnuOma\to \WnuOmRa$ and $R_0:W^{1-1/p , p}(\partial \Omega)\to W^{1,p}(\Omega)$
right lifting of the (non)local trace operators $\operatorname{Tr}$ and $\gamma_0$  respectively such that
\begin{align*}
\lim_{\eps\to0}\|R_\eps(g)-R_0(g)\|_{\WnuOma}= \lim_{\eps\to0}\|R_\eps(g)-R_{0,g}(g)\|_{\WnuOmRa}=0,
\end{align*}
for all $g\in W^{1,p}(\R^d\setminus\overline{\Omega})$. Here $u_g = R_{0,g}(g)\in W^{1,p}(\R^d)$ is the $g$-extension of $u$ outside $\Omega$, that is  $u_g= R_{0,g}(g)= g$ a.e. on $\Omega^c$. In fact, we merely  take $u_\eps= R_\eps(g)$  and  $u= R_0(g)$ to be respectively weak solutions to the (non)local Dirichlet problems
\begin{align*}
L_\eps u_\eps &=0 \quad \text{in $\,\Omega$ \, and } \quad u_\eps= g\quad \text{on $\, \Omega^c$}, \\
-\Delta_p u&=0 \quad \text{in $\, \Omega$\,\,\, and } \quad u= g\quad \text{on $\,\partial \Omega$}.
\end{align*}
\end{theorem}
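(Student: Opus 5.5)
We take $R_\eps(g)=u_\eps$ and $R_0(g)=u$ to be the weak solutions of the two homogeneous Dirichlet problems in the statement. They are obtained as minimizers: $u_\eps$ minimizes $w\mapsto \cE^\eps(w,w)$ over $\{w\in \WnuOmRa: w=g \text{ on }\Omega^c\}$. This set is nonempty because $g$ extends (via the extension operator of $\R^d\setminus\overline\Omega$) to $\overline g\in W^{1,p}(\R^d)$. The minimizer exists and is unique by strict convexity and coercivity, using the robust Poincar\'e inequality from \cite{Fog25}. Likewise $u$ minimizes $\cE^0(w,w)$ over $w\in W^{1,p}(\Omega)$ with $\gamma_0(w)=\gamma_0(\overline g)$. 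Then $\operatorname{Tr}\circ R_\eps=\operatorname{Id}$ and $\gamma_0\circ R_0=\operatorname{Id}$ hold by construction. Set $u_g:=R_{0,g}(g)$, equal to $u$ on $\Omega$ and $g$ on $\Omega^c$; it lies in $W^{1,p}(\R^d)$ since the traces match across the Lipschitz boundary. Because $u_\eps-u_g$ vanishes on $\Omega^c$, we have $\|u_\eps-u_g\|_{\WnuOmRa}^p=\|u_\eps-u\|^p_{L^p(\Omega)}+\cE^\eps(u_\eps-u_g,u_\eps-u_g)$, and this dominates $\|u_\eps-u\|_{\WnuOma}^p$. It therefore suffices to prove
\[
\|u_\eps-u\|_{L^p(\Omega)}\to0 \quad\text{and}\quad \cE^\eps(u_\eps-u_g,u_\eps-u_g)\to0 .
\]

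\textbf{Step 1: uniform bounds and compactness.} Testing minimality against $u_g$ gives $\cE^\eps(u_\eps,u_\eps)\le \cE^\eps(u_g,u_g)\le 2^p\|u_g\|^p_{W^{1,p}(\R^d)}$ by \eqref{eq:taylor-exapnsion}. The Poincar\'e inequality applied to $u_\eps-u_g\in W^p_{\nu_\eps,0}(\Omega|\R^d)$ then bounds $\|u_\eps\|_{L^p(\Omega)}$ uniformly. Theorem \ref{thm:asymp-compactness}$(iii)$--$(iv)$ applied to $u_\eps-u_g$ yields a subsequence with $u_{\eps_j}\to w$ in $L^p(\Omega)$ and $w-u_g\in W^{1,p}_0(\Omega)$. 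Hence $w$ is admissible for the local problem.

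\textbf{Step 2: identify the limit.} Theorem \ref{thm:BBM-liminf} gives
\[
K_{d,p}\cE^0(w,w)\le\liminf\cE^\eps_\Omega(u_\eps,u_\eps)\le \liminf \cE^\eps(u_\eps,u_\eps).
\]
For any admissible competitor $v$ (with $v=g$ outside $\Omega$, $v\in W^{1,p}(\R^d)$), minimality and Theorem \ref{thm:BBM-dual-limit}$(i)$ give $\limsup\cE^\eps(u_\eps,u_\eps)\le \lim\cE^\eps(v,v)=K_{d,p}\cE^0(v,v)$. Therefore $w$ minimizes $\cE^0$, so $w=u$ by uniqueness, and the whole family converges. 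Taking $v=u_g$ also yields $\cE^\eps(u_\eps,u_\eps)\to K_{d,p}\cE^0(u,u)$.

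\textbf{Step 3: energy convergence (the main obstacle).} By Theorem \ref{thm:equiv-conv-in-form}, it suffices to show
\[
\cE^\eps(u_\eps,u_\eps-u_g)-\cE^\eps(u_g,u_\eps-u_g)\to0 .
\]
The Euler--Lagrange equation with the admissible test function $u_\eps-u_g\in W^p_{\nu_\eps,0}$ gives $\cE^\eps(u_\eps,u_\eps-u_g)=0$. For the remaining term we apply Theorem \ref{thm:asymp-conv-form} with $u_g\in W^{1,p}(\R^d)$, $\Omega$ Lipschitz (condition $(i)$), and $v_\eps=u_\eps-u_g$, which is bounded and converges to $0$ in $L^p(\Omega)$. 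This gives $\cE^\eps(u_g,u_\eps-u_g)\to K_{d,p}\cE^0(u_g,0)=0$. The delicate point is checking the hypotheses of the demi-convergence theorem, namely the uniform bound on $\cE^\eps(v_\eps,v_\eps)$, which follows from Step 1 and the triangle-type estimate $\cE^\eps(a-b,a-b)\le 2^{p-1}(\cE^\eps(a,a)+\cE^\eps(b,b))$. Combining the two terms gives $\cE^\eps(u_\eps-u_g,u_\eps-u_g)\to0$, and together with Step 2 this proves both limits.
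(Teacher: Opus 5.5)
Your proposal is correct and follows the paper's route. The paper's proof defers to Theorem~\ref{thm:converg-dirichlet} with $f_\eps=0$ and $g_\eps=g$, and that proof runs exactly your three steps: compactness of $u_\eps-u_g$, identification of the limit by minimality and uniqueness, and linearization via Theorems~\ref{thm:asymp-conv-form} and~\ref{thm:equiv-conv-in-form}; your testing of minimality against $u_g$ replaces the Young-inequality bound used there. The paper also records the uniform bound $\|R_\eps(g)\|_{\WnuOmRa}\le C\|g\|_{\TnuOma}$, citing \cite{Fog25}, which you get from your Step~1 by testing against an arbitrary admissible extension of $g$ instead of $u_g$.
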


\begin{proof}
Clearly by definition of non(local) traces we have $\operatorname{Tr}(u_\eps)= g$ on $\Omega^c$ and $\gamma_0(u)= \gamma_0(g)$ on $\partial\Omega$. In particular, $\operatorname{Tr}\circ R_\eps =Id$ and $\gamma_0 \circ R_0 =Id$. Repeating the arguments from  \cite[Sec. 8, \& Sec. 11]{Fog25}, reveals that each mapping $g\mapsto R_\eps(g)$ and, analogously  $g\mapsto R_0(g)$ are continuous and bounded, in fact,
\begin{align*}
\|R_\eps(g)\|_{\WnuOmRa} &=\|u_\eps\|_{\WnuOmRa}
\leq C\|g\|_{\TnuOma},\\
\|R_0(g)\|_{W^{1,p}(\Omega)}&= \|u\|_{W^{1,p}(\Omega)}
\leq C\|g\|_{W^{1-1/p ,p}(\partial\Omega)},
\end{align*}
for some generic constant $C>0$ not depending on $\eps$.
We conclude that $R_\eps$ and $R_0$ are right lifting for the operator  $\operatorname{Tr}$ and $\gamma_0$ respectively.  Finally, as  we show in the upcoming
Section \ref{sec:conv-weak-solution}, we have
\begin{align*}
\lim_{\eps\to0}\|u_\eps-u\|_{\WnuOma}= \lim_{\eps\to0}\|u_\eps-u_g\|_{\WnuOmRa}=0.
\end{align*}
\end{proof}

 Finally, using the norm $\|u\|^*_{W^{1,p}(\Omega)}$ above we obtain the convergence of dual spaces.
\begin{theorem}[Robust dual spaces]\label{thm:conv-dual-spaces}
Let $\Omega\subset \R^d$ be open. For  $f\in (L^p(\Omega))'$ we have
\begin{align*}
\| f\|_{(W^{1,p}_0(\Omega))'}\leq \liminf_{\eps\to0} \| f\|_{ (W^p_{\nu_\eps,0}(\Omega|\R^d))'}.
\end{align*}
If $\Omega\subset \R^d$ is an $W^{1,p}$-extension domain
 we have
\begin{align*}
\| f\|_{(W^{1,p}(\Omega))'}\leq \liminf_{\eps\to0} \| f\|_{(\WnuOma)'}.
\end{align*}
However, if in addition $\Omega\subset \R^d$ is bounded Lipschitz then we have
\begin{align*}
\| f\|_{(W^{1,p}(\Omega))'}=\lim_{\eps\to0} \| f\|_{(\WnuOma)'} \quad\text{and}\quad
\| f\|_{(W^{1,p}_0(\Omega))'}
&=\lim_{\eps\to0} \| f\|_{(W^p_{\nu_\eps,0}(\Omega|\R^d))'}.
\end{align*}
\end{theorem}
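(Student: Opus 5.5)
The plan is to prove each inequality by duality. For the liminf estimates I would test against near-extremal nonlocal functions and use compactness. For the limsup estimates I would test against a near-extremal local function and use the convergence of forms.

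Throughout, $f\in (L^p(\Omega))'$ is identified with an element of $L^{p'}(\Omega)$, acting by $\langle f,v\rangle=\int_\Omega fv\,\d x$.

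\emph{Liminf inequalities.} For the zero-complement case, fix $\eps_j\to0$ realizing the liminf, and assume that liminf is finite (otherwise there is nothing to prove). Given $v\in C_c^\infty(\Omega)$ with $\|v\|^*_{W^{1,p}(\Omega)}\le 1$, extend it by zero. Then $v\in W^p_{\nu_\eps,0}(\Omega|\R^d)$, and by Theorem \ref{thm:BBM-dual-limit}$(iii)$ together with Lemma \ref{lem:collap-bdary},
\begin{align*}
\cE^\eps(v,v)\to K_{d,p}\|\nabla v\|^p_{L^p(\Omega)}.
\end{align*}
Hence $\|v\|_{W^p_{\nu_\eps,0}}\to \|v\|^*_{W^{1,p}(\Omega)}$. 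This gives
\begin{align*}
\langle f,v\rangle\le \|f\|_{(W^p_{\nu_\eps,0})'}\,\|v\|_{W^p_{\nu_\eps,0}(\Omega|\R^d)}.
\end{align*}
Passing to the liminf and taking the sup over $v$ yields the claim, by density of $C_c^\infty(\Omega)$ in $W^{1,p}_0(\Omega)$.

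For the Neumann-type space with $\Omega$ an extension domain I would argue identically. Take any $v\in W^{1,p}(\Omega)$ (through its extension $Ev$). By Theorem \ref{thm:BBM-dual-limit}$(i)$ we have
\begin{align*}
\cE^\eps_\Omega(v,v)\to K_{d,p}\|\nabla v\|^p_{L^p(\Omega)},
\end{align*}
so $\|v\|_{\WnuOma}\to\|v\|^*_{W^{1,p}(\Omega)}$, and the same inequality chain closes the argument. Strictly speaking, these are limsup-type bounds on the test norm. They give exactly the liminf lower bound for the dual norm, because the dual norm is a sup over test functions.

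\emph{Limsup inequalities (bounded Lipschitz case).} For each $\eps$, pick $w_\eps\in \WnuOma$ with $\|w_\eps\|_{\WnuOma}\le1$ and
\begin{align*}
\langle f,w_\eps\rangle\ge \|f\|_{(\WnuOma)'}-\eps.
\end{align*}
Choose a subsequence realizing the limsup. By Theorem \ref{thm:asymp-compactness}$(iii)$, after a further subsequence $w_{\eps_n}\to w$ strongly in $L^p(\Omega)$, with $w\in W^{1,p}(\Omega)$ and
\begin{align*}
\|w\|^*_{W^{1,p}(\Omega)}\le \liminf_n \|w_{\eps_n}\|_{\WnuOma}\le 1.
\end{align*}
Here the liminf bound uses Theorem \ref{thm:BBM-liminf}, together with the fact that the $L^p$ part converges. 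Strong $L^p$ convergence and $f\in L^{p'}(\Omega)$ give $\langle f,w_{\eps_n}\rangle\to\langle f,w\rangle$. Therefore
\begin{align*}
\limsup_\eps\|f\|_{(\WnuOma)'}\le \langle f,w\rangle\le \|f\|_{(W^{1,p}(\Omega))'}.
\end{align*}
For the zero-complement spaces I would run the same argument. A uniform bound on $\|w_\eps\|_{W^p_{\nu_\eps,0}(\Omega|\R^d)}$ controls the full $\R^d$ energy, since $w_\eps=0$ off $\Omega$ makes the $\Omega^c\times\Omega^c$ part vanish. Theorem \ref{thm:asymp-compactness}$(iii)$--$(iv)$ then gives a limit $w\in W^{1,p}_0(\Omega)$ with $\|w\|^*\le1$ and strong $L^p(\Omega)$ convergence, and the same chain concludes.

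\emph{Main obstacle.} The delicate step is the compactness with strong $L^p(\Omega)$ convergence on the whole of $\Omega$, rather than only locally. Only $L^p_{\loc}$ convergence is guaranteed in general, and $L^p_{\loc}$ does not let us pass to the limit in $\langle f,w_\eps\rangle$ for general $f\in L^{p'}$. This is why the bounded Lipschitz hypothesis is needed. It is also where the identification $w\in W^{1,p}_0(\Omega)$ in the Dirichlet case relies on item $(iv)$, that is, on the trace argument from Theorem \ref{thm:conv-trace}.
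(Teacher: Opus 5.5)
Your proposal is correct and follows essentially the same route as the paper. The liminf bounds come from testing against fixed $W^{1,p}$ functions whose nonlocal norms converge to $\|\cdot\|^*_{W^{1,p}(\Omega)}$, and the limsup bounds come from near-extremal nonlocal test functions combined with the compactness Theorem~\ref{thm:asymp-compactness} (strong $L^p(\Omega)$ convergence on bounded Lipschitz domains) and lower semicontinuity of the norm. The only differences are cosmetic: you spell out the density argument for the zero-complement liminf on an arbitrary open $\Omega$, which the paper calls ``analogous'', and you cite item $(iv)$ rather than Theorem~\ref{thm:conv-trace} directly to obtain $w\in W^{1,p}_0(\Omega)$.
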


\begin{proof}
If $\Omega$ is a $W^{1,p}$-extension domain then, see \cite{guy-thesis,Fog23} and Theorem \ref{thm:BBM-dual-limit},  $(\WnuOma)'\subset (W^{1,p}(\Omega))'$ and  $\| u\|_{\WnuOma} \to \|u\|^*_{W^{1,p}(\Omega)}$, $u\in W^{1,p}(\Omega)$ so that
\begin{align*}
|\langle f,u\rangle|\leq \liminf_{\eps\to0}\big( \|f\|_{(\WnuOma)'} \cdot \| u\|_{\WnuOma} \big)= \|u\|^*_{W^{1,p}(\Omega)}\liminf_{\eps\to0}\|f\|_{(\WnuOma)'}.
\end{align*}
Thus, $\| f\|_{(W^{1,p}(\Omega))'}\leq \liminf\limits_{\eps\to0} \| f\|_{(\WnuOma)'}. $ The first $\operatorname{liminf}$ follows analogously. Now assume $\Omega$ is bounded Lipschitz. Consider  $u_\eps\in \WnuOma$ such that $\|u_\eps\|_{\WnuOma}=1$ and $  \| f\|_{(\WnuOma)'}-\eps \leq |\langle f,u_\eps\rangle|$.  By the compactness Theorem \ref{thm:asymp-compactness} we can assume there is $u\in W^{1,p}(\Omega)$ such that $\|u\|^*_{W^{1,p}(\Omega)}\leq \liminf\limits_{\eps\to0}\|u_\eps\|_{\WnuOma}=1$ and $\|u_\eps-u\|_{L^{p}(\Omega)}\xrightarrow{\eps\to0}0$. The latter implies $ \langle f,u_\eps-u\rangle\to 0$ since $f\in (L^p(\Omega))'$ and hence we get
$$\limsup\limits_{\eps\to0} \| f\|_{(\WnuOma)'}\leq |\langle f,u\rangle|\leq\| f\|_{(W^{1,p}(\Omega))'}.$$ Likewise, if we instead selected
$u_\eps\in W^p_{\nu_\eps,0}(\Omega |\R^d)$, i.e. $u_\eps=0$ a.e. on $\Omega^c$, such that $\|u_\eps\|_{W^p_{\nu_\eps}(\R^d)}=\|u_\eps\|_{W^p_{\nu_\eps,0}(\Omega |\R^d)}=1$ and $  \| f\|_{(W^p_{\nu_\eps,0}(\Omega |\R^d))'}-\eps \leq |\langle f,u_\eps\rangle|$, then $u\in W^{1,p}(\R^d)$ and $u=0$ on a.e. $\Omega$, so that Theorem \ref{thm:conv-trace} implies  $\gamma_0 (u)=0$, i.e., $u\in W^{1,p}_0(\Omega)$. From this we also deduce that
$$\limsup\limits_{\eps\to0} \| f\|_{(W^p_{\nu_\eps,0}(\Omega |\R^d))'}\leq |\langle f,u\rangle|\leq\| f\|_{(W^{1,p}_0(\Omega))'}.$$
\end{proof}
%\vspace{-2mm}

%%%%%%%%%%%%%%%%%%%%%%%%%%%%%%%
\section{Definition: Asymptotic weak convergence}  \label{sec:asymp-conv-weak}
%%%%%%%%%%%%%%%%%%%%%%%%%%%%%%%
Given Banach spaces $X$ and $X_\eps$, we aim to define a form of weak convergence for elements
$ \ell_\eps \in X'_\eps $ converging to an element $\ell \in X' $ as  $\varepsilon \to 0$. The primary challenge arises from the dependence of $X_\eps $ on $ \varepsilon$, where of course $X_\eps'$  and $X'$ are respectively the dual space of $X_\eps$ and $X$. Our objective is to preserve key features of standard weak convergence in a fixed Banach space, including: (a) boundedness of the sequence, (b) the weak convergence property, and (c) the weak-strong type lemma.

To alleviate the notations we simply adopt the following notional conventions.  For simplicity we write $\langle\cdot,\cdot \rangle_\eps$ (resp. $\langle\cdot,\cdot \rangle$) to denote the dual pairing between $X_\eps'$ and $X_\eps$ (resp. $X'$ and $X$).  Moreover, it should become clear  from the context that $X_\eps$ denotes one of the spaces $W^p_{\nu_\eps}(\Omega)$, $W^p_{\nu_\eps}(\Omega|\R^d)$, $W^p_{\nu_\eps, \Omega}(\Omega|\R^d)$  or  $T^p_{\nu_\eps}(\Omega^c)$, while  $X$ denotes one of the spaces $W^{1,p}(\Omega)$, $W^{1,p}_0\Omega)$ or $W^{1-1/p ,p}(\partial\Omega)$. Furthermore, the restriction of functions on $\Omega^c$ or on $\partial\Omega$  is understood in the sense of the traces. In other words, we merely write $\langle g,v\rangle$ instead of $\langle g,\gamma_0(v )\rangle$ for  $v\in W^{1,p}(\R^d)$ and $g\in (W^{1-1/p ,p}(\partial\Omega))'$  and $\langle g_\eps,v\rangle_\eps $ instead of $\langle g,\operatorname{Tr}(v )\rangle_\eps$   if $v\in \WnuOmRa $ and $g\in (\TnuOma)'$.

\vspace{2mm}
\par Let us recall that $W^{1,p}(\R^d)\hookrightarrow W^p_{\nu_\eps}(\Omega|\R^d)$, while, if $\Omega$ is an $W^{1,p}$-extension domain we have $W^{1,p}(\Omega)\hookrightarrow W^p_{\nu_\eps}(\Omega)$. Hence  $(W^p_{\nu_\eps}(\Omega))'\subset (W^p_{\nu_\eps}(\Omega|\R^d))'\subset (W^{1,p}(\Omega))'$.This motivates the following definition.

\begin{definition}\label{def:weak-asymp-form}
We say that  $(f_\eps)_\eps$ with $f_\eps\in (W^p_{\nu_\eps}(\Omega|\R^d))'$ (resp. $f_\eps\in (W^p_{\nu_\eps}(\Omega))'$) asymptotically weakly converges to $f\in (W^{1,p}(\Omega))'$ if the following conditions hold
\begin{enumerate}[$(F_1)$]
\item \textbf{Asymptotically bounded:}   $(f_\eps)_\eps$ is asymptotically bounded,  that is,
\begin{align*}
\sup_{\eps>0} \| f_\eps\|_{(W^p_{\nu_\eps}(\Omega|\R^d))'}<\infty \quad \big(\text{resp.}\quad \sup_{\eps>0} \| f_\eps\|_{(W^p_{\nu_\eps}(\Omega))'}<\infty \big).
\end{align*}
\item \textbf{$L^p$-Weak-strong type lemma:} For  a sequence $v$, $(v_\eps)_\eps$ with $v_\eps\in\WnuOmRa$ and $v\in W^{1,p}(\R^d)$ (resp. $v_\eps\in \WnuOma$ and $v\in W^{1,p}(\Omega)$) such that $v_\eps|_\Omega\to v|_\Omega$  in $L^p(\Omega)$  as $\eps\to 0$ we have
\begin{align*}
\lim_{\eps\to0} \langle f_\eps , v_\eps \rangle_\eps
= \langle f , v\rangle .
\end{align*}
In particular $(f_\eps)_\eps$  satisfies the  usual \textbf{$W^{1,p}$-weak convergence}, that is, for   any $v\in W^{1,p}(\R^d)$ (resp. $v\in W^{1,p}(\Omega)$)  we have
\begin{align*}
\lim_{\eps\to0} \langle f_\eps , v\rangle_\eps  = \langle f , v\rangle.
\end{align*}
\end{enumerate}
The asymptotic weak convergence of $ (f_\eps)_\eps $ with $ f_\eps \in ( W^p_{\nu_\eps, \Omega}(\Omega | \R^d))' $ to$ f \in W^{1,p}_0(\Omega)$  is defined, \textit{mutatis mutandis}, in a similar manner.
\end{definition}

\begin{example}
The conditions of Definition \ref{def:weak-asymp-form}  hold true  in the situation where  $f,f_\eps\in L^{p'} (\Omega)$ (in particular if $f_\eps=f$) and
$f_\eps \rightharpoonup f$ weakly in $L^{p'}(\Omega)$, that is
\begin{align*}
\lim_{\eps\to0} \int_{\Omega}f_\eps(x) v(x)\d x
= \int_{ \Omega}f (x)v(x)\d x\qquad\text{for all $v\in L^{p}(\Omega)$}.
\end{align*}
By the boundedness principle, the weak convergence of the sequence $(f_\eps)_\eps$ implies the boundedness of the sequence $(\| f_\eps\|_{L^{p'}(\Omega)})_\eps$ and hence it is readily seen that
\begin{align*}
\sup_{\eps>0}  \| f_\eps\|_{(W^p_{\nu_\eps}(\Omega|\R^d))'}\leq \sup_{\eps>0} \| f_\eps\|_{(W^p_{\nu_\eps}(\Omega))'}\leq
\sup_{\eps>0}  \| f_\eps\|_{L^{p'}(\Omega)}<\infty.
\end{align*}
Now if $(v_\eps)_\eps$  is a sequence such that $v_\eps \to v$ in $L^p(\Omega)$, then  the standard weak-strong lemma implies $\int_{\Omega}f_\eps(x) v_\eps (x)\d x
\to \int_{ \Omega}f (x)v(x)\d x$ since
\begin{align*}
 \int_{\Omega}|f_\eps(x) (v_\eps (x)-v(x))|\d x
 \leq  \sup_{\eps>0}  \| f_\eps\|_{L^{p'}(\Omega)} \|v_\eps-v\|_{L^p(\Omega)}\xrightarrow{\eps\to0}0.
\end{align*}
\end{example}

 \begin{definition}\label{def:asymp-strong-Dir}
 We say that a  sequence $ (g_\eps)_\eps$ with $g_\eps\in  \TnuOma$ converges asymptotically strongly to $g\in W^{1,p}(\R^d\setminus\overline{\Omega})$, if
\begin{align*}
\lim_{\eps\to0}	\|g_\eps-g\|_{T^p_{\nu_\eps}(\Omega^c)}=0.
\end{align*}
If $\Omega\subset \R^d$ is bounded Lipschitz, we say that  $ (g_\eps)_\eps$ with $g_\eps\in  \TnuOma$ converges asymptotically strongly to $g\in W^{1-1/p,p}(\partial\Omega)$ if
\begin{align*}
\lim_{\eps\to0}	\|g_\eps-\widehat{E}(g)\|_{T^p_{\nu_\eps}(\Omega^c)}=0.
\end{align*}
where $\widehat{E}(g)$ is defined as in Remark \ref{rem:ext-plus-trace}.   It is worth mentioning that, the asymptotic boundedness of $(g_\eps)_\eps$ is automatic under mild regularity assumptions on $\Omega$; for example, it holds whenever $\R^d \setminus \overline{\Omega}$ is a $W^{1,p}-$extension domain.
\end{definition}
\begin{example}
Let $g_\eps, g\in W^{1,p}(\R^d\setminus \overline{\Omega})$. Assume $\R^d\setminus \overline{\Omega}$ is a $W^{1,p}$-extension domain. If $g_\eps\to  g$ strongly in $W^{1,p}(\R^d\setminus \overline{\Omega})$. By Remark \ref{rem:ext-plus-trace} there is $C>0$ such that
\begin{align*}
	\|g_\eps-g\|_{T^p_{\nu_\eps}(\Omega^c)}   	\leq C\|g_\eps-g\|_{W^{1,p}(\R^d\setminus \overline{\Omega})}\to 0
\end{align*}
Namely $\|g_\eps-g\|_{T^p_{\nu_\eps}(\Omega^c)}\xrightarrow{\eps\to0}0$.
Likewise, if $g\in W^{1-1/p, p}(\partial \Omega)$,   $\Omega$ bounded Lipschitz and  $g_\eps\to  \widehat{E}(g)$ strongly in $W^{1,p}(\R^d\setminus \overline{\Omega})$
we also have $ \|g_\eps-\widehat{E}(g)\|_{T^p_{\nu_\eps}(\Omega^c)}
\xrightarrow{\eps\to0}0.$
\end{example}

\begin{definition}\label{def:weak-asymp-trace}
We say that a sequence  $(g_\eps)_\eps$ with $g_\eps\in (T^p_{\nu_\eps}(\Omega^c))'$ asymptotically weakly converges to $g\in (W^{1-1/p , p}(\partial\Omega))'$ if
the following conditions hold.
\begin{enumerate}[$(G_1)$]
\item \textbf{Asymptotically bounded:} The sequence $(g_\eps)_\eps$ satisfies
\begin{align*}
\sup_{\eps>0} \| g_\eps\|_{(T^p_{\nu_\eps}(\Omega^c))'}<\infty .
\end{align*}

\item \textbf{$L^p$-Weak-strong type lemma:}
For $v\in W^{1,p}(\R^d)$  and a sequence $(v_\eps)_\eps$ with $v_\eps\in\TnuOma$  such that $v_\eps|_\Omega\to v|_\Omega$  in $L^p(\Omega)$  as $\eps\to 0$ we have
\begin{align*}
\lim_{\eps\to0} \langle g_\eps , v_\eps \rangle_\eps  = \langle g , v\rangle
\end{align*}
In particular, the sequence $(g_\eps)_\eps$  satisfies \textbf{$W^{1,p}$-weak convergence}, that is, for all $v\in W^{1,p}(\R^d)$ we have $\lim\limits_{\eps\to0} \langle g_\eps , v\rangle_\eps = \langle g.  v\rangle.$
\end{enumerate}
\end{definition}

\noindent Note that by duality Theorem \ref{thm:trace-loc-thm} implies $L^{p'}(\partial\Omega)\hookrightarrow (W^{1-1/p ,p}(\partial \Omega))'$, while Theorem \ref{thm:trace-nonloc-thm} implies $ L^{p'} (\Omega^c, \omega_{\nu_\eps}^{1-p'})\hookrightarrow (\TnuOma)'$.
In  particular the case where $g_\eps\in L^{p'} (\Omega^c, \omega_{\nu_\eps}^{1-p'})$ and
$g\in L^{p'}(\partial\Omega)$  the dual pairings $\langle \cdot,\cdot \rangle_\eps$ and $\langle \cdot,\cdot \rangle$  respectively boil down to
\begin{align*}
\langle g_\eps , v \rangle_\eps&= \int_{\Omega^c}g_\eps(x) v(x)\d x\quad\text{and}\quad
\langle g , v \rangle=\int_{\partial \Omega}g (x)v(x)\d \sigma(x).
\end{align*}

\begin{example}
The asymptotic weak convergence in the sense of Definition~\ref{def:weak-asymp-trace}
holds clearly for the simple case $g_\eps = 0$ and $g = 0$.
The upcoming Theorem~\ref{thm:exam-weak-trace}, reminiscent of
\cite[Lemma~11.1]{Fog25}, provides a nontrivial example given for some $\varphi \in C_b^2(\R^d)$ by
\begin{align*}
    g_\eps = \mathcal{N}_\eps \varphi
    \quad\text{and}\quad
    g = K_{d,p}\,\partial_{n,p}\varphi
      := K_{d,p}\,|\nabla \varphi|^{p-2}\,\nabla \varphi \cdot n,
\end{align*}
where for $1<p<2$, we assume $\kappa_\varphi :=
    \sup_{\eps>0} \|L_\eps \varphi\|_{L^\infty(\Omega)} < \infty.$ According to  Theorem \ref{thm:point-evalua-pLevy-rad} the later is merely implied when $|\nabla \varphi(x))|\neq 0$ for all $x\in \overline{\Omega}$.
\end{example}

\noindent The following theorem provides pointwise estimates for the $p$-L\'evy operator when the corresponding kernel is radial, thereby also improving \cite[Proposition~B.2]{Fog25}.
\begin{theorem}\label{thm:point-evalua-pLevy-rad}
Let $\nu \in L^1(\R^d,\, 1 \wedge |h|^p)$ be a non-negative $p$-L\'evy radial kernel, and whose corresponding $p$-L\'evy operator is given by
\begin{align*}
Lu(x)= \pv 2\int_{\R^d}|u(x)-u(y)|^{p-2}(u(x)-u(y))\, \nu(x-y)\d y.
\end{align*}
Let $u \in C^{2}(B_{2}(x)) \cap L^{\infty}(\R^d)$ with $x \in \R^d$. Assume that $|\nabla u(x)| \neq 0$ only when $1 < p < 2$.
Then $Lu(x)$ is well defined, and for  $0<\delta<1$ small
\begin{align*}
|Lu(x)|&\leq 2^p\|u\|^{p-1}_{L^\infty(\R^d)}\int_{|h|\geq \delta}\nu(h)\d h\\
+&2^{p+1}p\Big(\|u\|_{C^2(B_\delta(x))} \frac{K_{d,p-2}}{|\nabla u(x)|^{2-p}}+
\|u\|^{\max(1, p-1)}_{C^2(B_\delta(x))}\Big)\int_{|h|<\delta}|h|^p\, \nu(h)\d h
\end{align*}
with  $K_{d,p-2}= \tfrac{d+p-2}{p-1} K_{d,p}. $ In particular, for some constant $c_{d,p}>0$ we have
\begin{align*}
\begin{split}
|Lu(x)|\leq c_{d,p}\Big(\|u\|_{C^2(B_1(x))} |\nabla u(x)|^{p-2}+
\|u\|^{\max(1, p-1)}_{C^2(B_1(x))}+\|u\|^{p-1}_{L^\infty(\R^d)}\Big)
\|\nu\|_{L^1(\R^d,1\land|h|^p)}.
\end{split}
\end{align*}
Furthermore,  if $u \in C^{2}_b(U) \cap L^{\infty}(\mathbb{R}^d)$ where $U\subset  \R^d$ is open, then for $p\geq 2$ we have
\begin{align*}
\|Lu\|_{L^\infty(U)}\leq c_{d,p}\Big(\|u\|^{p-1}_{C^2_b(U)} + \|u\|^{p-1}_{L^\infty(\R^d)}\Big)\|\nu\|_{L^1(\R^d,1\land|h|^p)}.
\end{align*}
 Meanwhile, for the case $1<p<2$,  if in addition $\frac{1}{|\nabla u|}\in L^\infty(U)$  then  we have
\begin{align*}
\|Lu\|_{L^\infty(U)}\leq c_{d,p} \Big( \|\, |\nabla u|^{-1}\|^{2-p}_{L^\infty(U)}\|u\|_{C^2_b(U)}  +\|u\|_{C^2_b(U)}  + \|u\|^{p-1}_{L^\infty(\R^d)}\Big)\|\nu\|_{L^1(\R^d,1\land|h|^p)}.
\end{align*}
\end{theorem}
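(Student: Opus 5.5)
We split $Lu(x)$ into a far part and a near part. Write
\begin{align*}
Lu(x)= 2\int_{|h|\geq\delta}\psi(u(x)-u(x+h))\,\nu(h)\d h+ \pv 2\int_{|h|<\delta}\psi(u(x)-u(x+h))\,\nu(h)\d h,
\end{align*}
with $\psi(t)=|t|^{p-2}t$. The far part is bounded directly, since $|u(x)-u(x+h)|\leq 2\|u\|_{L^\infty}$ gives the contribution $2\cdot 2^{p-1}\|u\|^{p-1}_{L^\infty(\R^d)}\int_{|h|\geq\delta}\nu(h)\d h$.

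For the near part we use radiality. Since $\nu(h)=\nu(-h)$, we symmetrize:
\begin{align*}
\pv\int_{|h|<\delta}\psi(u(x)-u(x+h))\nu(h)\d h=-\tfrac12\int_{|h|<\delta}\big[\psi(a_h)-\psi(-b_h)\big]\nu(h)\d h,
\end{align*}
where $a_h=u(x+h)-u(x)$ and $b_h=u(x-h)-u(x)$. Taylor's formula gives $a_h=\nabla u(x)\cdot h+r_h$ and $-b_h=\nabla u(x)\cdot h-r_{-h}$ with $|r_{\pm h}|\leq \tfrac12\|u\|_{C^2(B_\delta(x))}|h|^2$. Hence $a_h-(-b_h)=a_h+b_h=O(\|u\|_{C^2}|h|^2)$.

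We now apply Lemma \ref{lem:elm-est-simons} to the difference $\psi(a_h)-\psi(-b_h)$, treating the two ranges of $p$ separately.

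\textbf{Case $p\geq2$.} By \eqref{eq:upper-elem-degen},
$$|\psi(a_h)-\psi(-b_h)|\leq \kappa_p|a_h+b_h|(|a_h|+|b_h|)^{p-2}\lesssim \|u\|_{C^2}|h|^2(\|u\|_{C^1}|h|)^{p-2}=\|u\|^{p-1}_{C^2}|h|^p.$$
This yields the term $\|u\|^{\max(1,p-1)}_{C^2}\int_{|h|<\delta}|h|^p\nu$.

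\textbf{Case $1<p<2$.} Here \eqref{eq:upper-elem-sing} alone only gives $|h|^{2(p-1)}$, which is not integrable against $\nu$ in general. So we linearize around $\psi(\nabla u(x)\cdot h)$ instead. By the mean value theorem,
$$|\psi(a)-\psi(a')|\leq(p-1)|a-a'|\max(|a|,|a'|)^{p-2}$$
on segments avoiding $0$. Taking $a=\nabla u(x)\cdot h$ and $a'=a_h$, the error is roughly $\|u\|_{C^2}|h|^2\,|\nabla u(x)\cdot h|^{p-2}$. Integrating over the spherical directions, write $h=r w$. Then $\int_{\mathbb S^{d-1}}|\nabla u(x)\cdot w|^{p-2}\d\sigma$ is finite because $p-2>-1$, and it equals $|\nabla u(x)|^{p-2}|\mathbb S^{d-1}|K_{d,p-2}$. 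This produces the factor $K_{d,p-2}|\nabla u(x)|^{p-2}$ and the bound $\|u\|_{C^2}|\nabla u(x)|^{p-2}|h|^p$. The identity $K_{d,p-2}=\tfrac{d+p-2}{p-1}K_{d,p}$ follows from the Gamma formula for $K_{d,p}$ together with $\Gamma(z+1)=z\Gamma(z)$. The linear term $\psi(\nabla u(x)\cdot h)$ is odd in $h$, so it cancels in the symmetric principal value.

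\textbf{Main obstacle.} The hardest step is justifying the linearization for $1<p<2$ in the directions where $\nabla u(x)\cdot h$ is small relative to the remainder $|h|^2$, since there the mean value bound degenerates. For those directions we would fall back on \eqref{eq:upper-elem-sing}, which gives $|\psi(a_h)-\psi(\nabla u(x)\cdot h)|\leq\kappa_p|r_h|^{p-1}$, and control the measure of such directions. Concretely, on the set where $|\nabla u(x)\cdot w|\leq C\|u\|_{C^2}r$, the bound $|r_h|^{p-1}\leq (\|u\|_{C^2}r^2)^{p-1}\lesssim \|u\|_{C^2}r^2\,|\nabla u(x)\cdot h|^{p-2}$ holds. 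So both regimes give the same $|h|^p|\nabla u\cdot w|^{p-2}$ integrand, and the extra power of $\|u\|_{C^2}$ is absorbed. Collecting the constants then gives the stated inequality with the factor $2^{p+1}p$.

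\textbf{Remaining statements.} Choosing $\delta=1$ and using $\int_{|h|\geq1}\nu+\int_{|h|<1}|h|^p\nu=\|\nu\|_{L^1(1\land|h|^p)}$ gives the second bound. Taking the supremum over $x\in U$ gives the $L^\infty(U)$ estimates. For $p\geq2$ no gradient term is needed, and $\|u\|_{C^2}\le\|u\|^{p-1}_{C^2}+1$ can be absorbed after normalizing, or handled by homogeneity. For $1<p<2$, $|\nabla u|^{p-2}\leq\||\nabla u|^{-1}\|^{2-p}_{L^\infty(U)}$.
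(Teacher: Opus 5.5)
Your proof is correct in substance, with a few repairs below, but it follows a different route from the paper's. The paper does not symmetrize. With $a=\nabla u(x)\cdot h$ and $b=u(x+h)-u(x)$, it removes the odd term $\psi(a)$ and, for every $p$, splits $\psi(b)-\psi(a)=\psi'(a)(b-a)+[\psi(b)-\psi(a)-\psi'(a)(b-a)]$. The Hessian-type term $\psi'(a)(b-a)$ is integrated in polar coordinates; this is where $K_{d,p-2}|\nabla u(x)|^{p-2}$ comes from, even when $p\ge 2$. The remainder is bounded by $\|u\|^{\max(1,p-1)}_{C^2}|h|^p$: for $p\ge2$ by the growth of $\psi'$, and for $1<p<2$ by the $C^1$-smoothness of $\psi$ away from $0$ ``for $\delta$ small''.

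Your second-difference symmetrization turns $p\ge 2$ into a single application of \eqref{eq:upper-elem-degen}, with no gradient term at all. Your bounds also hold for every $\delta\in(0,1]$, which is what the passage to $\delta=1$ with a constant $c_{d,p}$ independent of $u$ and $x$ requires. For $1<p<2$ your separate treatment of directions near the hyperplane $\{\nabla u(x)\cdot h=0\}$ is more careful than the paper. There $a\to0$ while $b-a\neq0$, so $\psi'(a)$ blows up, and the paper's pointwise remainder bound $|\psi(b)-\psi(a)-\psi'(a)(b-a)|\le|b-a|$ is not uniform in the direction of $h$. The final estimate survives only because that remainder is also $\lesssim|b-a|\,|a|^{p-2}$, which integrates to the $K_{d,p-2}$ term. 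What the paper's decomposition buys in exchange is that it isolates the same term $\psi'(\nabla u(x)\cdot h)(b-a)$ that produces $-K_{d,p}\Delta_p u$ in the pointwise limit.

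\textbf{Repairs.}

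\emph{Mean value inequality.} The mean value theorem gives $|\psi(a)-\psi(a')|=(p-1)|\xi|^{p-2}|a-a'|$ with $\xi$ between $a$ and $a'$. Since $p-2<0$, this is controlled by $\min(|a|,|a'|)^{p-2}$, not by the maximum. As you wrote it (constant $p-1$, maximum) the inequality is false: for $0<a<a'$ one has $a'^{p-1}-a^{p-1}\ge(p-1)(a'-a)a'^{p-2}$. On your good set $|\nabla u(x)\cdot h|\ge2|r_h|$ the minimum is at least $|\nabla u(x)\cdot h|/2$, so your conclusion there survives.

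\emph{The case split and the constant.} The case split can be dropped. For $1<p<2$ and all real $a,a'$,
\[
|\psi(a)-\psi(a')|\le2|a-a'|\max(|a|,|a'|)^{p-2}\le2|a-a'|\,|a|^{p-2}.
\]
To see this, assume $|a|\le a'$, put $s=|a|/a'$, and use $s\le s^{p-1}\le1$. This bounds the near part by $2\|u\|_{C^2(B_\delta(x))}K_{d,p-2}|\nabla u(x)|^{p-2}\int_{|h|<\delta}|h|^p\nu(h)\d h$. That is what justifies your otherwise unverified claim that the constants collect to $2^{p+1}p$.

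\emph{The remark for $p\ge 2$.} Your remark about $\|u\|_{C^2}\le\|u\|^{p-1}_{C^2}+1$ is unnecessary. Your symmetrized bound already gives the factor $\kappa_p2^{p-2}\|u\|^{p-1}_{C^2(B_\delta(x))}$ in front of $\int_{|h|<\delta}|h|^p\nu(h)\d h$.

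\emph{The supremum over $U$.} Taking the supremum over $x\in U$ tacitly needs $\|u\|_{C^2(B_1(x))}\le\|u\|_{C^2_b(U)}$, i.e.\ $u\in C^2_b$ on the $1$-neighbourhood of $U$. With only $u\in C^2_b(U)$ the bound can fail near $\partial U$. Take $d=1$, $p=2$, $\nu(h)=|h|^{-1-2s}$ with $s>1/2$, $u=\min(|y|,1)$ and $U=(0,1)$; then $|Lu(x)|\sim x^{1-2s}$ as $x\to0^+$. The paper's ``follow directly'' has the same gap, though its later use with $\varphi\in C^2_b(\R^d)$ is unaffected.
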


\begin{proof}
We prove only the first estimate, as the remaining ones follow directly from it.   Let us put $a=\nabla u(x)\cdot h$ and $b=\int_0^1\nabla u(x+th)\cdot h\, \d t$ so that $u(x+h)-u(x)= \int_0^1\nabla u(x+th)\cdot h\d t$. Since  $\pv \int_{B_\delta(0)} \psi(\nabla u(x)\cdot h)\nu(h)\d h=0$ and $\psi(t)= |t|^{p-2}t$, by the fundamental theorem of calculus we obtain
\begin{align}\label{eq:xxpsi-taylor-nearzero}
\begin{split}
2\int_{|h|<\delta} &\psi(u(x+h)-u(x))\,\nu(h)\,\d h
\\&= 2\int_{|h|<\delta } \psi'(a)(b-a)+ [\psi(b)-\psi(a)-\psi'(a)(b-a)]\,\nu(h)\,\d h
\end{split}
\end{align}
where we emphasize that $\psi'(t)= (p-1)|t|^{p-2}$. It is worth noticing that
\begin{align*}
2|b-a| &=2\Big| \int_0^1 t\int_0^1 D^2u(x+sth)\cdot h \cdot h\, \d s\,  \d t\Big|\leq \|u\|_{C^2(B_1(x))} |h|^2.
\end{align*}
While, since   $\nabla u(x)\neq 0$ for $1<p<2$, using polar coordinates we find that
\begin{align*}
 2\Big|\int_{|h|<\delta } \psi'(a)&(b-a)\,\nu(h)\,\d h\Big|
\leq (p-1)\|u\|_{C^2(B_1(x))}\int_{|h|<\delta }  |\nabla u(x)\cdot h|^{p-2}  |h|^2\nu(h)\,\d h\\
&= (p-1)\|u\|_{C^2(B_1(x))}\int_{\mathbb{S}^{d-1}}  \hspace*{-2ex}|\nabla u(x)\cdot \xi|^{p-2}  \d\sigma_{d-1}(\xi)\, \int_0^\delta r^{p+d-1}\nu(r)\,\d r\\
&= (p-1)\|u\|_{C^2(B_1(x))} |\nabla u(x)|^{p-2}K_{d,p-2} \int_{|h|<\delta } |h|^p\nu(h)\,\d h,
\end{align*}
where we recall that
\begin{align*}
K_{d,p-2}= \frac{1}{|\mathbb{S}^{d-1}|} \int_{\mathbb{S}^{d-1}} |w_d|^{p-2}\d \sigma_{d-1}(w)= \frac{ \Gamma\big(\frac{d}{2}\big)  \Gamma\big(\frac{p-1}{2}\big)}{\Gamma\big(\frac{d+p-2}{2}\big) \Gamma\big(\frac{1}{2}\big)}= \frac{d+p-2}{p-1} K_{d,p}.
\end{align*}
Next, for  $1<p<2$ with $\nabla u(x)\neq 0$, note that  $\psi(t)= |t|^{p-2}t $ is $C^1$ on $\R\setminus\{0\}$  so that
\begin{align*}
\lim_{b\to a}\frac{\psi(b)-\psi(a) -\psi'(a)(b-a)}{b-a}=0.
\end{align*}
Since $b- a\to0$ as $|h|\to 0$, for $0<\delta<1$ small enough we have
\begin{align*}
|\psi(b)-\psi(a) -\psi'(a)(b-a)| \leq |b-a|\leq \|u\|_{C^2(B_1(x))}|h|^2\leq \|u\|_{C^2(B_1(x))}|h|^p.
\end{align*}
For the case $p\geq 2$ we have
\begin{align*}
|\psi(b)-\psi(a)-\psi'(a)(b-a)|
&\leq |b-a| \int_0^1  |\psi'(a+\tau (b-a))- \psi'(a)|\\
&\leq 2^{p+1}(p-1)\|u\|_{C^2(B_1(x))}
\|\nabla u\|^{p-2}_{L^\infty(B_1(x))}
|h|^p\\
&\leq 2^{p+1}(p-1)\|u\|^{p-1}_{C^2(B_1(x))} |h|^p.
\end{align*}
Thus in both cases $1<p<2$  and $p\geq 2$ we have
\begin{align*}
\footnotesize
2\Big|\int_{|h|<\delta }\hspace*{-2ex}|\psi(b)-\psi(a) -\psi'(a)(b-a)| \nu(h)
\d h\Big| \leq 2^{p+1}(p-1) \|u\|^{\max(1, p-1)}_{C^2(B_1(x))}\int_{|h|<\delta } \hspace*{-2ex}|h|^p\nu(h)\d h.
\end{align*}
Inserting altogether in \eqref{eq:xxpsi-taylor-nearzero} for $0<\delta<1$ sufficiently small we obtain
\begin{align*}
&2\Big|\int_{|h|<\delta } \psi(u(x+h)-u(x))\,\nu(h)\,\d h\Big|
\\
&\leq 2^{p+1}(p-1)
\Big( \|u\|_{C^2(B_1(x))} \frac{K_{d,p-2}}{|\nabla u(x)|^{2-p}} +
\|u\|^{\max(1, p-1)}_{C^2(B_1(x))}\Big)\int_{|h|<\delta }\hspace*{-2ex} |h|^p\nu(h)\d h.
\end{align*}
The desired estimate follows since for  $0<\delta<1$ we have
\begin{align*}
\int_{B^c_\delta(x)} |u(x)- u(y)|^{p-1}\nu(x-y)\d y
&= 2^{p}\|u\|^{p-1}_{L^\infty(\R^d)}\int_{|h|\geq \delta}  \nu(h) \d h.
\end{align*}
\end{proof}

\begin{theorem}\label{thm:exam-weak-trace}
Let $\Omega\subset \R^d$ be bounded Lipschitz and $\varphi\in C_b^2(\R^d)$.  In the case $1<p<2$, we assume that $\varphi$ satisfies $\kappa_\varphi:= \sup_{\eps>0}\|L_\eps \varphi\|_{L^\infty(\Omega)} <\infty.$ Then we  obtain the linear forms  $g_\eps\in  \big(\TnuOma\big)'$ and $g\in \big(W^{1-1/p ,p}(\partial \Omega)\big)'$ by defining
\begin{align*}
g_\eps =  \mathcal{N}_\eps \varphi
\qquad\text{and}\qquad
g= K_{d,p} \partial_{n,p}\varphi=  K_{d,p}  |\nabla \varphi|^{p-2} \nabla \varphi\cdot n.
\end{align*}
Moreover, $(g_\eps)_\eps$  asymptotically weakly converges to $g$.
\end{theorem}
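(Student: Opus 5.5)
The approach is to route everything through the nonlocal Gauss--Green formula. For $\varphi\in C^2_b(\R^d)$ and any $v\in \WnuOmRa$, splitting $\cE^\eps(\varphi,v)$ into the $\Omega\times\R^d$ part and the $\Omega^c\times\Omega$ part and using symmetry of $\nu_\eps$ gives
\begin{align*}
\cE^\eps(\varphi,v)=\int_\Omega L_\eps\varphi(x)\,v(x)\,\d x+\int_{\Omega^c}\mathcal{N}_\eps\varphi(y)\,v(y)\,\d y .
\end{align*}
This formula is exactly what lets us \emph{define} $g_\eps$ as a functional on the trace space:
\begin{align*}
\langle g_\eps,\operatorname{Tr}v\rangle_\eps:=\cE^\eps(\varphi,v)-\int_\Omega L_\eps\varphi\, v\,\d x .
\end{align*}
The right-hand side depends only on $\operatorname{Tr}v$. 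Indeed, if $\operatorname{Tr}v=0$, then $v\in W^p_{\nu_\eps,0}(\Omega|\R^d)$, and the identity holds with no boundary term. This is checked first for nice $v$ and then extended by density/continuity.

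For boundedness $(G_1)$ we use the H\"older estimate \eqref{eq:holder-form-estimate}:
\begin{align*}
|\cE^\eps(\varphi,v)|\le \cE^\eps(\varphi,\varphi)^{1/p'}\cE^\eps(v,v)^{1/p}\le C\|\varphi\|_{W^{1,p}_{loc}}^{p-1}\|v\|_{\WnuOmRa},
\end{align*}
where \eqref{eq:taylor-exapnsion} is applied to a cutoff of $\varphi$ near $\Omega$. The far part is controlled because $\varphi$ is bounded and $\int_{|h|>1}\nu_\eps\le 1$. In addition, $|\int_\Omega L_\eps\varphi\,v|\le \|L_\eps\varphi\|_{L^{p'}(\Omega)}\|v\|_{L^p(\Omega)}$. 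For $p\ge2$ this is uniformly bounded by Theorem~\ref{thm:point-evalua-pLevy-rad}, since $\Omega$ is bounded. For $1<p<2$ it is bounded by the hypothesis $\kappa_\varphi<\infty$ together with $|\Omega|<\infty$. Taking the infimum over representatives $v$ of a given trace then gives $\sup_\eps\|g_\eps\|_{(\TnuOma)'}<\infty$. The limit functional $g=K_{d,p}\partial_{n,p}\varphi$ is continuous on $W^{1-1/p,p}(\partial\Omega)$, because $|\nabla\varphi|^{p-1}$ is bounded on $\partial\Omega$ and Theorem~\ref{thm:trace-loc-thm} provides $L^{p'}(\partial\Omega)\hookrightarrow(W^{1-1/p,p}(\partial\Omega))'$.

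For $(G_2)$, take $v_\eps\in\WnuOmRa$ with $\sup_\eps\|v_\eps\|_{\WnuOmRa}<\infty$ and $v_\eps|_\Omega\to v|_\Omega$ in $L^p(\Omega)$, where $v\in W^{1,p}(\R^d)$. (The uniform bound is implicit in the asymptotic framework, since $v_\eps$ will be solutions or lifts.) We pass to the limit in the two terms separately.
\begin{enumerate}[(1)]
\item $\cE^\eps(\varphi,v_\eps)\to K_{d,p}\cE^0(\varphi,v)$ by the demi-convergence Theorem~\ref{thm:asymp-conv-form}, which applies since a bounded Lipschitz set is an extension domain, case $(i)$. Strictly, that theorem needs $\varphi\in W^{1,p}(\R^d)$. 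We replace $\varphi$ by $\chi\varphi$ with $\chi\in C_c^\infty$ equal to $1$ on a neighbourhood of $\overline\Omega$; the error is $\int\!\!\int_{\Omega\times\{|x-y|>\delta\}}$, which is bounded by $C\big(\int_{|h|>\delta}\nu_\eps\big)^{1/p'}\|v_\eps\|$ and tends to $0$.
\item $\int_\Omega L_\eps\varphi\, v_\eps\to -K_{d,p}\int_\Omega\Delta_p\varphi\, v$. This follows from the pointwise convergence $L_\eps\varphi\to -K_{d,p}\Delta_p\varphi$ on $\Omega$ recalled in the introduction, the uniform $L^\infty$ bound above (dominated convergence gives strong $L^{p'}(\Omega)$ convergence), and $v_\eps\to v$ in $L^p(\Omega)$.
\end{enumerate}
The classical Green formula then yields
\begin{align*}
\langle g_\eps,v_\eps\rangle_\eps\to K_{d,p}\Big(\int_\Omega|\nabla\varphi|^{p-2}\nabla\varphi\cdot\nabla v+\int_\Omega\Delta_p\varphi\,v\Big)=K_{d,p}\int_{\partial\Omega}\partial_{n,p}\varphi\,\gamma_0 v\,\d\sigma=\langle g,v\rangle .
\end{align*}

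The main obstacle is step (2) in the singular range $1<p<2$. There the pointwise limit $L_\eps\varphi\to-K_{d,p}\Delta_p\varphi$ requires $\nabla\varphi\neq0$, and $\Delta_p\varphi$ may fail to be $L^{p'}$ where the gradient vanishes. The hypothesis $\kappa_\varphi<\infty$ only gives boundedness. My first attempt would be to use boundedness to extract a weak-$*$ limit $\ell$ of $L_\eps\varphi$ in $L^\infty(\Omega)$. I would then identify $\ell$ weakly by testing against $w\in C_c^\infty(\Omega)$: Theorem~\ref{thm:BBM-dual-limit}$(iii)$ gives $\int L_\eps\varphi\,w=\cE^\eps(\varphi,w)\to K_{d,p}\cE^0(\varphi,w)$, so $\ell=-K_{d,p}\Delta_p\varphi$ in distributions and hence as a bounded function. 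Weak-$*$ convergence paired with strong $L^p$ convergence of $v_\eps$ (on a bounded set) suffices, which removes any need for pointwise convergence.
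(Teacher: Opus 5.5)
Your proposal is correct and follows essentially the same route as the paper. You get the uniform bound on $g_\eps$ from the nonlocal Gauss--Green formula, which the paper instead cites from \cite{Fog25}; you pass to the limit in $\cE^\eps(\varphi,v_\eps)$ via Theorem~\ref{thm:asymp-conv-form} and in $\int_\Omega L_\eps\varphi\, v_\eps$ via the uniform bound $\kappa_\varphi$; and you conclude with the local Gauss--Green formula. Your cutoff $\chi\varphi$ (needed because $\varphi\notin W^{1,p}(\R^d)$ in general) and your weak-$*$ identification of the limit of $L_\eps\varphi$ for $1<p<2$ (which avoids the pointwise convergence requiring $\nabla\varphi\neq 0$) are valid refinements of steps the paper passes over, and you correctly keep the factor $K_{d,p}$ in $L_\eps\varphi\to -K_{d,p}\Delta_p\varphi$.
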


\begin{remark}
As shown in Theorem~\ref{thm:point-evalua-pLevy-rad}, the condition
$\kappa_\varphi < \infty$ merely follows if we only assume that
$\nabla \varphi(x) \neq 0$ for all $x \in \overline{\Omega}$.
Indeed,  $|\nabla \varphi|^{-1} \in L^\infty(\Omega)$ since for some $x_0 \in \overline{\Omega}$ we have
\begin{align*}
    \big\| |\nabla \varphi|^{-1} \big\|_{L^\infty(\Omega)}
    \leq \Big( \min_{x \in \overline{\Omega}} |\nabla \varphi(x)| \Big)^{-1}
    = |\nabla \varphi(x_0)|^{-1}.
\end{align*}
\end{remark}

\begin{proof}
It is important to observe that  in both cases, i.e., $1<p<\infty$ we have $\kappa_\varphi=\sup_{\eps>0}\|L_\eps \varphi\|_{L^\infty(\Omega)} <\infty$; see Theorem \ref{thm:point-evalua-pLevy-rad}. Therefore, $\|\Delta_p  \varphi\|_{L^\infty(\Omega)} \leq \kappa_\varphi$ because  $L_\eps \varphi (x)\to -\Delta_p\varphi(x)$, by \cite[Theorem 9.9]{Fog25}.
Since  $\kappa_\varphi<\infty$ the local Gauss-Green formula and H{\"o}lder's inequality imply
\begin{align*}
\Big|\int_{\partial \Omega}\partial_{n,p} \varphi (x)v(x)\d \sigma(x)\Big|&= \Big|\int_{\Omega} \Delta_p \varphi(x) v(x)+|\nabla \varphi(x)|^{p-2} \nabla \varphi(x) \cdot \nabla v(x)\d x\Big|
\\& \leq C_\varphi \|v\|_{W^{1,p}(\Omega)}\quad \text{with}\quad C_\varphi= \kappa_\varphi|\Omega|^{1/p'}+ \|\nabla\varphi\|^{p-1}_{L^p(\Omega)}.
\end{align*}
From this we deduce $|\langle g, v \rangle|\leq C_\varphi\|v\|_{W^{1-1/p ,p}(\partial \Omega)}$ for all $v\in W^{1-1/p ,p}(\partial \Omega)$. In other words $g\in (W^{1-1/p ,p}(\partial \Omega))'$.  Analogously,   according to \cite[Lemma 11.1]{Fog25} there is a constant $C_\varphi>0$ independent of $\eps$ such that
\begin{align*}
\Big|\int_{\Omega^c} \mathcal{N}_\eps \varphi(y)v(y)\d y\Big|\leq C_\varphi\|v\|_{W^p_{\nu_\eps}(\Omega|\R^d)} \quad\text{for all $v\in \WnuOmRa$}.
\end{align*}
Accordingly, $|\langle g_\eps, v \rangle|\leq C_\varphi\|v\|_{\TnuOma}$ for all $v\in \TnuOma$. Thus,  $g_\eps\in (\TnuOma)'$ and  the sequence $(g_\eps)_\eps$ is asymptotically bounded with  $\|g_\eps\|_{(\TnuOma)'}\leq C_\varphi$. To prove the condition $(G_2)$ from Definition \ref{def:weak-asymp-trace} is valid, consider $(v_\eps)_\eps$ be asymptotically bounded  and $v_\eps\to v$ in $L^p(\Omega)$. We know that $L_\eps \varphi (x)\to -\Delta_p\varphi(x)$ and $|L_\eps\varphi | \leq \kappa_\varphi$ while Theorem \ref{thm:asymp-conv-form} implies $\cE^{\eps}(\varphi, v_\eps)\to K_{d,p}\cE^0(\varphi, v)$. This together with the (non)local Gauss-Green formula, see \cite[Theorem B.8]{Fog25} imply

\begin{align*}
\lim_{\eps\to0}\int_{\Omega^c} \mathcal{N}_\eps \varphi(y)v_\eps(y)\d y
&=
\lim_{\eps\to0} \mathcal{E}^\eps(\varphi, v_\eps )
-\lim_{\eps\to0} \int_{\Omega}L_{\eps}\varphi(x)v_\eps(x)\d x\\
&= K_{d,p} \int_{\Omega}|\nabla\varphi(x)|^{p-2}\nabla\varphi(x)\cdot \nabla v(x)-\Delta_p\varphi(x) v(x)\d x\\
&= K_{d,p}\int_{\partial\Omega} |\nabla\varphi(x)|^{p-2}\nabla\varphi(x)\cdot n(x) v(x)\,\d\sigma(x).
\end{align*}

\noindent In short, we get $\langle g_\eps, v_\eps \rangle_\eps\xrightarrow{\eps\to0} \langle g,v\rangle$ and hence the  property $(G_2)$ is verified.
\end{proof}

\begin{remark}
In view of Theorem \ref{thm:exam-weak-trace}, one is naturally tempted to  expect that  for a sufficiently smooth function $ u $ there holds the pointwise convergence $\cN_\eps \varphi (x)\to \partial_{n,p}  \varphi (x)$ for $x\in \partial \Omega$. This is however counterintuitive. In fact, using the argument used in the proof of \cite[Theorem 9.9]{Fog25}, with $\psi(t)=|t|^{p-2}t$  and $u\in C^2(B_1(x))\cap L^\infty(\R^d)$, by one finds that
\begin{align*}
\lim_{\eps\to0} \cN_\eps u (x)
&=-(p-1)\lim_{\eps\to0} \int_{(\Omega-x)\cap B_\delta(0)}
\hspace*{-2ex}|\nabla u(x)\cdot h|^{p-2}D^2 u(x)h\cdot h  \,\nu_\eps(h)\d h\\
&\qquad+ \lim_{\eps\to0} \int_{(\Omega-x)\cap B_\delta(0)} \psi(\nabla u(x)\cdot h)\, \nu_\eps( h)\d h=: H_p(x)+G_p(x).
\end{align*}
Here  the outcomes $H_p(x)$ and $G_p(x)$  are independent of  $0<\delta<1/2$ since
\begin{align*}
\int_{\Omega\cap B^c_\delta(x)} |u(x)- u(z)|^{p-1}\nu_\eps(x-z)\d z
&= 2^{p}\|u\|^{p-1}_{L^\infty(\R^d)}\int_{|h|\geq \delta}\hspace*{-2ex} \nu_\eps(h) \d h\xrightarrow{\eps\to0}0.
\end{align*}
The limit of the hessian term $H_p(x)$ always converges up to a subsequence,  provided $|\nabla u(x)|\neq 0$ for $1<p<2$ since the corresponding integral is  uniformly bounded.  Indeed, passing through polar coordinates yields
\begin{align*}
\int_{\Omega_x\cap B_\delta(0)}
&|\nabla u(x)\cdot h|^{p-2}|D^2 u(x)h\cdot h|  \nu_\eps(h)
\d h
\\&\leq  \|u\|_{C^2(B_{1/2}(x))} \frac{ |\nabla u(x)|^{p-2}}{|\mathbb{S}^{d-1}|}\int_{\mathbb{S}^{d-1}} |w_d|^{p-2} \d \sigma_{d-1}(w) \int_{B_\delta(0)} \hspace*{-2ex} |h|^p\nu_\eps(h)\d h\\
&\leq
|\nabla u(x)|^{p-2}\|u\|_{C^2(B_{1/2}(x))}K_{d,p-2}<\infty
\end{align*}
where we recall that $K_{d,p-2}=\tfrac{d+p-2}{p-1} K_{d,p}$ is positive and finite. Accordingly, $\lim\limits_{\eps\to0} \cN_\eps u (x)$ exists only if the gradient term $G_p(x)$ is convergent. However, $G_p(x)$ is likely divergent to $\pm\infty$ when $\nabla u(x)\neq 0$.
For instance, on the upper half plane $\Omega= \R^d_{+}=\{ x\in\R^d \,:\, x_d>0\}$,  assume $\partial_{x_d} u(a) \neq 0$, $\partial_{x_i} u(a)=0$, $i=1,2,\cdots, d-1$ for fixed $a\in\partial \Omega= \R^{d-1}$. Since $\Omega-a=\Omega$ for  $a\in \partial \Omega$, using polar coordinates, we have
\begin{align*}
G_p(a)&=  \lim_{\eps\to0} \int_{\R^d_+\cap B_\delta(0)} \psi(\nabla u(a)\cdot h)\nu_\eps( h)\d h\\
&= \psi(\partial_{x_d} u(a))  \lim_{\eps\to0} \int_{\R^d_+\cap B_\delta(0)} h_d^{p-1}\nu_\eps( h)\d h\\
&= \frac{K_{d,p-1}}{2}\psi(\partial_{x_d} u(a))  \lim_{\eps\to0} \int_{B_\delta(0)} |h|^{p-1}\nu_\eps( h)\d h=\pm\infty,
\end{align*}
where the fact that $\lim\limits_{\eps\to0} \int_{B_\delta(0)} |h|^{p-1}\nu_\eps( h)\d h=\infty$ can be found in  \cite[Proposition 9.1]{Fog25} or \cite[Remark 2.3]{Fog23}.   In conclusion we have the following:
\begin{align*}
\footnotesize
\lim_{\eps\to0} \cN_\eps u (x)=
\begin{cases}
-K_{d,p}\Delta_p u(x)  &x\in \Omega,\quad   p\geq 2,\\
-K_{d,p}\Delta_p u(x)  &x\in \Omega,\quad \nabla u(x)\neq 0, \quad1<p<2,\\
0 & x\in \R^d\setminus \overline{\Omega},\qquad 1<p<\infty,\\
0&x\in \partial \Omega,\quad \nabla u(x)=0, \quad p>2,\\
H_2(x) &x\in \partial \Omega,\quad \nabla u(x)=0, \quad p=2,\\
\pm \infty &x\in \partial \Omega,\quad \nabla u(x)=0, \quad 1<p<2,\\
\text{likely diverges to $\pm\infty$} & x\in \partial \Omega,\quad \nabla u(x)\neq 0,\quad 1<p<\infty.
\end{cases}
\end{align*}
The cases $x\in \Omega$ and $x\in \R^d\setminus\overline{\Omega}$ can be found in \cite[Section 9]{Fog25}.
An intriguing open question is to determine an explicit expression for $H_2(x)$ or for the general case $H_p(x)$, for  $x\in \partial\Omega$ assuming sufficient regularity  of
$\partial\Omega$, where we emphasize that
\begin{align*}
H_p(x)=-(p-1)\lim_{\eps\to0} \int_{(\Omega-x)\cap B_\delta(0)}
|\nabla u(x)\cdot h|^{p-2}D^2 u(x)h\cdot h  \,\nu_\eps(h)\d h.
\end{align*}
\end{remark}

\begin{remark}[Robustness of Gauss-Green formula]
 For $\varphi \in C^2_b(\R^d)$  and $v\in W^{1,p}(\R^d)$ and letting $\eps\to 0$ into the nonlocal Gauss-Green formula,see \cite[Theorem B.8]{Fog25},
\begin{align*}\tag{$G_\eps$}
\cE^\eps(\varphi, v)= \int_\Omega L_\eps \varphi(x)v(x)\d x+  \int_{\Omega^c}\mathcal{N}_\eps \varphi(y)v(y)\d y.
\end{align*}
wherein,  we get $\cE^\eps(\varphi, v)\to K_{d,p}\cE^0(\varphi, v)$ from Section \ref{sec:conv-forms},  $L_\eps \varphi (x)\to -\Delta_p\varphi(x)$ by \cite[Theorem 9.9]{Fog25}, and  $ \int_{\Omega^c}\mathcal{N}_\eps \varphi(y)v(y)\d y\to \int_{\partial \Omega}\partial_{n,p} \varphi(x)v(x)\d \sigma(x)$ from the foregoing, one recovers the usual local Gauss-Green formula
\begin{align*}\tag{$G_0$}
\cE^0 (\varphi, v)= \int_\Omega -\Delta_p \varphi(x)v(x)\d x+  \int_{\partial \Omega}\partial_{n,p} \varphi(x)v(x)\d \sigma(x).
\end{align*}
In other words, the nonlocal Gauss-Green formula  stays  robust as $\eps\to0$ and converges  to the local Gauss-Green formula. Along these lines, it is worth mentioning that a new proof of the classical divergence theorem using the nonlocal divergence theorem as the main tool is established in \cite{HK23}.
\end{remark}

%%%%%%%%%%%%%%%%%%%%%%%%%%%%%%%%
\section{Convergence of weak solutions}\label{sec:conv-weak-solution}
%%%%%%%%%%%%%%%%%%%%%%%%%%%%%%%%

In this section we establish the optimal convergence  of weak solutions $(u_\eps)_\eps$ of \eqref{eq:main-problem-nonlocal} to the local solution $u$ of \eqref{eq:main-problem-local}.  We improve and generalize the convergence results previously obtained in \cite{Fog25} wherein only the strong convergence in $L^p(\Omega)$ is obtained under more relaxed assumptions on the sequence $(f_\eps)_\eps$ and $(g_\eps)_\eps$. throughout this section condition~\eqref{eq:plevy-approx} is in force; while in the case $d=1$ we assume in addition that
\begin{align}\label{eq:ponce-one-dim-cond}
\text{There exist $c_0, \theta_0 \in (0,1)$ such that}
\quad
\inf_{0 < \varepsilon < 1} \inf_{\theta_0 \leq \theta \leq 1} \nu_\varepsilon(\theta x) \geq c_0
\quad \text{for all } x \in \R.
\end{align}
This condition allows  to apply the asymptotic compactness from Theorem~\ref{thm:asymp-compactness}.
\vspace{-1mm}
%%%%%%%%%%%%%%%%%%%%%%%%%%%%%%%%
\subsection{Convergence of Neumann problem}
%%%%%%%%%%%%%%%%%%%%%%%%%%%%%%%%
In this section we obtain  the optimal convergence of $(u_\eps)_\eps$ to $u$, in presence of Neumann condition, i.e., in the case where $\tau=1$ in \eqref{eq:main-problem-nonlocal} and \eqref{eq:main-problem-local}.

\begin{theorem}[\textbf{Neumann problem I}]\label{thm:convergence-neumann} Assume $\Omega\subset \R^d$ is open bounded Lipschitz and connected. Let $ (f_\eps)_\eps$ with  $f_\eps\in  (W^{p}_{\nu_\eps}(\Omega|\R^d))'$ and $ (g_\eps)_\eps$ with $g_\eps\in  (T^{p}_{\nu_\eps}(\Omega^c))'$  be sequences that converge asymptotically weakly to  $f\in (W^{1,p}(\Omega))'$ and
$g\in( W^{1-1/p ,p}(\partial\Omega))'$ respectively.
Assume  $w_\eps \in  \WnuOmRa$ is a weak solution to the Neumann problem
\begin{align*}
\mu L_\eps u = f_\eps \quad \text{ in } \,\,\Omega
\quad \text{and} \quad
\mu \mathcal{N}_\eps u = g_\eps
\quad \text{ on } \,\,\Omega^c ,
\end{align*}
where $\mu^{-1}=K_{d,p}$. Assume $w \in W^{1,p}(\Omega)$ is a  weak solution of local Neumann problem
\begin{align*}
-\Delta_p u = f \quad \text{ in } \,\, \Omega \quad\text{ and }\quad
\partial_{n,p} u = g \quad \text{ on }\,\, \partial\Omega.
\end{align*}
Then  $(u_\eps)_\eps$ with $u_\eps :=w_\eps- \frac{1}{|\Omega|}
\int_\Omega w_\eps$ optimally  converges  to $u:=w- \frac{1}{|\Omega|}\int_\Omega w$, i.e.,
\begin{align}
\lim_{\eps\to0}\|u_\eps -u\|_{W^p_{\nu_\eps}(\Omega)} = \lim_{\eps\to0}\|u_\eps- \overline{u}\|_{W^p_{\nu_\eps}(\Omega|\R^d)}=\lim_{\eps\to0}\cE^\eps_{cr}(u_\eps, u_\eps)=0,
\end{align}
where $\overline{u}\in W^{1,p}(\R^d)$ is any $W^{1,p}$-extension of $u$.
\end{theorem}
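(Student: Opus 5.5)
Fix an extension $\overline{u}=Eu\in W^{1,p}(\R^d)$; since $\Omega$ is bounded Lipschitz, Theorem~\ref{thm:BBM-dual-limit} applies to it. Both problems are invariant under adding constants. Compatibility forces $\langle f_\eps,1\rangle_\eps+\langle g_\eps,1\rangle_\eps=0$ and the local analogue, so $u_\eps$ and $u$ are again weak solutions. The plan has four steps: uniform bounds, compactness, energy convergence, and the linearization of Theorem~\ref{thm:equiv-conv-in-form}.

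\textbf{Step 1: uniform bound.} Test the weak formulation $\mu\cE^\eps(u_\eps,v)=\langle f_\eps,v\rangle_\eps+\langle g_\eps,v\rangle_\eps$ with $v=u_\eps$. Since $\int_\Omega u_\eps=0$, the robust Poincar\'e inequality from \cite{Fog25} gives $\|u_\eps\|_{L^p(\Omega)}^p\le C\cE^\eps(u_\eps,u_\eps)$ with $C$ independent of $\eps$. Combined with $(F_1)$, $(G_1)$ and $\|\operatorname{Tr}u_\eps\|_{\TnuOma}\le\|u_\eps\|_{\WnuOmRa}$, this yields $\sup_\eps\|u_\eps\|_{\WnuOmRa}<\infty$.

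\textbf{Step 2: compactness and identification of the limit.} Theorem~\ref{thm:asymp-compactness}(iii) gives a subsequence with $u_\eps\to \tilde u$ in $L^p(\Omega)$ and $\tilde u\in W^{1,p}(\Omega)$, $\int_\Omega\tilde u=0$. For a test function $\varphi\in W^{1,p}(\R^d)$, use monotonicity (Minty's trick):
\[
\mu\cE^\eps(\varphi,\varphi-u_\eps)\ge \langle f_\eps+g_\eps,\varphi-u_\eps\rangle_\eps .
\]
Pass to the limit using Theorem~\ref{thm:asymp-conv-form} on the left (with $v_\eps=\varphi-u_\eps$, whose energy is uniformly bounded) and $(F_2)$, $(G_2)$ on the right. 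This gives $\cE^0(\varphi,\varphi-\tilde u)\ge\langle f+g,\varphi-\tilde u\rangle$. By density this holds for all $\varphi\in W^{1,p}(\Omega)$, so $\tilde u$ solves the local Neumann problem. Uniqueness of the mean-zero solution (strict monotonicity together with connectedness of $\Omega$) gives $\tilde u=u$, and hence the whole family converges.

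\textbf{Step 3: energy convergence.} Test the equation with $u_\eps$. By $(F_2)$, $(G_2)$,
\[
\mu\cE^\eps(u_\eps,u_\eps)=\langle f_\eps+g_\eps,u_\eps\rangle_\eps\to\langle f+g,u\rangle=\cE^0(u,u).
\]
Next, by Theorem~\ref{thm:asymp-conv-form} applied to the pair $(\overline u, u_\eps)$ and Theorem~\ref{thm:BBM-dual-limit}, we have $\cE^\eps(\overline u,u_\eps-\overline u)\to0$. For the cross term $\cE^\eps(u_\eps,\overline u)$, test the equation with $v=\overline u$ and again use $(F_2)$, $(G_2)$: $\mu\cE^\eps(u_\eps,\overline u)\to\langle f+g,u\rangle$. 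Hence $\cE^\eps(u_\eps,u_\eps-\overline u)\to0$. Theorem~\ref{thm:equiv-conv-in-form} then converts this into $\cE^\eps(u_\eps-\overline u,u_\eps-\overline u)\to0$; its hypothesis $M<\infty$ holds by Step~1 and \eqref{eq:taylor-exapnsion}.

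\textbf{Step 4: conclusion.} Together with the $L^p(\Omega)$ convergence, this gives $\|u_\eps-\overline u\|_{\WnuOmRa}\to0$. It dominates $\|u_\eps-u\|_{\WnuOma}$, since $\cE^\eps_\Omega\le\cE^\eps$ and $\overline u=u$ on $\Omega$. For the crossing term, $\cE^\eps_{cr}(u_\eps,u_\eps)^{1/p}\le \cE^\eps_{cr}(u_\eps-\overline u,u_\eps-\overline u)^{1/p}+\cE^\eps_{cr}(\overline u,\overline u)^{1/p}$ (Minkowski). The first term tends to zero by Step~3 and the second by Lemma~\ref{lem:collap-bdary}, since $|\partial\Omega|=0$.

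\textbf{Main obstacle.} The delicate point is Step~3: converting $\mu\cE^\eps(u_\eps,u_\eps)\to\cE^0(u,u)$ into vanishing of $\cE^\eps$ of the difference. Everything hinges on the demi-convergence of Theorem~\ref{thm:asymp-conv-form} for the mixed term and on the uniform linearization. One must also check that testing with $\overline u$ is admissible in the $(F_2)$/$(G_2)$ sense, i.e.\ it fits the constant-sequence case $v_\eps=v$.
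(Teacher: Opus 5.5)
Your proof is correct. Steps 1, 3 and 4 follow the paper's argument essentially verbatim: the uniform bound from the robust Poincar\'e inequality, compactness via Theorem~\ref{thm:asymp-compactness}, then $\cE^\eps(\overline u,u_\eps-\overline u)\to0$ from Theorem~\ref{thm:asymp-conv-form}, then $\mu\cE^\eps(u_\eps,u_\eps-\overline u)=\langle f_\eps+g_\eps,u_\eps-\overline u\rangle_\eps\to0$, and finally Theorem~\ref{thm:equiv-conv-in-form}.

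The genuine difference is how you identify the subsequential limit $\tilde u$. The paper argues variationally:
\begin{itemize}
\item each $u_{\eps_n}$ minimizes $\cJ^{\eps_n}_1$ over mean-zero functions;
\item the energy liminf (Theorem~\ref{thm:BBM-liminf}) gives $\cJ_1(\tilde u)\le\liminf_n\cJ^{\eps_n}_1(u_{\eps_n})$;
\item the constant recovery sequence $\overline v$ and Theorem~\ref{thm:BBM-dual-limit} give $\cJ^{\eps_n}_1(\overline v)\to\cJ_1(v)$.
\end{itemize}
Hence $\tilde u$ minimizes $\cJ_1$ on $W^{1,p}(\Omega)^\perp$. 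That route only needs convergence of the energy on fixed functions, and it is exactly the structure later recorded as Theorem~\ref{thm:gamma-conv-J}.

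Your Minty route instead passes to the limit in $\mu\cE^\eps(\varphi,\varphi-u_\eps)\ge\langle f_\eps+g_\eps,\varphi-u_\eps\rangle_\eps$. This requires the demi-convergence of Theorem~\ref{thm:asymp-conv-form} with an $\eps$-dependent argument already at the identification stage. That costs nothing extra, since your Step~3 uses it anyway. It also needs the standard hemicontinuity step $\varphi=\tilde u+tw$, $t\downarrow0$. In return, it bypasses both the characterization of weak solutions as minimizers and the energy liminf for $(u_{\eps_n})$, and it is the argument that survives for monotone problems without a variational structure.

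Two small remarks.
\begin{itemize}
\item Your test function $\varphi-u_\eps$ is not mean-zero, so the Minty inequality uses the weak formulation on all of $\WnuOmRa$. This is where the compatibility $\langle f_\eps,1\rangle_\eps+\langle g_\eps,1\rangle_\eps=0$ you noted enters; the paper sidesteps it by working in $\WnuOmRa^\perp$ throughout.
\item The limit $\mu\cE^\eps(u_\eps,u_\eps)\to\cE^0(u,u)$ at the start of Step~3 is true but is never used afterwards.
\end{itemize}
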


\begin{proof}
In virtue of the robust Poincar\'e inequality \cite[Corollary 10.2]{Fog25}, there exist $\eps_0\in (0,1)$ and $C= C(d,p,\Omega)>0$ such that for all $\eps\in (0, \eps_0)$ and all  $v \in\WnuOmRa$,
\begin{align}\label{eq:uniform-coercivity}
\|v-\mbox{$\fint_{\Omega} v$}\|^p_{\WnuOmRa}\leq C\cE^\eps(v,v).
\end{align}
\noindent By assumptions  $(f_\eps)_\eps$ and $(g_\eps)_\eps$ are asymptotically bounded  say
\begin{align*}
M:=\sup_{\eps\in (0,\eps_0)}\big(\|f_\eps\|_{(W^p_{\nu_\eps}(\Omega|\R^d))'}+ \|g_\eps\|_{(T^p_{\nu_\eps}(\Omega^c))'}\big)<\infty.
\end{align*}
Since $\|u_\eps\|_{\TnuOma}\leq \|u_\eps\|_{\WnuOmRa}$ we find that
\begin{align*}
\big| \langle  f_\eps ,u_\eps \rangle_\eps + \langle   g_\eps, u_\eps \rangle_\eps  \big|
\leq M\big(\|u_\eps\|_{\WnuOmRa}+ \|u_\eps\|_{\TnuOma}\big)\leq 2M\|u_\eps\|_{\WnuOmRa}.
\end{align*}
Note that $ u_\eps\in \WnuOmRa^\perp :=  \big\{ u\in \WnuOmRa: \int_{\Omega}u\d x=0\big\}$ satisfies
the weak formulation $\mu \cE^\eps(u_\eps, v) = \langle  f_\eps ,v \rangle_\eps + \langle   g_\eps, v \rangle_\eps$ for all $v\in \WnuOmRa^\perp$. This implies
\begin{align*}
\mathcal{E}^\eps(u_\eps,u_\eps)
&= \mu^{-1}\langle  f_\eps ,u_\eps \rangle_\eps + \mu ^{-1} \langle   g_\eps, u_\eps  \rangle_\eps\leq  C \|u_\eps\|_{\WnuOmRa}.
\end{align*}
Combining this with the estimate   \eqref{eq:uniform-coercivity} implies
\begin{align*}
\|u_\eps\|^{p-1}_{\WnuOma}\leq \|u_\eps\|^{p-1}_{\WnuOmRa}\leq C\qquad\text{for all } \eps\in (0,\eps_0)\,,
\end{align*}
  here   $C>0$ is generic and independent of $\eps$. We obtain the uniform boundedness
\begin{align}\label{eq:uniform-boundednessm}
\sup_{\eps\in (0,\eps_0)}\|u_\eps\|_{\WnuOma}\leq \sup_{\eps\in (0,\eps_0)} \|u_\eps\|_{\WnuOmRa}\leq C.
\end{align}
Since $\int_\Omega u_\eps\d x=0$,  by Theorem \ref{thm:asymp-compactness}, there exist   a subsequence $(\eps_n)_n,$ $\eps_n\to 0$ and  $u\in W^{1,p}(\Omega)^\perp:= \big\{ v\in W^{1,p}(\Omega): \int_{\Omega}v\d x=0\big\}$ such that
\begin{align}\label{eq:BBM-liminf}
\|u_{\eps_n} -u\|_{L^p(\Omega)}\xrightarrow{\eps_n\to0}0\quad \text{and}\quad
K_{d,p}\cE^0(u,u)\leq \liminf_{\eps\to0}\cE^\eps(u_\eps, u_\eps).
\end{align}
 Now, we must show that $u$
is in fact the unique solution  to minimization problem
\begin{align*}
\cJ_1(u)&= \min_{v\in W^{1,p}(\Omega)^\perp} \cJ_1(v)\quad \text{ with }\quad
\cJ_1(v) = \tfrac{1}{p} \cE^0(v,v)  -\langle f, u\rangle- \langle g, u\rangle.
\end{align*}
Recall that, each $u_\eps$ is solution of the minimization problem
\begin{align*}
\cJ^\eps_1(u_\eps)&= \min_{v\in \WnuOmRa^\perp } \cJ^\eps_1(v) \,\,\, \, \text{ with }\,\,\,\,
\mathcal{J}^\eps_1(v) = \tfrac{\mu}{p} \mathcal{E}^\eps(v,v) - \langle f_\eps, v\rangle_\eps - \langle g_\eps, v\rangle_\eps.
\end{align*}
 For fixed  $v\in W^{1,p}(\Omega)^\perp$, given that  $\partial \Omega$ is Lipschitz, let us consider $\overline{v}\in W^{1,p}(\R^d)$ be a Sobolev  extension of $v$.
 In view of asymptotic weak convergence of $f_\eps$ and $g_\eps$, and  Theorem \ref{thm:BBM-dual-limit} we have
\begin{align}\label{eq:limsup-J}
\begin{split}
\lim_{\eps\to0}\mathcal{J}^\eps_1(\overline{v})
&= \lim_{\eps\to0}\big(\tfrac{\mu}{p} \mathcal{E}^\eps(\overline{v},\overline{v}) -\langle f_\eps, \overline{v} \rangle_\eps - \langle g_\eps, \overline{v}\rangle_\eps\big) \\
&=\tfrac{1}{p}\mathcal{E}^0(v,v)-  \langle f, v\rangle- \langle g, v\rangle= \cJ_1(v).
\end{split}
\end{align}
Since  $u_{\eps_n}\to u$ in $L^p(\Omega)$, the asymptotic weak convergence of $(f_\eps)_\eps$ and $(g_\eps)_\eps$ imply
\begin{align*}
\lim_{n\to\infty}\langle f_{\eps_n}, u_{\eps_n}\rangle_{\eps_n}
+  \langle g_{\eps_n}, u_{\eps_n}\rangle_{\eps_n}= \langle f, u\rangle+ \langle g, u\rangle.
\end{align*}
This together with the liminf estimate in \eqref{eq:BBM-liminf}, gives $\cJ_1(u)
\leq\liminf\limits_{n \to \infty} \cJ^{\eps_n}_1(u_{\eps_n}).$ Since,  each $u_{\eps_n}$ minimizes $\cJ^{\eps_n}_1$, $\int_\Omega \overline{v}(x)\d x=0$ and   $\overline{v}\in W^{1,p}(\R^d)\subset \WnuOmRa$, we get
\begin{align*}
\cJ_1(u)\leq \liminf_{n\to \infty}\cJ^{\eps_n}_1(u_{\eps_n})\leq
\liminf_{n\to \infty} \cJ^{\eps_n}_1(\overline{v})= \cJ_1(v).
\end{align*}
It turns out that $\|u_{\eps_n}-u\|_{L^p(\Omega)}\to 0$ as $n\to\infty$ and  $u$ minimizes $\cJ_1$ as expected. Note that $u$ is therefore the unique weak solution in $W^{1,p}(\Omega)^\perp$  to the local Neumann problem $-\Delta_p u=f$ in $\Omega$ and $\partial_{n,p}u=g$ on $\partial\Omega$. Moreover, the uniqueness of $u$ implies  the convergence of the whole sequence, that is, $\|u_{\eps}-u\|_{L^p(\Omega)}\to 0$ as $\eps\to0$.   In view of Theorem \ref{thm:asymp-conv-form}, the latter implies
\begin{align*}
\cE^\eps(\overline{u},u_\eps-\overline{u}) \xrightarrow{\eps\to0}0\quad\text{and}\quad
\cE^\eps(\overline{u},u_\eps)\xrightarrow{\eps\to0}  K_{d,p}\,\cE^0(u,u).
\end{align*}
Recall that $\overline{u}\in W^{1,p}(\R^d)$ is a Sobolev extension of $u$. Furthermore,  the asymptotic weak convergence of  $ (f_\eps)_\eps$ and $ (g_\eps)_\eps$ yields
\begin{align*}
& \langle  f_\eps ,u_\eps \rangle_\eps + \langle   g_\eps, u_\eps \rangle_\eps\to  \langle  f ,u \rangle+ \langle   g, u  \rangle,\\
& \langle  f_\eps ,\overline{u} \rangle_\eps + \langle   g_\eps, \overline{u}  \rangle_\eps\,\, \to  \langle  f ,u \rangle+ \langle   g, u \rangle.
\end{align*}
Note that $\overline{u}\in W^{1,p}(\R^d)\subset \WnuOmRa$, $\int_\Omega u(x)\d x= 0$ so that  $u_\eps-\overline{u}\in \WnuOmRa^\perp$. Since $u_\eps \in \WnuOmRa^\perp$ is a weak solution, it follows that
\begin{align*}
\mathcal{E}^\eps(u_\eps,u_\eps-\overline{u})
%&= \langle  f_\eps ,u_\eps-\overline{u} \rangle_\eps + \langle   g_\eps, u_\eps-\overline{u}  \rangle_\eps
&=\langle  f_\eps ,u_\eps \rangle_\eps + \langle   g_\eps, u_\eps  \rangle_\eps- \langle  f_\eps, \overline{u} \rangle_\eps - \langle   g_\eps, \overline{u}  \rangle_\eps
\xrightarrow{\eps\to0}0.
\end{align*}
Altogether,  we have obtained  the convergences
\begin{align*}
\mathcal{E}^\eps(\overline{u},u_\eps-\overline{u}) \xrightarrow{\eps\to0}0 \quad\text{and}\quad 	\mathcal{E}^\eps(u_\eps, u_\eps-\overline{u}) \xrightarrow{\eps\to0}0.
\end{align*}
Accordingly, Theorem \ref{thm:equiv-conv-in-form} implies the  sought energy convergence, i.e.,
\begin{align*}
\cE^\eps_{cr}(u_\eps-\overline{u}, u_\eps-\overline{u})\leq \cE^\eps(u_\eps-\overline{u}, u_\eps-\overline{u}) \xrightarrow{\eps\to0}0.
\end{align*}
Finally, we obtain  $\|u_\eps-\overline{u}\|_{\WnuOmRa} \xrightarrow{\eps\to0}0$, while Lemma  \ref{lem:collap-bdary} implies that
\begin{align*}
\cE^\eps_{cr}(u_\eps, u_\eps)^{1/p}\leq \cE^\eps_{cr}(\overline{u}, \overline{u})^{1/p}+ \cE^\eps_{cr}(u_\eps-\overline{u}, u_\eps-\overline{u})^{1/p}\xrightarrow{\eps\to0}0.
\end{align*}
\end{proof}

Let us now look at the case of the regional $p$-L{\'e}vy operators $L_{\Omega,\eps}$ , $\eps>0$ with
\begin{align*}
L_{\Omega,\eps}u(x)= 2\pv \int_{\Omega} |u(x)-u(y)|^{p-2}(u(x)-u(y))\, \nu_\eps(x-y)\d y&& (x\in \Omega).
\end{align*}
\begin{theorem}[\textbf{Neumann problem II}]\label{thm:convergence-neumann regional}
Assume $\Omega\subset \R^d$ is open bounded Lipschitz and connected. Let $ (f_\eps)_\eps$ with  $f_\eps\in  (W^{p}_{\nu_\eps}(\Omega))'$ be a sequence  converging asymptotically weakly to  $f\in (W^{1,p}(\Omega))'$. Assume $u_\eps \in  \WnuOma^\perp$ is  a weak solution to the regional Neumann problem
\begin{align*}
\mu L_{\Omega,\eps} u= f_\eps \quad \text{ in \,$\Omega$ \,\, and} \quad  \int_\Omega u(x)\d x=0 .
\end{align*}
Let $u\in W^{1,p}(\Omega)^\perp$ be the  weak solution of Neumann problem
\begin{align*}
-\Delta_p u=f\quad \text{ in \,\, $\Omega$ and }\quad  \partial_{n,p} u=0\quad \text{on \,\, $\partial\Omega$}.
\end{align*}
\noindent Then $(u_\eps)_\eps$ converges to $u$ in the optimal sense, that is,
\begin{align*}
\lim_{\eps\to0}\|u_\eps-u\|_{\WnuOma}=0.
\end{align*}
\end{theorem}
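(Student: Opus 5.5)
We follow the proof of Theorem \ref{thm:convergence-neumann}, with the form $\cE^\eps_\Omega(\cdot,\cdot)$ in place of $\cE^\eps(\cdot,\cdot)$ and the space $\WnuOma^\perp$ in place of $\WnuOmRa^\perp$. In this regional setting there is no complement datum, no trace, and no cross term.

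\textbf{Step 1: uniform bound.} Since $\Omega$ is bounded, Lipschitz and connected, we first need a robust Poincar\'e inequality
\begin{align*}
\|v\|^p_{L^p(\Omega)}\leq C\,\cE^\eps_\Omega(v,v)\qquad \text{for } v\in \WnuOma^\perp,\ \eps\in(0,\eps_0).
\end{align*}
This is the regional version of \cite[Corollary 10.2]{Fog25}. If it is not available there, we derive it by contradiction from Theorem \ref{thm:asymp-compactness}$(iii)$. Take a normalized sequence with vanishing energy. It converges in $L^p(\Omega)$ to some $u$ with $\nabla u=0$, by Theorem \ref{thm:BBM-liminf}. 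Connectedness forces $u$ to be constant, and the zero mean forces $u=0$. This contradicts $\|u\|_{L^p(\Omega)}=1$.

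Next test the weak formulation $\mu\,\cE^\eps_\Omega(u_\eps,v)=\langle f_\eps,v\rangle_\eps$ with $v=u_\eps$. Using $(F_1)$ this gives $\cE^\eps_\Omega(u_\eps,u_\eps)\le C\|u_\eps\|_{\WnuOma}$. Combined with Poincar\'e, we get $\sup_\eps\|u_\eps\|_{\WnuOma}<\infty$.

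\textbf{Step 2: identification of the limit.} By Theorem \ref{thm:asymp-compactness}$(iii)$, a subsequence converges in $L^p(\Omega)$ to some $\widetilde u\in W^{1,p}(\Omega)^\perp$. Along it, $K_{d,p}\cE^0(\widetilde u,\widetilde u)\le\liminf\cE^\eps_\Omega(u_\eps,u_\eps)$.

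Now take $v\in W^{1,p}(\Omega)^\perp$. Since $\Omega$ is an extension domain, $v$ belongs to $\WnuOma^\perp$, and Theorem \ref{thm:BBM-dual-limit} gives $\cE^\eps_\Omega(v,v)\to K_{d,p}\cE^0(v,v)$. Condition $(F_2)$ handles the linear terms. Minimality of $u_\eps$ for
\begin{align*}
\cJ^\eps(v)=\tfrac{\mu}{p}\cE^\eps_\Omega(v,v)-\langle f_\eps,v\rangle_\eps
\end{align*}
then yields $\cJ(\widetilde u)\le\cJ(v)$. By strict convexity on $W^{1,p}(\Omega)^\perp$ the minimizer is unique, so $\widetilde u=u$. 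Hence the whole family converges: $u_\eps\to u$ in $L^p(\Omega)$.

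\textbf{Step 3: energy convergence.} Test the weak formulation with $v=u_\eps-u\in\WnuOma^\perp$ (the mean of $u$ is zero). This gives
\begin{align*}
\mu\,\cE^\eps_\Omega(u_\eps,u_\eps-u)=\langle f_\eps,u_\eps\rangle_\eps-\langle f_\eps,u\rangle_\eps,
\end{align*}
and by $(F_2)$ both terms converge to $\langle f,u\rangle$, so the difference tends to $0$.

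For the other term we use Theorem \ref{thm:asymp-conv-reg-form}. Here $\Omega$ is an extension domain, the sequence $v_\eps=u_\eps$ is bounded and converges to $u$. This gives $\cE^\eps_\Omega(u,u_\eps)\to K_{d,p}\cE^0(u,u)$. Theorem \ref{thm:BBM-dual-limit} gives $\cE^\eps_\Omega(u,u)\to K_{d,p}\cE^0(u,u)$. Hence $\cE^\eps_\Omega(u,u_\eps-u)\to 0$.

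Both $\cE^\eps_\Omega(u_\eps,u_\eps)$ and $\cE^\eps_\Omega(u,u)$ are bounded, the latter by \eqref{eq:taylor-exapnsion} applied to an extension of $u$. So Theorem \ref{thm:equiv-conv-in-form} for $\cE^\eps_\Omega$ applies and gives $\cE^\eps_\Omega(u_\eps-u,u_\eps-u)\to0$. Together with the $L^p$ convergence this proves $\|u_\eps-u\|_{\WnuOma}\to0$.

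\textbf{Main obstacle.} The main obstacle is the uniform regional Poincar\'e inequality in Step 1. It relies on the compactness of Theorem \ref{thm:asymp-compactness}$(iii)$ on the bounded Lipschitz domain, and on connectedness of $\Omega$.
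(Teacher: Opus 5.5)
Your proposal is correct and follows the route the paper intends: the paper's proof just says the argument is analogous to Theorem~\ref{thm:convergence-neumann}, and you carry out exactly that adaptation. You obtain the uniform bound from a robust Poincar\'e inequality, identify the limit as the unique minimizer via compactness, and conclude energy convergence through Theorem~\ref{thm:equiv-conv-in-form} applied to $\cE^\eps_\Omega$. The only addition is that you derive the regional Poincar\'e inequality by a compactness--contradiction argument instead of citing it, and that argument is valid (it uses $\eps_n\to0$, Theorem~\ref{thm:asymp-compactness}$(iii)$, connectedness and the zero mean).
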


\begin{proof}
The proof is analogous to that of Theorem \ref{thm:convergence-neumann}.
\end{proof}

\begin{remark}
It is worth mentioning that,  concerning the Neumann problems from Theorem \ref{thm:convergence-neumann}, $u\in W^{1,p}(\Omega)^\perp$ and $u_\eps\in\WnuOmRa^\perp$, always exist uniquely, thanks to the Poincar\'{e} type inequalities. However general solutions $w$ (resp. $w_\eps$) exists if and only if $f$ and $g$ (resp. $f_\eps$ and $g_\eps$) are compatible that is $	\langle f, 1\rangle+\langle g, 1\rangle=0$ (resp. $\langle f_\eps, 1\rangle_\eps+\langle g_\eps,  1\rangle_\eps =0$). Analogous observation holds for the regional problem from Theorem \ref{thm:convergence-neumann regional}. We refer the reader to see \cite{Fog25} for more details.
\end{remark}

As glimpsed in the proof of Theorem \ref{thm:convergence-neumann},  we implicitly use the fact that the asymptotic of the sequence of functionals $(\cJ^\eps_1)_\eps$ exhibits a seemingly form of $\Gamma$-convergence to the functional $\cJ_1$ with respect to the strong topology on  $L^p(\Omega)$.

\begin{theorem}\label{thm:gamma-conv-J}
Assume $\Omega\subset \R^d$ is open bounded Lipschitz.  Let $ (f_\eps)_\eps$ with  $f_\eps\in  (W^{p}_{\nu_\eps}(\Omega|\R^d))'$ and $ (g_\eps)_\eps$ with $g_\eps\in  (T^{p}_{\nu_\eps}(\Omega^c))'$  be sequences that converge asymptotically weakly to  $f\in (W^{1,p}(\Omega))'$ and
$g\in( W^{1-1/p ,p}(\partial\Omega))'$ respectively. Let us extend the functionals $\cJ^\eps_1: L^p(\R^d, \omega_{\nu_\eps})\to (-\infty,\infty]$  and $\cJ_1: L^p(\Omega)\to (-\infty,\infty]$ as follows
\begin{align*}
\cJ^\eps_1 (v)
&=\begin{cases}
\tfrac{\mu}{p} \mathcal{E}^\eps(v,v)  -\langle f_\eps, v\rangle_\eps - \langle g_\eps, v\rangle_\eps & \text{if $v\in \WnuOmRa$}\\
\infty &\text{ else },
\end{cases}\\
\cJ_1(v) &=\begin{cases}
\tfrac{1}{p} \mathcal{E}^0(v,v)  -\langle f, v\rangle- \langle g, v\rangle & \text{if $v\in W^{1,p}(\Omega)$}\\
\infty &\text{ else }.
\end{cases}
\end{align*}
The following assertions are true.
\begin{itemize}
\item \textbf{Limsup}.  For every  $v\in L^p(\Omega)$ there exists a sequence $(v_\eps)_\eps$ with each $v_\eps\in \WnuOmRa$ such that  $v_\eps\to v$ in $L^p(\Omega)$  and
\begin{align*}
\limsup_{\eps\to0} \cJ^\eps_1(v_\eps)\leq \cJ_1(v).
\end{align*}
\item \textbf{Liminf}. For any sequence $(v_\eps)_\eps$  of measurable  functions $v_\eps:\R^d\to\R$ such that $v_\eps\to v$ in $L^p(\Omega)$ we have
\begin{align*}
\liminf_{\eps\to0} \cJ^\eps_1(v_\eps)\geq \cJ_1(v).
\end{align*}
\end{itemize}
\end{theorem}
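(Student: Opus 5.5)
We prove the two inequalities separately, treating the finite-energy and infinite-energy cases apart in each.

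\textbf{Limsup.} If $v\notin W^{1,p}(\Omega)$ then $\cJ_1(v)=\infty$ and there is nothing to prove. For instance, take any $v_\eps\in\WnuOmRa$ with $v_\eps|_\Omega=v$, such as $v\mathds{1}_\Omega$ if it has finite energy, or else a mollified version that converges to $v$ in $L^p(\Omega)$. If $v\in W^{1,p}(\Omega)$, we use the constant recovery sequence $v_\eps=\overline{v}:=Ev\in W^{1,p}(\R^d)\subset\WnuOmRa$, where $E$ is the extension operator of the bounded Lipschitz domain $\Omega$. Theorem~\ref{thm:BBM-dual-limit}$(i)$ gives $\cE^\eps(\overline v,\overline v)\to K_{d,p}\cE^0(v,v)$, so $\frac{\mu}{p}\cE^\eps(\overline v,\overline v)\to\frac1p\cE^0(v,v)$ since $\mu=K_{d,p}^{-1}$. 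The $W^{1,p}$-weak convergence in $(F_2)$ and $(G_2)$ gives $\langle f_\eps,\overline v\rangle_\eps+\langle g_\eps,\overline v\rangle_\eps\to\langle f,v\rangle+\langle g,v\rangle$. Hence $\lim_\eps\cJ^\eps_1(\overline v)=\cJ_1(v)$, exactly as in \eqref{eq:limsup-J}.

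\textbf{Liminf.} Let $v_\eps\to v$ in $L^p(\Omega)$, and pass to a subsequence realizing $\liminf_\eps\cJ^\eps_1(v_\eps)$, which we may assume finite. Then $v_\eps\in\WnuOmRa$ along the subsequence. The first step is a coercivity bound showing $\sup_\eps\cE^\eps(v_\eps,v_\eps)<\infty$. Write $L:=\sup_\eps\cJ^\eps_1(v_\eps)<\infty$ and $M:=\sup_\eps(\|f_\eps\|+\|g_\eps\|)$, which is finite by $(F_1)$ and $(G_1)$. Using $\|v_\eps\|_{\TnuOma}\le\|v_\eps\|_{\WnuOmRa}$ we get
\begin{align*}
\tfrac{\mu}{p}\cE^\eps(v_\eps,v_\eps)\le L+2M\big(\|v_\eps\|_{L^p(\Omega)}+\cE^\eps(v_\eps,v_\eps)^{1/p}\big).
\end{align*}
Since $p>1$ and $\sup_\eps\|v_\eps\|_{L^p(\Omega)}<\infty$, Young's inequality absorbs the term $\cE^\eps(v_\eps,v_\eps)^{1/p}$ into the left-hand side, so $\cE^\eps(v_\eps,v_\eps)$ is bounded. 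Note that, unlike the proof of Theorem~\ref{thm:convergence-neumann}, no Poincar\'e inequality is needed, because the $L^p$ norm is controlled by the assumed convergence.

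With this bound, $\cE^\eps_\Omega(v_\eps,v_\eps)\le\cE^\eps(v_\eps,v_\eps)$ stays bounded. Theorem~\ref{thm:BBM-liminf} then yields $v\in W^{1,p}(\Omega)$ and
\begin{align*}
K_{d,p}\cE^0(v,v)\le\liminf_\eps\cE^\eps_\Omega(v_\eps,v_\eps)\le\liminf_\eps\cE^\eps(v_\eps,v_\eps).
\end{align*}
For the linear terms, let $\overline v=Ev$. The weak-strong properties $(F_2)$ and $(G_2)$, applied with $v_\eps|_\Omega\to\overline v|_\Omega$ in $L^p(\Omega)$, give $\langle f_\eps,v_\eps\rangle_\eps+\langle g_\eps,v_\eps\rangle_\eps\to\langle f,v\rangle+\langle g,v\rangle$. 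Combining the two limits gives $\liminf_\eps\cJ^\eps_1(v_\eps)\ge\cJ_1(v)$.

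The step we expect to need the most care is the boundary term $\langle g_\eps,v_\eps\rangle_\eps$ in the liminf. Condition $(G_2)$ is stated for sequences with $v_\eps|_\Omega\to v|_\Omega$, but the trace pairing depends on the values of $v_\eps$ on $\Omega^c$, which are controlled only through the energy bound. We will therefore invoke $(G_2)$ with exactly the hypotheses stated, namely $L^p(\Omega)$ convergence together with the uniform energy bound just established. This asymptotic boundedness is what makes $(G_2)$ meaningful; we will note it explicitly, in the same way it is used in the proof of Theorem~\ref{thm:exam-weak-trace}.
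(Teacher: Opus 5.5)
Your proposal is correct and follows the same route as the paper. For the limsup, both use the constant extension $\overline v$ as recovery sequence, via Theorem~\ref{thm:BBM-dual-limit} and the $W^{1,p}$-weak convergence of $f_\eps,g_\eps$; for the liminf, both use Theorem~\ref{thm:BBM-liminf} together with $(F_2)$ and $(G_2)$.

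Your Young-inequality step derives $\sup_\eps \cE^\eps(v_\eps,v_\eps)<\infty$ along a subsequence from $(F_1)$, $(G_1)$ and the $L^p(\Omega)$ bound. This supplies the energy bound that the paper only asserts (``in particular $\liminf_{\eps\to0}\cE^\eps(v_\eps,v_\eps)<\infty$''). It is also exactly what makes Theorem~\ref{thm:BBM-liminf} applicable and provides the boundedness that $(G_2)$ implicitly requires.
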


\begin{proof}
Let $v\in L^p(\Omega) $. The limsup is trivial if  $v\in L^p(\Omega)\setminus W^{1,p}(\Omega)$ since $\cJ_1(v)=\infty$. In this case, it suffices to consider any sequence $(v_\eps)_\eps$  such that each $ v_\eps \in C_c^\infty(\R^d)\subset \WnuOmRa$ is supported in $\Omega$ and  that $\|v_\eps-v \|_{L^p(\Omega)}\to 0$ as $\eps\to 0$. For  $v\in W^{1,p}(\Omega)$, we pick the constant sequence $v_\eps= \overline{v}$ where $\overline{v}\in W^{1,p}(\R^d)\subset \WnuOmRa$  is any $W^{1,p}$-extension of $v$. Thus $\|v_\eps-v \|_{L^p(\Omega)}= 0$ and by \eqref{eq:limsup-J} we have
\begin{align*}
\lim_{\eps\to0}\mathcal{J}^\eps_1(v_\eps)=\lim_{\eps\to0}\mathcal{J}^\eps_1(\overline{v})= \cJ_1(v).
\end{align*}
To prove the liminf condition, consider a sequence $(v_\eps)_\eps$ of measurable function such that $v_\eps\to v$ in $L^p(\Omega)$. Assume  $\liminf\limits_{\eps\to0}\cJ^\eps_1(v_\eps)<\infty$ since the case $\liminf\limits_{\eps\to0}\cJ^\eps_1(v_\eps)= \infty$ is trivial. In particular we have   $\liminf\limits_{\eps\to0}\cE^\eps(v_\eps,v_\eps)<\infty$.  By Theorem \ref{thm:BBM-liminf} we get
\begin{align*}
\liminf_{\eps\to0}\cE^\eps(v_\eps,v_\eps)
\geq
\liminf_{\eps\to0}\cE^\eps_\Omega(v_\eps,v_\eps)\geq K_{d,p}\cE^0(v,v).
\end{align*}
By  assumption we have $\langle f_\eps, v_\eps \rangle_\eps + \langle g_\eps,v_\eps\rangle_\eps
\to  \langle f, v\rangle+ \langle g, v\rangle$. Hence one obtains
\begin{align*}
\liminf_{\eps\to0}\cJ^\eps_1(v_\eps)\geq\cJ_1(v).
\end{align*}
\end{proof}

\noindent The next result deals with regional functionals and is analog to Theorem \ref{thm:gamma-conv-J}.
\begin{theorem}\label{thm:gamma-conv-JOm}
Assume $\Omega\subset \R^d$ is open bounded Lipschitz. Let $ (f_\eps)_\eps$ with  $f_\eps\in  (W^{p}_{\nu_\eps}(\Omega))'$ be a sequence  converging asymptotically weakly to  $f\in (W^{1,p}(\Omega))'$. Let us define the functionals $\cJ^\eps_\Omega$  and $\cJ_\Omega$ as follows
\begin{align*}
\cJ^\eps_\Omega (v)
&=\begin{cases}
\tfrac{\mu}{p} \mathcal{E}^\eps_\Omega(v,v)  -\langle f_\eps, v\rangle_\eps  & \text{if $v\in \WnuOma$}\\
\infty &\text{ else },
\end{cases}\\
\cJ_\Omega(v) &=\begin{cases}
\tfrac{1}{p} \mathcal{E}^0(v,v)  -\langle f, v\rangle& \text{if $v\in W^{1,p}(\Omega)$}\\
\infty &\text{ else }.
\end{cases}
\end{align*}
Then $\cJ^\eps_\Omega\to\cJ_\Omega$ in the Gamma sense, i.e., the following assertions are true
\begin{itemize}
\item \textbf{Limsup}.  For every  $v\in L^p(\Omega)$ there exists a sequence $(v_\eps)_\eps$ with  $v_\eps\in \WnuOma$ such that  $v_\eps\to v$ in $L^p(\Omega)$  and
\begin{align*}
\limsup_{\eps\to0} \cJ^\eps_\Omega (v_\eps)\leq \cJ_\Omega(v).
\end{align*}
\item \textbf{Liminf}. For any sequence $(v_\eps)_\eps\subset L^p(\Omega)$ such that $v_\eps\to v$ in $L^p(\Omega)$ we have
\begin{align*}
\liminf_{\eps\to0} \cJ^\eps_\Omega(v_\eps)\geq \cJ_\Omega(v).
\end{align*}
\end{itemize}
\end{theorem}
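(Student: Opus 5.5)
The proof will mirror that of Theorem \ref{thm:gamma-conv-J}, with the full form $\cE^\eps$ replaced by the regional form $\cE^\eps_\Omega$ and the boundary functional dropped. Both the limsup and the liminf parts rely on Theorem \ref{thm:BBM-dual-limit} and Theorem \ref{thm:BBM-liminf}, together with the asymptotic weak convergence of $(f_\eps)_\eps$ in the sense of Definition \ref{def:weak-asymp-form} for the spaces $\WnuOma$ and $W^{1,p}(\Omega)$.

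\textbf{Limsup.} Fix $v\in L^p(\Omega)$. If $v\notin W^{1,p}(\Omega)$, then $\cJ_\Omega(v)=\infty$ and the inequality is trivial. Any sequence $v_\eps\in C_c^\infty(\Omega)$ with $v_\eps\to v$ in $L^p(\Omega)$ works, since $C_c^\infty(\Omega)\subset W^{1,p}(\Omega)\hookrightarrow \WnuOma$ by \eqref{eq:taylor-exapnsion} applied to the zero extension.

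If $v\in W^{1,p}(\Omega)$, take the constant recovery sequence $v_\eps=v$. This is legitimate because $\Omega$ is bounded Lipschitz, hence a $W^{1,p}$-extension domain, so $v\in\WnuOma$. Theorem \ref{thm:BBM-dual-limit}$(i)$, applied with $u=w=\overline v$ for an extension $\overline v\in W^{1,p}(\R^d)$ of $v$, gives
\[
\cE^\eps_\Omega(v,v)\to K_{d,p}\cE^0(v,v).
\]
Since $\mu K_{d,p}=1$, the energy term converges to $\tfrac1p\cE^0(v,v)$. Property $(F_2)$ with the constant sequence gives $\langle f_\eps,v\rangle_\eps\to\langle f,v\rangle$. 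Hence $\lim_{\eps\to0}\cJ^\eps_\Omega(v)=\cJ_\Omega(v)$.

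\textbf{Liminf.} Let $v_\eps\to v$ in $L^p(\Omega)$, and assume $\liminf_{\eps\to0}\cJ^\eps_\Omega(v_\eps)<\infty$, the other case being trivial. Pass to a subsequence realizing the liminf, along which $v_\eps\in\WnuOma$. The first step is to show that $\cE^\eps_\Omega(v_\eps,v_\eps)$ stays bounded along this subsequence. By $(F_1)$,
\[
|\langle f_\eps,v_\eps\rangle_\eps|\le M\big(\|v_\eps\|_{L^p(\Omega)}+\cE^\eps_\Omega(v_\eps,v_\eps)^{1/p}\big).
\]
The $L^p$ norms are bounded because $v_\eps$ converges in $L^p(\Omega)$. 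Boundedness of $\cJ^\eps_\Omega(v_\eps)$ then yields
\[
\tfrac{\mu}{p}E_\eps\le C+ME_\eps^{1/p},\qquad E_\eps:=\cE^\eps_\Omega(v_\eps,v_\eps).
\]
Since $p>1$, this forces $E_\eps$ to be bounded.

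With this bound, Theorem \ref{thm:BBM-liminf} gives $v\in W^{1,p}(\Omega)$ and
\[
K_{d,p}\cE^0(v,v)\le\liminf_{\eps\to0} E_\eps.
\]
In particular $\cJ_\Omega(v)$ is finite. Next, $(F_2)$ applied to $v_\eps\to v$ in $L^p(\Omega)$ gives $\langle f_\eps,v_\eps\rangle_\eps\to\langle f,v\rangle$. Adding the two limits yields $\liminf_{\eps\to0}\cJ^\eps_\Omega(v_\eps)\ge \cJ_\Omega(v)$.

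The only nonroutine step is the a priori bound on $E_\eps$ in the liminf part. It is needed twice: to invoke Theorem \ref{thm:BBM-liminf}, and to make $(v_\eps)_\eps$ admissible in $(F_2)$, which presupposes a bounded sequence in $\WnuOma$. The superlinear growth of the energy against the linear term $\langle f_\eps,\cdot\rangle_\eps$, combined with the uniform bound $(F_1)$, handles this.
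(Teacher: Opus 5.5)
Your proposal is correct and follows essentially the paper's route. The paper gives no separate proof and defers to the argument of Theorem~\ref{thm:gamma-conv-J}: a constant (extension) recovery sequence handled by Theorem~\ref{thm:BBM-dual-limit}, and a liminf obtained from Theorem~\ref{thm:BBM-liminf} together with $(F_2)$. Your a priori bound on $\cE^\eps_\Omega(v_\eps,v_\eps)$, derived from $(F_1)$ and the superlinear growth of the energy, is a worthwhile addition, since the paper simply asserts that a finite $\liminf$ of the functionals forces a finite $\liminf$ of the energies.
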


%%%%%%%%%%%%%%%%%%%%%%%%%%%%%%%
\subsection{Convergence of the Dirichlet problem}
%%%%%%%%%%%%%%%%%%%%%%%%%%%%%%%
In this section we establish the optimal convergence of $(u_\eps)_\eps$ to $u$, in presence of Dirichlet condition, i.e., in  the case $\tau=0$ in \eqref{eq:main-problem-nonlocal} and \eqref{eq:main-problem-local}.  The function spaces for the Dirichlet data  $g_\eps$ and $g$ are completely different.  First of all, by Remark \ref{rem:ext-plus-trace}, using a lifting of the trace operator and the standard Sobolev extension, any  function $g\in W^{1-1/p , p}(\partial \Omega)$, has a representative $\overline{g}\in W^{1,p}(\R^d)$, such that $\gamma_0(\overline{g})= g$ and
$\|\overline{g}\|_{ W^{1, p}(\R^d)}
\leq C\|g\|_{ W^{1-1/p , p}(\partial \Omega)}$ for some constant $C>0$ independent on $g$. Therefore, there is no loss of generality  if the  Dirichlet boundary data $g$ belongs to $ W^{1,p}(\R^d)$ or to $W^{1,p}(\R^d\setminus\overline{\Omega})$. Likewise,  it is also  sufficient to consider the Dirichlet data  $g_\eps$ in $\WnuOmRa$ for the nonlocal Dirichlet problem
\begin{theorem}[\textbf{Dirichlet problem I}]\label{thm:converg-dirichlet}
Assume $\Omega\subset \R^d$ is open bounded Lipschitz.   Let $ (f_\eps)_\eps$ with  $f_\eps\in  (W^{p}_{\nu_\eps,0}(\Omega|\R^d))'$ converges asymptotically weakly to $f\in (W^{1,p}_0(\Omega))'$ and $ (g_\eps)_\eps$ with $g_\eps\in  T^{p}_{\nu_\eps}(\Omega^c)$ converges asymptotically strongly to $g\in W^{1,p}(\R^d\setminus\overline{\Omega})$, i.e.,
\begin{align*}
\lim_{\eps\to0}	\|g_\eps-g\|_{\TnuOma}=0.
\end{align*}
Let $\mu^{-1}=K_{d,p}$, and $u_\eps \in  \WnuOmRa$ be the  weak solution of Dirichlet problem
\begin{align*}
\mu L_\eps u = f_\eps \quad \text{ in } \,\, \Omega
\quad \text{ and } \quad u = g_\eps
\quad \text{ on }\,\, \Omega^c ,
\end{align*}
 Let  $u \in W^{1,p}(\Omega)$ be  the weak solution of local Dirichlet problem
\begin{align*}
-\Delta_p u = f \quad \text{ in } \,\, \Omega \quad\text{ and }\quad
u = g \quad \text{ on } \,\, \partial\Omega.
\end{align*}
\noindent Then $(u_\eps)_\eps$ optimally converges to $u$ in the following sense
\begin{align*}
\lim_{\eps\to0}\|u_{\eps}-u\|_{W^p_{\nu_\eps}(\Omega)}= \lim_{\eps\to0}\|u_{\eps}-u_g\|_{W^p_{\nu_\eps}(\Omega|\R^d)} =\lim_{\eps\to0}\|u_{\eps}-\overline{u}\|_{W^p_{\nu_\eps}(\Omega| \R^d)}=0.
\end{align*}
In particular $\cE^\eps_{cr}(u_\eps-\overline{u}, u_\eps-\overline{u})\xrightarrow{\eps\to0}0$. Here $\overline{u}\in W^{1,p}(\R^d)$ is any $W^{1,p}$-extension of $u$,  while $u_g\in W^{1,p}(\R^d)$ refers to the extension of  $u$ by $g$ on $\Omega^c$.
\end{theorem}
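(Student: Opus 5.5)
The plan mirrors the proof of Theorem~\ref{thm:convergence-neumann}, with the Dirichlet data handled by a lifting. First I reduce to the case where $g$ is a single function of $W^{1,p}(\R^d)$. Since $\Omega$ is bounded Lipschitz, $\R^d\setminus\overline{\Omega}$ is a $W^{1,p}$-extension domain, so by Remark~\ref{rem:ext-plus-trace} I extend $g$ to $W^{1,p}(\R^d)$ and keep the name $g$. By definition of the trace norm, for each $\eps$ I choose $G_\eps\in\WnuOmRa$ with $G_\eps=g_\eps-g$ on $\Omega^c$ and $\|G_\eps\|_{\WnuOmRa}\le \|g_\eps-g\|_{\TnuOma}+\eps\to0$. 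The function $\widetilde g_\eps:=g+G_\eps$ then equals $g_\eps$ on $\Omega^c$. I write $u_\eps=\widetilde g_\eps+z_\eps$ with $z_\eps\in W^p_{\nu_\eps,0}(\Omega|\R^d)$.

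\textbf{Uniform bound.} I test the weak formulation $\mu\cE^\eps(u_\eps,v)=\langle f_\eps,v\rangle_\eps$, valid for $v\in W^p_{\nu_\eps,0}(\Omega|\R^d)$, with $v=z_\eps=u_\eps-\widetilde g_\eps$. Using H\"older \eqref{eq:holder-form-estimate} and the bound $\cE^\eps(\widetilde g_\eps,\widetilde g_\eps)\le C$ from \eqref{eq:taylor-exapnsion}, I get
\[
\cE^\eps(u_\eps,u_\eps)\le \cE^\eps(u_\eps,u_\eps)^{1/p'}C+M\|z_\eps\|.
\]
The robust Poincar\'e inequality of \cite{Fog25} for $W^p_{\nu_\eps,0}(\Omega|\R^d)$ lets me control $\|z_\eps\|_{\WnuOmRa}^p\lesssim\cE^\eps(z_\eps,z_\eps)$. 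Together these give $\sup_\eps\|u_\eps\|_{W^p_{\nu_\eps}(\R^d)}$ restricted to bounded sets $<\infty$, which also yields a bound for $z_\eps$ on $\R^d$.

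\textbf{Identifying the limit.} Theorem~\ref{thm:asymp-compactness} gives a subsequence along which $z_{\eps_n}\to z$ in $L^p$, and by item $(iv)$ $z\in W^{1,p}_0(\Omega)$. Since $G_\eps\to0$ in $L^p(\Omega)$, I get $u_{\eps_n}\to u:=g+z$ in $L^p(\Omega)$. To show that $u$ solves the local problem I use the minimization argument of Theorem~\ref{thm:convergence-neumann}. Each $u_\eps$ minimizes $\frac{\mu}{p}\cE^\eps(v,v)-\langle f_\eps,v-\widetilde g_\eps\rangle_\eps$ over $v\in\widetilde g_\eps+W^p_{\nu_\eps,0}$. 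For the competitor I take $\widetilde g_\eps+\varphi$ with $\varphi\in C_c^\infty(\Omega)$. The liminf side comes from Theorem~\ref{thm:BBM-liminf}. On the limsup side, $\cE^\eps(g+G_\eps+\varphi)\to K_{d,p}\cE^0(g+\varphi)$, because $\cE^\eps(G_\eps,G_\eps)\to0$ and the triangle inequality in $\cE^{1/p}$ applies, combined with Theorem~\ref{thm:BBM-dual-limit}. I then pass to density in $W^{1,p}_0$. Uniqueness of $u$ upgrades the convergence to the whole family.

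\textbf{Energy convergence.} Let $u_g:=u$ on $\Omega$ and $g$ outside, so $u_g\in W^{1,p}(\R^d)$ because $\gamma_0(u)=\gamma_0(g)$. Set $w_\eps:=u_\eps-u_g=(z_\eps-z)+G_\eps$. By Theorem~\ref{thm:equiv-conv-in-form} it suffices to show $\cE^\eps(u_\eps,w_\eps)-\cE^\eps(u_g,w_\eps)\to0$.

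For the second term, Theorem~\ref{thm:asymp-conv-form} gives $\cE^\eps(u_g,z_\eps-z)\to K_{d,p}\cE^0(u_g,0)=0$, since $z_\eps-z\to0$ in $L^p(\Omega)$ and is bounded. Also $\cE^\eps(u_g,G_\eps)\to0$ by H\"older.

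For the first term, $\cE^\eps(u_\eps,z_\eps-z)=\mu^{-1}\langle f_\eps,z_\eps-z\rangle_\eps\to0$ by the weak-strong property $(F_2)$. Again $\cE^\eps(u_\eps,G_\eps)\to0$ by H\"older.

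Hence $\cE^\eps(w_\eps,w_\eps)\to0$, and with $\|w_\eps\|_{L^p(\Omega)}\to0$ this gives $\|u_\eps-u_g\|_{\WnuOmRa}\to0$. The claim for $\WnuOma$ follows by restriction. For an arbitrary extension $\overline u$, the relevant quantities are $\cE^\eps(u_g-\overline u)$: the $\Omega\times\Omega$ part vanishes, the $\cE^\eps_{cr}$ part tends to $0$ by Lemma~\ref{lem:collap-bdary} since $u_g-\overline u\in W^{1,p}(\R^d)$ vanishes on $\Omega$ and $|\partial\Omega|=0$, and the $\Omega^c\times\Omega^c$ part does not enter $\cE^\eps$. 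The triangle inequality then finishes.

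The main obstacle I expect is the uniform coercivity step. It needs a Poincar\'e inequality on $W^p_{\nu_\eps,0}$ that is uniform in $\eps$, and it needs the compactness of Theorem~\ref{thm:asymp-compactness}(iv) to guarantee $z\in W^{1,p}_0(\Omega)$. I would take both from \cite{Fog25}.
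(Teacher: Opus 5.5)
Your proposal is correct and follows essentially the same route as the paper's proof. The shared steps are: the same lifting of $g_\eps-g$ to reduce to $\|g_\eps-g\|_{\WnuOmRa}\to0$; the robust Poincar\'e--Friedrichs inequality for the uniform bound; compactness applied to $u_\eps-\widetilde g_\eps$ (which vanishes off $\Omega$) rather than to $u_\eps$; the minimization/uniqueness argument; and Theorem~\ref{thm:equiv-conv-in-form}, fed by Theorem~\ref{thm:asymp-conv-form}, the weak formulation and H\"older, with Lemma~\ref{lem:collap-bdary} handling an arbitrary extension $\overline{u}$.

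Two points are cosmetic. First, what is uniformly bounded is $\|u_\eps\|_{\WnuOmRa}$, and hence $\|z_\eps\|_{W^p_{\nu_\eps}(\R^d)}$ since $z_\eps=0$ on $\Omega^c$. It is not $u_\eps$ in $W^p_{\nu_\eps}(\R^d)$, because the $\Omega^c\times\Omega^c$ energy of $g_\eps$ is not controlled by the trace norm. Second, you use competitors $\widetilde g_\eps+\varphi$ with $\varphi\in C_c^\infty(\Omega)$ plus density, whereas the paper takes $v-g+g_\eps$ for $v\in g+W^{1,p}_0(\Omega)$ directly.
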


\begin{proof}
\textbf{Extension of $g_\eps$ and $g$}. Since  $\Omega\subset \R^d$ is bounded Lipschitz, we extend $g$ to some $\overline{g}\in W^{1,p}(\R^d)\subset\WnuOmRa$ with
$\overline{g}= g$ a.e. on $\R^d\setminus\overline{\Omega}$ and $\|\overline{g}\|_{W^{1,p}(\R^d)}\leq C\|g\|_{W^{1,p}(\R^d\setminus\overline{\Omega})}$ for some  $C>0$ independent of $g$. Thus  $g\in \TnuOma$ and we get
\begin{align*}
\|g\|^p_{\TnuOma}\leq \|\overline{g}\|^p_{\WnuOmRa} \leq 2^{p+1} \|\overline{g}\|^p_{W^{1,p}(\R^d)}\leq C\|g\|_{W^{1,p}(\R^d\setminus\overline{\Omega})}.
\end{align*}
Therefore we have $g_\eps-g\in \TnuOma$. Hence, for each $\eps>0$ there is $h_\eps\in \WnuOmRa$ such that $h_\eps= g_\eps-g$ a.e. on $\Omega^c$ and
\begin{align*}
\|h_\eps\|_{\WnuOmRa}\leq \eps+ \|g_\eps-g\|_{\TnuOma}.
\end{align*}
Let us define $ \overline{g}_\eps= h_\eps +\overline{g}\in\WnuOmRa$ so that $\overline{g}_\eps= g_\eps$ a.e. on $\Omega^c$ and
\begin{align}\label{eq:geps-extens}
\|\overline{g}_\eps-\overline{g}\|_{\WnuOmRa}= \|h_\eps\|_{\WnuOmRa}\leq \eps+ \|g_\eps-g\|_{\TnuOma}\xrightarrow{\eps\to0}0.
\end{align}
By means of the foregoing extension device, there is no loss of generality in assuming that $g \in W^{1,p}(\R^d) $ and $g_\eps \in \WnuOmRa$ such that
\begin{align*}
\|g_\eps-g \|_{\WnuOmRa}\xrightarrow{\eps\to0}0.
\end{align*}
In particular, we can assume $\|g_\eps-g \|_{\WnuOmRa}<1$ so that, \eqref{eq:taylor-exapnsion} yields
\begin{align*}
\sup_{\eps>0}\|g_\eps \|_{\WnuOmRa}\leq 1+ \sup_{\eps>0}
\|g \|_{\WnuOmRa}\leq 1+  2^{p+1} \|g\|^p_{W^{1,p}(\R^d)}<\infty.
\end{align*}
\textbf{Boundedness of $(u_\eps)_\eps$.} This leads to  the uniform estimate
\begin{align}\label{eq:uniform-bdd-f-g-D}
M:= 	\sup_{\eps>0}\big(\|f_\eps \|_{(\WnuOmRao)'}+\|g_\eps \|_{\WnuOmRa}\big)<\infty.
\end{align}
By the robust Poincar\'e-Friedrichs inequality see  \cite[Section 10]{Fog25} there exist $\eps_0 \in (0,1)$ and $C>0$ such that, for all $\eps\in (0, \eps_0)$ and $v \in\WnuOmRao$ we have
\begin{align}\label{eq:uniform-coercivity-D}
\|v\|^p_{\WnuOmRa}\leq C\mathcal{E}^\eps(v,v).
\end{align}

\noindent It worth recalling that the weak solution  $u_\eps$  verifies  $\cE^\eps(u_\eps,u_\eps-g_\eps)=  \langle f_\eps , u_\eps-g_\eps \rangle_\eps$ and $ u_\eps-g_\eps\in \WnuOmRao$. Thus we find that
\begin{align*}
\begin{split}
\| u_\eps-g_\eps \|_{\WnuOmRa}
&\leq C \cE^\eps(u_\eps-g_\eps,u_\eps-g_\eps)^{1/p}\\
&	\leq C\cE^\eps(u_\eps,u_\eps)^{1/p}+ C\cE^\eps(g_\eps,g_\eps)^{1/p}.
\end{split}
\end{align*}
Furthermore, we also find that
\begin{align*}
\begin{split}
&\cE^\eps(u_\eps,u_\eps)
= \cE^\eps(u_\eps,u_\eps-g_\eps) +\cE^\eps(u_\eps,g_\eps)= \langle f_\eps , u_\eps-g_\eps \rangle_\eps +\cE^\eps(u_\eps,g_\eps)\\
&\leq \| u_\eps-g_\eps \|_{\WnuOmRa} \|f_\eps\|_{\WnuOmRao'}+\cE^\eps(u_\eps,u_\eps)^{1/p'}\cE^\eps(g_\eps,g_\eps)^{1/p}\\
&\leq  C \|f_\eps\|_{\WnuOmRao'}\big(
\cE^\eps(u_\eps,u_\eps)^{1/p}+\cE^\eps(g_\eps,g_\eps)^{1/p}\big)
+\cE^\eps(u_\eps,u_\eps)^{1/p'}\cE^\eps(g_\eps,g_\eps)^{1/p}.
\end{split}
\end{align*}

\noindent By exploiting the Young inequality $	|ab|\leq \frac{\delta^p |a|^p}{p}+ \frac{|b|^{p'}}{p'\delta^{p'}} $, $a,b\in \R$ and $\delta>0$,  we get

\begin{align*}
\cE^\eps(u_\eps,u_\eps)^{1/p'}\cE^\eps(g_\eps,g_\eps)^{1/p}
&\leq  \frac{\delta^{p}\cE^\eps(u_\eps,u_\eps)}{p'}+ \frac{\cE^\eps(g_\eps,g_\eps)}{p\delta^{\frac{p^2}{p'}}},\\
C\|f_\eps\|_{\WnuOmRao'} \cE^\eps(u_\eps,u_\eps)^{1/p}
&\leq  \frac{\delta^p\cE^\eps(u_\eps,u_\eps)}{p}+ \frac{C^{p'}\|f_\eps\|_{\WnuOmRao'} ^{p'}}{p'\delta^{p'}},\\
C\|f_\eps\|_{\WnuOmRao'}\cE^\eps(g_\eps,g_\eps)^{1/p}
&\leq \frac{\delta^{p}\cE^\eps(g_\eps,g_\eps)}{p} + \frac{C^{p'}\|f_\eps\|_{\WnuOmRao'} ^{p'}}{p'\delta^{p'}}.
\end{align*}
Inserting altogether in the previous estimate gives
\begin{align*}
\cE^\eps(u_\eps,u_\eps)
&\leq 	 \delta^{p}\cE^\eps(u_\eps,u_\eps) + ( \frac{1}{p\delta^{\frac{p^2}{p'}}}+ \frac{\delta^{p}}{p}) \cE^\eps(g_\eps,g_\eps)
+ \frac{2C^{p'}}{p'\delta^{p'}} \|f_\eps\|_{\WnuOmRao'} ^{p'}.
\end{align*}
\noindent Assigning the  particular choice  $\delta^p=\frac12$ we arrive at
\begin{align*}
\cE^\eps(u_\eps,u_\eps)&\leq C(\|f_\eps\|_{\WnuOmRao'} ^{p'} + \cE^\eps(g_\eps,g_\eps)) .
\end{align*}
On the other hand, the coercivity estimate \eqref{eq:uniform-coercivity-D} implies
\begin{align}\label{eq:xlp-bound-dirich}
\begin{split}
\|u_\eps\|_{L^p(\Omega)}
&\leq  \|g_\eps\|_{L^p(\Omega)}+ C\cE^\eps(u_\eps-g_\eps,u_\eps-g_\eps)^{1/p}\\
&\leq C\|g_\eps\|_{\WnuOmRa }+ C\cE^\eps(u_\eps,u_\eps)^{1/p}.
\end{split}
\end{align}
By combining everything together and accounting \eqref{eq:uniform-bdd-f-g-D}  leads to the uniform estimate
\begin{align}\label{eq:unifo-bdedness-D}
\begin{split}
\|u_\eps\|_{\WnuOmRa}
&\leq C\big(\|f_\eps \|^{\frac{1}{p-1}}_{(\WnuOmRao)'}+\|g_\eps \|_{\WnuOmRa}\big)
\leq C
\end{split}
\end{align}
for a generic  constant  $C>0$ independent of $\eps$. Namely, we arrive at the conclusion that, the sequence $(w_\eps)_\eps$ with $w_\eps= u_\eps-g_\eps$ is asymptotically bounded  with respect to $W^p_{\nu_{\eps}}(\R^d)$, since $u_\eps-g_\eps=0$ a.e. on $\Omega^c$ and accounting  \eqref{eq:uniform-bdd-f-g-D},  we have
\begin{align*}
\begin{split}
\|u_\eps-g_\eps\|_{W^p_{\nu_{\eps} }(\R^d)}= \|u_\eps-g_\eps\|_{\WnuOmRa}\leq \|u_\eps\|_{\WnuOmRa}+ \|g_\eps\|_{\WnuOmRa}
\leq C.
\end{split}
\end{align*}
\textbf{$L^p$-Convergence of $(w_\eps)_\eps$ and $(u_\eps)_\eps$.} Accordingly, by the asymptotic compactness\footnote{Applying the asymptotic compactness Theorem \ref{thm:asymp-compactness} directly on $(u_\eps)_\eps$ would necessitate invoking the assumption that the domain  $\Omega$ is  Lipschitz, which we intentionally refrain from using here.}
Theorem \ref{thm:asymp-compactness}, there is $w\in W^{1,p}(\R^d)$ and subsequence $\eps_n\to 0$ such that $(u_{\eps_n} -g_{\eps_n} )_n$ converges to $w$ in $L^p_{\loc}(\R^d)$.  In particular, since $w_\eps= u_\eps-g_\eps=0$ a.e. on $\Omega^c$ we find that $w=0$ a.e. on $\Omega^c$, leading to the conclusion that $w\in W^{1,p}_0(\Omega)$ owing to  the fact that the boundary $\partial\Omega$ is continuous.  In addition,  given that $(g_{\eps})_\eps$ converges to $g$ in $L^p(\Omega)$,  then  $ u_\eps= w_\eps+ g_\eps$  also converges to $u:= w+g$ in $L^p(\Omega)$. Therefore, letting $u_g=w+ g$  in $\R^d$ we have  $u_g= w+g\in W^{1,p}(\R^d)$, $u_g|_{\Omega^c}= g$ and   $u_g|_\Omega= u$, that is, $u_g$ extends $u$ by $g$ on $\Omega^c$. By Theorem \ref{thm:BBM-liminf}, there also holds that
\begin{align*}
K_{d,p}\cE^0(u,u)&\leq \liminf_{n\to\infty}\cE^{\eps_n}(u_{\eps_n}, u_{\eps_n}).
\end{align*}
Moreover, the strong convergence of $(u_{\eps_n}-g_{\eps_n})_n$ to $u-g$ in $L^p(\Omega)$ and the asymptotic weak convergence of $(f_{\eps_n} )_n $ yield $\langle f_{\eps_n}, u_{\eps_n}-g_{\eps_n} \rangle_{\eps_n }\to \langle f ,u-g\rangle$ and  hence
\begin{align*}
\cJ_0(u)\leq \liminf_{n\to \infty}\cJ_0^{\eps_n}(u_{\eps_n}),
\end{align*}
where we recall that each $u_\eps$  uniquely  satisfies the minimization problem
\begin{align*}
\cJ_0^\eps(u_\eps) &= \min_{v-g_\eps\in \WnuOmRao} \cJ_0^\eps(v)
\quad \text{with}\quad
\cJ_0^\eps(v) = \tfrac{\mu}{p} \cE^\eps(v,v)
-\langle f_\eps , v-g_\eps\rangle_\eps.
\end{align*}
\textbf{The limit $u$ minimizes $\cJ_0$.} Next, we show that  $u$  solves the minimization problem
\begin{align*}
\cJ_0(u)&= \min_{v-g\in W^{1,p}_0(\Omega)} \cJ_0(v)  \quad\text{with}\quad
\cJ_0(v) = \tfrac{1}{p} \mathcal{E}^0(v,v)
- \langle f,v-g\rangle.
\end{align*}
Let $v\in g+W^{1,p}_0(\Omega)$, that is, $v-g\in W^{1,p}_0(\Omega)\subset \WnuOmRao$ and consider $v_\eps=v-g+g_\eps$ so that $v_\eps\in g_\eps+ \WnuOmRao$.  Further, since  $\cE^\eps(v_\eps-v,v_\eps-v)^{1/p}\leq \|g_\eps-g\|_{\WnuOmRa}, $ the triangular inequality implies
\begin{align*}
\big|\cE^\eps(v_\eps,v_\eps)^{1/p}-K^{1/p}_{d,p}\cE^0(v,v)^{1/p}\big|
&\leq  \|g_\eps-g\|_{\WnuOmRa}+ \big|\cE^\eps(v,v)^{1/p}-K^{1/p}_{d,p}\cE^0(v,v)^{1/p}\big|.
\end{align*}
Given that $\Omega$ is Lipschitz, Theorem \ref{thm:BBM-dual-limit} implies that $ \cE^\eps(v,v)\xrightarrow{\eps\to0}K_{d,p}\cE^0(v,v)$  and from the foregoing we have $\|g_\eps-g\|_{\WnuOmRa}\xrightarrow{\eps\to0}0$. Therefore, we conclude that
\begin{align}\label{eq:Eeps-veps-conv}
\lim_{\eps\to0}\cE^\eps(v_\eps,v_\eps)
= K_{d,p}
\cE^0(v,v).
\end{align}
Since $v_\eps-g_\eps= v-g$, this and  the asymptotic weak convergence of $(f_\eps)_\eps$ yield
\begin{align*}
\lim_{\eps\to0}\mathcal{J}^\eps_0(v_\eps)
&= \lim_{\eps\to0}\big(\tfrac{\mu}{p} \cE^\eps(v_\eps, v_\eps)-\langle f_\eps , v_\eps-g_\eps \rangle_\eps \big)\\
&=\tfrac{1}{p}\mathcal{E}^0(v,v) -\langle f ,v-g\rangle = \cJ_0(v).
\end{align*}
Whence, since  each $u_{\eps_n}$ minimizes $\cJ^{\eps_n}_0$, i.e.,  $\cJ_0^{\eps_n}(u_{\eps_n})\leq
\cJ_0^{\eps_n}(v_{\eps_n}) $, we deduce that
\begin{align*}
\cJ_0(u)\leq \liminf_{n\to \infty}\cJ_0^{\eps_n}(u_{\eps_n})\leq
\liminf_{n\to \infty}\cJ_0^{\eps_n}(v_{\eps_n})= \cJ_0(v).
\end{align*}
We arrive at the conclusion that
\begin{align*}
\cJ_0(u)&= \min_{v-g\in W^{1,p}_0(\Omega)} \cJ_0(v).
\end{align*}
In other words, $u$ is in fact the unique weak solution  to the local Dirichlet problem $-\Delta_p u=f$ in $\Omega$ and $u=g$ on $\partial\Omega$ on $W^{1,p}(\Omega)$. The uniqueness of $u$ implies  that $\|u_{\eps}-u\|_{L^p(\Omega)}\to 0$ as $\eps\to0$.
From this, Theorem \ref{thm:asymp-conv-form} implies the asymptotic convergences
\begin{align*}
\mathcal{E}^\eps(u_g,u_\eps-u_g) \xrightarrow{\eps\to0}0
\quad\text{and}\quad
\mathcal{E}^\eps(u_g,u_\eps)
\xrightarrow{\eps\to0}
K_{d,p}\,\mathcal{E}^0(u,u).
\end{align*}
\textbf{Optimal convergence.} Furthermore, $(u_\eps-g_\eps)_\eps$ converges to $u-g$ in $L^p(\Omega)$ and $u-g\in W^{1,p}_0(\Omega)$, since $\|g_\eps-g\|_{\WnuOmRa}\xrightarrow{\eps\to0}0$.
By the asymptotic weak convergence of  $ (f_\eps)_\eps$  we obtain
\begin{align*}
& \langle  f_\eps ,u_\eps-g_\eps \rangle_\eps \to  \langle  f ,u-g \rangle\quad \text{and}\quad
\langle  f_\eps ,u_g-g \rangle_\eps
\to  \langle  f ,u-g \rangle.
\end{align*}
Moreover, recalling that $(u_\eps)_\eps$ is asymptotically bounded in $\WnuOmRa$, we have
\begin{align*}
|\mathcal{E}^\eps(u_\eps,g_\eps-g)|\leq  \mathcal{E}^\eps(u_\eps,u_\eps)^{1/p'}
\mathcal{E}^\eps(g_\eps-g, g_\eps-g)^{1/p}\leq C\|g_\eps-g\|_{\WnuOmRa}\xrightarrow{\eps\to0}0.
\end{align*}
Thence,  since $u_\eps \in \WnuOmRa$ is a weak solution and  $g-u_g\in W^{1,p}_0(\Omega)\subset \WnuOmRa$, it follows by linearity  and by $u_\eps-g_\eps, u_g-g\in W^p_{\nu_\eps,0}(\Omega|\R^d)$ that
\begin{align*}
\mathcal{E}^\eps(u_\eps,u_\eps-u_g)
&= 	\mathcal{E}^\eps(u_\eps,g_\eps-g)
+ \langle  f_\eps ,u_\eps-g_\eps \rangle_\eps
+ \langle  f_\eps ,g-u_g \rangle_\eps
\xrightarrow{\eps\to0}0.
\end{align*}
Altogether,  we have obtained  the convergences
\begin{align*}
\mathcal{E}^\eps(u_g, u_\eps-u_g) \xrightarrow{\eps\to0}0
\quad\text{and}\quad
\mathcal{E}^\eps(u_\eps, u_\eps-u_g) \xrightarrow{\eps\to0}0.
\end{align*}
Accordingly,  Theorem \ref{thm:equiv-conv-in-form} implies $\mathcal{E}^\eps(u_\eps-u_g, u_\eps-u_g) \xrightarrow{\eps\to0}0$ and hence  we have
\begin{align*}
\|u_\eps-u_g\|_{\WnuOmRa} \xrightarrow{\eps\to0}0.
\end{align*}
Finally, let $\overline{u}\in W^{1,p}(\R^d)$ be another $W^{1,p}$-extension of $u$, then we have  $u_g-\overline{u}\in W^{1,p}(\R^d)$  and $u_g-\overline{u}=0$ a.e. on $\Omega$. Accordingly, since $\Omega$ is bounded Lipschitz, by Lemma  \ref{lem:collap-bdary} we get
$\cE^\eps_{cr}(u_g-\overline{u}, u_g-\overline{u})^{1/p}\xrightarrow{\eps\to0}0$. It follows that
\begin{align*}
\|u_\eps-\overline{u}\|_{\WnuOmRa}
&\leq \|u_\eps-u_g\|_{\WnuOmRa}+ \|u_g-\overline{u}\|_{\WnuOmRa}\\
&=\|u_\eps-u_g\|_{\WnuOmRa}+\cE^\eps_{cr}(u_g-\overline{u}, u_g-\overline{u})^{1/p}\xrightarrow{\eps\to0}0,
\end{align*}
while also Lemma  \ref{lem:collap-bdary} implies that
\begin{align*}
\cE^\eps_{cr}(u_\eps, u_\eps)^{1/p}\leq \cE^\eps_{cr}(\overline{u}, \overline{u})^{1/p}+ \cE^\eps_{cr}(u_\eps-\overline{u}, u_\eps-\overline{u})^{1/p}\xrightarrow{\eps\to0}0.
\end{align*}
\end{proof}

In the special case where $\Omega$ is only bounded in one direction,  that is,  we have $\Omega\subset H_R$ with $H_R=\{ x\in \R^d\,:\, |x\cdot e|\leq R\}$  for some $R>0$ and $e\in \R^d$, the convergence of $(u_\eps)_\eps$ to $u$ is only obtained in  $L^p_{\loc}(\Omega)$, see instance  \cite{Fog25},  thereby excluding the possibility of applying the asymptotic weak convergence of $(f_\eps)_\eps $.  Consequently,  this underscores the need of  weakening the assumption on $(f_\eps)_\eps$.
\begin{definition}\label{def:strong-asymp-form}
We say that a sequence  $(f_\eps)_\eps$ with $f_\eps\in (W^p_{\nu_\eps,0}(\Omega|\R^d))'$ asymptotically strongly converges to $f\in (W^{1,p}_0(\Omega))'$ if $(f_\eps)_\eps$ is asymptotically bounded (see Definition \ref{def:weak-asymp-form}) and satisfies the following condition
\begin{enumerate}[$(F'_2)$]
\item \textbf{$L^p$-Strong-Weak type lemma:} For  a sequence $(v_\eps)_\eps$ with $v_\eps\in\WnuOmRao$ and $v\in W^{1,p}_0(\Omega)$ such that $(v_\eps|_\Omega)_\eps$ weakly converges  to $ v|_\Omega$  in $L^p(\Omega)$  as $\eps\to 0$,
\begin{align*}
\lim_{\eps\to0} \langle f_\eps , v_\eps \rangle_\eps
= \langle f , v\rangle .
\end{align*}
\end{enumerate}
The condition $(F'_2)$ above obviously occurs when  $\|f_\eps-f\|_{L^{p'}(\Omega)}\to0$ as $\eps\to0$.
\end{definition}
\noindent Let us recall that the topology on $L^p_{\loc}(\Omega)$ is metrizable with the metric
\begin{align*}
\|u-v\|_{L^p_{\loc}(\Omega)}= \sum_{j=1}^{\infty}\frac{1}{2^j}\frac{\| u-v\|_{L^p(\Omega_j)}}{1+\| u-v\|_{L^p(\Omega_j)}},
\end{align*}
where $(\Omega_j)_j$ is any exhaustion of $\Omega$ such that each $\Omega_j$ is  open bounded.

\begin{theorem}[\textbf{Dirichlet problem II}]\label{thm:converg-dirichlet-bis}
Let the setting of Theorem \ref{thm:converg-dirichlet} applies. Assume $\Omega\subset \R^d$ is open bounded in one direction with Lipschitz boundary. Assume $(f_\eps)_\eps$ strongly asymptotically converges $f$ in the sense of Definition \ref{def:strong-asymp-form}.
\noindent Then $(u_\eps)_\eps$ locally strongly converges to $u$ in the following sense
\begin{align*}
&\lim_{\eps\to0}\|u_{\eps}-u\|_{L^p_{\loc}(\Omega)}+ \cE^\eps_\Omega(u_{\eps}-u,u_{\eps}-u)= 0,\\
&\lim_{\eps\to0}\|u_{\eps}-u_g\|_{L^p_{\loc}(\R^d)}+ \cE^\eps(u_{\eps}-u_g,u_{\eps}-u_g)=\lim_{\eps\to0}\cE^\eps_{cr}(u_\eps, u_\eps)= 0.
\end{align*}
Moreover, if in addition $g_\eps= g$ then we have
\begin{align*}
\lim_{\eps\to0}\|u_{\eps}-u\|_{W^p_{\nu_\eps}(\Omega)}
=\lim_{\eps\to0}\|u_{\eps}-u_g\|_{W^p_{\nu_\eps}
(\Omega| \R^d)}
=\lim_{\eps\to0}\|u_{\eps}-\overline{u}\|_{W^p_{\nu_\eps}(\Omega| \R^d)}=0.
\end{align*}
\end{theorem}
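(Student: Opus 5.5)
The plan is to follow the proof of Theorem \ref{thm:converg-dirichlet}, with the global $L^p(\Omega)$ convergence replaced by $L^p_{\loc}$ convergence plus weak $L^p(\Omega)$ convergence. The condition $(F'_2)$ is exactly what is needed to pass to the limit in the duality pairings under this weaker convergence.

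\textbf{Step 1: extension and uniform bounds.} As in Theorem \ref{thm:converg-dirichlet}, extend $g$ to $\overline g\in W^{1,p}(\R^d)$ and $g_\eps$ to $\overline g_\eps$ with $\|\overline g_\eps-\overline g\|_{\WnuOmRa}\to0$, and write $g,g_\eps$ for these extensions. Since $\Omega$ is bounded in one direction, the robust Poincar\'e--Friedrichs inequality \eqref{eq:uniform-coercivity-D} from \cite[Section 10]{Fog25} is still available. The Young-inequality argument of Theorem \ref{thm:converg-dirichlet} then gives the uniform bound \eqref{eq:unifo-bdedness-D}. Hence $w_\eps=u_\eps-g_\eps$ is bounded in $W^p_{\nu_\eps}(\R^d)$ and vanishes on $\Omega^c$.

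\textbf{Step 2: identifying the limit.} Theorem \ref{thm:asymp-compactness}$(ii)$ gives a subsequence with $w_{\eps_n}\to w$ in $L^p_{\loc}(\R^d)$ and $w\in W^{1,p}(\R^d)$, $w=0$ on $\Omega^c$. The Lipschitz boundary then gives $w\in W^{1,p}_0(\Omega)$. Boundedness in $L^p$ upgrades this to weak convergence $w_{\eps_n}\rightharpoonup w$ in $L^p(\Omega)$. Consequently $(F'_2)$ yields $\langle f_{\eps_n},w_{\eps_n}\rangle\to\langle f,w\rangle$, and Theorem \ref{thm:BBM-liminf}, which only needs $L^p_{\loc}$ convergence, gives the liminf inequality. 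The recovery sequence $v_\eps=v-g+g_\eps$ works verbatim, since the pairing $\langle f_\eps,v-g\rangle$ with a fixed function converges by $(F'_2)$. So $u=w+g$ minimizes $\cJ_0$. Uniqueness then gives convergence of the whole family in $L^p_{\loc}(\Omega)$, and also $u_\eps-u_g\to0$ in $L^p_{\loc}(\R^d)$, because $g_\eps\to g$ in $\WnuOmRa$ controls $L^p(\Omega)$ and on $\Omega^c$ both sides equal $g_\eps$, respectively $g$.

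\textbf{Step 3: energy convergence.} Theorem \ref{thm:asymp-conv-form}(iii), applied with $u_g-g\in W^{1,p}_0(\Omega)$, or with $(i)/(ii)$ under the Lipschitz boundary, requires only $L^p_{\loc}$ convergence of $u_\eps$ and a uniform energy bound. It gives $\cE^\eps(u_g,u_\eps-u_g)\to0$. For the other term, write as before
\[\cE^\eps(u_\eps,u_\eps-u_g)=\cE^\eps(u_\eps,g_\eps-g)+\langle f_\eps,u_\eps-g_\eps\rangle_\eps-\langle f_\eps,u_g-g\rangle_\eps .\]
The first term tends to $0$ by H\"older's inequality. The two pairings converge to $\langle f,u-g\rangle$ by $(F'_2)$, using the weak convergence from Step 2, so their difference tends to $0$. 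Theorem \ref{thm:equiv-conv-in-form} then yields $\cE^\eps(u_\eps-u_g,u_\eps-u_g)\to0$, hence also $\cE^\eps_\Omega(u_\eps-u,u_\eps-u)\to0$. Finally $\cE^\eps_{cr}(u_\eps,u_\eps)\to0$ follows by the triangle inequality and Lemma \ref{lem:collap-bdary}, since $|\partial\Omega|=0$ for a Lipschitz boundary.

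\textbf{Step 4: the case $g_\eps=g$ (main obstacle).} Here the difficulty is upgrading to convergence of the full $L^p(\Omega)$ norm when $\Omega$ is unbounded. In this case $u_\eps-u_g=w_\eps-w$ vanishes on $\Omega^c$ and lies in $\WnuOmRao$. The Poincar\'e--Friedrichs inequality \eqref{eq:uniform-coercivity-D} then gives
\[\|u_\eps-u_g\|^p_{L^p(\Omega)}\le C\cE^\eps(u_\eps-u_g,u_\eps-u_g)\to0,\]
which yields all three norm limits, the last one again through Lemma \ref{lem:collap-bdary}. For general $g_\eps$ this fails, because $u_\eps-u_g$ need not vanish on $\Omega^c$; this is why that case only gives local convergence.
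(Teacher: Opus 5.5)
Your argument follows the paper's proof essentially step for step: the robust Poincar\'e--Friedrichs inequality on domains bounded in one direction, compactness in $L^p_{\loc}$, weak $L^p(\Omega)$ convergence from boundedness to feed $(F'_2)$, Theorem \ref{thm:asymp-conv-form} plus Theorem \ref{thm:equiv-conv-in-form}, and the Poincar\'e upgrade for $g_\eps=g$. At one point you are more careful than the paper. Its Fact~2 asserts that $u_\eps$ itself is bounded in $W^p_{\nu_\eps}(\R^d)$ and in $L^p(\R^d)$. That is not available, because $g_\eps$ is only controlled in the trace norm. Your $w_\eps=u_\eps-g_\eps$ vanishes on $\Omega^c$, so your compactness and weak-convergence argument is legitimate.

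There is, however, one step that fails, and the paper shares it: the claim $\|u_\eps-u_g\|_{L^p_{\loc}(\R^d)}\to0$.
\begin{itemize}
\item On $\Omega^c$ the difference is $g_\eps-g$. The hypothesis $\|g_\eps-g\|_{\TnuOma}\to0$ gives no local $L^p$ control away from $\partial\Omega$, because a set $A\subset\Omega^c$ interacts with $\Omega$ only through $\int_{|h|\geq\dist(A,\Omega)}\nu_\eps(h)\,\d h\to0$.
\item Concretely, take a bounded $A$ with $\dist(A,\Omega)\geq1$ and $g_\eps=g+\mathds{1}_A$. Testing with $\mathds{1}_A$ gives $\|g_\eps-g\|^p_{\TnuOma}\leq 2|A|\int_{|h|\geq1}\nu_\eps(h)\,\d h\to0$. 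Yet $u_\eps-u_g=\mathds{1}_A$ on $A$ for every $\eps$.
\end{itemize}
So this part of the conclusion needs an extra hypothesis, such as $g_\eps\to g$ in $L^p_{\loc}(\Omega^c)$ (as in Theorem \ref{thm:converg-dirichlet-3}). The remaining conclusions are unaffected.

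Two smaller remarks.
\begin{itemize}
\item Option $(iii)$ of Theorem \ref{thm:asymp-conv-form} cannot be used ``with $u_g-g\in W^{1,p}_0(\Omega)$''. The first slot is $u_g$, and the form is not additive there, so you must rely on $(i)$, as the paper tacitly does.
\item Your closing claim in Step~4 is too pessimistic. Note that $w_\eps-w=(u_\eps-u_g)-(g_\eps-g)\in\WnuOmRao$. By Step~3 and $\|g_\eps-g\|_{\WnuOmRa}\to0$, we get $\cE^\eps(w_\eps-w,w_\eps-w)\to0$. The Poincar\'e--Friedrichs inequality and the triangle inequality then give $\|u_\eps-u\|_{L^p(\Omega)}\to0$, hence the full-norm conclusions, without assuming $g_\eps=g$.
\end{itemize}
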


\begin{proof}
The proof follows by  proceeding similarly  as in the proof of Theorem \ref{thm:converg-dirichlet},  thus we only make adjustments solely at the points of discrepancy of arguments.
\smallskip

\textbf{Fact 1.} (Robust Poincar\'e inequality). The robust Poincar\'e inequality still holds when $\Omega$ is bounded in one direction.  Namely see \cite[Section 10]{Fog25} there exist $\eps_0 \in (0,1)$ and $C>0$ such that, for all $\eps\in (0, \eps_0)$ and $v \in\WnuOmRao$ we have
\begin{align}\label{eq:uniform-coercivity-D-bis}
\|v\|^p_{\WnuOmRa}\leq C\mathcal{E}^\eps(v,v).
\end{align}

\textbf{Fact 2.} (Weak convergence). From this we get that $(u_\eps)_\eps$ is asymptotically bounded with respect to $W^p_{\nu_\eps}(\R^d)$, thereby yielding that $(u_\eps)_\eps$ converges to $u_g$ in $L^p_{\loc}(\R^d)$. This in conjunction with the boundedness  in $L^p(\R^d)$ implies  $(u_\eps)_\eps$ converges weakly to $u_g$ in $L^p(\R^d)$. In particular, $((u_\eps-g_\eps)|_\Omega)_\eps$ converges weakly to $u-g$ in $L^p(\Omega)$. By assumption on $(f_\eps)_\eps$ we obtain that $\langle f_\eps, u_\eps-g_\eps\rangle_\eps\to  \langle f, u-g\rangle $, while by Theorem \ref{thm:asymp-conv-form}  implies $\cE^\eps(u_\eps, u_\eps-u_g)\to0$. The sought optimal convergence follows likewise.

\smallskip
\textbf{Fact 3.} (Case $g_\eps=g$). If $g_\eps=g$ then  $u_\eps-u_g=0$ on $\Omega^c$, i.e., $u_\eps-u_g\in \WnuOmRao$. Thence,  the estimate  \eqref{eq:uniform-coercivity-D-bis} implies
 $\|u_\eps-u_g\|^p_{\WnuOmRa}\xrightarrow{\eps\to0}0$ since
  \begin{align*}
\|u_\eps-u_g\|^p_{\WnuOmRa}\leq C\mathcal{E}^\eps(u_\eps-u_g, u_\eps-u_g)\xrightarrow{\eps\to0}0.
\end{align*}
\end{proof}

Interestingly, other variants of Theorem \ref{thm:converg-dirichlet} can be derived by altering the assumptions on the Dirichlet Data $g_\eps$ or on the domain $\Omega$. For instance, by strengthening the assumption on the Dirichlet data $(g_\eps)_\eps$ may allow for a potential relaxation of the regularity requirements on
$\Omega$.
\begin{theorem}[\textbf{Dirichlet problem III}]\label{thm:converg-dirichlet-3}
Assume $\Omega\subset \R^d$ is open,  with a continuous boundary. Assume $\|g_\eps-g\|_{W^p_{\nu_\eps}(\R^d)}\to0$ with $g_\eps\in  W^p_{\nu_\eps}(\R^d)$ and $g\in W^{1,p}(\R^d)$.  Assume that $g=0$ or  $|\partial\Omega|=0$.

\medskip
If $\Omega$ is bounded and the setting of Theorem \ref{thm:converg-dirichlet} is in force then
\begin{align*}
\lim_{\eps\to0}\|u_{\eps}-u\|_{W^p_{\nu_\eps}(\Omega)}= \lim_{\eps\to0}\|u_{\eps}-u_g\|_{W^p_{\nu_\eps}(\R^d)}=0,
\end{align*}

If $\Omega$ is bounded in one direction  and  the setting of Theorem \ref{thm:converg-dirichlet-bis} is in force then
\begin{align*}
&\lim_{\eps\to0}\|u_{\eps}-u\|_{L^p_{\loc}(\Omega)}+ \cE^\eps_\Omega(u_{\eps}-u,u_{\eps}-u)=\lim_{\eps\to0} \cE^\eps(u_{\eps}-u_g,u_{\eps}-u_g)= 0.
\end{align*}
Moreover, if in addition $g_\eps= g$ then we have
\begin{align*}
\lim_{\eps\to0}\|u_{\eps}-u\|_{W^p_{\nu_\eps}(\Omega)}
=\lim_{\eps\to0}\|u_{\eps}-u_g\|_{W^p_{\nu_\eps}
(\R^d)}=0.
\end{align*}
%where we recall $u_g\in W^{1,p}(\R^d)$ refers to the extension of  $u$ by $g$ on $\Omega^c$.
\end{theorem}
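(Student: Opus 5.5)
We follow the proofs of Theorem \ref{thm:converg-dirichlet} and Theorem \ref{thm:converg-dirichlet-bis}, changing only the steps that used the Lipschitz regularity of $\partial\Omega$. Set $w_\eps=u_\eps-g_\eps\in\WnuOmRao$.

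\textbf{Step 1: uniform bounds.} The robust Poincar\'e--Friedrichs inequality \eqref{eq:uniform-coercivity-D} holds on $\WnuOmRao$ whenever $\Omega$ is bounded, or bounded in one direction. We reproduce the Young-inequality argument, using that $\sup_\eps\|g_\eps\|_{W^p_{\nu_\eps}(\R^d)}<\infty$; this follows from \eqref{eq:taylor-exapnsion} and $\|g_\eps-g\|_{W^p_{\nu_\eps}(\R^d)}\to0$. This yields \eqref{eq:unifo-bdedness-D}, hence $\sup_\eps\|w_\eps\|_{W^p_{\nu_\eps}(\R^d)}<\infty$. We also have $\sup_\eps\|u_\eps\|_{W^p_{\nu_\eps}(\R^d)}<\infty$, because $u_\eps=g_\eps$ on $\Omega^c$ and the $\Omega^c\times\Omega^c$ energy of $g_\eps$ is controlled.

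\textbf{Step 2: compactness and the limit.} We apply Theorem \ref{thm:asymp-compactness}$(ii)$ on $\R^d$ to $(w_\eps)$. Along a subsequence, $w_{\eps_n}\to w$ in $L^p_{\loc}(\R^d)$ with $w\in W^{1,p}(\R^d)$ and $w=0$ a.e. on $\Omega^c$. Since $\partial\Omega$ is continuous, such a $w$ lies in $W^{1,p}_0(\Omega)$; this is the standard characterization for domains with continuous boundary, and it is exactly what the footnote in Theorem \ref{thm:converg-dirichlet} relied on. Put $u_g=w+g\in W^{1,p}(\R^d)$. The $\Gamma$-type argument then goes through unchanged. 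The liminf follows from Theorem \ref{thm:BBM-liminf}. For the recovery sequence we take $v_\eps=v-g+g_\eps$ with $v-g\in W^{1,p}_0(\Omega)$, and we need $\cE^\eps(v,v)\to K_{d,p}\cE^0(v,v)$ without a Lipschitz boundary. For this, write $v=(v-g)+g$ on $\R^d$ and estimate $\cE^\eps\le\cE^\eps_{\R^d}$.
\begin{itemize}
\item If $g=0$, then $v\in W^{1,p}_0(\Omega)$ and Theorem \ref{thm:BBM-dual-limit}$(iii)$ gives the limit.
\item If $|\partial\Omega|=0$, we split $\cE^\eps=\cE^\eps_\Omega+2\cE^\eps_{cr}$. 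Lemma \ref{lem:collap-bdary} kills the cross terms. For $\cE^\eps_\Omega(v,v)$ we use the full-space limit \eqref{eq:form-conv-full} for $v\in W^{1,p}(\R^d)$: the $\Omega\times\Omega$ part tends to $K_{d,p}\int_\Omega|\nabla v|^p$, since the $\Omega^c\times\Omega^c$ part tends to $K_{d,p}\int_{\Omega^c}|\nabla v|^p$ and the cross part vanishes. The $\Omega^c\times\Omega^c$ limit follows from the localized BBM liminf on the open set $\R^d\setminus\overline\Omega$ together with $|\partial\Omega|=0$, and the matching limsup comes from \eqref{eq:form-conv-full}.
\end{itemize}
We then conclude that $u$ minimizes $\cJ_0$. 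Uniqueness gives convergence of the whole family, in $L^p(\Omega)$ for $\Omega$ bounded and in $L^p_{\loc}$ for $\Omega$ bounded in one direction.

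\textbf{Step 3: optimal convergence.} We aim to show $\cE^\eps(u_g,u_\eps-u_g)\to0$ and $\cE^\eps(u_\eps,u_\eps-u_g)\to0$, and then invoke Theorem \ref{thm:equiv-conv-in-form}.

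For the first term we need the demi-convergence of Theorem \ref{thm:asymp-conv-form} without hypotheses $(i)$--$(ii)$. We re-run its proof, in which Theorem \ref{thm:BBM-dual-limit} enters only through $\cE^\eps(u_g,u_g)\to K_{d,p}\cE^0(u_g,u_g)$. This is exactly the Step 2 claim for $u_g$, valid when $g=0$ (then $u_g\in W^{1,p}_0(\Omega)$) or when $|\partial\Omega|=0$. This is the step I expect to be the main obstacle: making the form convergence rigorous for merely continuous $\partial\Omega$, in particular the localized BBM limit on $\R^d\setminus\overline\Omega$.

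For the second term we write
\[
\cE^\eps(u_\eps,u_\eps-u_g)=\cE^\eps(u_\eps,g_\eps-g)+\langle f_\eps,u_\eps-g_\eps\rangle_\eps+\langle f_\eps,g-u_g\rangle_\eps .
\]
The first summand tends to $0$ by H\"older's inequality. The remaining two cancel in the limit by $(F_2)$ in the bounded case, and by $(F'_2)$ with weak $L^p$ convergence in the one-direction case, as in Fact 2.

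\textbf{Step 4: conclusion.} In the bounded case, $\cE^\eps$ convergence together with the $L^p$ convergence gives $\|u_\eps-u_g\|_{W^p_{\nu_\eps}(\R^d)}\to0$. The $\Omega^c\times\Omega^c$ part equals the energy of $g_\eps-g$ there, which tends to $0$ by assumption. Restricting to $\Omega$ gives the $\WnuOma$ statement. When $g_\eps=g$, the difference $u_\eps-u_g$ lies in $\WnuOmRao$, and \eqref{eq:uniform-coercivity-D-bis} upgrades energy convergence to norm convergence, as in Fact 3.
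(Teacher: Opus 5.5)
Your proposal is correct. Its first half matches the paper's Facts 1--2: the uniform bounds, compactness of $w_\eps=u_\eps-g_\eps$ on $\R^d$, and the recovery sequence $v_\eps=v-g+g_\eps$, which rests on $\cE^\eps(v,v)\to K_{d,p}\cE^0(v,v)$ for $v\in g+W^{1,p}_0(\Omega)$. In the case $|\partial\Omega|=0$ you combine \eqref{eq:form-conv-full} with Theorem~\ref{thm:BBM-liminf} on $\Omega$ and on $\R^d\setminus\overline{\Omega}$. This is exactly the bound $\limsup_{\eps\to0}\cE^\eps(v,v)\le K_{d,p}\int_{\overline{\Omega}}|\nabla v|^p$ that the paper quotes from \cite{Fog23}.

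The genuine difference is the optimal-convergence step.
\begin{itemize}
\item The paper passes to the full-space form $\cE^\eps_{\R^d}$. Since $u_\eps\to u_g$ in $L^p_{\loc}(\R^d)$ (on $\Omega^c$ this is just $g_\eps\to g$) with bounded $W^p_{\nu_\eps}(\R^d)$-norms, Theorem~\ref{thm:asymp-conv-form} with $\R^d$ in place of $\Omega$ gives $\cE^\eps_{\R^d}(u_g,u_\eps-u_g)\to0$, with no boundary involved at all. The extra $\Omega^c\times\Omega^c$ part of $\cE^\eps_{\R^d}(u_\eps,u_\eps-u_g)$ is $O(\|g_\eps-g\|_{W^p_{\nu_\eps}(\R^d)})$, and Theorem~\ref{thm:equiv-conv-in-form} is then applied to $\cE^\eps_{\R^d}$.
\item You instead keep $\cE^\eps$ and reopen the proof of Theorem~\ref{thm:asymp-conv-form}. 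You observe, correctly, that hypotheses $(i)$--$(iii)$ enter there only through $\limsup_{\eps\to0}\cE^\eps(u,u)\le K_{d,p}\cE^0(u,u)$. Your Step~2 supplies this at $u=u_g\in g+W^{1,p}_0(\Omega)$.
\end{itemize}

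What each route buys:
\begin{itemize}
\item The paper uses the demi-convergence theorem as a black box. The hypothesis ``$g=0$ or $|\partial\Omega|=0$'' is needed only for the recovery sequence.
\item Your route isolates a reusable principle: demi-convergence holds at any $u$ whose energies converge. It also uses the full-space assumption on $g_\eps-g$ essentially only in Step~4. So it shows that the $\cE^\eps$- and $\WnuOma$-level conclusions already hold under $\|g_\eps-g\|_{\WnuOmRa}\to0$.
\end{itemize}

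You split $\cE^\eps(u_\eps,u_\eps-u_g)$ through the term $\cE^\eps(u_\eps,g_\eps-g)$. This is more careful than the paper's Fact~3, which tests the equation with $u_\eps-u_g$ even though that function is not in $\WnuOmRao$ when $g_\eps\neq g$.

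Finally, the step you flag as the main obstacle is not one. Theorem~\ref{thm:BBM-liminf} holds on any open set, so the exterior liminf on $\R^d\setminus\overline{\Omega}$ is immediate. Squeezing the nonnegative pieces against \eqref{eq:form-conv-full} also yields $\cE^\eps_{cr}(v,v)\to0$ without Lemma~\ref{lem:collap-bdary}. You did not check that lemma's hypothesis $\partial\Omega=\partial\overline{\Omega}$, though it does hold for continuous boundaries.
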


\begin{proof}
The first claim is proved by proceeding in the same manner as in the proof of Theorem~\ref{thm:converg-dirichlet}, while the second claim follows analogously by adapting the proof of Theorem~\ref{thm:converg-dirichlet-bis}. We therefore prove only the first claim, highlighting the necessary modifications at the points where the arguments differ.

\smallskip

\textbf{Fact 1.} (Convergence holds in $L^p(\R^d)$).  By assumption $(g_\eps)_\eps$ we find that
\begin{align*}
\lim_{\eps\to0}	\|g_\eps-g\|_{W^p_{\nu_\eps}(\Omega|\R^d)}
\leq  \lim_{\eps\to0}	\|g_\eps-g\|_{W^p_{\nu_\eps}(\R^d)}=0.
\end{align*}
This enabled us  to prove that $(u_\eps)_\eps$ converges (up to a subsequence) to $u_g\in W^{1,p}(\R^d)$  in $ L^p_{\loc}(\R^d)$. However, if $\Omega$ is bounded then $(u_\eps)_\eps$ converges to $u_g$ in $L^p(\R^d)$ since
\begin{align}\label{eq:xxconver-ueps-Lp}
\| u_\eps-u_g\|^p_{L^p(\R^d)}= \| u_\eps-u\|^p_{L^p(\Omega)}+ \| g_\eps-g\|^p_{L^p(\Omega^c)}\xrightarrow{\eps\to0}0.
\end{align}
\textbf{Fact 2.} (The recovery sequence $(v_\eps)_\eps$).  Let $v\in g+W^{1,p}_0(\Omega)$.  If $g=0$ then $v\in W^{1,p}_0(\Omega)\cap W^{1,p}(\R^d)$ and hence Theorem \ref{thm:BBM-dual-limit} implies  $\cE^\eps(v,v)\to K_{d,p}\cE^0(v,v)$. Meanwhile, if $|\partial\Omega|=0$ then we know (see  for instance  \cite{Fog23}) that
\begin{align*}
K_{d,p}\cE^0(v,v)&\leq \liminf_{\eps\to0}\cE^\eps(v,v)\leq
\limsup_{\eps\to0}\cE^\eps(v,v)\\
&\leq K_{d,p}\int_{ \overline{\Omega}}|\nabla v(x)|^p\d x = K_{d,p}\cE^0(v,v).
\end{align*}
In either case $\cE^\eps(v,v)\to K_{d,p}\cE^0(v,v)$. As in \eqref{eq:Eeps-veps-conv}, this enables us to deduce that $\cE^\eps(v_\eps,v_\eps)\to K_{d,p}\cE^0(v,v)$ with $v_\eps = v-g+g_\eps$ and $v\in g+W^{1,p}_0(\Omega)$. The latter is used to imply that $u$ is the solution to the local Dirichlet problem and that the whole sequence $(u_\eps)_\eps$ converges to $u_g$ in $L^p_{\loc}(\R^d)$ or in $L^p(\R^d)$ if $\Omega$ is bounded.

\smallskip

\textbf{Fact 3.} (Optimal convergence).  In light of Theorem \ref{thm:asymp-conv-form}  we find that $\cE^\eps_{\R^d}(u_g, u_\eps-u_g)\to0$.  The fact that $u_\eps$ is a weak solution implies
\begin{align*}
\lim_{\eps\to0}\cE^\eps(u_\eps, u_\eps-u_g)= \lim_{\eps\to0}\langle f_\eps, u_\eps-u_g\rangle_\eps=0.
\end{align*}
Since $u_\eps= g_\eps$  and $u_g= g$ on $\Omega^c$, by linearity we find that
\begin{align*}
\lim_{\eps\to0}|\cE^\eps_{\R^d}(u_\eps, u_\eps-u_g)|
%&=\lim_{\eps\to0}\big|\cE^\eps(u_\eps, u_\eps-u_g)
%+ \cE^\eps_{\Omega^c}(g_\eps, g_\eps-g)\big|\\
&\leq \lim_{\eps\to0}\big(|\cE^\eps(u_\eps, u_\eps-u_g)|+ C\|g_\eps-g\|_{W^{p}_{\nu_\eps}(\R^d)} \big)=0.
\end{align*}
Finally Theorem \ref{thm:equiv-conv-in-form} and Poincar\'e inequality \eqref{eq:uniform-coercivity-D-bis} imply  $\|u_{\eps}-u_g\|^p_{W^p_{\nu_\eps}(\R^d)}\leq C\cE^\eps_{\R^d}(u_\eps -u_g,u_\eps -u_g)\to0.$ In particular $\|u_{\eps}-u_g\|_{L^p(\R^d)}\to0.$
\end{proof}
\smallskip

Given  $h:\R^d\to \R$ measurable, we denote $L^p_h(\Omega)$ the metric space of all functions in $L^p(\Omega)$ extended by $h$ on $\Omega^c$. Namely we have
\begin{align*}
L^p_h(\Omega)= \{ u:\R^d\to\R\,\,\text{meas., s.t. $u|_\Omega\in L^p(\Omega)$  and $u=h$ a.e. on $\Omega^c$} \}.
\end{align*}
Note that $L^p_h(\Omega)$ is a metric space with the metric $(u,v)\mapsto \| u-v\|_{L^p(\Omega)}$.  The asymptotic of the functionals $(\cJ^\eps_0)_\eps$ towards $\cJ_0$ also exhibits a form of Gamma-convergence.
\begin{theorem}\label{thm:gamma-conv-Jo}
Let the assumptions of Theorem \ref{thm:converg-dirichlet}  be in force. Let us extend the functionals $\cJ^\eps_0: L^p_{g_\eps}(\Omega)\to (-\infty,\infty]$  and $\cJ_0: L^p_{g}(\Omega)\to (-\infty,\infty]$ as follows
\begin{align*}
\cJ^\eps_0 (v)
&=\begin{cases}
\tfrac{\mu}{p} \mathcal{E}^\eps(v,v)  -\langle f_\eps, v-g_\eps\rangle_\eps  & \text{if $v\in g_\eps+\WnuOmRao$}\\
\infty &\text{ else },
\end{cases}\\
\cJ_0(v) &=\begin{cases}
\tfrac{1}{p}\mathcal{E}^0(v,v)  -\langle f, v-g\rangle
& \text{if $v\in g+W^{1,p}_0(\Omega)$}\\
\infty &\text{ else }.
\end{cases}
\end{align*}
The following assertions are true.

\begin{itemize}
\item \textbf{Limsup}.  For every  $v\in L^p_g(\Omega)$ there exists a sequence $(v_\eps)_\eps$ with  $v_\eps-g_\eps \in \WnuOmRao$, in particular $v_\eps \in L^p_{g_\eps}(\R^d) $, such that  $v_\eps\to v$ in $L^p(\Omega)$  and
\begin{align*}
\limsup_{\eps\to0} \cJ^\eps_0(v_\eps)\leq \cJ_0(v).
\end{align*}
\item \textbf{Liminf}. For any sequence $(v_\eps)_\eps$  with $v_\eps \in L^p_{g_\eps}(\R^d) $ and $v \in L^p_{g}(\R^d) $ such that $v_\eps\to v$ in $L^p(\Omega)$ we have
\begin{align*}
\liminf_{\eps\to0} \cJ^\eps_0(v_\eps)\geq \cJ_0(v).
\end{align*}
\end{itemize}
Further, this result is true if $\Omega$ is bounded in one direction, under the setting of Theorem \ref{thm:converg-dirichlet-bis}, provided that the $L^p_{\mathrm{loc}}(\Omega)$-topology is used instead of  the $L^p(\Omega)$-topology.
\end{theorem}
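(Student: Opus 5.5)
The plan mirrors the proof of Theorem \ref{thm:gamma-conv-J}, with the Dirichlet constraint handled by the extension device from the proof of Theorem \ref{thm:converg-dirichlet}. As there, we first reduce to $g\in W^{1,p}(\R^d)$ and $g_\eps\in \WnuOmRa$ with $\|g_\eps-g\|_{\WnuOmRa}\to0$. This reduction uses $\overline{g}_\eps=h_\eps+\overline{g}$ and does not change traces on $\Omega^c$, so the spaces $L^p_{g_\eps}$ and the functionals are unaffected. In particular $g_\eps\to g$ in $L^p(\Omega)$ and $\sup_\eps\cE^\eps(g_\eps,g_\eps)<\infty$.

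\textbf{Limsup.} Let $v\in L^p_g(\Omega)$. If $v\notin g+W^{1,p}_0(\Omega)$, then $\cJ_0(v)=\infty$. We pick $\phi_\eps\in C_c^\infty(\Omega)$ with $\phi_\eps\to v-g$ in $L^p(\Omega)$ and set $v_\eps=g_\eps+\phi_\eps$. Then $v_\eps-g_\eps\in\WnuOmRao$ and $v_\eps\to v$ in $L^p(\Omega)$. If instead $v-g\in W^{1,p}_0(\Omega)$, we take the recovery sequence $v_\eps=v-g+g_\eps$ already used in the proof of Theorem \ref{thm:converg-dirichlet}. Then $v_\eps-v=g_\eps-g\to0$ in $L^p(\Omega)$, and \eqref{eq:Eeps-veps-conv} gives $\cE^\eps(v_\eps,v_\eps)\to K_{d,p}\cE^0(v,v)$. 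Since $v_\eps-g_\eps=v-g$ is fixed, the $W^{1,p}$-weak convergence part of $(F_2)$ gives $\langle f_\eps,v-g\rangle_\eps\to\langle f,v-g\rangle$. Hence $\cJ^\eps_0(v_\eps)\to\cJ_0(v)$.

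\textbf{Liminf.} Let $v_\eps\in L^p_{g_\eps}$ with $v_\eps\to v$ in $L^p(\Omega)$, and assume along a subsequence that $\cJ^\eps_0(v_\eps)$ is bounded above; otherwise there is nothing to prove. Then $v_\eps-g_\eps\in\WnuOmRao$. The main step is to turn the energy bound into uniform boundedness of $\cE^\eps(v_\eps,v_\eps)$. We have $\tfrac{\mu}{p}\cE^\eps(v_\eps,v_\eps)\le C+M\|v_\eps-g_\eps\|_{\WnuOmRa}$. By the robust Poincar\'e--Friedrichs inequality \eqref{eq:uniform-coercivity-D},
$\|v_\eps-g_\eps\|_{\WnuOmRa}\le C\cE^\eps(v_\eps,v_\eps)^{1/p}+C\cE^\eps(g_\eps,g_\eps)^{1/p}$. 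Since $p>1$, Young's inequality absorbs the right-hand side, exactly as in the boundedness step of Theorem \ref{thm:converg-dirichlet}, and yields $\sup_\eps\cE^\eps(v_\eps,v_\eps)<\infty$.

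Now $w_\eps:=v_\eps-g_\eps$ vanishes on $\Omega^c$ and is bounded in $W^p_{\nu_\eps}(\R^d)$. By Theorem \ref{thm:asymp-compactness}, in particular item $(iv)$, its $L^p(\Omega)$-limit $v-g$ lies in $W^{1,p}_0(\Omega)$, so $v\in g+W^{1,p}_0(\Omega)$. Theorem \ref{thm:BBM-liminf} gives $K_{d,p}\cE^0(v,v)\le\liminf\cE^\eps_\Omega(v_\eps,v_\eps)\le\liminf\cE^\eps(v_\eps,v_\eps)$. Since $w_\eps\to v-g$ strongly in $L^p(\Omega)$, $(F_2)$ yields $\langle f_\eps,w_\eps\rangle_\eps\to\langle f,v-g\rangle$. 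Combining these gives $\liminf\cJ^\eps_0(v_\eps)\ge\cJ_0(v)$. The delicate point here is identifying the boundary condition of the limit; it relies on $\Omega$ being bounded Lipschitz through item $(iv)$.

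\textbf{Bounded in one direction.} Under the setting of Theorem \ref{thm:converg-dirichlet-bis}, we repeat the argument with $L^p_{\loc}(\Omega)$ convergence. The Poincar\'e inequality \eqref{eq:uniform-coercivity-D-bis} still gives the bound. Pairing convergence now comes from $(F'_2)$: the uniform bound makes $w_\eps$ bounded in $L^p(\Omega)$, and together with local convergence this gives weak $L^p(\Omega)$ convergence to $v-g$. The limsup is unchanged, using Theorem \ref{thm:BBM-dual-limit} for $v\in g+W^{1,p}_0(\Omega)$.
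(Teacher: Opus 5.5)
Your proposal is correct and follows the paper's proof: you use the same extension device $\overline{g}_\eps=h_\eps+\overline{g}$ and the same recovery sequences for the limsup, and for the liminf you combine Theorem~\ref{thm:BBM-liminf} with $(F_2)$ (respectively $(F'_2)$ together with weak $L^p(\Omega)$ convergence in the one-direction case). Your liminf is in fact more complete than the paper's, which only asserts that a finite $\liminf\cJ^\eps_0(v_\eps)$ forces bounded energies and never checks $v-g\in W^{1,p}_0(\Omega)$ before invoking $(F_2)$; your Poincar\'e--Young absorption and compactness step supply exactly these two facts. In the one-direction case, replace item $(iv)$ of Theorem~\ref{thm:asymp-compactness}, which is stated only for bounded Lipschitz $\Omega$, by item $(ii)$ together with the continuous-boundary argument used in the proof of Theorem~\ref{thm:converg-dirichlet}.
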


\begin{proof}
As for the expression in \eqref{eq:geps-extens}, we can find   $ \overline{g}_\eps\in\WnuOmRa$  and $\overline{g}\in W^{1,p}(\R^d)\subset\WnuOmRa$  such that $\overline{g}_\eps= g_\eps$ and  $\overline{g}= g$ a.e. on $\R^d\setminus\overline{\Omega}$  satisfying
\begin{align*}
\|\overline{g}_\eps-\overline{g}\|_{\WnuOmRa}\leq \eps+ \|g_\eps-g\|_{\TnuOma}\xrightarrow{\eps\to0}0.
\end{align*}
If  $v\in L^p_g(\Omega)\setminus (g+W^{1,p}_0(\Omega))$ then $\cJ_0(v)=\infty$ and the limsup holds.  In this case, consider any sequence $ w_\eps \in C_c^\infty(\Omega)\subset \WnuOmRao$ and  $\|w_\eps-(v-\overline{g})\|_{L^p(\Omega)}\to 0$ as $\eps\to 0$ and define $(v_\eps)_\eps$  with $v_\eps= w_\eps+ \overline{g}_\eps$. Hence $v_\eps\in \overline{g}_\eps +\WnuOmRao$ and
\begin{align*}
\|v_\eps-v\|_{L^p(\Omega)}\leq  \|w_\eps-(v-\overline{g})\|_{L^p(\Omega)}+  \|\overline{g}_\eps-\overline{g}\|_{\WnuOmRa}
\xrightarrow{\eps\to0}0.
\end{align*}
Now assume $v\in g+ W^{1,p}_0(\Omega)$ and put $v_\eps=\overline{g}_\eps+v -\overline{g} $
then $v_\eps\in \overline{g}_\eps+ \WnuOmRao$ and  $v_\eps\to v$ in $L^p(\Omega)$ since
\begin{align*}
\|v_\eps-v\|_{L^p(\Omega)}
\leq \|\overline{g}_\eps-\overline{g}\|_{\WnuOmRa}
\xrightarrow{\eps\to0}0.
\end{align*}
In both settings we have  $\cJ^\eps_0 (v_\eps)\to \cJ_0 (v)$, in particular, the limsup follows. Indeed $\langle f_\eps, v_\eps -\overline{g}_\eps\rangle_\eps
= \langle f_\eps, v -\overline{g}\rangle_\eps\xrightarrow{\eps\to0} \langle f, v-\overline{g}\rangle$ while by \eqref{eq:Eeps-veps-conv} we have
\begin{align*}
\lim_{\eps\to0}\cE^\eps (v_\eps,v_\eps)= K_{d,p}\cE^0 (v,v).
\end{align*}
To prove the liminf condition,  let  $(v_\eps)_\eps$ be a sequence of measurable functions such that $v_\eps\to v$ in $L^p(\Omega)$ (or in $L^p_{\loc}(\Omega)$). The case $\liminf_{\eps\to0}\cJ^\eps(v_\eps)= \infty$ is trivial. Assume $\liminf\cJ^\eps(v_\eps)<\infty$ yielding in particular that  $\liminf_{\eps\to0}\cE^\eps(v_\eps,v_\eps)<\infty$. According to Theorem \ref{thm:BBM-liminf} we find that
\begin{align*}
\liminf_{\eps\to0}\cE^\eps(v_\eps,v_\eps)\geq
\liminf_{\eps\to0}\cE^\eps_\Omega(v_\eps,v_\eps)\geq K_{d,p}\cE^0(v,v).
\end{align*}
Hence if the setting of Theorem \ref{thm:converg-dirichlet} applies and $v_\eps\to v $ in $L^p(\Omega),$ we have
$\langle f_\eps,v_\eps-\overline{g}_\eps \rangle_\eps  \to  \langle f, v-\overline{g}\rangle$  as $\eps \to$ and  thus one gets the liminf condition
\begin{align*}
\liminf_{\eps\to0}\cJ^\eps(v_\eps)\geq\cJ(v),
\end{align*}
If the setting of Theorem \ref{thm:converg-dirichlet-bis} applies and $v_\eps\to v $ in $L_{\loc}^p(\Omega)$, the liminf follows likewise if we show that  $\langle f_\eps,v_\eps-\overline{g}_\eps \rangle_\eps  \to  \langle f, v-\overline{g}\rangle$ up to a subsequence as $\eps \to0$.  The latter holds  true since  $(v_\eps-\overline{g}_\eps)_\eps$ converges weakly to $v-\overline{g}$ in $L^p(\Omega)$, and $(f_\eps)_\eps$ strongly asymptotically converges to $f$ by assumption.
Indeed, since $\liminf\limits_{\eps\to0}\cE^\eps(v_\eps,v_\eps)<\infty$ we can assume without loss of generality that $\sup\limits_{\eps>0}\cE^\eps(v_\eps,v_\eps)<\infty.$ The asymptotic boundedness of $(\overline{g}_\eps)_\eps$ and the Poincar\'e inequality \eqref{eq:uniform-coercivity-D-bis} imply
\begin{align*}
\|v_\eps-\overline{g}_\eps\|_{\WnuOmRa}
&\leq
C\cE^\eps(v_\eps-\overline{g}_\eps,
v_\eps-\overline{g}_\eps)^{1/p}\\
&\leq C\|\overline{g}_\eps\|_{\WnuOmRa}+ C\cE^\eps(v_\eps,v_\eps)^{1/p}\leq C.
\end{align*}
Thus the sequence $(v_\eps-\overline{g}_\eps)_\eps $ is bounded in $L^p(\Omega)$ and converges to $v-\overline{g}$ in $L^p_{\loc}(\Omega)$. This implies that $(v_\eps-\overline{g}_\eps)_\eps$ weakly converges to $v-\overline{g}$ in $L^p(\Omega)$.
\end{proof}

\section{Case of fractional \texorpdfstring{$p$}{p}-Laplacian}\label{sec:conv-weak-solution-frac}
\noindent Now we are interested in  the case of the normalized  fractional $p$-Laplacian and the corresponding $p$-normal derivative, see \cite{Fog25}, respectively given by
\begin{align*}
(-\Delta)^s_pu(x)&:=\widetilde{C}_{d,p,s}\pv \int_{\R^d}\frac{|u(x)-u(y)|^{p-2}(u(x)-u(y))}{|x-y|^{d+sp}}\d y, \\
\cN_p^su(y)&:=\widetilde{C}_{d,p,s}\int_{\Omega}\frac{|u(x)-u(y)|^{p-2}(u(x)-u(y))}{|x-y|^{d+sp}}\d x
\end{align*}

\noindent The results in the fractional setting are retrieved from the general framework developed in Section~\ref{sec:conv-weak-solution} by considering $\nu_\varepsilon(h)=a_{d,p,\eps}|h|^{-d-(1-\eps)p}$ with $a_{d,p,\eps}=\frac{p\eps(1-\eps)}{|\mathbb{S}^{d-1}|}$
together with the asymptotic behavior of the normalizing constant $\widetilde{C}_{d,p,s}$
\begin{align*}
\lim_{\eps\to0}\frac{\widetilde{C}_{d,p,1-\eps}}{a_{d,p,\eps}}
= \lim_{s\to1^-}
\frac{\widetilde{C}_{d,p,s}|\mathbb{S}^{d-1}|}{ps(1-s)}
=\frac{2}{K_{d,p}}.
\end{align*}
\noindent The spaces $\WnuOma$, $ \WnuOmRa$ and $\TnuOma$ respectively become $W^{s,p}(\Omega)$, $W^{s,p}(\Omega|\R^d)$ and $T^{s,p}(\Omega)$, where  norms $\|\cdot\|_{W^{s,p}(\Omega)}$ and 	$\|\cdot\|_{W^{s,p}(\Omega|\R^d)}$ are given by
\begin{align*}
\|u\|^p_{W^{s,p}(\Omega)}
&=  \|u\|^p_{L^p(\Omega)}+ \cE^{s,p}_\Omega(u,u),
\\
\|u\|^p_{W^{s,p}(\Omega|\R^d)}
&=  \|u\|^p_{L^p(\Omega)}+ \cE^{s,p}(u,u),
\end{align*}
with fractional forms $ \cE^{s,p}_\Omega(\cdot,\cdot)$ and $ \cE^{s,p}(\cdot,\cdot)$ given by
 \begin{align*}
\quad \cE^{s,p}_\Omega(u,v)
&:=\frac{\widetilde{C}_{d,p,s}}{2}\iil_{ \Omega \Omega} \frac{|u(x)-u(y)|^{p-2}(u(x)-u(y))(v(x)-v(y))}{|x-y|^{d+sp}}\d y\d x,\\
\quad \cE^{s,p}(u,v) &:=\frac{\widetilde{C}_{d,p,s}}{2} \iil_{( \Omega^c\times \Omega^c)^c} \frac{|u(x)-u(y)|^{p-2}(u(x)-u(y))(v(x)-v(y))}{|x-y|^{d+sp}}\d y \d x.
\end{align*}
The next two results are implied by Theorem \ref{thm:convergence-neumann} and Theorem \ref{thm:convergence-neumann regional} respectively.
\begin{theorem}\label{thm:convergence-neumann-frac}
Assume $\Omega\subset \R^d$ is open bounded Lipschitz and connected. Let $ (f_s)_s$, with  $f_s\in  (W^{s,p}(\Omega|\R^d))'$, and $ (g_s)_s$, with $g_s\in  (T^{s,p}(\Omega^c))'$, $s\in (0,1)$, converge asymptotically weakly to  $f_1\in (W^{1,p}(\Omega))'$ and
$g_1\in( W^{1-1/p ,p}(\partial\Omega))'$ respectively as $s\to1^-$. Let  $u_s $ be the weak solution in $W^{s,p}(\Omega|\R^d)^\perp$ to the Neumann problem
\begin{align*}
(-\Delta)^s_p u = f_s \quad \text{ in } \,\, \Omega
\quad \text{ and } \quad
\mathcal{N}^s_p u = g_s \quad \text{ on } \,\, \Omega^c .
\end{align*}
Let $u_1$ be the  weak solution in $W^{1,p}(\Omega)^\perp$ of  the Neumann problem
\begin{align*}
-\Delta_p u = f_1 \quad \text{ in }\,\,  \Omega
\quad \text{and} \quad
\partial_{n,p} u = g_1 \quad \text{ on } \,\, \partial \Omega .
\end{align*}

\noindent Then $(u_s)_s$ strongly converges to $u$ in the following sense
\begin{align*}
\lim_{s\to1^-}\|u_s -u_1\|_{W^{s,p}(\Omega)} = \lim_{s\to1^-}\|u_s- \overline{u}_1\|_{W^{s,p}(\Omega|\R^d)}= \lim_{s\to1^-} \cE^{s,p}_{cr}(u_s, u_s)=0.
\end{align*}
Here $\overline{u}_1\in W^{1,p}(\R^d)$ is any $W^{1,p}$-extension of $u_1$.
\end{theorem}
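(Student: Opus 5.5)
The plan is to reduce the statement to Theorem \ref{thm:convergence-neumann} by the choice $\nu_\eps(h)=a_{d,p,\eps}|h|^{-d-(1-\eps)p}$ with $s=1-\eps$, and then track the normalizing constants. First I will check that $(\nu_\eps)_\eps$ is a $p$-L\'evy approximation of the unity in the sense of \eqref{eq:plevy-approx}. In polar coordinates, $\int(1\land|h|^p)\nu_\eps=a_{d,p,\eps}|\mathbb{S}^{d-1}|\big(\tfrac{1}{p\eps}+\tfrac{1}{p(1-\eps)}\big)=1$. For fixed $\delta>0$, the tail $\int_{|h|>\delta}\nu_\eps= (1-\eps)\eps\,\delta^{-(1-\eps)p}/(1-\eps)\to0$. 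In $d=1$, the lower bound \eqref{eq:ponce-one-dim-cond} is not literally satisfied (the kernel degenerates like $\eps$). I would instead invoke the compactness of Theorem \ref{thm:asymp-compactness} directly for fractional kernels, which is classical for $W^{s,p}$ with $(1-s)$-normalization, so that the arguments of Section \ref{sec:conv-weak-solution} go through.

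Next comes the dictionary between the two frameworks. Set $\lambda_s:=\widetilde{C}_{d,p,s}/(2a_{d,p,1-s})$, so that $\cE^{s,p}=\lambda_s\cE^{\eps}$, $\cE^{s,p}_\Omega=\lambda_s\cE^\eps_\Omega$, $(-\Delta)^s_p=2\lambda_s L_\eps$ and $\cN^s_p=2\lambda_s\cN_\eps$ (up to the factor conventions). The displayed asymptotics give $\lambda_s\to\mu=K_{d,p}^{-1}$ as $s\to1^-$. Consequently the norms of $W^{s,p}(\Omega)$, $W^{s,p}(\Omega|\R^d)$ and $T^{s,p}$ are uniformly equivalent to those of $\WnuOma$, $\WnuOmRa$ and $\TnuOma$ for $s$ near $1$, and the dual norms are equivalent as well. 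Hence asymptotic weak convergence of $(f_s),(g_s)$ in the fractional spaces is equivalent to that of $(f_\eps),(g_\eps)$ with $f_\eps=f_s$, $g_\eps=g_s$. The weak formulation $\cE^{s,p}(u_s,v)=\langle f_s,v\rangle+\langle g_s,v\rangle$ becomes $\lambda_s\cE^\eps(u_s,v)=\dots$, which is the Neumann problem with $\mu$ replaced by $\lambda_s$.

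The main obstacle is precisely this mismatch: Theorem \ref{thm:convergence-neumann} is stated with the fixed constant $\mu$, whereas here the coefficient is $\lambda_s\to\mu$. I would handle it by rescaling the data. Put $\tilde f_\eps:=(\mu/\lambda_s)f_s$ and $\tilde g_\eps:=(\mu/\lambda_s)g_s$. Since $\mu/\lambda_s\to1$, these still converge asymptotically weakly to $f_1,g_1$, because both conditions $(F_1)$ and $(F_2)$ (resp. $(G_1)$, $(G_2)$) are stable under multiplication by scalars tending to $1$. Then $u_s$ is the mean-zero weak solution of $\mu L_\eps u=\tilde f_\eps$, $\mu\cN_\eps u=\tilde g_\eps$. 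Theorem \ref{thm:convergence-neumann} then yields $\|u_s-u_1\|_{\WnuOma}\to0$, $\|u_s-\overline u_1\|_{\WnuOmRa}\to0$ and $\cE^\eps_{cr}(u_s,u_s)\to0$. Multiplying the seminorm parts by $\lambda_s\to\mu$, which stays bounded, transfers these three limits to $\|\cdot\|_{W^{s,p}(\Omega)}$, $\|\cdot\|_{W^{s,p}(\Omega|\R^d)}$ and $\cE^{s,p}_{cr}$, which is the claim.
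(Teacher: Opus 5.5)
Your proposal is correct and follows the paper's route. The paper obtains this theorem in one line from Theorem~\ref{thm:convergence-neumann}, taking $\nu_\eps(h)=a_{d,p,\eps}|h|^{-d-(1-\eps)p}$ with $s=1-\eps$ and the asymptotics $\widetilde{C}_{d,p,1-\eps}/a_{d,p,\eps}\to 2/K_{d,p}$; your rescaling of the data by $\mu/\lambda_s\to 1$ is what makes that reduction rigorous.

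One harmless slip: $L_\eps$ already carries the factor $2$, so $(-\Delta)^s_p=\lambda_s L_\eps$, not $2\lambda_s L_\eps$. This is consistent with $\cE^{s,p}=\lambda_s\cE^\eps$, which is the identity your argument actually uses.
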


\begin{theorem}\label{thm:converregio-frac}
Assume $\Omega\subset \R^d$ is open bounded Lipschitz and connected. Let $ (f_s)_s$, with  $f_s\in  (W^{s,p}(\Omega))'$, $s\in (0,1)$, converges asymptotically weakly to  $f_1\in (W^{1,p}(\Omega))'$ as $s\to1^-$.   Let  $u_s \in  W^{s,p}(\Omega)^\perp$ be the weak solution to the Neumann problem
\begin{align*}
(-\Delta)^s_{p,\Omega}u= f_{s}\quad
\text{ in\,\,  $\Omega$ \,\, and } \quad  \int_\Omega u(x) \, \d x=0,
\end{align*}
where $(-\Delta)^s_{p,\Omega}u$ is regional fractional $p$-Laplacian
\begin{align*}
(-\Delta)^s_{p,\Omega}u(x)&:=\widetilde{C}_{d,p,s}\pv \int_{\Omega}\frac{|u(x)-u(y)|^{p-2}(u(x)-u(y))}{|x-y|^{d+sp}}\d y.
\end{align*}
Let $u_1\in W^{1,p}(\Omega)^\perp$ be the  weak solution of  the Neumann problem
\begin{align*}
-\Delta_p u = f_1 \quad \text{ in } \,\, \Omega
\quad \text{and} \quad
\partial_{n,p} u = 0 \quad \text{ on } \,\, \partial \Omega .
\end{align*}

\noindent Then $(u_s)_s$ strongly converges to $u$ in the following sense
\begin{align*}
\lim_{s\to1^-}\|u_s -u_1\|_{W^{s,p}(\Omega)} = 0.
\end{align*}
\end{theorem}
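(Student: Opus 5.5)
The plan is to obtain Theorem \ref{thm:converregio-frac} as a direct specialization of Theorem \ref{thm:convergence-neumann regional}. We choose $s=1-\eps$ and the fractional kernels $\nu_\eps(h)=a_{d,p,\eps}|h|^{-d-(1-\eps)p}$ with $a_{d,p,\eps}=\frac{p\eps(1-\eps)}{|\mathbb{S}^{d-1}|}$.

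\textbf{Step 1: the kernels are admissible.} We first check that $(\nu_\eps)_\eps$ is a $p$-L\'evy approximation of the unity in the sense of \eqref{eq:plevy-approx}. Polar coordinates give
\begin{align*}
\int_{\R^d}(1\land|h|^p)\nu_\eps(h)\,\d h = p\eps(1-\eps)\Big(\int_0^1 r^{\eps p-1}\,\d r+\int_1^\infty r^{-1-(1-\eps)p}\,\d r\Big)=(1-\eps)+\eps=1 .
\end{align*}
Similarly, for every $\delta>0$,
\begin{align*}
\int_{|h|>\delta}\nu_\eps(h)\,\d h=\eps\,\delta^{-(1-\eps)p}\to0 .
\end{align*}
In $d=1$ we also need the lower bound \eqref{eq:ponce-one-dim-cond}. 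For $\eps$ bounded away from $0$ it holds, but uniformity as $\eps\to0$ fails because of the factor $\eps$. So this dimension has to be handled by the paper's convention (restricting to $\eps$ near $0$), or else by the compactness statement of \cite{Pon04} for fractional kernels, which does not need that bound. I flag this as a possible gap to check.

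\textbf{Step 2: the regional problem is a rescaled nonlocal problem.} Write $\lambda_s=\widetilde{C}_{d,p,s}/a_{d,p,1-s}$, so that $(-\Delta)^s_{p,\Omega}u=\tfrac{\lambda_s}{2}L_{\Omega,\eps}u$. By the displayed asymptotics of the normalizing constant, $\lambda_s\to 2/K_{d,p}=2\mu$ as $s\to1^-$.

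The weak formulation of $(-\Delta)^s_{p,\Omega}u_s=f_s$ with $\int_\Omega u_s=0$ reads $\cE^{s,p}_\Omega(u_s,v)=\langle f_s,v\rangle$ for all $v\in W^{s,p}(\Omega)^\perp$. Since $\cE^{s,p}_\Omega=\tfrac{\lambda_s}{2}\cE^\eps_\Omega$, this is equivalent to
\begin{align*}
\mu\,\cE^\eps_\Omega(u_s,v)=\langle \widetilde f_\eps,v\rangle, \qquad \widetilde f_\eps:=\frac{2\mu}{\lambda_s}f_s .
\end{align*}
The norms behave as follows. $W^{s,p}(\Omega)$ and $\WnuOma$ have the same underlying space, and $\|u\|^p_{W^{s,p}(\Omega)}=\|u\|^p_{L^p(\Omega)}+\tfrac{\lambda_s}{2}\cE^\eps_\Omega(u,u)$ with $\tfrac{\lambda_s}{2}\to\mu$ bounded above and below. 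Hence the two norms are uniformly equivalent for $s$ close to $1$.

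\textbf{Step 3: transfer the hypotheses.} Because $2\mu/\lambda_s\to1$ and the dual norms are uniformly comparable, the asymptotic weak convergence $f_s\to f_1$ in the sense of Definition \ref{def:weak-asymp-form} carries over: $(F_1)$ persists up to constants, and $(F_2)$ persists since the scalar factor tends to $1$. So $\widetilde f_\eps\to f_1$ asymptotically weakly.

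\textbf{Step 4: apply the regional Neumann result and convert back.} Theorem \ref{thm:convergence-neumann regional} now gives $\|u_s-u_1\|_{\WnuOma}\to0$. By the uniform norm equivalence of Step 2 this yields $\|u_s-u_1\|_{W^{s,p}(\Omega)}\to0$.

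The main obstacle is bookkeeping rather than analysis. One has to make sure the factor $\tfrac12$ and the constant ratio $\lambda_s$ align exactly with $\mu=K_{d,p}^{-1}$, so that the limit problem really is $-\Delta_p u=f_1$ and not a multiple of it. One also has to settle the $d=1$ uniform lower-bound issue noted in Step 1.
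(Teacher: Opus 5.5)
Your proposal is correct and follows the paper's route: the paper also obtains this theorem by specializing Theorem \ref{thm:convergence-neumann regional} to $\nu_\eps(h)=a_{d,p,\eps}|h|^{-d-(1-\eps)p}$ with $s=1-\eps$, using $\widetilde{C}_{d,p,1-\eps}/a_{d,p,\eps}\to 2/K_{d,p}$, and your rescaling $\widetilde f_\eps=\tfrac{2\mu}{\lambda_s}f_s$ together with the uniform norm equivalence is the bookkeeping that specialization needs. The $d=1$ worry is unfounded: as literally printed, a uniform lower bound by a constant would fail for every kernel integrable at infinity, even at fixed $\eps$, so it is presumably meant as an almost-monotonicity condition of the type $\nu_\eps(\theta x)\geq c_0\,\nu_\eps(x)$ for $\theta\in[\theta_0,1]$ (as in Ponce's one-dimensional compactness result), and the fractional kernels satisfy that uniformly with $c_0=1$ because $\nu_\eps(\theta x)=\theta^{-d-(1-\eps)p}\nu_\eps(x)\geq\nu_\eps(x)$.
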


\begin{theorem}\label{thm:converg-dirichlet-frac}
Assume $\Omega\subset \R^d$ is open.  For each $s\in(0,1)$, let  $f_s\in  (W^{s,p}_0(\Omega|\R^d))'$, $f_1\in (W^{1,p}_0(\Omega))'$,  $g_s\in  T^{s,p}(\Omega^c)$ and $g_1\in W^{1,p}(\R^d\setminus\overline{\Omega})$.
Let  $u_s \in  W^{s,p}(\Omega|\R^d)$ be the weak solution to the Dirichlet problem
\begin{align*}
(-\Delta)^s_pu= f_{s} \quad 	\text{ on $\Omega$ \,\, and }	\quad u=g_{s} \quad \text{ on \,\, $\Omega^c$.}
\end{align*}
Let $u_1\in W^{1,p}(\Omega)$ be the  weak solution of  the Dirichlet  problem
\begin{align*}
-\Delta_p u = f_1 \quad \text{ in }\,\,  \Omega
\quad \text{and} \quad
 u = g_1 \quad \text{ on }\,\, \partial \Omega .
\end{align*}
Let $\overline{u}_1$ be any $W^{1,p}$-extension of $u_1$ and $u_{1,g_1}$ be its the extension of $u_1$ by $g_1$.
\begin{enumerate}[$(I)$]
\item  \label{item:optimal} If $\Omega$ is bounded Lipschitz, $ (f_s)_s$  converges asymptotically weakly to  $f_1$
and $g_s$ strongly converges to $g_1$, i.e.,  $\|g_s-g_1\|_{T^{s,p}(\Omega^c)}\to0$ as $s\to1^-$,
then $(u_s)_s$ strongly converges to $u_1$ in the following sense
\begin{align}\label{eq:converg-dirichlet-bdd}
&\lim_{s\to1^-}\|u_s -u_1\|_{W^{s,p}(\Omega)} = \lim_{s\to1^-}
\|u_s- u_{1,g_1}\|_{W^{s,p}(\Omega|\R^d)}=0, \\
& \lim_{s\to1^-}
\|u_s- \overline{u}_1\|_{W^{s,p}(\Omega|\R^d)}=\lim_{s\to1^-} \cE^{s,p}_{cr}(u_s, u_s)=0.
\end{align}
\item \label{item:optimal-one-direc} If  $\Omega$ is bounded in one direction and Lipschitz, $ (f_s)_s$  converges asymptotically strongly to  $f_1$
and $g_s$ strongly converges to $g_1$, i.e.,
$\|g_s-g_1\|_{T^{s,p}(\Omega^c)}\to0$ as $s\to1^-$,
then $(u_s)_s$ strongly converges to $u_1$ in the following sense
\begin{align}\label{eq:converg-dirichlet-loc}
\begin{split}
&\lim_{s\to1^-}\|u_s -u_1\|^p_{L^p_{\loc}(\Omega)}+ \cE^{s,p}_{\Omega}(u_s-u_1,u_s-u_1)=0,\\
& \lim_{s\to1^-}\|u_s -u_1\|^p_{L^p_{\loc}(\R^d)}+ \cE^{s,p}(u_s-u_{1,g_1},u_s-u_{1,g_1})+ \cE^{s,p}_{cr}(u_s, u_s)=0.
\end{split}
\end{align}
Moreover, if in addition $g_s=g_1$ then we have $\|u_s-u_1\|_{L^p(\R^d)}\xrightarrow{s\to1^-}0$.
\item If $g_1=0$ or $|\partial\Omega|=0$,  and we assume that  $\Omega$ only has a continuous boundary and the sequence $(g_s)_s$ satisfies
$\|g_s-g_1\|_{W^{s,p}(\R^d)}\xrightarrow{s\to1^-}$
then  the strong convergence \eqref{eq:converg-dirichlet-bdd} from \ref{item:optimal} and the conclusions of \ref{item:optimal-one-direc} remain true.
\end{enumerate}
\end{theorem}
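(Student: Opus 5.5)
The plan is to reduce the fractional Dirichlet statement to the general results of Section~\ref{sec:conv-weak-solution} by specializing to the kernels $\nu_\eps(h)=a_{d,p,\eps}|h|^{-d-(1-\eps)p}$, $s=1-\eps$. Two preliminary checks come first.

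First, $(\nu_\eps)_\eps$ is a $p$-L\'evy approximation of the unity in the sense \eqref{eq:plevy-approx}. In polar coordinates,
\[
\int(1\land|h|^p)\nu_\eps=\frac{p\eps(1-\eps)}{|\mathbb{S}^{d-1}|}\,|\mathbb{S}^{d-1}|\Big(\frac{1}{p\eps}+\frac{1}{(1-\eps)p}\Big)=1,
\]
and $\int_{|h|>\delta}\nu_\eps=\eps\,\delta^{-(1-\eps)p}\to0$. The one-dimensional condition \eqref{eq:ponce-one-dim-cond} also holds, since $\nu_\eps(\theta x)\ge a_{1,p,\eps}|x|^{-1-(1-\eps)p}$ for $\theta\le1$. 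One caveat: this bound degenerates as $\eps\to0$ and for large $|x|$, so if it fails literally, I would invoke instead the fractional compactness directly, i.e.\ the Bourgain--Brezis--Mironescu/Ponce compactness for $(1-s)|h|^{-d-sp}$.

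Second, I would match the operators and forms. One has $(-\Delta)^s_p=\lambda_\eps L_\eps$ with $\lambda_\eps:=\widetilde{C}_{d,p,1-\eps}/a_{d,p,\eps}\to 2/K_{d,p}=2\mu$, and likewise $\cN^s_p=\lambda_\eps\cN_\eps$. The fractional forms satisfy $\cE^{s,p}=\tfrac{\lambda_\eps}{2}\cE^\eps$, and similarly for $\cE^{s,p}_\Omega$ and $\cE^{s,p}_{cr}$. Consequently $\|\cdot\|_{W^{s,p}(\Omega|\R^d)}$ and $\|\cdot\|_{\WnuOmRa}$ are equivalent uniformly in $\eps$ near $0$, and the same holds for the trace norms on $T^{s,p}(\Omega^c)$ and $\TnuOma$. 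Hence $u_s$ solves
\[
\mu L_\eps u_s=\tilde f_\eps:=\tfrac{\mu}{\lambda_\eps}f_s \quad\text{in }\Omega,\qquad u_s=g_s\quad\text{on }\Omega^c .
\]
Since $\mu/\lambda_\eps\to\tfrac12$, the local limit problem is governed by $\tfrac12 f_1$. This mismatch must be absorbed by the normalization: the paper's convention is that $|u|^p_{W^{s,p}}\to\|\nabla u\|^p_{L^p}$, so the local weak formulation should read $\cE^0=\langle f_1,\cdot\rangle$ with the fractional forms carrying the factor $\tfrac12$. I would verify carefully that the constants $2$, $\tfrac12$ and $\mu$ cancel consistently. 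Under this identification, the asymptotic weak (resp.\ strong) convergence of $(f_s)$ passes to $(\tilde f_\eps)$: the dual norms change only by bounded factors, and the limits are multiplied by a convergent scalar.

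With these preliminaries, the three items follow from earlier results. For (I), I apply Theorem~\ref{thm:converg-dirichlet} with $\tilde f_\eps$ and $g_\eps=g_s$; the hypothesis $\|g_s-g_1\|_{T^{s,p}}\to0$ is equivalent to $\|g_\eps-g\|_{\TnuOma}\to0$. Its conclusions, together with the uniform norm equivalence and the collapse $\cE^\eps_{cr}(u_\eps,u_\eps)\to0$ from its proof, give \eqref{eq:converg-dirichlet-bdd}. For (II), Theorem~\ref{thm:converg-dirichlet-bis} gives \eqref{eq:converg-dirichlet-loc}. When $g_s=g_1$, the difference $u_s-u_{1,g_1}$ vanishes off $\Omega$, and the robust Poincar\'e inequality \eqref{eq:uniform-coercivity-D-bis} yields the $L^p(\R^d)$ convergence. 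For (III), Theorem~\ref{thm:converg-dirichlet-3} applies, since $\|g_s-g_1\|_{W^{s,p}(\R^d)}\to0$ is equivalent to $\|g_\eps-g\|_{W^p_{\nu_\eps}(\R^d)}\to0$.

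The main obstacle is bookkeeping rather than analysis. One part is consistently tracking the normalization constant $\lambda_\eps\to2\mu$, including its factor $\tfrac12$ in the fractional forms, so that the limit equation is exactly $-\Delta_p u_1=f_1$. The other is checking that asymptotic weak and strong convergence, as well as the convergence of trace norms, are invariant under multiplication by a convergent scalar and under uniformly equivalent norms. I would first establish a short lemma recording these invariances, and then read off each item.
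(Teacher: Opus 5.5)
Your reduction is the same as the paper's, whose proof only specializes to $\nu_\eps(h)=a_{d,p,\eps}|h|^{-d-(1-\eps)p}$, $s=1-\eps$, and cites Theorems~\ref{thm:converg-dirichlet}, \ref{thm:converg-dirichlet-bis} and \ref{thm:converg-dirichlet-3}. Your checks of \eqref{eq:plevy-approx}, of the uniform norm equivalences, and of the transfer of the hypotheses on $(f_s)_s$, $(g_s)_s$ are what that one line leaves implicit.

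However, the operator identity you start from is off by a factor $2$, and this is what produces the spurious $\tfrac12 f_1$. Because $L_\eps$ carries the prefactor $2$, one has $L_\eps=\frac{2a_{d,p,\eps}}{\widetilde{C}_{d,p,1-\eps}}(-\Delta)^s_p$, i.e.\ $(-\Delta)^s_p=\frac{\lambda_\eps}{2}L_\eps$, not $\lambda_\eps L_\eps$. The identity printed in the introduction omits this $2$, which is inconsistent with the limit $\lambda_\eps\to 2/K_{d,p}$ stated in Section~\ref{sec:conv-weak-solution-frac}. Your own relation $\cE^{s,p}=\frac{\lambda_\eps}{2}\cE^\eps$, which is correct, already forces this. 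Indeed, $\int_\Omega L_\eps u\,v\,\d x=\cE^\eps(u,v)$ and $\int_\Omega(-\Delta)^s_pu\,v\,\d x=\cE^{s,p}(u,v)$ whenever $v=0$ on $\Omega^c$. Hence the weak formulation $\cE^{s,p}(u_s,v)=\langle f_s,v\rangle$ reads
\[
\mu\,\cE^\eps(u_s,v)=\tfrac{2\mu}{\lambda_\eps}\langle f_s,v\rangle_\eps\qquad\text{for all } v\in W^p_{\nu_\eps,0}(\Omega|\R^d).
\]
So the correct datum is $\tilde f_\eps=\frac{2\mu}{\lambda_\eps}f_s$, with $\frac{2\mu}{\lambda_\eps}\to1$. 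The three theorems then identify the limit as the solution of $-\Delta_pu=f_1$ with $u=g_1$ on $\partial\Omega$, which is $u_1$.

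As you wrote it, with $\tilde f_\eps\to\frac12 f_1$, the conclusion would be the solution of $-\Delta_pu=\frac12 f_1$, which differs from $u_1$ unless $f_1=0$. No normalization argument can absorb that, because the limit equation is fixed by the statement. So this must be corrected, not deferred. After the correction the rest of your bookkeeping is fine, since $\frac{\lambda_\eps}{2}\to\mu$ makes all norm comparisons uniform for small $\eps$.

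Your one-dimensional caveat is justified, and the paper's proof passes over it. As printed, \eqref{eq:ponce-one-dim-cond} cannot hold for any admissible family: taking $\theta=1$ gives $\nu_\eps\ge c_0$ on all of $\R$, which is incompatible with $\int_{|h|>1}\nu_\eps\,\d h<\infty$. For the fractional kernels it also fails because $a_{1,p,\eps}\to0$, so your first sentence claiming it holds is not literally right. What you actually verified is $\nu_\eps(\theta x)\ge\nu_\eps(x)$ for $0<\theta\le1$, i.e.\ radial monotonicity. That is the hypothesis under which the Bourgain--Brezis--Mironescu/Ponce compactness is available in $d=1$. Relying on it is the correct way to justify the compactness step (Theorem~\ref{thm:asymp-compactness}) used inside Theorems~\ref{thm:converg-dirichlet}--\ref{thm:converg-dirichlet-3} when $d=1$.
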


\begin{proof}
The result follows from Theorem \ref{thm:converg-dirichlet}, Theorem \ref{thm:converg-dirichlet-bis} and Theorem \ref{thm:converg-dirichlet-3}.
\end{proof}

%%%%%%%%%%%%%%%%%%%%%%%%%%%%%%
%\appendix
%%%%%%%%%%%%%%%%%%%%%%%%%%%%%%%%
%%%%%% produces listoftodos and seems to work with the amsart package
%%%%%%%%%%%%%%%%%%%%%%%%%%%%%%%%%
%\makeatletter
%\providecommand\@dotsep{5}
%\makeatother
%\listoftodos\relax
%%%%%%%%%%%%%%%%%%%%%%%%%%%%%%%% %%%%%%%%%%%
\vspace{1mm}
\noindent {\small \textbf{Data Availability Statement (DAS)}: Data sharing not applicable, no datasets were generated or analyzed during the current study.}

\vspace{1mm}
\noindent {\small \textbf{Conflict of Interest:}
{The author declares that there is no conflict of interest regarding the publication of this paper.}
%\bibliographystyle{plain}
%\bibliography{p-nonloc-bvp.bib}

%\begin{bibdiv}
%	\begin{biblist}
%		\bibselect{p-nonloc-bvp.bib} you need amsrefs-package
%	\end{biblist}
%\end{bibdiv}
\end{document}